\begin{document}

\newtheorem{thm}{Theorem}[section]
\newtheorem{theorem}{Theorem}[thm]
\newtheorem{propn}[thm]{Proposition}
\newtheorem{proposition}[thm]{Proposition}
\newtheorem{lemma}[thm]{Lemma}
\newtheorem{eg}[thm]{Example}
\newtheorem{defn}[thm]{Definition}
\newtheorem{definition}[thm]{Definition}
\newtheorem{remark}[thm]{Remark}
\newtheorem{notn}[thm]{Notation}
\newtheorem{condition}[thm]{Condition}
\newtheorem{corollary}[thm]{Corollary}
\newtheorem{conjecture}[thm]{Conjecture}
\newtheorem{assumption}[thm]{Assumption}
\newtheorem{condn}[thm]{Condition}
\newtheorem{example}[thm]{Example}
\newcommand\numberthis{\addtocounter{equation}{1}\tag{\theequation}}

\newcommand{\IP}{\mathbb P}
\newcommand{\IQ}{\mathbb Q}
\newcommand{\IE}{\mathbb E}
\newcommand{\IR}{\mathbb R}
\newcommand{\IZ}{\mathbb Z}
\newcommand{\IN}{\mathbb N}
\newcommand{\IT}{\mathbb T}
\newcommand{\IC}{\mathbb C}

\newcommand{\rmP}{\mathrm{P}}
\newcommand{\rmE}{\mathrm{E}}
\newcommand{\rmT}{\mathrm{T}}
\newcommand{\rmt}{\mathrm{t}}

\newcommand{\tX}{\tilde{X}}

\newcommand{\hX}{\hat{X}}
\newcommand{\hS}{\hat{S}}
\newcommand{\hq}{\hat{q}}
\newcommand{\hQ}{\hat{Q}}

\newcommand{\cP}{\mathcal{P}}
\newcommand{\cO}{\mathcal{O}}
\newcommand{\cA}{\mathcal{A}}
\newcommand{\cL}{\mathcal{L}}
\newcommand{\cG}{\mathcal{G}}
\newcommand{\cC}{\mathcal{C}}
\newcommand{\cV}{\mathcal{V}}
\newcommand{\cE}{\mathcal{E}}
\newcommand{\cT}{\mathcal{T}}
\newcommand{\cD}{\mathcal{D}}
\newcommand{\cZ}{\mathcal{Z}}
\newcommand{\cB}{\mathcal{B}}
\newcommand{\cK}{\mathcal{K}}
\newcommand{\cN}{\mathcal{N}}
\newcommand{\cX}{\mathcal{X}}
\newcommand{\cF}{\mathcal{F}}
\newcommand{\dd}{\mathrm{d}}
\newcommand{\ind}{\mathbf{1}}
\newcommand{\cJ}{\mathcal{J}}

\newcommand{\vs}{\mathbf{s}}
\DeclareRobustCommand{\VAN}[3]{#3}

\pagestyle{fancy}
\lhead{}
\chead{SLFV in a fluctuating environment and SuperBrownian Motion}
\rhead{}

\numberwithin{equation}{section}

\title{\large{\bf Rare mutations in the spatial Lambda-Fleming-Viot model in a fluctuating environment and SuperBrownian Motion}}
                                           
\author{ \begin{small}
\begin{tabular}{ll}                              
Jonathan Chetwynd-Diggle  \thanks{Supported by EPSRC grant
number EP/L015811/1}&
Aleksander Klimek \thanks{Supported by EPSRC grant
number EP/L015811/1} \thanks{Funding for this work has been provided by the Alexander von Humboldt Foundation in the framework of the Sofja Kovalevskaja Award endowed by the German Federal Ministry of Education and Research}
\\   
Department of Mathematics
& Max
Planck
Institute
\\       
 Oxford University & for
Mathematics
in
the
Sciences\\                   
Radcliffe Observatory Quarter& Inselstrasse 22\\                                                         
Oxford OX2 6GG & 04103
Leipzig \\
 UK & DE
 \\  chetwynd-diggle@maths.ox.ac.uk &  klimek@mis.mpg.de   \\
%\url{http://www.stats.ox.ac.uk/~etheridg/} & \url{https://www.maths.ox.ac.uk/people/jonathan.chetwynd-diggle}  \\                      
\end{tabular}
\end{small}}

%\date{March 29, 2017}

\maketitle

\begin{abstract}

We investigate the behaviour of an establishing mutation which is subject to rapidly fluctuating selection under the Lambda-Fleming-Viot model and show that under a suitable scaling it converges to the Feller diffusion in a random environment. 
%the subpopulation with a rare mutation within a population evolving according Lambda-Fleming-Viot model with selection in rapidly fluctuating environment. In this setting the scaling limit is the Feller diffusion in random environment.
We then extend to a population that is distributed across a spatial continuum. In this setting the scaling limit is the SuperBrownian motion in a random environment.
The scaling results for the behaviour of the rare allele are achieved via particle representations which belong to the family of `lookdown constructions'.
This generalises the results obtained for the neutral version of the model by \cite{chetwynd-diggle/etheridge:2017}, which was proved using a duality argument.
To our knowledge this is the first instance of the application of the lookdown approach in which other techniques seem unavailable.
\vspace{.1in}

\noindent {\bf Key words:}  Spatial Lambda Fleming-Viot model, Fluctuating selection,
SuperBrownian motion, lookdown construction, scaling limits

\vspace{.1in}

\noindent {\bf MSC 20}10 {\bf Subject Classification:}  Primary:
60F05, %Central limit and other weak theorems
60G57, % Random measures
60J25, %Continuous-time Markov processes on general state spaces
60J68, %Superprocesses
60J80, %Branching processes (Galton-Watson, birth-and-death, etc.)
	%92D25 % 	Population dynamics (general)
%60J25, 92D10, 92D15, 92D25 92D40  
\\
Secondary:  
60G51, %Processes with independent increments; Lévy processes
60G55, %point process
60J75, %jump process
92D10, %Genetics
92D15  %Problems related to evolution
%60G52
%60F05, 
%60G09, 60G55, 60G57, 60H15, 60J68
% 60J70 Applications of Brownian motions and diffusion theory (population genetics, absorption problems, etc.)

\end{abstract}

\allowdisplaybreaks

%\tableofcontents

%\setcounter{equation}{0}

%\begin{document}
\maketitle
\tableofcontents

\section{Introduction}

We address a question of some biological interest: how the frequency of a rare mutation evolves in a spatially distributed population if the direction of selection on that mutation fluctuates in time?
This type of question is particularly relevant in the context of the `Court Jester hypothesis' (see \cite{barnosky:2001}, \cite{benton:2009}), which states that long term improvements in fitness may not occur,
since populations must constantly evolve to keep pace with changes in the environment.

The simplest mathematical framework in which this question can be addressed is given by observation of a single genetic locus. 
For simplicity, we consider a population with two genetic types, the `common type', $\kappa_{c}$, (often referred to as a wild type in biological literature) and the `rare' type, $\kappa_{r}$.
We assume that the rare type forms only a small fraction of the total population. This simple setting may be interpreted as the model for a new mutation before it establishes itself within the population.

Consider for a moment a simple model without selection (for example, a Moran model or a Wright-Fisher model).
%Suppose that the proportion of the population carrying allele $a$ is small.
In the absence of spatial structure, the absolute number of rare individuals will evolve approximately according to a branching process, which, under appropriate scaling, converges to the Feller diffusion.
We may ask whether a similar phenomenon occurs in the presence of selection, especially when the direction of selection fluctuates rapidly in time. 
One expects that in the latter case the evolution approximately follows a branching process in a random environment, and, under suitable scaling, converges to the Feller diffusion in a random environment. 

It seems natural to try to establish an analogous result for a spatially distributed population. 
It is well known that there are serious difficulties when trying to construct models which incorporate genetic drift in higher dimensional spatial continua, see  \cite{barton/etheridge/veber:2013} for a review. 
A framework which allows us to overcome these difficulties has been found in the spatial Lambda-Fleming-Viot process, introduced in \cite{etheridge:2008} and \cite{barton/etheridge/veber:2010}. 
Diffusion approximations of this model lead to a limit of the Fisher-KPP type, (see e.g. \cite{etheridge/veber/yu:2014}, \cite{forien/penington:2017}) which is consistent with the behaviour of its non-spatial counterpart. 

Therefore, the spatial Lambda-Fleming-Viot model provides a reasonable framework to study the behaviour of the establishment of a mutation. 
Recent work by \cite{chetwynd-diggle/etheridge:2017} for the model without selective advantage or disadvantage for the rare mutation shows convergence.
The limiting object is a superBrownian motion, a measure-valued process introduced independently by \cite{watanabe:1968} and \cite{dawson:1975}, which is the spatial counterpart of the Feller diffusion. 
There is some evidence that superBrownian motion (sometimes referred to as a Dawson-Watanabe superprocess) is a universal scaling limit of critical interacting particle systems, see e.g. \cite{cox/durrett/perkins:2000}, \cite{bramson/cox/legall:2001} and \cite{vanderhofstad/holmes/perkins:2017} and references therein.

Recently \cite{biswas/etheridge/klimek:2018} studied a diffusion approximation of the spatial Lambda-Fleming-Viot with selection in a fluctuating environment.
In contrast to their work we are interested in the behaviour of an establishing mutation within a large population rather than two established populations of comparable size. 
In an analogy to the non-spatial case, we show that the limiting process is the superBrownian motion in a random environment, introduced and studied in \cite{mytnik:1996a}.
The work of \cite{nakashima:2015} shows that superBrownian motion in a random environment is the scaling limit of a model of branching random walks on a lattice in random environment, introduced by  \cite{birkner/geiger/kersting:2005}. We conjecture that superBrownian motion in a random environment is a universal scaling limit for the critical interacting particle systems in random environments. 

The proof of the scaling result in \cite{chetwynd-diggle/etheridge:2017} is based on a duality method. 
However, as discussed in detail in Section~5 of \cite{biswas/etheridge/klimek:2018}, a useful dual process seems to not be available in our setting. The techniques of \cite{biswas/etheridge/klimek:2018} are also not available to us as we are considering a rare mutation.
Therefore, we use a different approach, based on a particle representation which belongs to the family of lookdown constructions.
We build on the work of \cite{kurtz/rodrigues:2011} and
provide a new lookdown construction of superBrownian motion in a random environment.
We then use ideas from \cite{etheridge/kurtz:2014} and a slightly modified version of their construction of the spatial Lambda-Fleming-Viot lookdown which can incorporate a random environment.
 
For other techniques discussed above many difficulties arise with the introduction of spatial continua. However, the vast majority of our work in the non-spatial result transfers to the spatial result without difficulty, see Remark~\ref{sparsity remark} for an explanation.
% (which, to our knowledge, is new, but similar results where obtained for the model without fluctuations in \cite{bertoin/legall:2006}), 
For this reason, the majority of this paper is devoted to the rigorous derivation of the non-spatial result. 
%Those are discussed in Section~\ref{Scaling limits of the LFV - dynamics of the rare type}, together with a careful description of the lookdown representation. 
%The general strategy of the proof in this case is outlined in Subsection~\ref{Subsection Convergence of the non-spatial model}.
Our proof technique is based on four main ingredients: lookdown representation, an averaging trick due to \cite{kurtz:1973}, a perturbation result due to \cite{kurtz:1992} (which we recall as Theorem~\ref{kurtzaveraging}) and the Markov Mapping Theorem of \cite{kurtz:1998} (which we recall in Appendix~\ref{Markov Mapping Theorem}).
%The Markov Mapping Theorem 
Previous results using the lookdown approach have shown either the existence of processes under weak conditions, or convergence of processes which had previously been shown to converge through other means. This is, to our knowledge, the first proof of convergence using the lookdown approach in which other techniques are not available.

Lookdown constructions were introduced in \cite{donnelly/kurtz:1996, donnelly/kurtz:1999}.
This approach has proved to be particularly fruitful in applications to population models. 
In  this setting, each individual in the population is assigned  a `level' (taking values in either the integers, as in the original paper of \cite{donnelly/kurtz:1996}, or the reals, as introduced in \cite{kurtz:2000}). 
Levels typically carry information about genealogical relations between individuals.  
The name `lookdown' is used as individuals usually determine their parents by `looking down' at the sub-population with levels lower than their own. 
%\cite{donnelly/kurtz:1999a} - selection

From a practical perspective, one of the most useful properties of lookdown constructions is that when passing from individual based models to their high density limits, or continuous approximation, the genealogies are preserved.
% even though individuals are not present in the base model they will remain in the lookdown constructions. 
%This is not necessarily  the case for most of diffusion approximations.
The importance of this can be seen in the examples of systems of individual based models approximated by the same diffusion processes with very different genealogies obtained in \cite{taylor:2009}. For further examples in the context of Lambda-Fleming-Viot models we refer the reader to \cite{miller:2015}.
For an excellent explanation of the general principle of lookdown constructions we refer to \cite{kurtz/rodrigues:2011}, particularly their death process example in Section~2.1.

In the context of the Spatial Lambda-Fleming-Viot we would like to point to two different approaches. 
The first one is a the construction developed in \cite{etheridge/kurtz:2014}, Section~4.1.3. 
This construction forms the basis for our construction of the SLFV with selection in a fluctuating environment. 
%, whose scaling limits we investigate in Section~\ref{Scaling limits of the SLFV - dynamics of the rare type}.
%This is the content of Subsection~\ref{First construction of SLFV}. 
We recall a special case of this construction in Appenidix~\ref{Section Spatial Lambda-Fleming-Viot model}.
The second construction, which was the first lookdown construction for the SLFV, was presented in \cite{veber/wakolbinger:2015}, and was developed using a different approach in \cite{etheridge/kurtz:2014}, Section~4.1.1. 
The later construction is much closer in the flavour to the original ideas of \cite{donnelly/kurtz:1996}.

\subsection{Statement of main results}\label{Statement of main results}

We suppose that the population, which is distributed across $\IR^d$, is
subdivided into two genetic types. We will denote the space of types by $\mathcal{K} = \{\kappa_{c},\kappa_{r}\}$.
Formally, the state of the
population at time $t$ is described by a measure $M_t \in \mathcal{M}$. Where $\mathcal{M}$ is the space of measures whose first marginal is Lebesgue measure
on $\IR^d\times \mathcal{K}$. We note $\mathcal{M}$ is compact when equipped with the topology of weak convergence. 
 
At any fixed time there is a density
$w(t,\cdot) : \IR^d\rightarrow [0,1]$ such that 
$$M_t(dx,d\kappa)=\left(w(t,x)\delta_{\kappa_{r}}(d\kappa)+
(1-w(t,x))\delta_{\kappa_c}(d\kappa)\right) \mathrm{d}x.$$
We interpret $w(t,x)$ as the proportion of population of type $\kappa_{r}$ at location $x$ at time $t$. 
It is defined only up to Lebesgue null set. 
For what follows, it is convenient to fix a representative of $M_{0}$ and update it according to a procedure described below. 
We consider two types of events - neutral and selective events. 
Selective events are influenced by the state of the environment.

We begin with a description of the environment, which is used for all models in this section. Our environment is modelled through a simple random field.%, essentially the same as in Chapter~\ref{chap4}. 
\begin{definition}
\label{Branching -  Environment definition}
Let $\Pi^{env}$ be a Poisson process 
with intensity $E$, dictating the times of the changes in the environment.
Let $q(x,y)$ be a covariance function which belongs to $C_{0}\left(\mathbb{R}^d \times \mathbb{R}^d\right)$ (continuous functions vanishing at infinity) and
let $\{\xi^{(m)}(\cdot)\}_{m\geq 0}$ be a  family of identically distributed 
random fields on $\mathbb{R}^{d}$ such that 
\begin{align*}
\mathbb{P}\left[\xi^{(m)}(x) = -1  \right] = &  \frac{1}{2}=
\mathbb{P}\left[\xi^{(m)}(x) =+1 \right],\\
\mathbb{E}\left[ \xi^{(m)}(x)\xi^{(m)}(y) \right] = & q(x,y).
\end{align*}
Set $\tau_0=0$ and write $\{\tau_m\}_{m\geq 1}$ for the points in $\Pi^{env}$ and define
$$
\zeta(t,\cdot):=\sum_{m=0}^\infty\xi^{(m)}(\cdot)
\mathbf{1}_{[\tau_m,\tau_{m+1})}(t).
$$
%In other words, the environment $\xi(t,\cdot)$ is resampled, independently, at
%the times of the Poisson process $\Pi^{env}$.
\end{definition}
For the construction of the random field in Definition~\ref{Branching -  Environment definition} we refer to  \cite{ma:2009}, especially Example~1. 
%The arguments presented there require only a minor adaptation. 
We observe that the generator of the process describing the evolution of the environment, $A^{env}$, is given by
\begin{align}
\label{chapter4 environmnet generator}
A^{env}f(\zeta) = 
\mathbb{E}_{\pi}[f(\zeta)] - f(\zeta)
,
\end{align}
where $\pi$ is the stationary distribution of the random field  $\zeta$.

The following version of the Spatial Lambda-Fleming-Viot is a slight modification of the process discussed in \cite{biswas/etheridge/klimek:2018}.

\begin{definition}[Spatial Lambda-Fleming-Viot process with fluctuating selection (SLFVFS)]
\label{SLFVSFE_def}
Let $\mu$ be a $\sigma$-finite measure on $(0, \infty)$ and for each $r\in (0,\infty)$, 
let $\nu_{r}$ be a probability measure on $[0,1]$, such that the 
mapping $r \rightarrow \nu_r$ is measurable and 
\begin{align*}
%\label{integrability}
\int_{(0,\infty)}r^{d}\int_{[0,1]}u\; \nu_{r}(\mathrm{d}u)\mu(\mathrm{d}r) < \infty.
\end{align*} 
Further, fix ${\vs}\in [0,1]$ and 
let $\Pi^{neu}$, $\Pi^{fsel}$,  be independent Poisson point processes on 
$\IR_+\times \IR^d \times (0,\infty) \times [0,1]$ with intensity measures 
$(1-{\vs})\mathrm{d}t \otimes \mathrm{d}x \otimes \mu(\mathrm{d}r)\nu_{r}(\mathrm{d}u)$
and 
${\vs}\mathrm{d}t \otimes \mathrm{d}x \otimes \mu(\mathrm{d}r)\nu_{r}(\mathrm{d}u)$ 
respectively. Let $\Pi^{env}$ be a Poisson process of Definition~\ref{chapter4 environmnet generator}, evolving independently of  $\Pi^{neu}, \Pi^{sel}$.
Let $\sigma(\kappa,\xi): \big(\mathcal{K}\times \{-1,1\}\big) \to \mathbb{R}$ be a function which satisfies the symmetry condition
\begin{align}
\label{sysmetryeqcondition}
\mathbb{E}_{\pi}\left[  
 \frac{\sigma(\kappa_{r},\zeta)}{\sigma(\kappa_{c},\zeta)} - 1
 \right]
 =
 0,
\end{align} 

The {\em spatial Lambda-Fleming-Viot process with fluctuating selection (SLFVFS)}
with driving noises $\Pi^{neu}$, $\Pi^{fsel}$, $\Pi^{env}$, is the 
$\mathcal{M}_{\lambda}$-valued process $M_t$ with
dynamics described as follows. Let $w(t_{-},\cdot)$ be a representative of the 
density of $M_{t-}$ immediately before an event $(t,x,r,u)$ from $\Pi^{neu}$
or $\Pi^{fsel}$. Then the measure $M_t$ immediately after the event has 
density $w(t, \cdot)$ determined by:
\begin{enumerate}
\item If $(t,x,r,u) \in \Pi^{neu}$, a neutral event occurs at time 
$t$ within the closed ball $B(x,r)$. Then
\begin{enumerate}
\item Choose a parental location $l$ according to the uniform distribution on
$B(x,r)$. 
\item Choose the parental type $\kappa \in \{\kappa_r,\kappa_c\}$ according to distribution
\begin{align*}
\mathbb{P}\left[\kappa = \kappa_{r} \right] = w (t_{-},l) , 
\quad \mathbb{P}\left[\kappa =\kappa_c \right] = 1 -  w (t_{-},l).
\end{align*}
\item A proportion $u$ of the population within
$B(x,r)$ dies and is replaced by offspring with type $\kappa$.
Therefore, for each point $y \in B(x,r)$, 
\begin{align*}
w(t,y) = w(t_{-},y)(1 - u) + u\ind_{\{\kappa = \kappa_{r}\}}. % = (1-u)w(t_{-},y) + u(1 - \kappa)
\end{align*}
\end{enumerate}

\item If $(t,x,r,u) \in \Pi^{fsel}$, a   selective event  occurs at 
time $t$ within the closed ball $B(x,r)$. Then
\begin{enumerate}
\item Choose a parental location $l$ according to the uniform distribution on
$B(x,r)$. 
\item Choose the parental type $\kappa \in \{\kappa_r,\kappa_c\}$ according to 
\begin{align*}
\mathbb{P}\left[\kappa = \kappa_r \right] = \frac{\sigma(\kappa_{r},\zeta)w(t_{-},l_{i})}{\sigma(\kappa_{c},\zeta)w(t_{-},l_{i}) +\sigma(\kappa_{r},\zeta) (1 - w(t_{-},l_{i})}, \\
\mathbb{P}\left[\kappa = \kappa_c \right] =  \frac{\sigma(\kappa_{r},\zeta)(1 - w(t_{-},l_{i}))}{\sigma(\kappa_{c},\zeta)w(t_{-},l_{i}) + \sigma(\kappa_{r},\zeta)(1 - w(t_{-},l_{i})}.
\end{align*}
\item A proportion $u$ of the population within
$B(x,r)$ dies and is replaced by offspring with type $\kappa$.
Therefore, for each point $y \in B(x,r)$, 
\begin{align*}
w(t,y) = w(t_{-},y)(1 - u) + u\ind_{\{\kappa = \kappa_{r}\}}. % = (1-u)w(t_{-},y) + u(1 - \kappa)
\end{align*}
\end{enumerate}
\end{enumerate}
\end{definition}
The existence of the process follows from the methods of \cite{etheridge/veber/yu:2014} or results of \cite{etheridge/kurtz:2014}, Section~4.1.2.
We note the symmetry condition is not required for existence of the model. This condition can be relaxed if $\sigma$ depends on $N$, however for simplicity, we assume fixed $\sigma$ and so require condition \eqref{sysmetryeqcondition} to be satisfied. Furthermore, we will fix both the impact and radius of events in our models. We assume, with a slight abuse of notation that $\mu = \delta_r$ and $\nu_r = \delta_u$.

We discuss the lookdown representation of the non-spatial version of the model in Section~\ref{Scaling limits of the LFV - dynamics of the rare type}, while Section~\ref{Subsection projected model} explains how the lookdown relates to the underlying process.
The spatial version of the lookdown representation for this process is described in Section~\ref{Scaling limits of the SLFV - dynamics of the rare type}.

We shall define the limiting process, which is a variant of superBrownian motion in a random environment with a drift, in terms of the generator. 
Let $\mathcal{M}_{F}(\mathbb{R}^{d})$ we denote the space of finite measures on $\mathbb{R}^{d}$, again equipped with a topology of weak convergence.

\begin{definition}[SuperBrownian motion in a random environment]
\label{Definition SuperBrownian motion drift}
Let $q(x,y) \in C_{0}\left(\mathbb{R}^d \times \mathbb{R}^d\right)$ be a covariance function. The superBrownian motion in a random environment with a diffusion parameter $m$, a growth parameter $b$ and a quadratic variation parameter $(a,c)$ is the (unique) process $\mu$, taking values in $\mathcal{M}_{f}(\mathbb{R}^{d})$,  characterised by the generator (specified  for $f\in \bar{C}^{2}(R_{+}), \phi \in\mathcal{D}(\Delta)$)
\begin{multline}
\label{SBMRE with drift generator}
\mathcal{L}f(\langle \phi, X_{t}\rangle) =
f^{\prime}(\mu(\phi))\big[\frac{m}{2}\langle \Delta\phi, X_{t}\rangle + b\langle \phi, X_{t}\rangle\big]
\\ 
+ 
\frac{1}{2}f^{\prime\prime}( \langle \phi, X_{s}\rangle)\left( a \langle \phi^{2}, X_{s}\rangle 
+
c
\int_{\mathbb{R}^{d}\times \mathbb{R}^{d}}q(x,y)\phi(x)\phi(y)X_{s}(\mathrm{d}x)X_{s}(\mathrm{d}y)\mathrm{d}s \right)
.
\end{multline}
\end{definition}
This model is discussed in more detail in Section~\ref{Section SuperBrownian motion in a random environment}. In particular, we provide a new lookdown construction of the model, which we derive from the lookdown construction of branching Brownian motion in a random environment.

Our results describe the scaling limit of a sequence of processes.
At the $N$th stage of our scaling, the local population density will
be $K=K(N)$.  We shall denote the representative of the density of the  scaled SLFVSRE by $w^N$ 
and the population of rare individuals by $X^N=Kw^N$, which is defined 
Lebesgue almost everywhere. We shall think of $X^N$ as a measure-valued 
process and abuse notation by writing, for any Borel measurable $\phi$, 
\begin{align*}
\left\langle X_t^N,\phi\right\rangle
 =
K\int_{\IR^d}\phi(x)w_t^N(x)\mathrm{d} x
=
\int_{\mathbb{R}^d}\phi(x)
X_t^N(x)\mathrm{d} x.
\end{align*}
%
%Before scaling, the SLFV will be driven by a Poisson point process
%$\Pi^N$ with intensity $\mathrm{d} x\otimes \mathrm{d} t\otimes \mu^N(\mathrm{d} r)\nu_r(\mathrm{d} \rho)$. 
%Each $(x,t,r,\rho)\in \Pi^N$ signals a reproduction event for 
%the scaled process
%associated with the quadruple $(\frac{x}{M}, \frac{t}{N}, \frac{r}{M}, \frac{\rho}{J})$,
%where $M:= M(N)$, $J:= J(N)$, are some positive 
%increasing functions of $N$. In other words, f
The scaling for the neutral part of the model is nearly the same as in \cite{chetwynd-diggle/etheridge:2017}.
The modifications required by the presence of selection and fluctuations in the environment are inspired by \cite{biswas/etheridge/klimek:2018}.
For the scaled process,
time is sped up by a factor $N$, space is
shrunk by $M(N)$, and the impact of each event is reduced by a factor
$J(N)$. %Moreover, the local population
%density is increased to $K=K(N)$ where $K(N)$ is another increasing
%function of $N$.
The rate of environmental changes is multiplied by $\widehat{S}(N)^2$ and
the proportion of selective events is multiplied by $\widehat{S}(N)/S(N)$. 
Scaling of the selective events is motivated by our inclination
 to model short burst of strong selection.
 In the model weak selection limit the rate of selective events is scaled by $1/S(N)$. 
 The additional portion of selective events prevents the action of selection to average out in the diffusive limit.
The Central Limit Theorem suggests the relation between the rate of additional selective events and rate of changes of the environment.
See also \cite{biswas/etheridge/klimek:2018}, Section~3.2 and Section~4.2.
Let  $C(d) := \int_{\cB_1(0)} x^2 \mathrm{d} x$.
We are now in position to state the main result.
\begin{theorem}
\label{Space fluctuating selection theorem}
Suppose that
$X_0^N$ is absolutely continuous with respect to Lebesgue measure with support $\mathrm{supp}(X_0^N)\subseteq D$, where $D$ is a 
compact subset of $\IR^d$ independent of $N$, and  
$X_0^N$ converges weakly to $X_0$.
Moreover, suppose that, as $N$ tends to infinity,
\begin{align*}
\frac{C_{d}ur^{d+2}N}{JM^2} \to C_1 ; \; J,K,M,S,\widehat{S} \to \infty; \;  \frac{K}{JM^{d}} \to 0;  \;  \frac{u^2 V_R NK}{J^2 M^{d}} \to a;
\\
 \frac{N^2}{KJ^{2} M^{d}} \to 0 ; \;
\mathbb{E}_{\pi}\left[ \left\lbrace\frac{suNV_{R}}{SJ} \left(\frac{\sigma(\kappa_r, \zeta)}{\sigma(\kappa_c,\zeta)} - 1 \right)\right\rbrace^{2}\right] \to b^{2}  ; \; \frac{\widehat{S}}{S} \to 0
. 
\end{align*} 
If there exists an $n$ such that, as $N$ tends to infinity,
$N(K/J)^{n} \to 0$ then the sequence $X_{N}(t)$ converges weakly  to superBrownian motion in a random environment with initial condition $X_{0}$, diffusion parameter $C_1$, growth parameter $b^{2}$, quadratic variation parameter $(a,b^{2})$.
\end{theorem}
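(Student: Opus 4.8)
The plan is to lift the scaled process $X_N$ to a lookdown particle system, pass to the limit at the level of generators using Kurtz's averaging machinery to eliminate the rapidly fluctuating environment, and then project back to the measure-valued process via the Markov Mapping Theorem. By Remark~\ref{sparsity remark} it suffices to carry the hard analysis out in the non-spatial setting and import the spatial motion (the Laplacian with parameter $C_1$) essentially verbatim; so I would organise the argument around the non-spatial lookdown and indicate separately where the spatial spreading of reproduction events contributes the diffusion term. The first step is to write down the generator $\mathcal{A}_N$ of the coupled Markov process consisting of the lookdown configuration together with the environment $\zeta$, and to split it as
\[
\mathcal{A}_N = \widehat{S}(N)^2 A^{env} + \mathcal{A}_N^{neu} + \mathcal{A}_N^{fsel},
\]
where $A^{env}$ is the resampling generator \eqref{chapter4 environmnet generator}, $\mathcal{A}_N^{neu}$ encodes neutral reproduction (assigning levels, killing and inserting particles) and $\mathcal{A}_N^{fsel}$ encodes selective reproduction, whose rate carries the factor $\widehat{S}/S$ and whose bias depends on $\zeta$ through $\sigma(\kappa_r,\zeta)/\sigma(\kappa_c,\zeta)$.

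The neutral part is the one already treated in \cite{chetwynd-diggle/etheridge:2017} and \cite{etheridge/kurtz:2014}: under the stated scaling of $u,r,N,J,M,K$ it converges to the generator of critical super-Brownian motion, producing the Laplacian term $\tfrac{C_1}{2}\langle\Delta\phi,X_t\rangle$ and the Feller term $\tfrac{a}{2}f''\langle\phi^2,X_t\rangle$. Here the conditions $K/(JM^d)\to0$ and $N^2/(KJ^2M^d)\to0$ are the consistency and moment bounds that make the generator computation rigorous, while $N(K/J)^n\to0$ controls the sparsity of occupied high levels so that the particle picture genuinely represents a finite measure. I would verify these exactly as in the cited work, treating $\mathcal{A}_N^{neu}$ and the migration as a perturbation of the fast environment that does not interact with it.

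The heart of the matter is $\widehat{S}^2 A^{env} + \mathcal{A}_N^{fsel}$, which I would handle by the perturbation/averaging result recalled as Theorem~\ref{kurtzaveraging}. Because the environment generator is the simple resampling operator, its range is exactly the set of $\pi$-mean-zero functions and the corrector equation $A^{env}\chi=-g$ is solved explicitly by $\chi=g$ whenever $\mathbb{E}_\pi[g]=0$. The symmetry condition \eqref{sysmetryeqcondition} guarantees that the first-order (mean) effect of selection vanishes, so the leading surviving contribution is the second-order term obtained by composing $\mathcal{A}_N^{fsel}$ with this corrector. Under the central-limit balance between the environmental rate $\widehat{S}^2$ and the excess selection rate $\widehat{S}/S$, this term survives in the limit and produces \emph{simultaneously} the growth term $b^2\langle\phi,X_t\rangle$ and the environmental noise term $\tfrac{b^2}{2}f''\int q(x,y)\phi(x)\phi(y)X_s(\dd x)X_s(\dd y)$, with the single constant $b^2$ emerging as the limit of $\mathbb{E}_\pi[\{\tfrac{suNV_R}{SJ}(\sigma(\kappa_r,\zeta)/\sigma(\kappa_c,\zeta)-1)\}^2]$. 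The coincidence of the same $b^2$ in both terms is exactly the branching-in-random-environment phenomenon: mean-zero fluctuations still generate positive drift through their variance (an Itô-type correction) while the spatial correlations $\mathbb{E}[\xi(x)\xi(y)]=q(x,y)$ drive the correlated noise.

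The main obstacle, in my view, is making the averaging argument compatible with the unbounded level structure of the lookdown. One must exhibit a rich enough class of test functions on the particle space, confirm that the corrected functions $f+\widehat{S}^{-2}\chi$ stay in the domain and that the residual error terms are genuinely negligible uniformly in $N$, and control the occupation of high levels while the environment is sped up. Once the limiting generator is identified as \eqref{SBMRE with drift generator} at the particle level, the remaining step is routine in spirit: invoke the Markov Mapping Theorem (Appendix~\ref{Markov Mapping Theorem}) to transfer convergence and the martingale characterisation from the lookdown system to the measure-valued process, using the well-posedness of the super-Brownian-motion-in-a-random-environment martingale problem established in Section~\ref{Section SuperBrownian motion in a random environment} to conclude that $X_N$ converges weakly to the stated limit.
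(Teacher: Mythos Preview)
Your proposal is essentially the paper's own strategy: lookdown representation, the generator split $\mathcal{A}_N=\widehat{S}^2A^{env}+\mathcal{A}_N^{neu}+\mathcal{A}_N^{fsel}$, Kurtz's corrector/averaging (Theorem~\ref{kurtzaveraging}) to extract the second-order contribution $b^2$ producing both the drift and the $q$-correlated noise, and the Markov Mapping Theorem to project back.

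One point of emphasis: you treat the neutral part as something that can simply be imported from \cite{chetwynd-diggle/etheridge:2017} and \cite{etheridge/kurtz:2014}, but the paper cannot do that wholesale. The duality arguments of \cite{chetwynd-diggle/etheridge:2017} are unavailable here, and the bulk of the technical work (Propositions~\ref{probconvergenceofpaths} and~\ref{probconvergenceofbirths}, Lemmas~\ref{lemmaforbirthprocess} and~\ref{neutral intensity lemma}) is a \emph{new} lookdown-level proof that $A^N_{neu}$ converges to the Feller/branching generator, based on careful moment computations for gaps between consecutive levels conditioned on rare positions. Only the spatial-motion (Laplacian) piece is read off from the projected generator as in \cite{chetwynd-diggle/etheridge:2017}; the branching piece at the particle level is genuinely where the difficulty you flag as ``the main obstacle'' lives. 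A second nuance is that the paper does \emph{not} claim convergence of the lookdown processes themselves (no uniqueness for the limiting particle martingale problem is established); it uses the lookdown only to identify the limiting projected generator and then relies on well-posedness of the SBMRE martingale problem to conclude for $X_N$.
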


\begin{remark}
\label{sparsity remark}
We note that the neutral case of our model is analysed in \cite{chetwynd-diggle/etheridge:2017}. Their scaling requires
\begin{align*}
 \frac{N}{JM^2} \to C_1; \; J,K,M \to \infty; \; \frac{NK}{J^2M^d} \to C_2
,
\end{align*}
along with a `sparsity' condition which we discuss below. Their paper discusses the heuristic reason for their scaling which also gives a solid justification for our choice of scaling.

We note that our `sparsity' condition is the requirement $K/JM^d \to 0 $, which is stronger than the one present in \cite{chetwynd-diggle/etheridge:2017}.
Our set of conditions implies $M^2/J \to 0$ in comparison to theirs 
\begin{align*}
\frac{M}{J} \to 0 \mbox{ if }d=1;\; 
\frac{\log M}{J} \to 0 \mbox{ if }d=2;
\;
\frac{1}{J} \to 0 \mbox{ if }d\geq3.
\end{align*}
It is due to the fact that the lookdown construction `sees' the Hausdorff dimension of the support of superBrownian motion in a way that previous work did not, since our set of test functions does not smooth out the support of the process, in sharp contrast to their approach. 
We require the intensity of levels within the ball of radius $r/M$ to tend to infinity in order to allow the limit to have Hausdorff dimension two.
This observation explains why the proof of our results in the spatial case does not differ significantly from the proof in the non-spatial case. 
%WE SHOULD CITE THE FUNNYU MYTNIK PAPER SOMEWHERE IN THIS REMARK
\end{remark}

\subsection{Structure of the paper}
The rest of the paper is structured as follows. In Section~\ref{Scaling limits of the LFV - dynamics of the rare type} we discuss the Lambda-Fleming-Viot model, studying its scaling limits and discussing the lookdown representation. 
In particular, Section~\ref{Subsection Convergence of the non-spatial model} contains the bulk of our proof.
In Section~\ref{Section SuperBrownian motion in a random environment}, we discuss the lookdown construction for a version of Branching Brownian motion in a random environment and the lookdown representation of the superBrownian motion in a random environment.
In Section~\ref{Scaling limits of the SLFV - dynamics of the rare type}, we discuss how to extend the result of Section~\ref{Scaling limits of the LFV - dynamics of the rare type} to the spatial setup.
Appendix~\ref{Section Poisson random measures} contains some information of Poisson random measures, which are used extensively throughout the paper.
Appendix~\ref{kurtzrodmartingalelemma} briefly recalls a Lemma~A.13 from \cite{kurtz/rodrigues:2011} which ensures our projected process is a solution to the correct martingale problem.
Appendix~\ref{Markov Mapping Theorem} discusses the Markov Mapping Theorem.
Appendix~\ref{Proofs SBMRE BBMRE} contains some of the proofs of Theorems of Section~\ref{Section SuperBrownian motion in a random environment}. 
In Appendix~\ref{Section Spatial Lambda-Fleming-Viot model}, we recall the original construction of \cite{etheridge/kurtz:2014}.
\section{Scaling limits of the LFV - dynamics of the rare type}
\label{Scaling limits of the LFV - dynamics of the rare type}

In this section we are interested in describing the evolution of a subpopulation with a rare mutation within a population which evolves according to the $\Lambda$-Fleming-Viot model with selection in  a fluctuating environment (LFVSFE).
We will again consider a type space with two types, rare and common.
Which of those two types has higher fitness changes with the environment. 
%We assume that the direction of the selection changes between favouring the rare mutant and the common (wild) type. 
%The selection mechanism considered here differs slightly from the one of Section~\ref{nospace}, nevertheless it is closely related
%.
%We discuss the differences later on
%, see Remark~\ref{remark how our selection compares to the others}.
We show that the evolution of the rare subpopulation follows a Feller diffusion in a random environment.%which is the non-spatial counterpart of the SuperBrownian motion in a random environment discussed in Section~\ref{Section SuperBrownian motion in a random environment}.

%We are interested in the  
%generator of the 
%(non-spatial)  $\Lambda$-Fleming-Viot model with selection in  a fluctuating environment. %, see \cite{biswas/etheridge/klimek:2018} Section 3. 
We provide a new description of the LFVSFE in terms of a lookdown construction. 
This construction is inspired by the lookdown construction of the neutral model in Section~4.1.3 of \cite{etheridge/kurtz:2014}.
%, which we have discussed in Section~\ref{Section Spatial Lambda-Fleming-Viot model}.

%We begin with the non-spatial version of the model. 
Let us begin with a description of the model which gives some insight into its construction before defining the model precisely in Definition~\ref{Lookdown representation of LFVSRE}.
Each of the individuals in the population is assigned a \emph{genetic type} $\kappa$ from the set $\cK$ and a \emph{level} $l \in \mathbb{R}^{+} \cup \{0\}$. 
We restrict our attention to $\mathcal{K} = \{\kappa_{c}, \kappa_{r} \}$, which we refer to as the `common' and `rare' type, respectively.
Since in this section we consider a model without spatial structure, the state of the population can be represented as a collection of points, $\eta = \{ (l, \kappa)\}$, or as a measure which can be written, with a slight abuse of notation, as
\begin{align*}
\eta = \sum_{(l, \kappa) \in \eta  }\delta_{(l, \kappa) }, \quad \text{where } (l, \kappa)  \in \left(\mathbb{R}^{+} \cup \{0\}, \cK \right).
\end{align*}
%In what follows, we restrict our attention to $\mathcal{K} = \{\kappa_{c}, \kappa_{r} \}$, which we refer to as `common' and `rare' type, respectively.
We assume that the  process with levels  $\eta$ is always a conditionally Poisson system with Cox measure $m_{leb} \times \Xi$, where $m_{leb}$ is Lebesgue measure on $\IR^+$. This is because, in our lookdown model, when our initial condition has this form then our process will have this form for all subsequent times.
This is a key feature in how the lookdown construction relates to its underlying model.

For lookdown representations we consider  test functions of the form
% analogous to \eqref{LC pure death lookdown test function}, that is 
\begin{align*}
f(\eta) = \prod_{l \in \eta}g(l),
\end{align*}
with the additional requirement that there exists a $\lambda_{g} > 0$ such that $g(l) = 1$ for all $l > \lambda_{g}$.
This set of test functions will be used throughout this paper for computations involving lookdown representations.

We are interested in the situation 
%when the selectively advantageous type 
when the type which is selectively advantageous
depends on the environment. 
We write $\zeta$ for the random process which models the state of the environment. 
%For that reason, we introduce an additional random variable $\zeta$, which models the state of the environment. 
%The direction of the selection changes according to changes in the environment.
 Therefore the full state of the model at time $t$ is given by a pair $(\eta_{t},\zeta_{t})$.

We now proceed to carefully state our non-spatial model, the scaling and the non-spatial results. The following statements are almost identical to those found in Section~\ref{Statement of main results} but we include them for completeness. This further demonstrates the similarity between the spatial and non-spatial methods when using the lookdown construction and motivates why our spatial proof mainly lies within Proposition~\ref{Proposition Neutral Convergence In Space} and Proposition~\ref{selectivepropositionAspace}.

The evolution of the population is determined by reproduction events
%, 
%which are composed of two elements - discrete births and thinning. 
% In line with \cite{biswas/etheridge/klimek:2018}, w
%We consider
of two types 
%of reproduction event, 
 - neutral and selective.
We assume that a proportion, $s$, of events are selective and favour one of the two types. 
Events are driven by independent Poisson processes $\Pi^{neu}$ and $\Pi^{sel}$.
For simplicity we assume that the impact of the events is fixed and equal to $u$.

Both neutral and selective events are composed of two elements - discrete births and thinning. 
The birth phase of the events differs between neutral and selective events, while the thinning phase is the same.

%The birth part of the neutral events is the same as in the first construction of  Section~\ref{Section Spatial Lambda-Fleming-Viot model}. 
Whenever $t \in \Pi^{neu}$, a birth event produces offspring, with levels distributed according to an independent Poisson point process with intensity $u$.
Let $v^{*}$ be the smallest of the new levels. 
Let $(l^{*}_{neu}, \kappa^{*})$ denote the element of $\eta$ with the smallest level greater than $v^{*}$, that is 
\begin{align*}
l^{*}_{neu}  = \min\{ l  : (l, \kappa) \in \eta, l > v^{*}   \}.
\end{align*}
The individual $(l^{*}_{neu}, \kappa^{*})$ is chosen as the parent of the event and removed from the population.
All new individuals are assigned type $\kappa^*$, the type of the parent.
The levels of all old individuals in the population are changed. 
If the level of the individual was smaller than $v^{*}$ it remains unaffected by the birth.
If the level of the individual was larger than  $v^{*}$, it is moved to $l - l^{*}_{neu} + v^{*}$.

If the event is selective, the situation is more complicated.
We introduce an additional function $\sigma(\kappa,\zeta)$, which influences the likelihood of an individual of  type $\kappa$ being parent, given the state of the environment, $\zeta$.
When the environment is in state  $\zeta$, the higher the value of $\sigma(\kappa,\zeta)$, the more likely an individual of type $\kappa$   is to be selected as a parent during selective events in environment $\zeta$.

Whenever $t \in \Pi^{sel}$, a birth event produces offspring, with levels distributed according to an independent Poisson  process with intensity $u$.
As before, let $v^{*}$ be the smallest of the new levels. 
Let $(l^{*}_{sel}, \kappa^{*})$ denote the element of $\eta$ which obtains the minimum
\begin{align}
\min \left\lbrace \frac{l_{i} - v^{*}}{\sigma(\kappa_{i},\zeta)} : (l_i, \kappa_i) \in \eta, l_i > v^*  \right\rbrace
.
\end{align}
The individual $(l^{*}_{sel}, \kappa^{*})$ is chosen as the parent of the event and removed from the population.
All new individuals are assigned the parent's type as in neutral events.
The levels of all old individuals in the population are then changed. 
If the level of the individual was smaller than $v^{*}$ it remains unaffected by the birth.
If the level of the individual was larger than  $v^{*}$, it is moved to $\sigma(\kappa,\zeta)(l - l^{*}_{neu} + v^{*})/\sigma(\kappa^{*},\zeta)$.

For both neutral and selective events once the parent has been selected, and the old levels moved to their new locations, thinning takes place. 
Thinning does not affect the new individuals. 
The thinning takes the level of each individual which is neither a child or the parent of the event present within the population and multiplies it by $1/(1-u)$. 
The combined effect of this movement is defined explicitly through the function $\cJ_{neu}$, defined in \eqref{Jcaldeffinition}, for neutral events and $\cJ_{sel}$, defined in \eqref{Jcal sel deffinition}, for selective events.
We note that instead of removing the parent from the population you can consider the parent to be the lowest offspring instead.
The movement of the levels is chosen in this way to maintain levels with a conditionally Poisson system after any event.

We now define the main process of interest.

\begin{definition}[Lookdown representation of LFVSFE]
\label{Lookdown representation of LFVSRE}
Fix $s \in (0,1)$. Let $\Pi^{neu}, \Pi^{sel}$ be a pair of  independent Poisson point  processes with intensity measures $(1-s)\mathrm{d}t \otimes \nu(\mathrm{d}u)$ and  $s\mathrm{d}t \otimes \nu(\mathrm{d}u)$ respectively on $\IR^+ \times (0,1)$.
Moreover, let $\Pi^{env}$ be a Poisson process with rate $E$, independent of $\Pi^{neu}, \Pi^{sel}$.
Let $\sigma: \mathcal{K} \times \{-1, 1\} \rightarrow \mathbb{R}$ be a function. 

The lookdown representation of LFVSRE is the process taking values in purely atomic measures on $\mathbb{R}\times\mathcal{K} \times \{-1,1\}$ with  dynamics described as follows.
\begin{enumerate}
\item If $(t,u) \in \Pi^{neu}$  
	\begin{enumerate}
	\item a group of new individuals with levels $(v_{1}, v_{2}, \dots)$ is added to the 			population. Their levels are distributed according to a Poisson process with intensity $u$.
	\item Let $v^{*} = \min\{ v_{1}, v_{2}, \dots \}$.
	The type of the new individuals is chosen to be the same as the type of the individual $(\kappa^{*},l^{*})$ whose level is the lowest above $v^{*}$, that is 
	\begin{align}\label{l* sel definition}
l_{neu}^{*} = \min \{ l : (l,\kappa) \in \eta, l>v^{*} \}.
\end{align}
	\item As a result of an event the levels with position $l$ before an event will have new position given by
	\begin{align}
\label{Jcaldeffinition}
\cJ_{neu}(l ,l_{neu}^{*},v^*)=
\begin{cases}
\frac{1}{1 - u} (l - (l_{neu}^{*} - v^*))
& \text{ if } l > l_{neu}^{*}
,
\\
\frac{1}{1 - u} l
& \text{ if } l <l _{neu}^{*}
,
\\
v^*
& \text{ if } l = l_{neu}^{*}
.
\end{cases}
\end{align}
	\end{enumerate}

\item If $(t,u) \in \Pi^{sel}$  
	\begin{enumerate}
	\item a group of new individuals with levels $(v_{1}, v_{2}, \dots)$ is added to the 			population. Their levels are distributed according to a Poisson process with intensity $u$.
	\item Let $v^{*} = \min\{ v_{1}, v_{2}, \dots \}$.
	The type of the new individuals is chosen to be the same as the type of the individual $(l^{*}, \kappa^{*})$ whose level minimizes
	\begin{align}
\left\lbrace \frac{l_{i} - v^{*}}{\sigma(\kappa_{i},\zeta)} : (l_i, \kappa_i) \in \eta, l_i > v^*  \right\rbrace
.
\end{align}
	\item  As a result of an event the levels with position $l$ and type $\kappa$ before an event will maintain their type but will have new position given by
	\begin{align}
\label{Jcal sel deffinition}
\cJ_{sel}((l, \kappa),(l_{sel}^{*},\kappa^*),\zeta,v^*)
=
\begin{cases}
v^* & \text{ if } l = l_{sel}^{*}
,
\\
\frac{1}{1 - u}
\left(
l - (l^* - v^*)\frac{\sigma(\kappa,\zeta)}{\sigma(\kappa^*,\zeta)}
\right)
& \text{ if } l \neq l_{sel}^{*}, l > v^*
,
\\
\frac{1}{1 - u}
l
& \text{ if } l < v^*
.
\end{cases}
\end{align}
\end{enumerate}
\item If $t \in \Pi^{env}$, the environmental variable $\zeta_{t}$ is resampled uniformly from $\{-1, 1\}$.
\end{enumerate}
\end{definition}

As in the spatial case our results require a symmetry condition on the values of $\sigma(\kappa, \zeta)$. 
To be more precise, we require that 
\begin{align}
\label{0d symmetry condition}
\mathbb{E}_{\pi}\left[  
 \frac{\sigma(\kappa_{r},\zeta)}{\sigma(\kappa_{c},\zeta)} - 1
 \right]
 =
 0,
\end{align} 
where $\pi$ is the stationary distribution of $\zeta$.
% That condition is satisfied, for example, by
% \begin{align*}
% \sigma(\kappa_c,1) = 2, \quad \sigma(\kappa_r ,1) = 3, \quad  \sigma(\kappa_c,-1) = 2, \quad
% \sigma(\kappa_r, -1) = 1.
% \end{align*}

We note that the generator of the process from Definition~\ref{Lookdown representation of LFVSRE} is given by
\begin{align}
\label{fluctuating generator unscalled}
A f(\eta,\zeta) = A_{neu}f(\eta,\zeta) + A_{sel}f(\eta,\zeta) + A_{env}f(\eta,\zeta),
\end{align}
where 
\begin{align*}
A_{neu}f(\eta,\zeta) &
\\
= (1-s)&
\int_0^\infty
\Bigg[
u e^{- uv^*} g(\kappa^*, v^*)
e^{- u \int_{v^*}^{\infty} (1 - g(\kappa^*, v)
) \mathrm{d} v} 
\prod_{ (\kappa , l) \in \eta , l \neq l^* }
g \left( \kappa , \mathcal{J}_{neu}(l,l_{neu}^*,v^*) \right)
\Bigg] \mathrm{d} v^*
 - f(\eta),
 \\
A_{sel}f(\eta,\zeta) &
= s \int_0^\infty
\Bigg[
u e^{- uv^*} g(\kappa^*, v^*)
e^{- u \int_{v^*}^{\infty} (1 - g(\kappa^*, v)
) \mathrm{d} v} 
\\
&\phantom{u e^{- uv^*} g(\kappa^*, v^*)}
\times
\prod_{ (\kappa , l) \in \eta , l \neq l^*_{sel} }
g \left( \kappa , \cJ_{sel}((l, \kappa),(l_{sel}^{*},\kappa^*),\zeta,v^*) \right)
\Bigg] \mathrm{d} v^*
 - f(\eta),
\\
A_{env}f(\eta,\zeta) &= \mathbb{E}_{\pi}[f(\eta,\zeta)]- f(\eta,\zeta).
\end{align*}
Observe that the only differences between neutral and selective events are the choice of the parent $l^*$ and the movement of levels.

\subsection{Scaling}

%In order to capture the behaviour we are interested in, we need to ensure that our type is still rare when we pass to the scaling limit. 
We are interested in the evolution of the subpopulation of a rare type within the population evolving according to LFVSFE.
To quantify the rarity, we consider a population with total density $K$ (which is equal to one for the usual Lambda-Fleming-Viot model), and will let $K$ tend to infinity. 
We wish the rare type to make up about $\cO(1/K)$ of the population at each level of the scaling.
%In order to have a sensible limit as $K \to \infty$ at each level of the scaling we will have an effective population density $K$. 
This is represented in the look-down process by the intensity of levels. 
The levels of individuals are initially Poisson distributed with intensity $K$ with individuals given the rare type with probability $\cO(1/K)$ and given the common type otherwise.
%This ensures that the general population will have a law of large numbers type effect on the rare individuals. 

In order to recover the correct scaling limit we need to readjust our parameters. 
Let $w^{N}$ denote the proportion of individuals of the rare type at the $N^{th}$ stage of scaling.
Let $X^{N} = Kw^{N}$ denote the size of the population of the rare type.
Our scaling limit describes the behaviour of $X^{N}$.
The right scaling of the other parameters is suggested by \cite{chetwynd-diggle/etheridge:2017} and \cite{biswas/etheridge/klimek:2018} as discussed in Remark~\ref{sparsity remark}. 
We speed up the reproduction rate by $N$ and increase the total population size to $K(N)$, but scale down both the impact and the selection coefficient. 
The impact of an event at the $N^{th}$ stage of the approximation will be given by $u/J(N)$. The selection coefficient in the presence of fluctuations will be $s \widehat{S}(N)/S(N)$ and $s/S(N)$ in the absence of fluctuations.
% When we later prove Theorem~\ref{NS selection theorem} we will consider the case without a fluctuating environment. In this case, the selection coefficient is given by $s/S(N)$.
In order to simplify the notation we drop the explicit dependence of scaling parameters on $N$ in what follows.
For our results to hold, certain relations between the parameters need to be satisfied. 
In  Theorem~\ref{NS fluctuating selection theorem}, Theorem~\ref{NS neutral theorem} and Theorem~\ref{NS selection theorem} we specify scaling limits for the model with fluctuating selection, the model where the direction of selection does not change and the neutral model respectively.
In particular, 
%as was the case in Section~\ref{nospace}, 
our results require a specific relation between the rate of the changes in the environment and the rate of selection. 
We therefore assume that the rate of the environmental  events is  $\widehat{S}^{2}$.
%, which is consistent with the scaling in Chapter~\ref{chap4}, see discussion above   
%\eqref{limgen}.

The generator of the scaled process can then be written as 
\begin{align}
\label{fluctuating generator scaled}
A^N f(\eta,\zeta) = A^N_{neu}f(\eta,\zeta) + \widehat{S}A^N_{sel}f(\eta,\zeta) + \widehat{S}^{2}A^N_{env}f(\eta,\zeta),
\end{align}
where
\begin{align*}
A^N_{neu} f(\eta, \zeta)
&=
N
\Bigg(
\int_0^\infty
\Bigg[
\frac{uK}{J} e^{- \frac{uK}{J}v^*} g(\kappa^*, v^*)
e^{- \frac{uK}{J} \int_{v^*}^{\infty} (1 - g(\kappa^*, v)
) \mathrm{d} v} 
\\
&\phantom{A^N_{neu} f(\eta)A^N_{neu} f(\eta)}
\times
\prod_{ (\kappa , l) \in \eta , l \neq l^* }
g \left( \kappa , \mathcal{J}_{neu}(l,l^*,v^*) \right)
\Bigg] \mathrm{d} v^*
 - f(\eta) \Bigg)
,
\\
A^N_{sel}f(\eta, \zeta)
&=
\frac{sN}{S}
\Bigg(
\int_0^\infty
\Bigg[
\frac{uK}{J} e^{- \frac{uK}{J}v^*} g(\kappa^*, v^*)
e^{- \frac{uK}{J} \int_{v^*}^{\infty} (1 - g(\kappa^*, v)
) \mathrm{d} v}
\\
&\phantom{A^N_{neu} f(\eta)A^N_{neu} f(\eta)}\times
\prod_{ (\kappa , l) \in \eta , l \neq l^*_{sel} }
g \left( \kappa , \mathcal{J}_{sel}(l,l^*_{sel},v^*) \right)
\Bigg] \mathrm{d} v^*
 - f(\eta) \Bigg)
,
\\
A^N_{env} f(\eta, \zeta)
&= 
\IE_{\pi}[
f(\eta, \zeta)
]
-
f(\eta, \zeta)
.
\end{align*}
\subsection{Main result of this section}
\label{Subsection Statement of main results}
We now state the main results of this section. %We formulate them for both non-spatial and spatial versions of the model. 
%We begin with a complete formulation without space.
We recall the definitions of some of the classical models in terms of their generators, in order to state the results formally.
%he non-spatial counterparts of the models from Section~\ref{Section SuperBrownian motion in a random environment}.
%We specify them in terms of their generators. 
\begin{definition}[Feller diffusion]
Let  $a,b>0$.
The Feller diffusion is the process taking values in $\mathbb{R}$ with generator $C_{fd}$, defined for every $f \in C^{\infty}(\mathbb{R})$, given by
\begin{align*}
C_{fd}f(y) = ayf^{\prime\prime}(y) + byf^{\prime}(y).
\end{align*}
Its lookdown representation is characterised by the process with generator $A_{fd}$ given by
\begin{align}
\label{Kurtz-Rodrigues limiting generator no fluctuations}
A_{fd}f(\eta) = f(\eta)\sum_{i}2a\int_{l_{i}}^{\infty}\big(g(v)  - 1\big)\mathrm{d}v + f(\eta)\sum_{i}\big(al_{i}^{2}  - bl_{i}\big)\frac{g^{\prime}(l_{i})}{g(l_{i})}.
\end{align}
\end{definition}

\begin{definition}[Feller diffusion in random environment]
Let  $a,b>0$.
The Feller diffusion in a random environment is the process taking values in $\mathbb{R}$ with generator $C_{fdr}$, defined for every $f \in C^{\infty}(\mathbb{R})$, of the form
\begin{align*}
C_{fdr}f(y) = \big(ay + b^{2}y^{2} \big)f^{\prime\prime}(y) + b^{2}yf^{\prime}(y).
\end{align*}
Its lookdown representation is characterised by the process with generator $A$ given by
\begin{multline}
\label{Kurtz-Rodrigues limiting generator}
Af(\eta) = f(\eta)\sum_{i}2a\int_{l_{i}}^{\infty}\big(g(v)  - 1\big)\mathrm{d}v + f(\eta)\sum_{i}\big(al_{i}^{2}  - b^{2}l_{i}\big)\frac{g^{\prime}(l_{i})}{g(l_{i})}
\\
+
b^{2}f(\eta)\sum_{j} \left( \sum_{i \ne j}l_{j}l_{i}\frac{g^{\prime}(l_{i})g^{\prime}(l_{j})}{g(l_{i})g(l_{j})} + l_{j}^{2}\frac{g^{\prime\prime}(l_{i})}{g(l_{i})}\right).
\end{multline}
\end{definition}
For a detailed discussion of the lookdown constructions for the Feller diffusion and the Feller diffusion in a random environment we refer to \cite{kurtz/rodrigues:2011}, Section~2.

\begin{remark}
We observe that our choice of test functions guarantees that since 
\begin{align*}
 \sum_{i}\left(
g^{\prime}(l_{i})\prod_{j \ne i}g(l_{j})
\right)
= f(l) \sum_{i}\left(
\frac{g^{\prime}(l_{i})}{g(l_{i})}
\right),
\end{align*}
all terms appearing in \eqref{Kurtz-Rodrigues limiting generator no fluctuations} and \eqref{Kurtz-Rodrigues limiting generator} are well-defined, even if $g(l_i) = 0$.
%This way of giving meaning to expressions of the form \eqref{LC pure death lookdown test function} will be recurring throughout this chapter.
\end{remark}

\begin{theorem}
\label{NS fluctuating selection theorem}
Let $X^{N}(t)$ denote the total intensity of individuals of rare type at time $t$.
Suppose that $X^{N}_{0}$ converges to $X_{0}$.
Moreover, suppose that, as $N$ tends to infinity, 
\begin{align}
\label{Scaling, full model, no space}
J,K,S,\widehat{S} \to \infty; \quad  \frac{K}{J} \to 0;  \quad  \frac{u^2 NK}{J^2} \to 2a; \quad  \frac{N^2}{KJ^2} \to 0; \nonumber\\
\mathbb{E}_{\pi}\left[ \left\lbrace\frac{suN}{SJ} \left(\frac{\sigma(\kappa_r, \zeta)}{\sigma(\kappa_c,\zeta)} - 1 \right)\right\rbrace^{2}\right] \to b^{2}; \quad \frac{\widehat{S}}{K} \to 0; \quad \frac{\widehat{S}}{S} \to 0. 
\end{align} 
In addition, assume that there exists an $m$ such that, as $N \to \infty$, $NK^m/J^m \to 0$.
Then the sequence $X_{N}(t)$ converges weakly  to the Feller diffusion in a random environment with initial condition $X_{0}$ and parameters $2a$, $b^{2}$.
\end{theorem}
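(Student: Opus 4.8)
The plan is to prove convergence of the scaled lookdown processes $X^N(t)$ to the Feller diffusion in a random environment via the Markov Mapping Theorem (recalled in Appendix~\ref{Markov Mapping Theorem}), combined with the averaging results of \cite{kurtz:1992} (Theorem~\ref{kurtzaveraging}) to handle the fast environmental fluctuations. The overall strategy is to establish convergence of the generators $A^N$ from \eqref{fluctuating generator scaled} to the limiting generator $A$ in \eqref{Kurtz-Rodrigues limiting generator}, applied to the multiplicative test functions $f(\eta)=\prod_{l\in\eta}g(l)$. Because the lookdown process is a conditionally Poisson system with Cox measure $m_{leb}\times\Xi$, the Markov Mapping Theorem will then let us push the level-process convergence down to the projected measure-valued process $X^N$, identifying the limit as the process whose generator is $C_{fdr}$.

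First I would expand $A^N_{neu}f$ and $\widehat{S}A^N_{sel}f$ as functionals of $\eta$ by carrying out the $v^*$-integral against the Poisson offspring intensity $uK/J$ and Taylor-expanding the products $\prod g(\kappa,\cJ(\cdots))$ in the small parameter $u/J$ (the impact per event). The maps $\cJ_{neu}$ and $\cJ_{sel}$ from \eqref{Jcaldeffinition} and \eqref{Jcal sel deffinition} differ from the identity by an $\cO(u/J)$ perturbation (the thinning factor $1/(1-u)$ and the level-shift $(l^*-v^*)$), so each factor $g(\kappa,\cJ(l))$ is $g(l)$ plus correction terms involving $g'(l)/g(l)$ and $g''(l)/g(l)$. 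Summing these corrections and using the scaling relations $u^2NK/J^2\to 2a$ and $N/J\cdot(\text{selection strength})$ identifies the diffusive term $f(\eta)\sum_i 2a\int_{l_i}^\infty(g(v)-1)\mathrm dv$ and the drift term $f(\eta)\sum_i(al_i^2)g'(l_i)/g(l_i)$, exactly as in the neutral Kurtz--Rodrigues computation. The novel contribution comes from the selective term: because the environment switches sign, the first-order selection effect is mean-zero by the symmetry condition \eqref{0d symmetry condition}, so it does not survive directly; instead the \emph{fluctuations} of the selection, operating on the fast timescale set by $\widehat{S}^2 A^N_{env}$, contribute a second-order term. This is where the averaging argument enters: I would apply Theorem~\ref{kurtzaveraging} to average the fast environmental variable $\zeta$ against its stationary law $\pi$, producing the quadratic-in-$l$ terms $b^2 f(\eta)\sum_j(\sum_{i\ne j}l_jl_i\,g'(l_i)g'(l_j)/(g(l_i)g(l_j)) + l_j^2 g''(l_j)/g(l_j))$ with $b^2$ identified from $\mathbb{E}_\pi[\{(suN/SJ)(\sigma(\kappa_r,\zeta)/\sigma(\kappa_c,\zeta)-1)\}^2]\to b^2$.

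The main obstacle, and the step requiring the most care, is the \textbf{averaging of the fast-fluctuating selection}: the selective generator $A^N_{sel}$ is multiplied only by $\widehat{S}$ while the environment evolves at rate $\widehat{S}^2$, so the relevant limit is a genuine separation-of-timescales statement in which the $\cO(\widehat{S})$ first-order selection term must be shown to self-average (vanishing by \eqref{0d symmetry condition}) while its fluctuation accumulates diffusively to give the $b^2$ quadratic-variation contribution. Verifying the hypotheses of \cite{kurtz:1992} in this setting — in particular relative compactness of the occupation measures of $\zeta$ and the requisite uniform integrability, for which the extra technical condition $NK^m/J^m\to 0$ and the controls $\widehat S/S\to 0$, $\widehat S/K\to 0$, $N^2/KJ^2\to 0$ are needed — is the delicate part. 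I would therefore organise the proof as: (i) verify that $\{X^N\}$ is relatively compact via a martingale/compact-containment estimate using the initial support and the boundedness afforded by the scaling; (ii) establish the generator convergence $A^Nf\to Af$ with the averaging step controlling the error terms uniformly; (iii) invoke the Markov Mapping Theorem to transfer convergence to the projected process and conclude that every limit point solves the martingale problem for $C_{fdr}$; and (iv) appeal to uniqueness of that martingale problem (the Feller diffusion in a random environment being well-posed) to identify the limit uniquely.
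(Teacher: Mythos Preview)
Your overall strategy---lookdown representation, generator expansion, Kurtz's averaging theorem (Theorem~\ref{kurtzaveraging}), and the Markov Mapping Theorem to project---is exactly the route the paper takes. However, there is one concrete mechanism you do not name, and without it your step (ii) is incomplete.

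The issue is the $\widehat{S}$-scale term. You write that the first-order selection effect is mean-zero under~$\pi$ by~\eqref{0d symmetry condition} and that ``the fluctuations \ldots\ contribute a second-order term'' via averaging. But Theorem~\ref{kurtzaveraging} requires you to exhibit the martingale in the form~\eqref{kurtz rearrangement} with an $\epsilon_N^f$ that tends to zero; the raw generator $A^N f = A^N_{neu}f + \widehat{S}A^N_{sel}f + \widehat{S}^2 A^N_{env}f$ applied to a $\zeta$-independent $f$ still contains an $\cO(\widehat{S})$ term at each fixed time (since $\zeta$ is not averaged instantaneously). The paper handles this via the standard \emph{corrector} (perturbed test function) trick of \cite{kurtz:1973}: one replaces $f$ by
\[
\widetilde{f}(\eta,\zeta) = f(\eta) + \frac{1}{\widehat{S}}\,f_1(\eta,\zeta),
\qquad
f_1(\eta,\zeta) = f(\eta)\,\frac{suN}{JS}\Bigl(\tfrac{\sigma(\kappa_r,\zeta)}{\sigma(\kappa_c,\zeta)}-1\Bigr)\sum_j l_j\,\frac{g'(l_j)}{g(l_j)},
\]
chosen so that $A^N_{env}f_1 = -f_1$ (because $\mathbb{E}_\pi[f_1]=0$). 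Then $\widehat{S}^2\cdot\tfrac{1}{\widehat{S}}A^N_{env}f_1 = -\widehat{S}f_1$ cancels the leading part of $\widehat{S}A^N_{sel}f$ exactly, and the surviving contribution $A^N_{sel}f_1$ is precisely the $b^2$ quadratic-variation term in~\eqref{Kurtz-Rodrigues limiting generator}. Since $\widetilde f - \int A^N\widetilde f$ is a martingale and $\widetilde f \to f$, this puts you in the form~\eqref{kurtz rearrangement} with an explicit $\epsilon_N^f$ (see~\eqref{epsilon definition}) whose vanishing is then reduced to the neutral and selective estimates already established. You should make this corrector explicit; otherwise there is no concrete object on which to verify~\eqref{epsilontozero}.

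A second, smaller point: describing the neutral expansion as ``Taylor-expanding in $u/J$'' understates what is needed. The test functions ignore common-type individuals, so the expansion of $A^N_{neu}f$ requires conditioning on the rare levels and computing moments of the gaps $l_i-l_{i-1}$ between consecutive (mostly common-type) levels; this is the content of Propositions~\ref{probconvergenceofpaths} and~\ref{probconvergenceofbirths}, and it is where the conditions $K/J\to 0$, $N^2/(KJ^2)\to 0$, and $NK^m/J^m\to 0$ are actually used.
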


\begin{example}
Fix $\epsilon \in (0,1/4)$, $\beta \in (0,1/4 - \epsilon)$ and $\gamma \in (0,\beta)$.
The conditions of Theorem~\ref{NS fluctuating selection theorem} are satisfied if
\begin{align*}
J = N^{\frac{3}{4} + \epsilon} \quad S = N^{\beta} \quad K = N^{\frac{1}{2} + 2\epsilon} \quad \widehat{S}  = N^{\gamma}.
\end{align*}
\end{example}

As a by-product of our technique, we prove an analogous result for the neutral model and the model with selection. 

\begin{theorem}
\label{NS neutral theorem}
Let $X^{N}(t)$ denote the total intensity of individuals of rare type at time $t$.
Suppose that $X^{N}_{0}$ converges to $X_{0}$ and the intensity of selective events is zero.
Moreover, suppose that, as $N$ tends to infinity,
\begin{align}
\label{Scaling, neutral, no space}
J,K \to \infty; \quad \frac{K}{J} \to 0;  \quad   \frac{N^2}{KJ^2} \to 0; \quad \frac{u^2 NK}{J^2} \to 2a.
\end{align} 
In addition, assume that there exists an $m$ such that, as $N \to \infty$, $NK^m/J^m \to 0$.
Then the sequence $X_{N}(t)$ converges weakly to the critical Feller diffusion ($b = 0$) with initial condition $X_{0}$ and variance parameter $2a$.
\end{theorem}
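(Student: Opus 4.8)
The plan is to prove convergence first at the level of the lookdown particle system and then to transfer it to the projected rare-type mass $X^N$ by means of the Markov Mapping Theorem. Since the intensity of selective events is zero the environment is irrelevant and the scaled generator collapses to $A^N_{neu}$ of \eqref{fluctuating generator scaled}; in particular neither the averaging trick nor the perturbation result of Theorem~\ref{kurtzaveraging} is needed, which is exactly why the neutral statement is the cleanest of the three. Throughout I work with the product test functions $f(\eta)=\prod_{l\in\eta}g(l)$, with $g\equiv 1$ above a cut-off $\lambda_g$ and $g$ acting only on the rare atoms, so that only the $\mathcal O(1)$ rare atoms below $\lambda_g$ are seen, and I exploit that under the dynamics $\eta^N_t$ remains a conditionally Poisson system whose rare atoms have Cox intensity $X^N_t\,m_{leb}$, so that $\IE[f(\eta^N_t)\mid X^N_t]=\exp(-X^N_t\int_0^\infty(1-g(l))\,\dd l)$.

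The analytic core is the claim that $A^N_{neu}f(\eta)\to A_{fd}f(\eta)$, the critical ($b=0$) Feller lookdown generator \eqref{Kurtz-Rodrigues limiting generator no fluctuations}, with errors small enough to run a martingale argument. Writing the impact as $u/J$ and the offspring intensity as $uK/J$, the single balance $u^2NK/J^2\to 2a$ drives both surviving terms. The minimal offspring level $v^*$ is exponential with the small rate $uK/J$, so a tagged rare atom at level $l_i$ is chosen as parent (the nearest level above $v^*$ in the intensity-$K$ field) at rate $N(u/J)$, and when it is the parent it emits offspring above $l_i$ at intensity $uK/J$, contributing a factor $\int_{l_i}^\infty(g(v)-1)\,\dd v$; since $N(u/J)(uK/J)=u^2NK/J^2\to 2a$ this produces the branching term $2a\int_{l_i}^\infty(g(v)-1)\,\dd v$. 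The quadratic drift is subtler: a direct expansion of $\mathcal J_{neu}$ of \eqref{Jcaldeffinition} shows that in expectation the $\mathcal O(u/J)$ stretch of a rare level and the $\mathcal O(1/K)$ downward shift produced by the relocation of the parent cancel at first order, and the residual, of size $\tfrac12(u/J)(uK/J)l_i^2$ per event, integrates over the rate-$N$ dynamics to the deterministic flow $a l_i^2 g'(l_i)/g(l_i)$, again using $u^2NK/J^2\to 2a$.

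The principal obstacle is uniform control of the remainder. I would carry out the expansion after conditioning on the conditionally Poisson configuration, which renders all expectations explicit and keeps the relevant sums finite. The higher-order terms of the product over displaced levels are of order $(K/J)^m$ per event, so the hypothesis that $NK^m/J^m\to 0$ for some $m$ is precisely what allows truncation of the expansion at a fixed order and disposal of the tail; the condition $N^2/(KJ^2)\to 0$ removes the leading second-order correction, in particular forcing the simultaneous-displacement cross terms to vanish, consistent with the absence of the $b^2$-interaction terms in the neutral limit and with the level motion concentrating on its deterministic drift rather than acquiring a diffusive part. These are the estimates I expect to be most delicate.

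Finally I would assemble the pieces in the standard way. A Gronwall-type bound on the first and second moments of $X^N$, read off $A^N_{neu}$, yields compact containment and, via the Aldous--Rebolledo criterion, tightness of $\{X^N\}$. By the preserved conditionally Poisson structure, the generator convergence above, and the intertwining of $A_{fd}$ with the Feller generator $C_{fd}$ established in \cite{kurtz/rodrigues:2011}, the Markov Mapping Theorem of Appendix~\ref{Markov Mapping Theorem} identifies every subsequential limit of $X^N$ as a solution of the martingale problem for the critical Feller diffusion with variance parameter $2a$. Well-posedness of the Feller martingale problem then promotes tightness to the asserted weak convergence $X^N\Rightarrow X$ with the given initial condition $X_0$.
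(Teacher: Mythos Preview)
Your overall strategy coincides with the paper's: analyse the lookdown generator $A^N_{neu}$, show it approaches the critical Feller lookdown generator $A_{fd}$ of \eqref{Kurtz-Rodrigues limiting generator no fluctuations}, obtain compact containment from a first-moment (Gronwall) bound on $X^N$, and then pass to the projected process via the Markov Mapping Theorem and well-posedness of the Feller martingale problem. Your reading of which scaling hypotheses kill which remainder terms is also correct.

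There is, however, a genuine gap in the birth term. You phrase the analytic core as ``$A^N_{neu}f(\eta)\to A_{fd}f(\eta)$'' and propose to verify it by expanding after conditioning on the conditionally Poisson configuration. But this convergence does \emph{not} hold pointwise (or in $L^2$ at a fixed time), and conditioning alone does not close the argument. With $g$ insensitive to the common type, $A^N_{neu}f(\eta)$ still depends on the common levels through the dynamics: after the paper's split $A^N_{neu}=A^N_{neu,1}+A^N_{neu,2}$ via the Taylor expansion of the exponential factor, the birth piece $A^N_{neu,2}$ reduces to a sum of terms $\tfrac{u^2NK^2}{J^2}(l_i-l_{i-1})$, where $l_{i-1}$ is the nearest common level below the rare level $l_i$. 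Conditioned on the rare configuration, this has mean $\approx \tfrac{u^2NK}{J^2}\to 2a$ but variance $\approx (u^2NK/J^2)^2\to(2a)^2$, which does \emph{not} vanish. What the paper actually proves (Proposition~\ref{probconvergenceofbirths}) is the time-integrated statement
\[
\IE\Bigl[\sup_{t\le T}\Bigl|\int_0^t\Bigl(A^N_{neu,2}f(\eta_s)-2a\,f(\eta^r_s)\sum_i\!\int_{l_i}^\infty(1-g(v))\,\dd v\Bigr)\dd s\Bigr|\Bigr]\to 0,
\]
and this requires a \emph{temporal decorrelation} argument: the gap $l_i(t)-l_{i-1}(t)$ is refreshed by newly born offspring, so gaps at times separated by sufficiently many events are nearly independent; Lemma~\ref{lemmaforbirthprocess} turns this into a second-moment bound on the time integral of order $O(LT/N)+o(T^2)$, which is then fed through the partition device of Lemma~\ref{Lemma partition and supremum}. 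A further submartingale domination (Lemma~\ref{neutral intensity lemma}) for the auxiliary process $\beta^N_t=\tfrac{NK^2}{J^2}\sum(l-\hat l)h(l)$ is needed to control the supremum over partition cells. None of this is visible from a static expansion of the generator, and it is precisely the part you flag as ``most delicate'' without supplying a mechanism. Note also that the $2a$ branching contribution does not arise, as in your heuristic, from the event ``the rare atom is chosen as parent at rate $N(u/J)$''; it comes from the first-order term of $\exp\bigl(-\tfrac{uK}{J}\int_{v^*}^\infty(1-g)\bigr)$, which contributes at \emph{every} event, weighted by the random gap below each rare level.
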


\begin{theorem}
\label{NS selection theorem}
Let $X^{N}(t)$ denote the total intensity of individuals of rare type at time $t$.
Suppose that $X^{N}_{0}$ converges to $X_{0}$, $\sigma$ does not depend on the environment and $\hat{S}= 1$.
Moreover, suppose that, as $N$ tends to infinity,
\begin{align}
\label{Scaling, model with selection, no space}
J,K,S \to \infty; \quad  \frac{K}{J} \to 0; \quad   \frac{N^2}{KJ^2} \to 0; \quad  \frac{u^2 NK}{J^2} \to 2a; \quad \frac{suN}{SJ} \left(\frac{\sigma(\kappa_r)}{\sigma(\kappa_c)} - 1 \right) \to b
\end{align} 
In addition, assume that there exists an $m$ such that, as $N \to \infty$, $NK^m/J^m \to 0$.
Then the sequence $X_{N}(t)$ converges weakly to the Feller diffusion  with initial condition $X_{0}$ and parameters $2a$ and $b$.
\end{theorem}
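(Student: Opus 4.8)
The plan is to follow the template of the proof of Theorem~\ref{NS fluctuating selection theorem} carried out in Section~\ref{Subsection Convergence of the non-spatial model}, specialised to the case of a fixed selection direction, where substantial simplifications occur. Since $\sigma$ does not depend on $\zeta$ and $\widehat{S}=1$, the environmental component of the state decouples: the resampling operator $A^N_{env}$ acts only on $\zeta_t$, which no longer enters the law of $\eta_t$, so no averaging over the stationary distribution $\pi$ is needed and neither the averaging trick of \cite{kurtz:1973} nor the perturbation result Theorem~\ref{kurtzaveraging} has to be invoked. It therefore suffices to analyse the lookdown process $\eta_t$ with generator $A^N_{neu}+A^N_{sel}$ from \eqref{fluctuating generator scaled}, retaining the conditionally Poisson structure of $\eta$ that makes the later projection consistent.

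First I would establish generator convergence on the class of test functions $f(\eta)=\prod_{l\in\eta}g(l)$ with $g\equiv 1$ above some $\lambda_g$. For $A^N_{neu}$ this is the computation underlying the Kurtz--Rodrigues lookdown of the Feller diffusion: one writes $v^*$ as the minimum of the offspring Poisson process of intensity $uK/J$, then Taylor-expands the factor $\frac{uK}{J}e^{-\frac{uK}{J}v^*}$, the generating functional $e^{-\frac{uK}{J}\int_{v^*}^\infty(1-g(\kappa^*,v))\,\mathrm{d}v}$ and the product $\prod_{l\neq l^*}g(\kappa,\mathcal{J}_{neu}(l,l^*,v^*))$ in the small parameter $uK/J$, and multiplies by the time speed-up $N$. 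Using $u^2NK/J^2\to 2a$ and $K/J\to 0$, the leading terms assemble into the neutral part
\[
f(\eta)\sum_{i}2a\int_{l_i}^{\infty}\big(g(v)-1\big)\,\mathrm{d}v + f(\eta)\sum_{i}a l_i^2\,\frac{g'(l_i)}{g(l_i)}
\]
of the generator \eqref{Kurtz-Rodrigues limiting generator no fluctuations}. For $A^N_{sel}$ the only change is that the parent minimises $(l_i-v^*)/\sigma(\kappa_i)$ rather than $l_i-v^*$, and that levels move under $\mathcal{J}_{sel}$; expanding the resulting bias to first order and using $\frac{suN}{SJ}\big(\frac{\sigma(\kappa_r)}{\sigma(\kappa_c)}-1\big)\to b$ produces exactly the drift term $-f(\eta)\sum_{i}b l_i\,\frac{g'(l_i)}{g(l_i)}$. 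The key structural point is that here, in contrast to the fluctuating case, the linear term in the selection strength is not cancelled by any symmetry, so selection enters at first order as a drift rather than at second order as a variance contribution, which is why the limit is the \emph{ordinary} Feller diffusion.

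Once $A^N f\to A_{fd}f$ pointwise on the test class, I would invoke the Markov Mapping Theorem of Appendix~\ref{Markov Mapping Theorem}, exactly as in the main proof, to pass from the sequence of lookdown processes to the projected scalar process $X^N$ recording the total intensity of rare individuals, and thereby identify the weak limit as the Feller diffusion with generator $C_{fd}f(y)=2ay f''(y)+by f'(y)$. Tightness follows from the uniform control of the generators together with the martingale characterisation recalled in Appendix~\ref{kurtzrodmartingalelemma}.

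The main obstacle is the uniform control of the remainder terms in the two generator expansions: one must show that every contribution beyond the identified leading order vanishes. This is where the remaining scaling hypotheses enter, with $N^2/(KJ^2)\to 0$ ruling out the second-order birth fluctuations and $K/J\to 0$ the overlap corrections, while the assumption that $NK^m/J^m\to 0$ for some $m$ is the device that forces the higher-order terms of the expansion, of order at least $m$ in $K/J$ yet amplified by the time speed-up $N$, to be negligible despite being generated by the simultaneous perturbation of the order-$K$ many factors of $g$ in the product. Verifying that the selective expansion contributes only the stated drift, with no spurious second-order term surviving the $1/S$ scaling, is the most delicate bookkeeping step.
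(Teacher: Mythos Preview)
Your high-level plan and your identification of the limiting generator $A_{fd}$ are correct, and the paper follows the same outline: it treats $A^N_{neu}$ via Propositions~\ref{probconvergenceofpaths} and~\ref{probconvergenceofbirths}, treats $A^N_{sel}$ via Proposition~\ref{selectivepropositionA} (built from Lemmas~\ref{Lemma selective estimate neutral parent} and~\ref{unfavouredselectivegenerator}), and then applies Theorem~\ref{kurtzaveraging} in the degenerate case of trivial $Y$, which reduces to the standard Ethier--Kurtz compactness/identification criterion. The intensity estimate (Lemma~\ref{Intensity estimate 0d}) supplies compact containment and the $L^p$ bound~\eqref{kurtzbound}, and the Markov Mapping Theorem with Lemma~A.13 of \cite{kurtz/rodrigues:2011} closes the argument at the projected level.

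There is, however, a genuine gap in your proposal. You phrase the key step as ``$A^Nf\to A_{fd}f$ pointwise on the test class'', but this is not what happens and not what suffices. The prelimit generator $A^N_{neu}f(\eta)$ depends on the \emph{entire} level configuration $\eta$, including the order-$K$ many common-type levels below $\lambda_g$, whereas $A_{fd}f$ depends only on the rare levels $\eta^r$. There is no pointwise convergence on the state space; what the paper proves is the time-integrated statement
\[
\IE\Bigl[\sup_{t\le T}\Bigl|\int_0^t \bigl(A^N f(\eta_s)-A_{fd}f(\eta_s^r)\bigr)\,\mathrm{d}s\Bigr|\Bigr]\to 0,
\]
obtained by conditioning on $\eta^r$ and averaging over the conditionally Poisson common-type levels. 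For the level-movement part (Proposition~\ref{probconvergenceofpaths}) this averaging yields the order-statistics identities \eqref{approximation k=0}--\eqref{approximation k=3}, and the condition $N^2/(KJ^2)\to 0$ enters as a \emph{variance} bound, not as a remainder estimate in a Taylor expansion. For the birth part $A^N_{neu,2}$ (Proposition~\ref{probconvergenceofbirths}) conditioning on $\eta^r_s$ at a fixed time is not enough: the paper needs the two-time correlation estimate of Lemma~\ref{lemmaforbirthprocess} together with the submartingale domination of Lemma~\ref{neutral intensity lemma} to control $\sup_t$. Your Taylor-expansion sketch does not see this structure.

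For the selective piece the paper does \emph{not} expand the bias of the selective parent directly. Instead it introduces the auxiliary operator $A^N_{sel,3}$ in which the parent is chosen \emph{neutrally} ($l^*_{neu}$) but levels move under $\mathcal{J}_{sel}$, shows in Lemma~\ref{Lemma selective estimate neutral parent} that $A^N_{sel,3}$ produces the drift $-b\sum l_i g'(l_i)/g(l_i)$, and then proves separately in Lemma~\ref{unfavouredselectivegenerator} that $\IE|A^N_{sel,1}-A^N_{sel,3}|\to 0$ by a rather delicate order-statistics calculation treating the favoured and unfavoured cases differently. Your proposal collapses these two steps into ``expanding the resulting bias to first order'', which hides the part of the argument that actually requires work.
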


\begin{remark}
We stress  that our proofs do not guarantee convergence of the lookdown representations, but only convergence of the  projected models.
\end{remark}
%Since our test functions are bounded, the bound on the intensity of the process describing the evolution of the total number of rare individuals suffices for the $L^{p}$ bound.
%The following two lemmas, whose proofs are rather standard, guarantee that for both non-spatial and spatial versions of the model. 
%We formulate them only in the fluctuating regime, the proofs without fluctuations are completely analogous. 

\subsubsection{Projected model}
\label{Subsection projected model}
We follow the work in \cite{etheridge/kurtz:2014} to show the connection between our lookdown construction and the standard LFV.

The neutral generator is simply the non-spatial counterpart of the lookdown construction from \cite{etheridge/kurtz:2014} and so we do not treat it here. However, in order to clarify how our form of selection acts from the perspective of the underlying process we investigate further.
This section follows \cite{etheridge/kurtz:2014} and all the techniques in this section are taken from there, but formulated in our notation.
The part of the generator of our process which describes selective events is of the form
\begin{multline*}
A_{sel}f(\eta)
=
s
\Bigg(
\int_0^\infty
\Bigg[
u e^{- uv^*} g( v^*,\kappa^*)
e^{- u \int_{v^*}^{\infty} (1 - g(v,\kappa^*)
) \mathrm{d} v}
\\
\times
\prod_{ (l, \kappa ) \in \eta , l \neq l^*_{sel} }
g \left( \mathcal{J}_{sel}(l,l^*_{sel},v^*), \kappa \right)
\Bigg] \mathrm{d} v^*
 - f(\eta) \Bigg)
.
\end{multline*}
We define $h(\kappa) = \int_0^\infty (1 - g(l, \kappa) ) \mathrm{d} l$ and note that integration by parts gives
\begin{align*}
\int_0^\infty u e^{-u v^*} g( v^*, \kappa^*) e^{-\frac{uK}{J} \int_{v^*}^{\infty} (1 - g( v , \kappa^* ) ) \mathrm{d}v} \mathrm{d} v^*
= 
e^{-u
h(\kappa^*)}
.
\end{align*}
We also note that if $\eta$ is formed from a Poisson point process with intensity measure $m_{leb} \otimes \Xi(\mathrm{d} \kappa)$ then
\begin{align*}
\{ \mathcal{J}_{sel}(l,l^*_{sel},v^*)
:
l \neq l^*_{sel} \}
,
\end{align*}
is a Poisson point process with intensity measure 
$m_{leb} \otimes(1 - u)\Xi(\mathrm{d}\kappa)$.
%The sparsity scaling is encoded in $\Xi$ by noting $\Xi(\mathcal{K}) = K$.
Consider the distribution of $\kappa^*$ conditioned on $\Xi(\mathrm{d} \kappa)$. We note that 
\begin{align*}
\IP[ \kappa^* = \kappa_r] = \frac{\sigma(\kappa_r) \Xi(\{\kappa_c \})}
{
\sigma(\kappa_r) \Xi(\{\kappa_c \})
+
\sigma(\kappa_c) \Xi(\{\kappa_r \})
}
,
\end{align*}
and so when we average our selective generator we get
\begin{multline*}
\alpha
A^N_{sel}f(\Xi)
=
s
e^{-\int_{\mathcal{K}} h(\kappa) \Xi(\mathrm{d}\kappa)}
\\
\times
\Bigg(
\Bigg[
\frac{\sigma(\kappa_r) \Xi(\{\kappa_c \})}
{
\sigma(\kappa_r) \Xi(\{\kappa_c \})
+
\sigma(\kappa_c) \Xi(\{\kappa_r \})
}
e^{-u h(\kappa_r)}
\\
+
\frac{\sigma(\kappa_c) \Xi(\{\kappa_r \})}
{
\sigma(\kappa_r) \Xi(\{\kappa_c \})
+
\sigma(\kappa_c) \Xi(\{\kappa_r \})
}
e^{-u h(\kappa_c)}
\Bigg]
e^{u \int_{\mathcal{K}} h(\kappa) \Xi(\mathrm{d}\kappa)}
 - 1 \Bigg)
.
\end{multline*}
We finally note that our selection can now be seen as a simple weighted choice of parent in the (non-spatial) Lambda Fleming-Viot process. The same calculation can be done for the spatial case.

\begin{remark}
\label{remark how our selection compares to the others}
We notice that our way of modelling selection differs from the approach of \cite{etheridge/veber/yu:2014} and \cite{biswas/etheridge/klimek:2018}. % and so on. 
However, a quick generator calculation shows that this type of selection leads to the same diffusion approximation as in \cite{etheridge/veber/yu:2014} and \cite{biswas/etheridge/klimek:2018}. 
\end{remark}

\subsection{Convergence of the non-spatial model}
\label{Subsection Convergence of the non-spatial model}
%This section is devoted to the proofs of Theorem~\ref{NS neutral theorem}, Theorem~\ref{NS selection theorem} and Theorem~\ref{NS fluctuating selection theorem}. 

We are  interested in convergence of the model with selection in a fluctuating environment. 
Recall that we  study the behaviour of an establishing mutation under this model.
%are interested only in the behaviour of the individuals of the rare type.
We take that into account by considering test functions which are unaffected by the individuals of the common type, $\kappa_{c}$.
 
We recall the generator of the rescaled process takes the form
\begin{align}
\label{fluctuating generator}
A^N f(\eta,\zeta) = A^N_{neu}f(\eta,\zeta) + \widehat{S}A^N_{sel}f(\eta,\zeta) + \widehat{S}^{2}A^N_{env}f(\eta,\zeta),
\end{align}
where $A^N_{neu}$ is the part of the generator which describes  neutral events, $A^N_{sel}$ is the part of the generator which describes  selective events, and $A^N_{env}$ describes the evolution of the environment.
%The exact form of  $A_{neu}^{N}$ and $A_{sel}^{N}$  is specified  in \eqref{noselecgenerator} and\eqref{selgenerator}, respectively, while $A^N_{env}$  is given by
%\begin{align*}
%A_{env}f(\eta,\zeta)  = \mathbb{E}_{\pi}[f(\eta,\zeta)]- f(\eta,\zeta)
%.
%%\\
%%\hat{S}
%\end{align*}

In Sections~\ref{Subsection Neutral model proof} and \ref{Subsection Selective model proof}, we show that the terms $A^{N}_{neu}$ and $A^{N}_{sel}$ 
converge to well-defined limits.
%should converge by arguments similar to those of Theorem~\ref{NS selection theorem} with a slight modification of Lemma~\ref{neutral intensity lemma} (which we discuss later).
However, since $\widehat{S} \to \infty$ as $N$ tends to infinity, it may seem that as $N$ tends to infinity, \eqref{fluctuating generator} will not converge to a non-trivial limit. 
However, the naive limiting procedure does not take into account the changes in the direction of selection.
In order to identify the correct limit, we apply a `separation of timescales' trick due to \cite{kurtz:1973}.

By the calculations in the proof of  Theorem~\ref{NS selection theorem}
%(with all of the terms multiplied by $\widehat{S}$)
the operator $A_{sel}^N$ can be written as
%\begin{align*}
%A_{neu}^{N}f(\eta) = 
%f(\eta)
%\Bigg(
%-2 a 
%%f(\eta_t^r)
%\sum_i
%\int_{l_i(t)}^\infty \left( 1 - g(v) \right) \mathrm{d}v
%+
%a
%\sum_j
%l_j^2 
%\frac{g^\prime(\kappa_j, l_j)}{g(\kappa_j,l_j)}
%%\Bigg(
%%\frac{1}{2} \frac{u^2 N K}{J^2} 
%%+ \cO\left( \frac{N K^2}{J^3} \right)
%%+
%%\cO \left(\frac{N K^{n+1}}{J^{n+1}}  \right)
%%+ \cO\left( \frac{N K}{J^2} (l_j - l_{j-1} ) \right)
%%\Bigg)
%\Bigg)
%+ \cO(something)
%,
%\end{align*}
%
%
%By Theorem~\ref{selection theorem}
\begin{align}
\label{Asel for f1}
A_{sel}^Nf(\eta)
=
f(\eta)
\frac{suN}{JS}
\left(
\frac{\sigma(\kappa_r,\zeta)}{\sigma(\kappa_c,\zeta)}
- 1
\right)
\sum_{j}l_{j} 
\frac{g^{\prime}(\kappa_j , l_j,\xi )}{g(\kappa_j ,l_j,\xi)}
+ \cO \left(\frac{1}{S}+ \frac{1}{K}
\right)
\end{align}
%which means that the identification of the correct limit is non-trivial, as it seems that this term tends to infity. Now we use the trick from \cite{kurtz1973limit}. We consider a test function $g$ of the form
We consider a test function $\widetilde{f}$ of the form
\begin{align*}
\widetilde{f}(\eta,\zeta) = f(\eta) + 
\frac{1}{\hat{S}}
f(\eta)
\frac{suN}{JS}
\left(
\frac{\sigma(\kappa_r,\zeta)}{\sigma(\kappa_c,\zeta)}
- 1
\right)
\sum_{j}
\frac{g^{\prime}(\kappa_j , l_j )}{g(\kappa_j ,l_j)}l_{j} 
=: f(\eta) 
+ \frac{1}{\hat{S}}f_{1}(\eta,\zeta)
.
\end{align*}
Observe that since $\hat{S} \to \infty$, we can prove the test function $\widetilde{f}$ will tend to $f(\eta)$ as $N \to \infty$.
% by using Lemma~\ref{lemma for total number of levels}.  
We apply the generator \eqref{fluctuating generator} to $\widetilde{f}$.  This leads to
\begin{multline*}
A^{N}\widetilde{f}(\eta,\zeta)
=
A_{neu}^{N}f(\eta) + \widehat{S}A_{sel}^{N}f(\eta)
+
\frac{1}{\hat{S}}A_{neu}^{N}
f_{1}(\eta,\zeta)
+ A_{sel}^{N}\left( 
f_N(\eta,\xi)
  \right) 
- \widehat{S}f_N(\eta, \xi) 
\\
 = A_{neu}^{N}f(\eta)+  A_{sel}^{N}\left( 
f_N(\eta, \xi) 
 \right)
 + \mathcal{O}\left(
 \frac{1}{\widehat{S}}
 +
 \frac{\widehat{S}}{S} + \frac{\widehat{S}}{K}
 \right)
\end{multline*}
where we have used \eqref{Asel for f1}, that $f$ does not
depend on $\xi$ (to see $A_{env}f(\eta) = \IE_\pi[f(\eta)]-f(\eta) = 0$) and \eqref{0d symmetry condition},
(to see that, as $\IE_\pi[f_N(\eta,\zeta)]=0$, $A_{env}f_N(\eta,\zeta) = - f_N(\eta, \zeta)$).
Therefore, at least heuristically, the identification of $A_{sel}^{N}\left( 
f_N(\eta, \xi) 
 \right)$
 should lead to the correct limit. 
 
To make this argument rigorous, %we use Theorem~\ref{kurtzaveraging}.
we shall use a theorem due to \cite{kurtz:1992} which we recall here.
Let us introduce some notation.
For a metric space $E$, let 
$l_m(E)$
be the space of measures on $[0,\infty) \times E$ such that 
$\mu \in l_m(E)$ if and only if $\mu([0,t) \times E) = t$.
\begin{theorem}[\cite{kurtz:1992}, Theorem~2.1]
\label{kurtzaveraging}
Let $E_1$, $E_2$ be complete separable metric spaces, 
and set $E=E_1\times E_2$. For each $n$, let $\{(X_n,Y_n)\}$ be 
a stochastic process with sample paths in $D_E([0,\infty) )$ adapted to 
a filtration $\{{\cal F}_t^n\}$. Assume that 
$\{X_n\}$ satisfies the compact containment condition, that is, for 
each $\epsilon>0$ and $T>0$, there exists a compact $K\subset E$ such
that
\begin{equation}
\label{tightness}
\inf_n\IP[X_n(t)\in K, t\leq T]\geq 1-\epsilon,
\end{equation}
and assume that $\{Y_n(t) : t\geq 0, n=1,2,\ldots\}$ is 
relatively compact (as a collection of $E_2$-valued random 
variables). Suppose that there is an operator 
$A:{\cal D}(A)\subset \overline{C}(E_1)\rightarrow C(E_1\times E_2)$ such
that for $f\in {\cal D}(A)$ there is a process $\epsilon_n^f$ for 
which 
\begin{equation}
\label{kurtz rearrangement}
f(X_n(t))-\int_0^t Af(X_n(s),Y_n(s))\mathrm{d}s +\epsilon_n^f(t)
\end{equation}
is an $\{{\cal F}_t^n\}$-martingale. Let ${\cal D}(A)$ be dense
in $\overline{C}(E_1)$ in the topology of uniform 
convergence on compact sets. Suppose that for each $f\in{\cal D}(A)$ and
each $T>0$, there exists $p>1$ such that
\begin{equation}
\label{kurtzbound}
\sup_n\IE\left[\int_0^T|Af(X_n(t),Y_n(t)|^{p}\mathrm{d}t\right]<\infty
\end{equation}
and 
\begin{equation}
\label{epsilontozero}
\lim_{n\rightarrow\infty}\IE\left[\sup_{t\leq T}|\epsilon_n^f(t)|\right]=0.
\end{equation}
Let $\Gamma_n$ be the $l_m(E_2)$-valued random variable given by
\begin{equation}
\nonumber
\Gamma_n\left([0,t]\times B\right)=\int_0^t\ind_B(Y_n(s))\mathrm{d}s.
\end{equation}
Then $\{(X_n,\Gamma_n)\}$ is relatively compact in 
$D_{E_1}[0,\infty)\times l_m(E_2)$, and for any limit point
$(X,\Gamma)$ there exists a filration $\{{\cal G}_t\}$ such that
\begin{equation}
f(X(t))-\int_0^t\int_{E_2}Af(X(s),y)\Gamma(\mathrm{d}s\times \mathrm{d}y)
\end{equation}
is a $\{{\cal G}_t\}$-martingale for each $f\in {\cal D}(A)$.
\end{theorem}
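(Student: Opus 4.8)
The plan is to establish relative compactness of the pair $(X_n,\Gamma_n)$ in $D_{E_1}[0,\infty)\times l_m(E_2)$ and then to identify every subsequential limit $(X,\Gamma)$ as a solution of the averaged martingale problem, handling the two coordinates separately before recombining them. For $\{\Gamma_n\}$ relative compactness is almost immediate from the hypothesis that $\{Y_n(t):t\geq 0,\ n\geq 1\}$ is a relatively compact (hence tight) family of $E_2$-valued random variables: tightness of the $Y_n$ forces the occupation measures, which by construction have deterministic time-marginal equal to Lebesgue measure on every $[0,t)$, to concentrate their $E_2$-mass on compact sets uniformly in $n$; since the time-marginal is fixed, this yields tightness in the natural (vague-in-$E_2$, Lebesgue-in-time) topology on $l_m(E_2)$, whose precise specification is the delicate technical prerequisite that I would pin down first.

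For $\{X_n\}$ I would invoke the decomposition supplied by \eqref{kurtz rearrangement}, namely
\[
f(X_n(t)) = f(X_n(0)) + \int_0^t Af(X_n(s),Y_n(s))\,\dd s + M_n^f(t) - \epsilon_n^f(t),
\]
which exhibits $f(X_n)+\epsilon_n^f$ as a semimartingale with a martingale part $M_n^f$ and a finite-variation part whose oscillations are controlled, uniformly in $n$, by the $L^p$ bound \eqref{kurtzbound} (this bound supplies the uniform integrability that rules out concentration of the drift). Combining the compact containment condition \eqref{tightness} with this control of oscillations for a point-separating family of $f\in\cD(A)$, I would verify the standard relative-compactness criterion for solutions of martingale problems; here the one point requiring care is control of the martingale part $M_n^f$, which one obtains provided $\cD(A)$ is rich enough (e.g. closed under the operations needed to bound the predictable quadratic variation).

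Next I would extract a convergent subsequence of $(X_n,\Gamma_n)$ and, via the Skorokhod representation theorem, realise it on a single probability space so that $X_n\to X$ in $D_{E_1}[0,\infty)$ and $\Gamma_n\to\Gamma$ almost surely. The goal is to show that, for each $f\in\cD(A)$,
\[
f(X(t)) - \int_{[0,t]\times E_2} Af(X(s),y)\,\Gamma(\dd s\,\dd y)
\]
is a martingale for the filtration $\{\cG_t\}$ generated by $(X,\Gamma)$. The core step is passing to the limit in the drift rewritten as $\int_{[0,t]\times E_2} Af(X_n(s),y)\,\Gamma_n(\dd s\,\dd y)$. Here I would use that $X_n(s)\to X(s)$ for all but the countably many discontinuity times of the limit path $X$, that $Af\in C(E_1\times E_2)$ so that $Af(X_n(\cdot),\cdot)\to Af(X(\cdot),\cdot)$ uniformly on time-$E_2$ compacts, and combine this with the weak convergence $\Gamma_n\to\Gamma$; the uniform integrability guaranteed by \eqref{kurtzbound} with $p>1$ is precisely what lets me discard the contribution of the regions where $Af$ is large and legitimately interchange the limits.

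Finally, to upgrade convergence of the processes to the martingale property, I would test the pre-limit relation against a bounded continuous functional $H$ of the path up to a time $s<t$: since $f(X_n(t))-f(X_n(s))-\int_s^t Af(X_n,Y_n)\,\dd r+(\epsilon_n^f(t)-\epsilon_n^f(s))$ has zero conditional expectation, and $\epsilon_n^f\to 0$ in the sense of \eqref{epsilontozero}, passing to the limit yields $\IE\big[(f(X(t))-f(X(s))-\int_{(s,t]\times E_2}Af(X(r),y)\,\Gamma(\dd r\,\dd y))H\big]=0$, which is the desired identity. I expect the main obstacle to be exactly the joint passage to the limit in the drift term: controlling simultaneously the Skorokhod convergence of the slow variable $X_n$ and the weak convergence of the occupation measures $\Gamma_n$, and ensuring through the $L^p$ hypothesis that no mass of $Af$ escapes; setting up the topology on $l_m(E_2)$ so that this convergence is meaningful, and verifying that the drift integrand is continuous in the joint variable, are the technical points on which the whole argument turns.
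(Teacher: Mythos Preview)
The paper does not prove this theorem; it is quoted verbatim as Theorem~2.1 of \cite{kurtz:1992} and used as a black box (``we shall use a theorem due to \cite{kurtz:1992} which we recall here''). There is therefore no proof in the paper to compare your proposal against. Your sketch is a reasonable outline of the standard argument one finds in Kurtz's original paper and in the related treatment in \cite{ethier/kurtz:1986}, but for the purposes of this paper the result is simply cited, not reproved.
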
 
We observe that 
\begin{multline*}
f(\eta_t) - \int_0^t Af(\eta_t,\zeta_t)\mathrm{d}s +  (\widehat{f}(\eta_t,\zeta_t) - f(\eta_t) ) +  \int_0^t Af(\eta_s,\zeta_s) -  A^{N}\widehat{f}(\eta_s,\zeta_s) \mathrm{d}s
\\
= \widetilde{f}(\eta_t,\zeta_t)- \int_0^t A^{N}\widetilde{f}(\eta_s,\zeta_s)\mathrm{d}s
,
\end{multline*}
where $A$ is given by \eqref{Kurtz-Rodrigues limiting generator}. 
Since $\widetilde{f} - \int_{0}^{t} A^{N}\widetilde{f}(s)\mathrm{d}s$ is a martingale, we have written our problem in the form \eqref{kurtz rearrangement} with
\begin{align}
\label{epsilon definition}
\epsilon_N^f(t) = 
(\widetilde{f}(\eta_t,\zeta_t) - f(\eta_t) ) +  \int_0^t Af(\eta_s,\zeta_s) -  A^{N}\widetilde{f}(\eta_s,\zeta_s) \mathrm{d}s
\end{align}

To check that the assumptions of Theorem~\ref{kurtzaveraging} are satisfied, we
 work with both the lookdown representation and the projected model. 
%We use the lookdown representation to identify the correct limit, write the process in the form \eqref{kurtz rearrangement} (that is represent the approximating process in terms of limiting process and a correction term).
%Most of the work is devoted to showing that the aforementioned correction terms satisfies the bound \eqref{epsilontozero}.
%This requires us to represent each element of the sequence of approximations in a form close to that of the limiting generator. 
The projected model allows us to check the compact containment condition \eqref{tightness} and prove the $L^p$ estimate \eqref{kurtzbound}.
Both are achieved via an intensity estimate given by the following Lemma.
%This subsection is devoted to the intensity estimate. 
%We begin with the intensity estimate.
%The main idea is to write the process in a form as close as possible to the limit.
%We begin with the non-spatial case. 
\begin{lemma}
\label{Intensity estimate 0d}
Let $X^{N} = Kw^{N}$ denote the total intensity of individuals of the rare type.  Assume that $\mathbb{E}[X^{N}(0)] < \infty$. Then for any $T > 0$ 
\begin{align}
\label{First part of intensity estimate 0d}
\sup_{t \leq T}\sup_{N}\mathbb{E}[X^{N}(t)]  < &  \infty
,
\\
\label{Second part of intensity estimate 0d}
\lim_{H \to \infty}\sup_{N}\mathbb{P}\left[\sup_{t \leq T}X^{N}(t) > H \right]  = &  0
.
\end{align} 
\end{lemma}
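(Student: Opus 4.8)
The plan is to work with the projected jump process $X^N=Kw^N$ rather than the lookdown, since the total intensity is a functional of the projected model described in Section~\ref{Subsection projected model}. First I would read off the infinitesimal drift of $X^N$ from the dynamics of Definition~\ref{Lookdown representation of LFVSRE}. Neutral events replace a proportion $u/J$ of the population by offspring of a parent whose type matches the local frequency, so they contribute zero drift to $X^N$ (they feed only the quadratic variation). A selective event at environment $\zeta$ changes $w$ by $(u/J)\,\frac{w(\rho-1)(1-w)}{1+w(\rho-1)}$, with $\rho=\sigma(\kappa_r,\zeta)/\sigma(\kappa_c,\zeta)$; multiplying by $K$ and by the rate $\widehat S\,sN/S$ of selective events yields
$$
\text{drift of }X^N=\widehat S\,\gamma_N(\zeta_t)\,X^N+\mathcal{O}\!\Big(\tfrac{\widehat S}{K}(X^N)^2\Big),\qquad \gamma_N(\zeta)=\tfrac{suN}{SJ}\big(\rho-1\big),
$$
consistent with \eqref{Asel for f1}. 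The symmetry condition \eqref{0d symmetry condition} is exactly $\mathbb{E}_\pi[\gamma_N]=0$, while \eqref{Scaling, full model, no space} gives $\mathbb{E}_\pi[\gamma_N^2]\to b^2$ and $\widehat S/K\to0$.

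The difficulty is that the instantaneous drift coefficient $\widehat S\gamma_N(\zeta_t)$ is of order $\widehat S\to\infty$, so a naive Gronwall bound blows up; finiteness of the limit comes entirely from the cancellation produced by the fast, mean-zero environment. I would capture this through the principal eigenpair $(\lambda_N,\psi_N)$ of the two-state operator $\widehat S^2A_{env}+\widehat S\gamma_N(\cdot)$ on $\{-1,+1\}$. Writing $\gamma_N(\pm1)=\pm\theta_N$ with $\theta_N\to b$, an elementary $2\times2$ computation gives $\lambda_N=\tfrac{\widehat S^2}{2}\big(\sqrt{1+4\theta_N^2/\widehat S^2}-1\big)\to b^2$ and a strictly positive eigenfunction $\psi_N$ with $\psi_N(+1)/\psi_N(-1)\to1$, so $\psi_N$ is bounded above and below uniformly in $N$. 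The limit $\lambda_N\to b^2$ matches the growth rate $b^2 y$ appearing in the generator $C_{fdr}$ of the limiting Feller diffusion in a random environment, a useful consistency check.

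For \eqref{First part of intensity estimate 0d} I would condition on the whole (autonomous) environment path and apply Gronwall to $\mathbb{E}^\zeta[X^N(t)]$, obtaining $\mathbb{E}^\zeta[X^N(t)]\le X^N(0)\exp(\int_0^t\widehat S\gamma_N(\zeta_s)\,\mathrm{d}s)$ up to the correction term; taking expectations and using the Feynman--Kac bound $\mathbb{E}[\exp(\int_0^t\widehat S\gamma_N(\zeta_s)\,\mathrm{d}s)]\le C\,e^{\lambda_N t}$ gives $\sup_{t\le T}\sup_N\mathbb{E}[X^N(t)]<\infty$ since $\lambda_N\to b^2<\infty$. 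For \eqref{Second part of intensity estimate 0d} I would instead form the tilted process $W^N(t)=e^{-\lambda_N t}\psi_N(\zeta_t)X^N(t)$: because jumps of $\zeta$ and of $X^N$ are a.s.\ not simultaneous, the generator of the product has no cross term, and the eigenrelation makes its linear part vanish identically, leaving only the $\mathcal{O}(\widehat S/K\,(X^N)^2)$ correction, so $W^N$ is a nonnegative supermartingale up to that correction. Doob's maximal inequality for supermartingales then yields $\mathbb{P}[\sup_{t\le T}X^N(t)>H]\le\mathbb{P}[\sup_{t\le T}W^N(t)>H\,e^{-\lambda_N T}\min\psi_N]\le C\,\mathbb{E}[X^N(0)]/H$, uniformly in $N$, which tends to $0$ as $H\to\infty$.

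The main obstacle is making the supermartingale (resp.\ Gronwall) step rigorous in the presence of the nonlinear correction, whose coefficient vanishes because $\widehat S/K\to0$ but which is quadratic in $X^N$ and, as the identity $\tfrac{1-w}{1+w(\rho-1)}-1=\tfrac{-w\rho}{1+w(\rho-1)}$ shows, has an unfavourable sign precisely in the disfavouring environment. I would close this by localising at $\tau_H=\inf\{t:X^N(t)>H\}$, so that on $[0,\tau_H]$ the correction is bounded by $C\tfrac{\widehat S}{K}H\,X^N$; the factor $e^{-C\widehat S H t/K}W^N$ is then a genuine supermartingale up to $\tau_H$, and since $\widehat S/K\to0$ the correction to the maximal inequality is uniformly controlled. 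A parallel second-moment estimate, governed by the principal eigenvalue of $\widehat S^2A_{env}+2\widehat S\gamma_N$ (which likewise stays bounded, $\to4b^2$) together with the neutral variance input of order $a\,X^N$, supplies the $\mathbb{E}[(X^N)^2]$ bound needed to absorb the correction; the conditions $N^2/(KJ^2)\to0$ and $K/J\to0$ from \eqref{Scaling, full model, no space} ensure the remaining higher-order terms in the generator are negligible.
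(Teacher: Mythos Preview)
Your approach is correct and takes a genuinely different route from the paper's. The paper works directly with the stopped process $\widehat X^{N}=X^{N}(\cdot\wedge\tau_H)$: it applies the projected generator to $f(x,\zeta)=x$, obtains the martingale representation
\[
\widehat X^{N}(T)=\widehat X^{N}(0)+N\widehat S\,\tfrac{su}{SJ}\int_0^T\!\Big[(\rho(\zeta_s)-1)\,\widehat X^{N}(s)+\tfrac{(\widehat X^{N}(s))^2}{K}\cdot(\text{bounded})\Big]\mathrm{d}s+M(T),
\]
takes expectations, invokes the symmetry condition \eqref{0d symmetry condition} to drop the first (order-$\widehat S$) integral, bounds the quadratic remainder by $C\,\tfrac{\widehat S H}{K}\,\mathbb{E}[\widehat X^{N}]$, and closes with Gr\"onwall. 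Part~\eqref{Second part of intensity estimate 0d} then follows in one line from $\mathbb{P}\big[\sup_{t\le T}X^{N}>H\big]=\mathbb{P}\big[\widehat X^{N}_T\ge H\big]\le \mathbb{E}[\widehat X^{N}_T]/H$.

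Your spectral/Feynman--Kac argument makes the fast-environment averaging explicit through the principal eigenpair of $\widehat S^{2}A_{env}+\widehat S\gamma_N$, which is precisely the mechanism underlying the paper's one-line appeal to symmetry (and is in the spirit of the corrector trick used later in Section~\ref{Subsection Convergence of the non-spatial model}). What it buys you is a transparent quantitative bound $\lambda_N\to b^{2}$ on the effective growth rate and, for part~(2), a genuine supermartingale to which Doob applies, rather than Markov on a stopped expectation. The price is considerably more machinery: the paper needs only Gr\"onwall and Markov once the linear term is disposed of. Note, however, that the paper's step ``by symmetry the $\mathcal O(\widehat S)$ term has zero expectation'' is terse---$\widehat X^{N}(s)$ and $\zeta_s$ are correlated through the past---and your eigenvalue computation is one clean way to make that decorrelation rigorous. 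Both routes handle the nonlinear correction identically, by localising at level $H$ (resp.\ $Z^N$) and using $\widehat S/K\to 0$.
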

We discuss the proof in Section~\ref{subsection intensity estimate 0d}.

The part of the argument which allows us to identify the correct limit and shows that  condition \eqref{epsilontozero} is satisfied,  that is
\begin{equation}
%\label{epsilontozero}
\lim_{n\rightarrow\infty}\IE\left[\sup_{t\leq T}|\epsilon_N^f(t)|\right]=0,
\end{equation}
is more involved, and requires the use of the lookdown representation. 
We will first look at the behaviour of
\begin{align*}
\int_0^t A^Nf(\eta_s,\zeta_s) - A f(\eta_s,\zeta_s) \mathrm{d} s
.
\end{align*}
The terms involving $A^{N}_{neu}$ and $A^{N}_{sel}$ are tackled separately.
%Application of Taylor formula, which we discuss in Subsections~\ref{Subsection Neutral model proof}~\ref{Subsection Selective model proof} respectively, shows that for suitably chosen $A^{N}_{neu, 1}$, $A^{N}_{neu, 2}$ it is enough to show the following propositions.

Let $\eta^r_t$ be the process obtained from $\eta_t$ by only considering the individuals of the rare type, $\eta^r_t := \{l : (l, \kappa_r) \in \eta_t \}$.
\begin{proposition}
\label{Proposition Neutral Convergence}
Under the conditions of Theorem~\ref{NS neutral theorem}, %if $\eta^r_t$ is the level process gained from $\eta_t$ by only considering the levels with the rare type then
\begin{multline*}
\IE \Bigg[
\sup_{t \leq T}
\Bigg|
\int_0^t
A_{neu}^{N}f(\eta_s)
\\
-
\left(
f(\eta_s^r) 
\sum_{l_i(t) \in \eta_t^N} al_{i}^{2} \frac{g'(l_{i}(t))}{g(l_{i}(t))}
+
2a
f(\eta_s^r) \sum_{l_i(t) \in \eta_s^r}
\int_{l_i(t)}^{\infty} \big( 1 - g(\kappa_{i} , v) \big) \mathrm{d}v
\right)
\mathrm{d} s
\Bigg|
\Bigg]
\to 0
.
\end{multline*}
\end{proposition}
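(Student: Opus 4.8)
The overall strategy is to reduce the claim to a pointwise-in-time estimate on the generator and then to expand $A^N_{neu}f$ in the two small parameters of the scaling, $\delta:=u/J$ and $\theta:=uK/J$ (so that $\theta/K=\delta$ and $K/J\to0$ forces both to $0$). Writing $R^N(\eta)$ for the difference between $A_{neu}^Nf(\eta)$ and the target expression, the integrand is an ordinary non-anticipating functional of the configuration, so pathwise $\sup_{t\le T}\big|\int_0^t R^N(\eta_s)\,\dd s\big|\le\int_0^T|R^N(\eta_s)|\,\dd s$. Taking expectations and using Fubini, it therefore suffices to prove $\sup_{s\le T}\IE\big[|R^N(\eta_s)|\big]\to0$; the uniform moment control that makes this quantitative is exactly what Lemma~\ref{Intensity estimate 0d} supplies, since every bound on $R^N$ will be in terms of polynomials in the rare intensity $X^N$ and the number of rare levels below $\lambda_g$.

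For the expansion I would compute $A^N_{neu}f(\eta)$ exactly for fixed $\eta$. Because $f$ depends only on the rare levels we have $g(\kappa_c,\cdot)\equiv1$, so the event is felt only through the rare levels and through whether the looked-down parent is rare or common, and the sum over all levels collapses to a sum over $\eta^r$. I would then condition on the rare sub-configuration $\eta^r$ and use that, under the conditionally Poisson assumption, the common levels form a Poisson system of intensity $\approx K$; this is the input that lets one replace the fixed-$\eta$ quantity by its conditional mean given $\eta^r$ up to a fluctuation treated below, and it is where the martingale lemma recalled in Appendix~\ref{kurtzrodmartingalelemma} enters. The generic contribution comes from a \emph{common} parent: a rare level $l_i$ is thinned to $l_i/(1-\delta)$ when $v^*>l_i$ and relabelled by the forward gap $l^*-v^*$ when $v^*<l_i$. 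Thinning produces an upward drift $\delta l_i$, while the mean relabelling shift, integrated against the law of $v^*$, equals $\tfrac1K(1-e^{-\theta l_i})$; the leading $O(\delta)$ pieces cancel exactly and the surviving second order term is $\tfrac{\theta^2}{2K}l_i^2=\tfrac{u^2K}{2J^2}l_i^2$, which after the time speed-up $N$ and the limit $\tfrac{u^2NK}{J^2}\to2a$ yields $f(\eta^r)\sum_i a l_i^2 g'(l_i)/g(l_i)$. The growth term arises from \emph{rare}-parent events, whose rate is $O(\theta/K)$ per event but which inject new rare offspring above the parent at intensity $\theta$; the same product $N\theta\delta=\tfrac{u^2NK}{J^2}\to2a$ converts the offspring factor $e^{-\theta\int_{l_i}^\infty(1-g)\,\dd v}$ into the term $2af(\eta^r)\sum_i\int_{l_i}^\infty(1-g(v))\,\dd v$ of the statement.

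It remains to control $R^N$, which collects three kinds of errors. First, the higher order terms in the Taylor expansion of the infinite product and of the offspring exponential correspond to events in which several offspring fall below $\lambda_g$ or several levels interact; these are of order $N\theta^m=u^mN(K/J)^m$ and are killed by the hypothesis that $N(K/J)^m\to0$ for some $m$, together with $K/J\to0$. Second, the difference between $A^N_{neu}f(\eta_s)$ and its conditional mean given $\eta^r_s$ is a Poisson-background fluctuation; a variance computation shows it is of size $N\theta K^{-3/2}=\tfrac{Nu}{J}K^{-1/2}$ in $L^2$, and since $\big(\tfrac{Nu}{J}\big)^2K^{-1}$ is comparable (up to the fixed constant $u^2$) to $\tfrac{N^2}{KJ^2}$ by $\tfrac{u^2NK}{J^2}\to2a$, it vanishes precisely because $\tfrac{N^2}{KJ^2}\to0$. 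Third, the discrepancy between the number of rare levels and its mean is absorbed by Lemma~\ref{Intensity estimate 0d}. Combining the three and taking $\sup_{s\le T}\IE|\cdot|$ gives the claim.

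The main obstacle is the second step: the genuine signal sits at order $u^2K/J^2$, one order below the spurious $O(u/J)$ drift that must cancel exactly, so the whole argument hinges on isolating this cancellation cleanly and then showing that neither the neglected higher-order terms nor the fluctuations of the Poisson common background survive. Keeping the infinitely many levels under control inside the product while carrying the expansion to the required second order, and matching each surviving error against one of the scaling hypotheses ($N(K/J)^m\to0$, $N^2/(KJ^2)\to0$, $K/J\to0$), is the delicate part; the conditionally Poisson structure (Appendix~\ref{kurtzrodmartingalelemma}) and the intensity bounds (Lemma~\ref{Intensity estimate 0d}) are the two tools that make it work.
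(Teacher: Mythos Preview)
Your strategy of conditioning on $\eta^r$ and expanding in $\delta=u/J$ and $\theta=uK/J$ matches the paper, and the identification of the two limiting pieces is correct. But the reduction to a pointwise-in-time bound $\sup_{s\le T}\IE[|R^N(\eta_s)|]\to0$ fails, and this is the essential difficulty of the proposition rather than a technicality. The issue is the birth (growth) term: after the expansion it carries a factor $\tfrac{u^2NK^2}{J^2}(l_i-l_{i-1})$ for each rare level $l_i$, where $l_{i-1}$ is the nearest level below (typically common). Conditional on $\eta^r$, this gap is approximately exponential of rate $K$, so the scaled gap has mean $\tfrac{u^2NK}{J^2}\to2a$ but variance $(\tfrac{u^2NK}{J^2})^2\to4a^2$. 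The fluctuation is therefore of order one at every fixed time, and $\IE[|R^N(\eta_s)|]$ does not tend to zero. Your Poisson-fluctuation estimate $N\theta K^{-3/2}$ is correct for the level-drift part (this is exactly Proposition~\ref{probconvergenceofpaths}) but does not apply to the birth part, and the hypothesis $N^2/(KJ^2)\to0$ does not rescue it.

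The paper handles this by abandoning the pointwise reduction for the birth term and instead bounding $\IE\big[(\int_{t_{j-1}}^{t_j}\Gamma^N(s)\,\dd s)^2\big]$ directly via a two-time covariance computation (Lemma~\ref{lemmaforbirthprocess}): the common-level gaps at times separated by $L$ events decorrelate once the cumulative offspring intensity $I_L=K(1-(1-u/J)^L)$ is of order $K$, i.e.\ once $L/J\to\infty$, yielding $\IE[(\int_0^T\Gamma^N)^2]=O(LT/N)+o(T^2)$. This is then combined with a partition argument (Lemma~\ref{Lemma partition and supremum}) and an auxiliary submartingale bound on $\sup_t\beta^N_t$ (Lemma~\ref{neutral intensity lemma}) to recover control of the supremum. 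This time-averaging step is where the real work in Proposition~\ref{probconvergenceofbirths} lies, and it cannot be replaced by your pathwise inequality $\sup_t|\int_0^t R^N|\le\int_0^T|R^N|$.
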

This proposition will be proved via Taylor's formula through Proposition~\ref{probconvergenceofpaths} and Proposition~\ref{probconvergenceofbirths} in Section~\ref{Subsection Neutral model proof}.
An analogous proposition applies to the terms involving $A^{N}_{sel}$.
 \begin{proposition} \label{selectivepropositionA}
Under the conditions of Theorem~\ref{NS selection theorem}
for any $T \in \IR$
\begin{align*}
\IE\Bigg[ \sup_{t \leq T} \Bigg| \int_0^t A^N_{sel}f(\eta^N_s) - 
f(\eta^N_s) \left(
\sum_{l_i(t) \in \eta_t^N} - bl_{i} \frac{g'(l_{i})}{g(l_{i})}
\right)
\mathrm{d} s
\Bigg| \Bigg]
\to 0
.
\end{align*}
\end{proposition}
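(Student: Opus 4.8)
The plan is to establish Proposition~\ref{selectivepropositionA} by first obtaining a sufficiently explicit expansion of $A^N_{sel}f(\eta^N_s)$ in the scaling parameters, and then controlling the supremum over $t \leq T$ of the time integral of the error. The starting point is the expansion already recorded in \eqref{Asel for f1}, namely
\begin{align*}
A_{sel}^Nf(\eta)
=
f(\eta)
\frac{suN}{JS}
\left(
\frac{\sigma(\kappa_r,\zeta)}{\sigma(\kappa_c,\zeta)}
- 1
\right)
\sum_{j}l_{j}
\frac{g'(\kappa_j,l_j,\zeta)}{g(\kappa_j,l_j,\zeta)}
+ \cO\!\left(\frac{1}{S}+\frac{1}{K}\right).
\end{align*}
Under the scaling of Theorem~\ref{NS selection theorem}, in which $\sigma$ does not depend on the environment and $\widehat{S}=1$, the prefactor $\frac{suN}{JS}\left(\frac{\sigma(\kappa_r)}{\sigma(\kappa_c)}-1\right)$ converges to $b$, so the leading term of $A^N_{sel}f$ is precisely $-b\, f(\eta_s^N)\sum_{j}l_j \frac{g'(l_j)}{g(l_j)}$ (the sign coming from the fact that $g'\le 0$ for our test functions, or more simply from the definition of $b$), while the remainder is $\cO(1/S+1/K)$, which vanishes. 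The claim is therefore that the time-integrated remainder, together with the discrepancy between the finite-$N$ prefactor and its limit $b$, converges to zero uniformly in $t\leq T$ in expectation.

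First I would derive \eqref{Asel for f1} carefully, Taylor-expanding the product $\prod_{l\neq l^*_{sel}} g(\kappa,\cJ_{sel}(\cdots))$ against $\prod_l g(l)$ and the Poisson birth factor $e^{-uh(\kappa^*)}$, keeping track of which terms are $\cO(1)$, $\cO(1/S)$, and $\cO(1/K)$ once multiplied by the rate $sN/S$ and the impact $u/J$. The single-level displacement induced by $\cJ_{sel}$ is of order $u/J$, so a first-order Taylor expansion produces the stated $\sum_j l_j g'(l_j)/g(l_j)$ term with prefactor $\frac{suN}{JS}(\sigma(\kappa_r)/\sigma(\kappa_c)-1)$, second-order terms contribute the $\cO(1/S + 1/K)$ error once one uses $\frac{u^2NK}{J^2}\to 2a$ and $\frac{K}{J}\to 0$, and the birth/thinning contributions are similarly controlled. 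The key structural point, inherited from the projected-model computation in Section~\ref{Subsection projected model}, is that the choice of parent weighted by $\sigma$ produces exactly the factor $\sigma(\kappa_r)/\sigma(\kappa_c)-1$ after averaging, and that only the rare-type levels contribute to $f$ because $g(\kappa_c,\cdot)\equiv 1$ on our test functions.

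The second step is to pass from the pointwise generator bound to the uniform-in-time estimate on the integral. Here I would write the difference inside the supremum as a sum of two pieces: the integral of the genuine $\cO(1/S+1/K)$ remainder, and the integral of $\big(\frac{suN}{JS}(\sigma(\kappa_r)/\sigma(\kappa_c)-1)-b\big) f(\eta_s^N)\sum_j l_j g'(l_j)/g(l_j)$. Each integrand is bounded, up to a deterministic vanishing factor, by a functional of the rare-type configuration of the form $f(\eta_s^N)\sum_{l_j\in\eta_s^r} l_j |g'(l_j)|/g(l_j)$; since $g\equiv 1$ above $\lambda_g$, this sum only runs over the finitely many rare levels below $\lambda_g$ and is controlled in expectation by the total rare intensity $X^N(s)$. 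Invoking Lemma~\ref{Intensity estimate 0d} — specifically the uniform bound \eqref{First part of intensity estimate 0d} on $\sup_{t\le T}\sup_N\IE[X^N(t)]$ — I can bound $\IE[\sup_{t\le T}\int_0^t(\cdots)\mathrm{d}s]$ by $T$ times a uniformly bounded expectation times the vanishing deterministic prefactor, which sends the whole expression to $0$.

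The main obstacle I anticipate is making the Taylor-remainder control rigorous rather than heuristic: the test functions $g$ may vanish, so the ratios $g'(l_j)/g(l_j)$ are not a priori bounded pointwise, and one must argue (as in the \emph{Remark} following the Feller-diffusion definitions) that the \emph{products} $g'(l_j)\prod_{i\neq j}g(l_i)=f(\eta)g'(l_j)/g(l_j)$ are well-defined and bounded even where $g(l_j)=0$. Keeping the error uniformly integrable in $N$ while the rate $sN/S$ and inverse impact $J$ both diverge requires balancing the expansion against the precise scaling relations $\frac{u^2NK}{J^2}\to 2a$, $\frac{N^2}{KJ^2}\to 0$ and $\frac{K}{J}\to 0$, and in particular using the hypothesis that $NK^m/J^m\to 0$ for some $m$ to dispose of the higher-order multi-level interaction terms that a single selective event can generate. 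This is the same mechanism that controls the neutral remainder in Proposition~\ref{Proposition Neutral Convergence}, and I would structure the selective estimate in parallel, isolating the single-level first-order contribution and absorbing everything else into a remainder handled by the intensity estimate.
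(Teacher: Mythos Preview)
Your overall strategy---expand $A^N_{sel}f$ to leading order, identify the prefactor $\frac{suN}{JS}\big(\sigma(\kappa_r)/\sigma(\kappa_c)-1\big)$ converging to $b$, and control the time-integrated remainder via the intensity estimate---matches the paper's. However, there is a genuine gap in how you propose to derive the expansion \eqref{Asel for f1}.

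The issue is the selective parent $l^*_{sel}$. In the lookdown representation this is \emph{not} chosen by a random $\sigma$-weighted sampling; it is the deterministic minimiser of $(l-v^*)/\sigma(\kappa)$ over levels above $v^*$. Your phrase ``the choice of parent weighted by $\sigma$ produces exactly the factor $\sigma(\kappa_r)/\sigma(\kappa_c)-1$ after averaging'' conflates the projected-model picture with the lookdown mechanics. If you Taylor expand $\cJ_{sel}(l,l^*_{sel},v^*)-l$ directly, the first-order term involves $(l^*_{sel}-v^*)\sigma(\kappa)/\sigma(\kappa^*)$, where $\kappa^*$ is the type of $l^*_{sel}$; this type is correlated with the level configuration in a way that your expansion does not resolve. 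The paper handles this by an explicit decomposition: it introduces an auxiliary operator $A^N_{sel,3}$ in which $l^*_{sel}$ is replaced by the neutral parent $l^*_{neu}$ (the lowest level above $v^*$, almost always of common type), shows in Lemma~\ref{Lemma selective estimate neutral parent} that $A^N_{sel,3}$ yields the desired limit via exactly the Taylor and telescoping-sum arguments you sketch, and then separately proves in Lemma~\ref{unfavouredselectivegenerator} that $A^N_{sel,1}-A^N_{sel,3}\to 0$. That last lemma is not a triviality: it requires splitting into the cases where the rare type is favoured or unfavoured, computing the probability that $l^*_{sel}\neq l^*_{neu}$ in terms of the gap $l^s_i-l_i$ to the next favoured level, and bounding the resulting contribution via order-statistics calculations on uniform points. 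The error comes out as $\cO\big(\frac{NK}{SJ(K-Z^N)^2}\big)$, which is indeed $\cO(1/K)$ under the scaling, but obtaining it needs this argument.

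Your second step---passing from pointwise bounds to the uniform-in-time estimate via Lemma~\ref{Intensity estimate 0d}---is essentially what the paper does (it invokes Lemma~\ref{Lemma enough to bound variance lemma}, which plays the same role). So the fix is localised: insert the $A^N_{sel,3}$ decomposition and the analogue of Lemma~\ref{unfavouredselectivegenerator} between your Taylor expansion and the final estimate.
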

We discuss the proof of this proposition, along with calculations which allow us to fully justify the separation of timescales procedure in Section~\ref{Subsection Selective model proof}. 
%Before we state the crucial propositions, let us state and prove two elementary Lemmas.
These three propositions will allow us to conclude Theorem~\ref{NS neutral theorem} and Theorem~\ref{NS selection theorem}. We then proceed to use these results
 %and Lemma~\ref{lemma for total number of levels} 
 to prove Theorem~\ref{NS fluctuating selection theorem} in Section~\ref{Subsection fluctuating model proof}.
We notice that if the random perturbation $Y$ appearing in the statement of Theorem~\ref{kurtzaveraging} is trivial, the statement itself reduces to the usual condition for relative compactness of the sequence of stochastic processes, see, for example, \cite{ethier/kurtz:1986}, Theorem~3.9.1 and Theorem~3.9.4.
Therefore as a by-product of our construction we give a proof of Theorem~\ref{NS neutral theorem} and Theorem~\ref{NS selection theorem}.
%We would like to point out that the
The proof of Theorem~\ref{NS neutral theorem} is the most technical of this section.

\begin{remark}
\label{NS_almost_full_lookdown_proof}
Our proof guarantees the relative compactness of the sequences of scaled lookdown representations and that limit points must satisfy a martingale problem. 
However, we do not have a proof of uniqueness of the martingale problem characterizing the limiting equation. 
We will use the Markov Mapping Theorem to deduce the relative compactness of the sequence of projected models and a martingale problem characterising limit points.
Lemma~A.13 from \cite{kurtz/rodrigues:2011} guarantees that every projection given by the Markov Map solves the projected martingale problem.
Since this projected martingale problem has unique solutions, the relative compactness is enough to guarantee convergence of the sequence of projected models.
%This will not be the case in Section~\ref{Scaling limits of the SLFV - dynamics of the rare type}.
%Moreover, since every solution of the martingale problem for the lookdown repersentation corresponds to the solution of the projected model, every 
\end{remark}

%WE CONCLUDE By USING LEMMA A13 FROM KURTZ-RODRIGUES
%THIS IS TO GUARANTEE THAT THE PROJECTION IS A SOLUTION TO THE MARTINGALE PROBLEM FOR THE BRANCHING THING

%The rescaled generator takes the form $A^N = A^N_{neu} + A^N_{sel} + A^N_{env}$ and we will look at each component separately. We will also prove Theorem~\ref{NS neutral theorem} and Theorem~\ref{NS selection theorem} by considering $A^N_{neu}$ and $A^N_{sel}$ separately.

\begin{remark} 
\label{rarety remark}
We are interested in the behaviour of a rare subpopulation. 
As we have discussed earlier, we would like it to form $\mathcal{O}(1/K)$ of the population. 
For technical reasons, instead of the process $X^{N}(t)$, it is sometimes convenient to consider a stopped process $X^{N}(t \wedge \tau^{N})$, where 
\begin{align*}
\tau^N := \inf \{ t > 0 : X^N_t > Z^N \}.
\end{align*}
We require the sequence of real numbers $Z^{N}$ to be finite for each $N$ and to tend to infinity as $N$ tends to infinity. 
This requirement coupled with Lemma~\ref{Intensity estimate 0d} guarantees that the convergence of the stopped processes translates directly into convergence of the unstopped processes. 
%We will show convergence of the ``rare'' level process which gives convergence of their intensity process, $X^N_t$ through the Markov mapping theorem. However we will require that their intensity is bounded by a function of $N$ which converges to infinity and we denote $Z^N$. We therefore consider the process stopped at $\tau^N := \inf \{ t > 0 : X^N_t > Z^N \}$. We then show convergence of the stopped process which then ensure that $\forall T$, $\varepsilon > 0$ $\exists N_0 \in \IN$ such that $\forall N \geq N_0$, $\IP[ \tau^N < T ] < \varepsilon$. This then means we will have shown convergence of the unstopped process. For the remainder of this work we will assume we are working with the stopped process.
%We will make some further requirements that $Z^N$ tends to $\infty$ sufficiently slowly.
%However, we do not specify $Z^N$ explicitly as it will be clear that under these assumptions we may choose $Z^N \to \infty$ as $N \to \infty$.
We shall see that technical assumptions will require that $Z^{N} \rightarrow \infty$ sufficiently slowly. However, these assumptions will not change our proof.
\end{remark}
%We will prove the convergence required in Theorem~\ref{FINAL THEOREM} in Proposition~\ref{neutralpropositionA}, Proposition~\ref{selectivepropositionA} and Proposition~\ref{ENVIRONMENT PROPOSITION}.

\subsubsection{Intensity estimate}
\label{subsection intensity estimate 0d}
This subsection is devoted to the proof of our intensity estimate. 

\begin{proof}[Proof of Lemma~\ref{Intensity estimate 0d}] 
\hspace{1pt} The generator of the projected process can be written as 
\begin{align*}
\mathcal{L}f(w,\zeta) &= \left\lbrace
wf((1-u)w + u,\zeta) + (1-p)f((1-u)w,\zeta) - f(w,\zeta)
\right\rbrace \\
&+ s\left[
\frac{\sigma(\kappa_{r},\zeta)w}{\sigma(\kappa_{r},\zeta)w + \sigma(\kappa_{s},\zeta)(1-w)}pf((1-u)w + u,\zeta)
\right. \\
&+\left.\frac{\sigma(\kappa_{c},\zeta)(1-w)}{\sigma(\kappa_{r},\zeta)w + \sigma(\kappa_{s},\zeta)(1-w)}wf((1-u)w + u,\zeta) - f(w,\zeta)
\right]\\
&+ \mathcal{L}^{env}f(p,\zeta).
\end{align*}
Recall that we are interested in an estimate for  the intensity of the process describing the evolution of the rare individuals.
We therefore substitute $X = Kw$.
Under the assumptions of Theorem~\ref{NS fluctuating selection theorem} this leads to the generator
\begin{align*}
\mathcal{L}f(X,\zeta) &= N\left\lbrace\frac{X}{K}f\left(\left(1-\frac{u}{J}\right)X + K\frac{u}{J},\zeta\right) + \left(1-\frac{X}{K}\right)f\left(\left(1-\frac{u}{J}\right)X,\zeta\right) - f(X,\zeta)\right\rbrace
\\
&+ N\widehat{S}\frac{s}{S}\left[
\frac{\sigma(\kappa_{r},\zeta)\frac{X}{K}}{\sigma(\kappa_{r},\zeta)\frac{X}{K} + \sigma(\kappa_{s},\zeta)\left(1-\frac{X}{K}\right)}f\left(\left(1-\frac{u}{J}\right)X + K\frac{u}{J},\zeta\right)
\right. \\
&+\left.\frac{\sigma(\kappa_{c},\zeta)\left(1-\frac{X}{K}\right)}{\sigma(\kappa_{r},\zeta)\frac{X}{K} + \sigma(\kappa_{s},\zeta)\left(1-\frac{X}{K}\right)}f\left(\left(1-\frac{u}{J}\right)X,\zeta\right) - f(X,\zeta)
\right]\\
&+ \mathcal{L}^{env}f(X,\zeta).
\end{align*}
This means that for any $f \in C^{\infty}(\mathbb{R})$,
%Observe that  $(X_{t},\zeta)$ can be represented as
\begin{align*}
f(X^{N}(T),\zeta_{T}) = f(X_{0},\zeta_{0}) + \int_{0}^{T}\mathcal{L}(X^{N}(s),\zeta_{s})\mathrm{d}s + M(T),
\end{align*}
where $M$ is a martingale. 
Substituting $f(x,\zeta) = x$ leads to 
\begin{align*}
X(T) = &
 X(0) + N\widehat{S}\frac{s}{S}\frac{u}{J}\int_{0}^{T}
 \left[
\frac{\sigma(\kappa_{r},\zeta)}{\sigma(\kappa_{r},\zeta)\frac{X}{K} + \sigma(\kappa_{c},\zeta)\left(1-\frac{X}{K}\right)}
\frac{X}{K}\left(
K - X
\right)
\right.
\\
&
\left.
+\frac{ - \sigma(\kappa_{c},\zeta)}{\sigma(\kappa_{r},\zeta)\frac{X}{K} + \sigma(\kappa_{c},\zeta)\left(1-\frac{X}{K}\right)}
\left(1-\frac{X}{K}\right)X
\right]\mathrm{d}s + M(T)
\\
= &
 X(0) + N\widehat{S}\frac{s}{S}\frac{u}{J}\int_{0}^{T}
 \left[
\frac{\sigma(\kappa_{r},\zeta)}{\sigma(\kappa_{r},\zeta)\frac{X}{K} + \sigma(\kappa_{c},\zeta)\left(1-\frac{X}{K}\right)}
X
\right.
\\
& \left.
-
\left(
\frac{\sigma(\kappa_{r},\zeta) \frac{X}{K}}{\sigma(\kappa_{r},\zeta)\frac{X}{K} + \sigma(\kappa_{c},\zeta)\left(1-\frac{X}{K}\right)}
+
\frac{ \sigma(\kappa_{c},\zeta) \left(1-\frac{X}{K}\right)}{\sigma(\kappa_{r},\zeta)\frac{X}{K} + \sigma(\kappa_{c},\zeta)\left(1-\frac{X}{K}\right)}
\right)
X
\right]\mathrm{d}s + M(T).
\end{align*}
Since 
\begin{multline}
\label{0d intensity estimate fraction expansion}
\frac{\sigma(\kappa_{r},\zeta)}{\sigma(\kappa_{r},\zeta)\frac{X}{K} + \sigma(\kappa_{c},\zeta)\left(1-\frac{X}{K}\right)}
 = 
%\\
\frac{\sigma(\kappa_{r},\zeta)}{\sigma(\kappa_{c},\zeta)}
+
\frac{X}{K}\frac{ \sigma(\kappa_{c},\zeta)\sigma(\kappa_{r},\zeta) - \sigma^{2}(\kappa_{r},\zeta)}{\sigma(\kappa_{c},\zeta)\left(\sigma(\kappa_{r},\zeta)\frac{X}{K} + \sigma(\kappa_{c},\zeta)\left(1-\frac{X}{K}\right)\right)}
,
\end{multline}
this expression can be written as 
\begin{multline}
\label{IE 0d martingale representation}
X(T) = 
 X(0) + N\widehat{S}\frac{s}{S}\frac{u}{J}
\left\lbrace 
 \int_{0}^{T}
 \left[
\left(\frac{\sigma(\kappa_{r},\zeta)}{\sigma(\kappa_{c},\zeta)} - 1\right)X
\right]
\mathrm{d}s \right.\\
+
\left.
\int_{0}^{T}
 \left[
\frac{X^{2}}{K}\frac{ \sigma(\kappa_{c},\zeta)\sigma(\kappa_{r},\zeta) - \sigma^{2}(\kappa_{r},\zeta)}{\sigma(\kappa_{c},\zeta)\left(\sigma(\kappa_{r},\zeta)\frac{X}{K} + \sigma(\kappa_{c},\zeta)\left(1-\frac{X}{K}\right)\right)}
\right]
\mathrm{d}s
\right\rbrace
+ M(T).
\end{multline}
Consider the stopping time $\tau = \inf \{ t \geq 0: X^{N} > H \}$ and the stopped process $\widehat{X}^{N}(t) = X^{N}(t \wedge \tau)$.
Since \eqref{IE 0d martingale representation} holds at a bounded stopping time, taking the expectation and using the symmetry condition \eqref{0d symmetry condition} leads to  
\begin{multline*}
\mathbb{E}[\widehat{X}^{N}(T)] = \mathbb{E}[\widehat{X}(0)] 
+
 N\widehat{S}\frac{s}{S}\frac{u}{J}
 \mathbb{E}
\left[ 
 \int_{0}^{T}
 \left\lbrace
\left(\frac{\sigma(\kappa_{c},\zeta)}{\sigma(\kappa_{r},\zeta)} - 1\right)\widehat{X}^{N}(s)
\right\rbrace
\mathrm{d}s \right.\\
+
\left.
\int_{0}^{T}
 \left\lbrace
\frac{(\widehat{X}^{N}(s))^{2}}{K}\frac{\sigma^{2}(\kappa_{r},\zeta) + \sigma(\kappa_{c},\zeta)\sigma(\kappa_{r},\zeta)}{\sigma(\kappa_{c},\zeta)\left(\sigma(\kappa_{r},\zeta)\frac{\widehat{X}^{N}(s)}{K} + \sigma(\kappa_{c},\zeta)\left(1-\frac{\widehat{X}^{N}(s)}{K}\right)\right)}
\right\rbrace
\mathrm{d}s
\right]
\\
\leq
\mathbb{E}[\widehat{X}(0)] 
+
N\frac{H}{K}\frac{s\widehat{S}}{S}\frac{u}{J} C_{\sigma}
 \int_{0}^{T}\mathbb{E}[\widehat{X}^{N}(s)] \mathrm{d}s
 ,
\end{multline*}
where $C_{\sigma}$ is a constant depending on $\sigma$.
By Gr\"onwall's inequality 
\begin{align*}
\mathbb{E}[\widehat{X}^{N}(T)] \leq \mathbb{E}[\widehat{X}^{N}(0)]\exp
\left( 
N\frac{H}{K}\frac{s\widehat{S}}{S}\frac{u}{J} C_{\sigma}T
\right).
\end{align*}
If one takes $H = Z^N$ this, combined with the conditions from Theorem~\ref{NS fluctuating selection theorem}, concludes the proof of \eqref{First part of intensity estimate 0d}.
To see that \eqref{Second part of intensity estimate 0d} holds, it is enough to observe that by Markov's inequality 
\begin{align}\label{stoppingtimemarkov}
\IP\left[ \sup_{0 \leq t \leq T} X_t \geq H\right]
=
\IP[ \widehat{X}_{T} \geq H]
\leq
\frac{\IE[ \widehat{X}_{T}]}{H}
,
\end{align}
%shows that the stopped process is bounded uniformly for all $t \in [0,T]$. 
%which is bounded uniformly for all times since 
%\begin{align*}
%\mathcal{O}\left(\frac{NK^{2}}{J^{3}}+ \frac{NK\widehat{S}}{SJ^{2}} + \frac{N\widehat{S}}{SKJ}\right)
%\end{align*}
%tends to $0$ as $N$ tends to $\infty$
%.
%%We appeal to Doob's Martingale inequality to conclude the proof. we can now conclude,
and for any 
$T$, $\varepsilon > 0$ we can choose $H_1$, $N_1$ such that for 
$H \geq H_1$, $N \geq N_1$ and $t\leq T$
the right hand side of~(\ref{stoppingtimemarkov})
is less than $\varepsilon$.
\end{proof}

We also take note of a simple corollary which we will use later in our main proof.
\begin{corollary}
\label{compactcontainmentcorollary}
The lookdown process satisfies the compact containment condition, \eqref{tightness} in Theorem~\ref{kurtzaveraging}.
\end{corollary}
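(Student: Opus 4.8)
The plan is to transfer the scalar control of the rare-type intensity obtained in Lemma~\ref{Intensity estimate 0d} to the full lookdown configuration, using the fact that the process is, at every time, a conditionally Poisson system with Cox measure $m_{leb}\times\Xi$. Throughout write $\eta^N_t$ for the rare-type point configuration on $\IR^+$ and, for $\lambda>0$, set $N_\lambda(t):=\eta^N_t([0,\lambda])$, the number of rare individuals whose level lies below $\lambda$. First I would recall that in the space of purely atomic measures on $\IR^+$ with the vague topology a set $\Gamma$ is relatively compact precisely when $\sup_{\eta\in\Gamma}\eta([0,\lambda])<\infty$ for every $\lambda>0$. Hence it suffices to exhibit, for each $\epsilon>0$ and $T>0$, thresholds $(H_k)_{k\in\IN}$ such that the event $\{N_k(t)\le H_k \text{ for all } k\in\IN,\ t\le T\}$ has probability at least $1-\epsilon$ uniformly in $N$; the closure of the corresponding configuration set is then compact and realises \eqref{tightness}.

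The second ingredient is the exact conditional law. Because the process remains conditionally Poisson with Cox measure $m_{leb}\times\Xi$ for all times, conditionally on $X^N(t)$ the count $N_\lambda(t)$ is Poisson with mean $\lambda X^N(t)$. In particular $\IE[N_\lambda(t)]=\lambda\,\IE[X^N(t)]$, which by \eqref{First part of intensity estimate 0d} is bounded uniformly in $N$ and in $t\le T$. This already controls $N_\lambda(t)$ at any fixed time, so the entire difficulty is the passage to the supremum over $t\le T$.

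I expect this time-supremum to be the main obstacle. The difficulty is that, although the increments of $N_\lambda$ are individually small in mean (a birth below level $\lambda$ adds a Poisson number of points of mean $\tfrac{uK}{J}\lambda\to0$ since $K/J\to0$), the event rate is of order $N$, so a naive event-by-event bound on births is useless and the balance between births and the upward relabelling of levels must be respected. My approach is to first use \eqref{Second part of intensity estimate 0d} to restrict to the event $\{\sup_{t\le T}X^N(t)\le H\}$, whose probability exceeds $1-\epsilon/2$ uniformly in $N$ once $H$ is large; on this event the conditional intensity of rare points below $\lambda$ never exceeds $\lambda H$. I would then write $N_\lambda(t)=N_\lambda(0)+A_\lambda(t)+M_\lambda(t)$, where $A_\lambda$ is the predictable compensator and $M_\lambda$ a martingale, and argue that the preserved conditionally-Poisson structure pins the law of $N_\lambda(t)$ to $\mathrm{Poisson}(\lambda X^N(t))$: this forces the birth and relabelling fluctuations to cancel, so that both $A_\lambda(T)$ and the predictable bracket $\langle M_\lambda\rangle_T$ are of order $\lambda\sup_{t\le T}X^N(t)=\cO(\lambda H)$ rather than diverging. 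A Doob maximal inequality, or equivalently a stopping-time/Markov argument identical in spirit to \eqref{stoppingtimemarkov} in the proof of Lemma~\ref{Intensity estimate 0d} (stopping at $\sigma:=\inf\{t:N_\lambda(t)>H_k\}$ and bounding $\IE[N_\lambda(\sigma\wedge T)]/H_k$), then yields $\IP[\sup_{t\le T}N_\lambda(t)>H_k]\le C\lambda H/H_k$ for a constant $C$ independent of $N$.

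Finally I would assemble the compact set by a union bound. Taking $\lambda=k$ and choosing $H_k$ growing fast enough that $\sum_{k\in\IN} Ck H/H_k<\epsilon/2$, the events $\{\sup_{t\le T}N_k(t)>H_k\}$ together with the complement of $\{\sup_{t\le T}X^N(t)\le H\}$ have total probability below $\epsilon$, uniformly in $N$. Hence $\inf_N\IP[\eta^N_t\in\Gamma,\ t\le T]\ge1-\epsilon$ for the relatively compact set $\Gamma=\{\eta:\eta([0,k])\le H_k\ \forall k\}$, which is exactly the compact containment condition \eqref{tightness} required in Theorem~\ref{kurtzaveraging}. The only genuinely delicate point is the control of $\langle M_\lambda\rangle_T$ in the third paragraph, and the sketch above indicates that it rests on the exact conditional Poisson law rather than on crude birth counting.
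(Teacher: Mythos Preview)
The paper takes a one-line route: it invokes Theorem~\ref{KR convergence of random measures theorem} (Kurtz--Rodrigues, Theorem~A.9), which asserts that a sequence of conditionally Poisson systems converges in $\mathcal{M}_{h_k}$ if and only if their Cox measures do. Relative compactness being characterised via convergence of subsequences, compact containment for the lookdown is thereby equivalent to compact containment for the projected intensity $X^N$, and the latter is precisely~\eqref{Second part of intensity estimate 0d}. No explicit control of $N_\lambda(t)$ is attempted.

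Your direct programme is sound through the fixed-time step, but the time-supremum argument --- which you rightly flag as the crux --- does not work as written. The stopping-time route fails because $\sigma=\inf\{t:N_\lambda(t)>H_k\}$ is measurable in the level filtration; at time $\sigma$ one has $N_\lambda(\sigma)>H_k$ by construction, so the conditional law of $N_\lambda(\sigma)$ given $X^N(\sigma)$ is biased upward and the identity $\IE[N_\lambda(\sigma\wedge T)]=\lambda\,\IE[X^N(\sigma\wedge T)]$ is unavailable. The martingale route is equally problematic: the marginal law $N_\lambda(t)\sim\mathrm{Poisson}(\lambda X^N(t))$ at each fixed $t$ places no constraint whatsoever on $\langle M_\lambda\rangle_T$, which records accumulated jump activity. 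Reproduction events arrive at rate $N$ and every event perturbs $N_\lambda$ (rare-parent events add a $\mathrm{Poisson}(\lambda uK/J)$ batch of offspring, and all events push existing rare levels upward through thinning), so the gross jump activity on $[0,T]$ diverges with $N/J\to\infty$ even though the net count remains Poisson. Thus $\langle M_\lambda\rangle_T$ is not $\cO(\lambda H)$ and Doob's inequality gives nothing uniform. A direct argument would need either a genuine submartingale domination for $N_\lambda$ (in the spirit of what is done for $\beta^N$ in Lemma~\ref{neutral intensity lemma}) or a union bound over the $\cO(NT)$ jump times with Poisson tails sharp enough to beat the factor $N$ --- but the latter forces $H_k$ to grow with $N$, violating the uniformity required in~\eqref{tightness}. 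The abstract transfer via Theorem~\ref{KR convergence of random measures theorem} sidesteps all of this.
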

\begin{proof}
We note that the characterisation of convergence given in Theorem~\ref{KR convergence of random measures theorem} ensures that a sequence of lookdown processes satisfies the compact containment condition if and only if the projected processes also satisfy compact containment condition.
\end{proof}

\begin{remark}
We do not give details but a simpler calculation also proves Lemma~\ref{Intensity estimate 0d} under the assumptions of Theorem~\ref{NS neutral theorem} and Theorem~\ref{NS selection theorem}.
\end{remark}

\subsubsection{Neutral model - proof of Theorem~\ref{NS neutral theorem}}
\label{Subsection Neutral model proof}
Even though this subsection contains the proof of the result for  the least complicated model, the proof itself is the most involved one. 
The relative simplicity of the proof for the more complicated model demonstrates the power of this lookdown method. 
Proofs of Theorem~\ref{NS selection theorem} and Theorem~\ref{NS fluctuating selection theorem} heavily rely on technical observations from this subsection.
We recall that the generator of the neutral part of the process takes the form 
\begin{multline}
\label{noselecgenerator}
A^N_{neu} f(\eta)
=
N
\Bigg(
\int_0^\infty
\Bigg[
\frac{uK}{J} e^{- \frac{uK}{J}v^*} g(\kappa^*, v^*)
e^{- \frac{uK}{J} \int_{v^*}^{\infty} (1 - g(\kappa^*, v)
) \mathrm{d} v} 
\\
\times
\prod_{ (\kappa , l) \in \eta , l \neq l^* }
g \left( \kappa , \mathcal{J}_{neu}(l,l^*,v^*) \right)
\Bigg] \mathrm{d} v^*
 - f(\eta) \Bigg)
,
\end{multline}
where $(\kappa^{*},v^{*})$ denotes the parent and $\cJ_{neu}$ is defined as in \eqref{Jcaldeffinition}, that is 
\begin{align*}
\cJ_{neu}(l ,l_{neu}^{*},v^*)=
\begin{cases}
\frac{1}{1 - u} (l - (l_{neu}^{*} - v^*))
& \text{ if } l > l_{neu}^{*}
,
\\
\frac{1}{1 - u} l
& \text{ if } l <l _{neu}^{*}
,
\\
v^*
& \text{ if } l = l_{neu}^{*}
.
\end{cases}
\end{align*}
Before stating the propositions and lemmas which prove Theorem~\ref{NS neutral theorem}, let us rewrite \eqref{noselecgenerator} in a more convenient form.

We start by observing that a Taylor expansion of $g$ gives
\begin{multline}
\label{taylorforproduct}
g(\kappa^*, v^*)
\prod_{ (\kappa , l) \in \eta , l \neq l^* }
g \left( \kappa , \mathcal{J}(l,l^*,v^*) \right)
-
f(\eta)
\\
=
f(\eta)
\sum_{l} \frac{g^{\prime}(\kappa, l)}{g(\kappa, l)} (\mathcal{J}(l,l^*,v^*) - l)
+ 
\sum_{l, \hat{l}}
\frac{g^{\prime}(\hat{\kappa}, \hat{l})}{g(\hat{\kappa}, \hat{l})}
\frac{g^{\prime}(\kappa, l)}{g(\kappa, l)}
\cO \big( (\mathcal{J}(l,l^*,v^*) - l)(\mathcal{J}(\hat{l},l^*,v^*) - \hat{l}) \big)
\\
+
\sum_l
\frac{g^{\prime\prime}(\kappa, l)}{g^{2}(\kappa, l)}
\cO \big( (\mathcal{J}^2(l,l^*,v^*) - l)
\big)
,
\end{multline}
and a Taylor expansion of the exponential function about $0$ leads to 
\begin{align}
\label{exponentialinttaylor}
e^{- \frac{uK}{J} \int_{v^*}^{\infty} (1 - g(\kappa^*, v)
) \mathrm{d} v}
=
1 - \frac{uK}{J} \int_{v^*}^{\infty} (1 - g(\kappa^*, v)
) \mathrm{d} v + \cO
\left( \frac{uK}{J}
\int_{v^*}^{\infty} (1 - g(\kappa^*, v)
) \mathrm{d} v
 \right)^2
.
\end{align}
Applying \eqref{exponentialinttaylor} we may rewrite \eqref{noselecgenerator} as 
\begin{align*}
A_{neu}^{N} = A_{neu,1}^{N} + A_{neu,2}^{N} + \mathcal{O}\left( \frac{K}{J} A_{neu, 2}^{N} \right),
\end{align*}
where
\begin{align}
\label{eq2.1limitgen}
 A_{neu,1}^{N} = & N
\Bigg(
\int_0^\infty
\frac{uK}{J} e^{- \frac{uK}{J}v^*} 
\bigg[
g(\kappa^*, v^*)
\prod_{ (\kappa , l) \in \eta , l \neq l^* }
g \left( \kappa , \mathcal{J}(l,l^*,v^*) \right)
 - f(\eta)
\bigg] 
\mathrm{d} v^*
\Bigg)
,
\\
A_{neu, 2}^{N} = & N
\Bigg(
- \int_0^\infty
\frac{u^2K^2}{J^2} e^{- \frac{uK}{J}v^*}
\int_{v^*}^{\infty} (1 - g(\kappa^*, v)
) \mathrm{d} v
\;
g(\kappa^*, v^*)
\nonumber
\\
\label{eq2.2limitgen}
&\phantom{
A_{neu, 2}^{N} = N
\Bigg(
- \int_0^\infty
\frac{u^2K^2}{J^2}
e^{- \frac{uK}{J}v^*}
}
\times \prod_{ (\kappa , l) \in \eta , l \neq l^* }
g \left( \kappa , \mathcal{J}(l,l^*,v^*) \right)
\mathrm{d} v^*
\Bigg)
.
\end{align}

%To rewrite our problem in the form \eqref{kurtz rearrangement}, we choose $A$ to be 
%\begin{align}
%\label{formofA}
%Af(\eta) =  
%\int_0^t 
%f(\eta)
%\left(
%\sum_{i} -2a \int_{l_{i}}^{\infty} (1 - g(v) ) \; \mathrm{d} v
%\right)
%+
%f(\eta^N_s) \left(
%\sum_{i} al_{i}^{2}  \frac{g^{\prime}(l_{i})}{g(l_{i})}
%\right)
%\mathrm{d}s
%,
%\end{align} 
%then the correction term $\varepsilon^{f}_{N}(t)$ is given by
%We will have $A$ be of the form appearing in \eqref{Branching level generator} and then in order to satisfy \eqref{kurtz rearrangement} we consider
%\begin{align*}
%\varepsilon^f_N(t)
%= \int_0^t Af(\eta^N_s) - A^N_{neu} f(\eta^N_s) \mathrm{d} s
%.
%\end{align*}
%This then clearly satisfies \eqref{kurtz rearrangement}. 

% In order to apply Theorem~\ref{kurtzaveraging}, we need to check that all three parts of the compact containment condition  \eqref{tightness}, the $L^{p}$ integrability condition \eqref{kurtzbound} and a uniform estimate on the correction \eqref{epsilontozero} are satisfied. We see that  \eqref{tightness} is a consequence of Corollary~\ref{compactcontainmentcorollary}. Then \eqref{kurtzbound} can be deduced from Lemma~\ref{Intensity estimate 0d} and the form of $A$ given in \eqref{formofA}. The proof of \eqref{epsilontozero} is more involved and is divided into two propositions, in which we investigate the behaviour of $A_{neu,1}^{N}$ and $A_{neu,2}^{N}$ separately. 

We begin with  statements of the propositions that identify limits of \eqref{eq2.1limitgen} and \eqref{eq2.2limitgen} separately. The proofs appear later in this section. 
%We begin with the statement for $A_{neu,1}^{N}$. 
\begin{proposition}
\label{probconvergenceofpaths}
Under the conditions of Theorem~\ref{NS neutral theorem}, %if $\eta^r_t$ is the level process gained from $\eta_t$ by only considering the levels with the rare type then
\begin{align*}
\IE \Bigg[
\sup_{t \leq T}
\left|
\int_0^t
A_{neu,1}^{N}f(\eta_s)
-
f(\eta_s^r) \left(
\sum_{l_i(t) \in \eta_t^r} al_{i}^{2} \frac{g'(l_{i}(t))}{g(l_{i}(t))}
\right)
\mathrm{d} s
\right|
\Bigg]
\to 0
.
\end{align*}
\end{proposition}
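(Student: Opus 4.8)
The plan is to write $\Phi(\eta^r):=f(\eta^r)\sum_{l_i\in\eta^r}a\,l_i^2\,g'(l_i)/g(l_i)$ for the candidate limit and, via the triangle inequality $\sup_{t\le T}|\int_0^t(\cdots)\,\mathrm{d}s|\le\int_0^T|\cdots|\,\mathrm{d}s$ together with Fubini, to bound the expectation in the statement by $\int_0^T\IE[|A_{neu,1}^Nf(\eta_s)-\Phi(\eta_s^r)|]\,\mathrm{d}s$; it then suffices to prove $\sup_{s\le T}\IE[|A_{neu,1}^Nf(\eta_s)-\Phi(\eta_s^r)|]\to0$. Throughout I would work with the stopped process $X^N(\cdot\wedge\tau^N)$ of Remark~\ref{rarety remark}, which together with Lemma~\ref{Intensity estimate 0d} bounds the number of rare levels in any fixed level-interval and lets one transfer the conclusion back to the unstopped process. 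Since $g(\kappa_c,\cdot)\equiv1$ and $g(\kappa_r,\cdot)\equiv1$ above $\lambda_g$, only the finitely many rare levels $l_i\le\lambda_g$ contribute to any of the sums below, which is what keeps every quantity finite.

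The core is a per-configuration expansion of $A_{neu,1}^Nf(\eta)$. Applying the Taylor formula \eqref{taylorforproduct}, I would split $A_{neu,1}^Nf(\eta)$ into its first-order part $N f(\eta)\sum_{l_i\in\eta^r}\frac{g'(l_i)}{g(l_i)}\,\mathcal I(l_i)$, where $\mathcal I(l_i):=\int_0^\infty\frac{uK}{J}e^{-\frac{uK}{J}v^*}\big(\mathcal J_{neu}(l_i,l^*,v^*)-l_i\big)\,\mathrm{d}v^*$, plus a remainder gathering the quadratic and higher terms. Evaluating $\mathcal I(l_i)$ is the crux. The pure thinning contribution (factor $1/(1-u/J)$, active whenever $l_i\neq l^*$) and the downward shift $-\frac{1}{1-u/J}(l^*-v^*)$ (active when $l_i>l^*$) each produce a term of order $\frac{u}{J}l_i$; multiplied by the event rate $N$ this is of order $\frac{Nu}{J}l_i\sim\frac{J}{uK}l_i$, which \emph{diverges}. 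The whole point is that these divergent pieces cancel, and one must carry the expansions of $e^{-\frac{uK}{J}v^*}$ and of $(1-u/J)^{-1}$ one order further, using that the spacing $l^*-v^*$ from $v^*$ to the first existing level above it has conditional mean of order $1/K$, to extract the surviving term of order $\frac{u^2K}{J^2}l_i^2$. Multiplying by $N$ and invoking $\frac{u^2NK}{J^2}\to2a$ from \eqref{Scaling, neutral, no space} then produces exactly the summand of $\Phi$ (the precise constant and sign being fixed by this second-order computation).

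Because the generator does not itself average over the common levels, the per-configuration value $\mathcal I(l_i)$ must be shown to concentrate on the conditional mean computed above. I would condition on the rare configuration $\eta^r$ and average over the common levels, which form a Poisson system of intensity $\approx K\to\infty$; the parent location $l^*(v^*)$ and the successive spacings entering $\mathcal I(l_i)$ then obey a law of large numbers, so the realized integral differs from its conditional mean only by fluctuations that vanish as $K\to\infty$. Concretely I would estimate $\IE[|A_{neu,1}^Nf-\Phi|]\le\IE\big[|A_{neu,1}^Nf-\IE[A_{neu,1}^Nf\mid\eta^r]|\big]+\IE\big[|\IE[A_{neu,1}^Nf\mid\eta^r]-\Phi|\big]$, bounding the first (fluctuation) term by a variance estimate for the dense common levels and the second (bias) term by the explicit expansion of the previous paragraph.

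It remains to dispose of the remainder. The quadratic diagonal and cross terms of \eqref{taylorforproduct} each carry two factors $\mathcal J_{neu}(\cdot)-(\cdot)=O(u/J)$, so after multiplication by $N$ they are of order $\frac{u^2N}{J^2}(\cdots)=\frac1K\cdot\frac{u^2NK}{J^2}(\cdots)\to0$ since $K\to\infty$; the error from replacing $\exp\{-\frac{uK}{J}\int_{v^*}^\infty(1-g)\}$ by $1$ is exactly the $\mathcal O\big(\frac KJ A_{neu,2}^N\big)$ term, which vanishes by the sparsity condition $K/J\to0$; and the rare large-jump events, in which a rare individual is itself selected as parent (jump $v^*-l_i$), are controlled using $N^2/(KJ^2)\to0$ together with the slow growth of $Z^N$ and the condition $N(K/J)^m\to0$. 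The main obstacle is precisely the exact cancellation of the two divergent $O(Nu/J)$ drift contributions: both the bias computation and the fluctuation bound have to be pushed one order beyond the naive leading term, and it is this cancellation that makes the neutral case the most technical one in the paper.
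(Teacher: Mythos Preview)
Your proposal is essentially correct and follows the same strategy as the paper: Taylor-expand the product via \eqref{taylorforproduct}, identify the first-order contribution as a sum over rare levels, exhibit the cancellation of the divergent $O(Nu/J)$ pieces so that the surviving drift is $\tfrac12\tfrac{u^2NK}{J^2}l_j^2$, and control the difference between the realised and averaged quantities by conditioning on $\eta^r$ and exploiting that the common levels form a Poisson system of intensity $K-Z^N$.

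Two remarks on how the paper executes this compared with your sketch. First, for the outer reduction the paper uses Lemma~\ref{Lemma enough to bound variance lemma} (an $L^2$ bound giving $\IE[\sup_t|\int_0^t\Gamma|]\le\sqrt{T\epsilon}$), whereas you use the cruder $L^1$ estimate $\sup_t|\int_0^t\Gamma|\le\int_0^T|\Gamma|$; both work, and in fact the paper then also controls a conditional second moment, so your bias--fluctuation split amounts to the same thing. Second, the paper organises the cancellation differently: it integrates explicitly over each interval $[l_{i-1},l_i]$ between \emph{consecutive} levels (of either type), swaps the order of summation, and obtains three telescoping pieces $P_1,P_2,P_3$ in \eqref{telescopingsumsstatement} whose Taylor expansions cancel order by order; the random correction comes from $\sum_{i\le j}(l_i-l_{i-1})^2$, whose conditional mean and variance are computed via the order statistics of uniforms. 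Your per-rare-level integral $\mathcal I(l_i)$ is the same object viewed before the telescoping, so the difference is purely organisational.

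One point to tighten: your remainder argument asserts $\mathcal J_{neu}(\cdot)-(\cdot)=O(u/J)$ uniformly, but the shift piece $-(l^*-v^*)/(1-u/J)$ is not $O(u/J)$ pointwise --- it is only $O(u/J)$ \emph{after} integrating against $\tfrac{uK}{J}e^{-\tfrac{uK}{J}v^*}$ and averaging over the common levels. The quadratic cross terms therefore need the same conditional-mean/variance analysis as the first-order term, not a crude pointwise bound. The paper handles this implicitly by working directly with the integrated expressions in \eqref{telescopingsumsstatement} rather than bounding jumps pointwise.
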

\begin{proposition}
\label{probconvergenceofbirths}
Under the conditions of Theorem~\ref{NS neutral theorem}, %if $\eta^r_t$ is the level process gained from $\eta_t$ by only considering the levels with the rare type then
\begin{align*}
\IE \left[ \sup_{t \leq T} \left|\int_0^t  
\left\lbrace A_{neu, 2}^{N}f(\eta_s)
- 
2a
f(\eta_s^r) \sum_{l_i(t) \in \eta_s^r}
\int_{l_i(t)}^{\infty} \big( 1 - g(\kappa_{i} , v) \big) \mathrm{d}v \right\rbrace \mathrm{d}s \right| \right]
\to 0
.
\end{align*}
\end{proposition}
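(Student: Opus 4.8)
The plan is to analyse the rescaled birth operator $A^N_{neu,2}$ of \eqref{eq2.2limitgen} by splitting the $v^*$-integral according to which individual is selected as the parent, isolating a single leading term, and matching it — after the scaling $u^2NK/J^2\to2a$ — to the birth contribution of the limiting generator \eqref{Kurtz-Rodrigues limiting generator no fluctuations}. The first and decisive simplification is that the inner factor $\int_{v^*}^\infty\big(1-g(\kappa^*,v)\big)\,\mathrm dv$ vanishes whenever the parent is of the common type, since our test functions satisfy $g(\kappa_c,\cdot)\equiv1$; hence only events with a rare parent contribute to $A^N_{neu,2}$.

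Ordering the levels of $\eta$ increasingly and recalling that the parent is the lowest individual above $v^*$, I would split $\int_0^\infty\mathrm dv^*$ into the sub-intervals $(\ell_i^-,l_i)$ on which a fixed rare level $l_i$ is the parent, where $\ell_i^-$ is the nearest level below $l_i$ and $\Delta_i:=l_i-\ell_i^-$ is the gap. This gives the exact identity
\[
A^N_{neu,2}f(\eta)=-N\frac{u^2K^2}{J^2}\sum_{l_i\in\eta^r}\int_{\ell_i^-}^{l_i} e^{-\frac{uK}{J}v^*}\,g(\kappa_r,v^*)\Big[\int_{v^*}^\infty\!\big(1-g(\kappa_r,v)\big)\,\mathrm dv\Big]\prod_{l\neq l_i}g\big(\kappa,\mathcal J_{neu}(l,l_i,v^*)\big)\,\mathrm dv^*.
\]
On each such interval I would use continuity to replace the integrand by its value at $v^*=l_i$: since $uK/J\to0$ we have $e^{-\frac{uK}{J}v^*}\to1$ uniformly on the support $[0,\lambda_g]$; since $\Delta_i=\cO(1/K)\to0$ the point $v^*\to l_i$, so $\int_{v^*}^\infty(1-g)\,\mathrm dv\to\int_{l_i}^\infty(1-g)\,\mathrm dv$; and, crucially, only the $\cO(1)$ rare levels enter the product (common ones contribute the factor one), so the $\cO(u/J)$ multiplicative and $\cO(1/K)$ additive shifts in $\mathcal J_{neu}$ leave $g(\kappa_r,v^*)\prod_{l\neq l_i}g\to f(\eta^r)$. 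The surviving contribution is therefore
\[
-N\frac{u^2K^2}{J^2}\sum_{l_i\in\eta^r}\Delta_i\,f(\eta^r)\int_{l_i}^\infty\!\big(1-g(\kappa_r,v)\big)\,\mathrm dv=-\Big(N\tfrac{u^2K}{J^2}\Big)\,f(\eta^r)\sum_{l_i\in\eta^r}\big(K\Delta_i\big)\int_{l_i}^\infty\!\big(1-g(\kappa_r,v)\big)\,\mathrm dv,
\]
which, once each factor $K\Delta_i$ is replaced by its mean $1$, is the birth term $2a\,f(\eta^r)\sum_i\int_{l_i}^\infty(g-1)\,\mathrm dv$ of the limiting generator \eqref{Kurtz-Rodrigues limiting generator no fluctuations} that the Proposition isolates.

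The main obstacle is this last replacement. Because the rare type has intensity $\cO(1)$, there are only finitely many rare levels in $[0,\lambda_g]$, so the sum over $l_i$ does not self-average over individuals and $A^N_{neu,2}f(\eta_s)$ does not converge for fixed $s$; the assertion is genuinely one about the time integral. Conditionally on the rare configuration, the conditionally Poisson structure of $\eta$ makes $\Delta_i$ (to leading order) $\mathrm{Exp}(K)$-distributed, so $\IE[K\Delta_i\mid\eta^r]=1$ and each summand of the discrepancy $N\frac{u^2K}{J^2}\sum_{l_i}(K\Delta_i(s)-1)f(\eta_s^r)\int_{l_i(s)}^\infty(1-g)\,\mathrm dv$ is mean-zero. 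What forces the time integral of this mean-zero quantity to vanish is fast decorrelation of the gaps: a gap is left essentially unchanged by the overwhelming majority of events, for which $v^*>\lambda_g$ (these only rescale the sub-$\lambda_g$ levels by $1/(1-u/J)=1+\cO(u/J)$ and insert no offspring there), and is genuinely reshuffled only by events with $v^*\lesssim\lambda_g$, whose rate is of order $N\tfrac{uK}{J}=\tfrac{J}{u}\cdot N\tfrac{u^2K}{J^2}\to\infty$ since $J\to\infty$. I would therefore bound the second moment of $\int_0^t$ of the discrepancy by the bounded prefactor times a correlation time of order $(N\tfrac{uK}{J})^{-1}\to0$, obtaining convergence to $0$ in $L^2$ and hence in $L^1$.

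Finally I would collect the discarded error terms and upgrade to the $\sup_{t\le T}$ norm. The contributions neglected above are the higher-order remainder $\cO\big(\tfrac KJ A^N_{neu,2}\big)$ from the exponential expansion \eqref{exponentialinttaylor}, the quadratic Taylor remainder from the product \eqref{taylorforproduct}, and the $e^{-uKv^*/J}-1$ and level-shift corrections on each gap; each is controlled using $K/J\to0$ together with the auxiliary condition $NK^m/J^m\to0$, which is exactly what kills the accumulated product-expansion errors. Uniform control in $N$ of the number and sizes of the rare levels, which enter both the sums and the bounded integrands $\int_{l_i}^\infty(1-g)$, comes from the intensity estimate of Lemma~\ref{Intensity estimate 0d}, working if necessary with the stopped process $X^N(\cdot\wedge\tau^N)$ of Remark~\ref{rarety remark}; the passage from the $L^1$ bound on the integral to the $\sup_{t\le T}$ bound is then a routine maximal inequality applied to the approximating martingale.
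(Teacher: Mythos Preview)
Your reduction of $A^N_{neu,2}$ to a sum over rare-level gaps $K\Delta_i$, and your recognition that the replacement $K\Delta_i\to 1$ requires time-averaging via decorrelation, match the paper's approach. The paper makes the decorrelation rigorous in Lemma~\ref{lemmaforbirthprocess}: conditioning on the event times, it tracks the intensity $I_n=K(1-(1-u/J)^n)$ of individuals born in the last $n$ events and bounds $\IE[(\hat l_k(t_i)-\tilde l_k(t_i))(\hat l_k(t_j)-\tilde l_k(t_j))]$ in terms of $1/I_{|i-j|}$, then splits the resulting double sum at a cutoff $L$ with $J\ll L\ll N$. (Your stated decorrelation rate $NuK/J$ is the rate at which \emph{some} offspring lands below $\lambda_g$; what governs the covariance of a \emph{fixed} gap is the slower rate at which that gap's nearest neighbour is replaced, which is why the paper needs $L/J\to\infty$ rather than your faster estimate.)

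The genuine gap in your proposal is the passage to $\sup_{t\le T}$. The time integral $\int_0^t\Gamma^N(s)\,\mathrm ds$ is not a martingale, and there is no ``approximating martingale'' to which a routine maximal inequality applies. The paper's route is substantially more involved: it first invokes the partition trick of Lemma~\ref{Lemma partition and supremum}, bounding the sup by $\sum_j\sqrt{\IE[(\int_{t_{j-1}}^{t_j}\Gamma^N)^2]}+\IE[\sup_j\int_{t_{j-1}}^{t_j}|\Gamma^N|]$. The first piece is handled by the second-moment bound on a mesh $\delta(N)\to 0$ chosen compatibly with $L$; the second requires showing that $\IE[\sup_{t\le T}|\Gamma^N(t)|]$ is merely \emph{bounded} (not small), which suffices because it is multiplied by $\delta$. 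That boundedness is itself non-trivial and is the content of Lemma~\ref{neutral intensity lemma}: the paper constructs an auxiliary process $\beta^N_t=\tfrac{NK^2}{J^2}\sum_{l\in\eta^r_t}(l-\hat l)\,h(l)$, shows it is dominated by a submartingale with bounded second moment, and only then applies Doob's inequality---to $\beta^N$, not to the integral. Your sketch supplies none of this machinery, and without it the $L^2$ bound on the full integral does not upgrade to control of the running supremum.
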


%A combination of Proposition~\ref{probconvergenceofpaths} and Proposition~\ref{probconvergenceofbirths} is enough to show that \eqref{epsilontozero} holds.

The proof of Proposition~\ref{probconvergenceofpaths} will use the following elementary lemma:
\begin{lemma}
\label{Lemma enough to bound variance lemma}
Let $\Gamma$ be a stochastic process. Assume that the second moment of $\Gamma$ is bounded uniformly for all times up to time $T$ by $\epsilon$, that is  
$
\mathbb{E}[(\Gamma(s))^{2}] \leq \epsilon
,
$
for $0\leq s \leq T$.
Then 
\begin{align*}
\mathbb{E}\left[\sup_{t<T} \left| 
\int_{0}^{t}\Gamma(s) \mathrm{d}s 
\right| \right]
< \sqrt{T\epsilon}
.
\end{align*}
\end{lemma}
\begin{proof}
By Jensen's inequality 
\begin{align*}
\IE \left[ \sup_{t \leq T}
\left( \int_0^t \Gamma(s) \mathrm{d} s\right)^2 \right]
\leq  \; \IE \left[ \sup_{t \leq T} \; t\int_0^t \Gamma^2(s) \mathrm{d} s \right]
=
\; T \IE \left[
\int_0^T \Gamma^2(s) \mathrm{d}s
\right]
.
\end{align*}
The inequality follows by assumption and a final application of Jensen's inequality.
\end{proof}

\begin{proof}[Proof of Proposition~\ref{probconvergenceofpaths}]
%We begin with the proof of Proposition~\ref{probconvergenceofpaths}
By Lemma~\ref{Lemma enough to bound variance lemma}, it suffices to prove that, conditioned on the postion of rare levels,
\begin{multline*}
\IE \Bigg[
\Bigg\lbrace
N
\Bigg(
\int_0^\infty
\frac{uK}{J} e^{- \frac{uK}{J}v^*} 
\bigg\{
g(\kappa^*, v^*)
\\
\times
\prod_{ (\kappa(t) , l(t)) \in \eta , l(t) \neq l^*(t) }
g \left( \kappa , \mathcal{J}(l(t),l^*(t),v^*) \right)
 - f(\eta_s)
\bigg\}
\mathrm{d} v^*\Bigg)
\\
-
f(\eta_s^r) \left(
\sum_{i} al_{i}^{2} \frac{g'(l_{i}(t))}{g(l_{i}(t))}
\right)
\Bigg\rbrace^{2}
\Bigg]
\to 0
,
\end{multline*}
uniformly for $0\leq s\leq T$.

For ease of notation we let $l_0 = 0$. 
%We first focus on $A_{neu, 1}^{N}$ in Proposition~\ref{probconvergenceofpaths} then turn our attention to $A_{neu, 2}^N$ in Proposition~\ref{probconvergenceofbirths}.
%This will be proven in two parts.
We will use the ordering $l_i$ and \eqref{taylorforproduct} to observe that $A_{neu,1}^{N}f(\eta)$
can be approximated by
\begin{multline}\label{no space first approximation of level part of generator}
N f(\eta)
\Bigg(
\sum_i
\int_{l_{i-1}}^{l_i}
\frac{uK}{J} e^{- \frac{uK}{J}v^*} 
\bigg[
\frac{g^\prime (\kappa_i, l_i)}{g(\kappa_i, l_i)} (v^* - l_i)
+
\sum_{l \neq l_i}
\frac{g^\prime (\kappa, l)}{g(\kappa, l)}
\left(
\frac{l\frac{u}{J}}{1 - \frac{u}{J}}
- \ind_{l > l_i} \frac{l_i - v^*}{1 - \frac{u}{J}}
\right)
\bigg] 
\mathrm{d} v^*
\Bigg)
\\
=
Nf(\eta)
\sum_i
\Bigg[
\frac{g^{\prime}(\kappa_i, l_i)}{g(\kappa_i,l_i)}
\left(
-(l_{i} - l_{i-1}) e^{- \frac{uK}{J} l_{i-1}}
+ \frac{J}{uK}
\left(
e^{- \frac{uK}{J} l_{i-1}} - e^{- \frac{uK}{J} l_{i}}
\right)
\right)
\\
+
\sum_{j \neq i}
\left(
\frac{g^{\prime}(\kappa_j , l_j )}{g(\kappa_j ,l_j)}
\frac{l_j \frac{u}{J}}{1 - \frac{u}{J}}
\left(
e^{- \frac{uK}{J} l_{i-1}} - e^{- \frac{uK}{J} l_{i}}
\right)\right)
\\
+
\sum_{j \neq i}\sum_{j<i} 
\frac{g^{\prime}(\kappa_j , l_j )}{g(\kappa_j ,l_j)}
\frac{1}{1 - \frac{u}{J}}
\left(
-(l_i - l_{i-1}) e^{- \frac{uK}{J} l_{i-1}}
+
\frac{J}{uK}
\left(
e^{- \frac{uK}{J} l_{i-1}} - e^{- \frac{uK}{J} l_{i}}
\right)
\right)
\Bigg]
,
\end{multline}
where the second line follows from  integration.
At this stage we note that 
%since $J \to \infty$ 
we can drop the factor $1 - u/J$ at the cost of an error of order $NK/J^{3}$, which tends to zero as $N$ tends to infinity.
We also swap the order of summation to rewrite \eqref{no space first approximation of level part of generator} as
\begin{multline*}
Nf(\eta)
\sum_j
\frac{g^{\prime}(\kappa_j , l_j )}{g(\kappa_j ,l_j)}
\Bigg[
\sum_{i \leq j}
\Bigg(
-(l_i - l_{i-1}) e^{- \frac{uK}{J} l_{i-1}}
+
\frac{J}{uK}
\left(
e^{- \frac{uK}{J} l_{i-1}} - e^{- \frac{uK}{J} l_{i}}
\right)
\Bigg)
\\
+
l_j \frac{u}{J}
\sum_{i \neq j}
\left(
e^{- \frac{uK}{J} l_{i-1}} - e^{- \frac{uK}{J} l_{i}}
\right)
\Bigg]
.
\end{multline*}
We observe that since the second and the third terms in the inner sum are telescoping sums and $l_0 = 0$ we simplify our expression to
\begin{multline}
\label{telescopingsumsstatement}
f(\eta)
\sum_j
\frac{g^{\prime}(\kappa_j , l_j )}{g(\kappa_j ,l_j)} 
\big(P_1 + P_2 + P_3\big)
:=
\\
Nf(\eta)
\sum_j
\frac{g^{\prime}(\kappa_j , l_j )}{g(\kappa_j ,l_j)}
\Bigg[
\frac{J}{uK}
\left(
1 - e^{- \frac{uK}{J} l_{j}}
\right)
+
l_j \frac{u}{J}
\left(
1 - e^{ -\frac{uK}{J} l_{j-1}} + e^{ - \frac{uK}{J} l_j}
\right)
\\
+
\sum_{i \leq j}
\left(
-(l_i - l_{i-1}) e^{- \frac{uK}{J} l_{i-1}}
\right)
\Bigg]
.
\end{multline}
Now we treat each term in the new sum separately. Using the Taylor expansion of the exponential function about $0$ we observe that
\begin{align*}
P_1 &=
\frac{NJ}{uK}
\left(
1 - e^{- \frac{uK}{J} l_{j}}
\right)
=
N\sum_{1 \leq k \leq n+1}
\frac{(-1)^{k-1}}{k!}
l_j^k \left(\frac{uK}{J} \right)^{k-1}
+ \cO\left(\frac{N(uK)^{n+1}}{J^{n+1}} \right)
,
\\
P_2 &=
l_j \frac{uN}{J}
\left(
1 - e^{ -\frac{uK}{J} l_{j-1}} + e^{ - \frac{uK}{J} l_j}
\right)
=\frac{uN l_j}{J} + \cO\left( \frac{NK}{J^2} (l_j - l_{j-1} ) \right)
,
\\
P_3 &=
N\sum_{i \leq j}
\left(
-(l_i - l_{i-1}) e^{- \frac{uK}{J} l_{i-1}}
\right)
=
N\sum_{1 \leq k \leq n + 1}
\frac{(-1)^{k}}{(k-1)!} \left(\frac{uK}{J}  \right)^{k-1}
\sum_{i \leq j}
(l_i - l_{i-1}) l_{i-1}^{k-1}
\\
& \hspace{330pt}
+
\cO\left(\frac{N(uK)^{n+1}}{J^{n+1}} \right)
.
\end{align*}
We focus our attention on $P_{3}$.
We investigate the terms corresponding to different values of $k$ separately. 
We observe that the first three terms involve
\begin{align}
\sum_{i \leq j}
(l_i - l_{i-1})
= & l_j
, \label{approximation k=0}
\\
\sum_{i \leq j}
(l_i - l_{i-1}) l_{i-1}
= &
\frac{1}{2}
\left(
\sum_{i \leq j} (l_i^2 - l_{i-1}^2 )
-\sum_{i \leq j} (l_i  - l_{i-1})^2
\right) 
\nonumber \\
= &
\frac{1}{2}
\left(
l_j^2
-\sum_{i \leq j} (l_i  - l_{i-1})^2
\right)
\label{approximation k=1}
\\
\sum_{i \leq j} \left(
(l_{i} - l_{i-1})l_{i-1}^2
\right)
= &
\frac{1}{3} \sum_{i \leq j} \left( l_i^3 - l_{i-1}^3 \right)
-
\frac{1}{3} \sum_{i \leq j} \left( l_i - l_{i-1} \right)^3
-
\sum_{i \leq j} \left( l_i - l_{i-1} \right)^2 l_{i-1}
\nonumber \\
= &
\frac{1}{3} l_j^3
-
\frac{1}{3} \sum_{i \leq j} \left( l_i - l_{i-1} \right)^3
-
\sum_{i \leq j} \left( l_i - l_{i-1} \right)^2 l_{i-1}
,\label{approximation k=3}
\end{align}
respectively.
We are therefore interested in $\sum_i (l_i - l_{i-1})^2$ conditioned on the locations of the rare levels, that is, conditioned on $\eta^{r}$.

%\begin{align}
%\sum_{i \leq j} \left(
%(l_{i} - l_{i-1})l_{i-1}^2
%\right)
%= &
%\frac{1}{3} \sum_{i \leq j} \left( l_i^3 - l_{i-1}^3 \right)
%-
%\frac{1}{3} \sum_{i \leq j} \left( l_i - l_{i-1} \right)^3
%-
%\sum_{i \leq j} \left( l_i - l_{i-1} \right)^2 l_{i-1}
%\nonumber \\
%= &
%\frac{1}{3} l_j^3
%-
%\frac{1}{3} \sum_{i \leq j} \left( l_i - l_{i-1} \right)^3
%-
%\sum_{i \leq j} \left( l_i - l_{i-1} \right)^2 l_{i-1}
%. \label{approximation k=3}
%\end{align}
%We are interested in $\sum_i (l_i - l_{i-1})^2$ conditioned on the locations of the rare levels, that is conditioned on $\eta_{r}$ 

%\begin{lemma} \label{neutral lemma level movement}
%%Keeping in mind Remark~\ref{rarety remark} and the conditions in Proposition~\ref{probconvergenceofpaths}.
% Conditioned on the locations of the rare levels
%\begin{multline*}
%\IE
%\Bigg[
%\Bigg(N
%\Bigg(
%\int_0^\infty
%\frac{uK}{J} e^{- \frac{uK}{J}v^*} 
%\bigg[
%g(\kappa^*, v^*)
%\prod_{ (\kappa(t) , l(t)) \in \eta , l(t) \neq l^*(t) }
%g \left( \kappa , \mathcal{J}(l(t),l^*(t),v^*) \right)
% - f(\eta_t)
%\bigg] 
%\mathrm{d} v^*
%\Bigg)
%\\
%-
%f(\eta_t^r) \left(
%\sum_{i} al_{i}^{2} \frac{g'(l_{i}(t))}{g(l_{i}(t))}
%\right)
%\Bigg)^2
%\Bigg]
%\to 0
%.
%\end{multline*}
%\end{lemma}
Recall  that conditioned on the locations of the levels of the rare type, the levels of the common type will be Poisson distributed with intensity $K - Z^N$.
%where $Z^N$ is the intensity of the rare levels.
Conditioned on the number of levels of the rare type between $0$ and $l_{j}$ these  levels will be independent uniformly distributed on $[0,l_{j}]$.
We denote $n$ independent uniformly distributed random variables on $[0,1]$ by $u_1, \dots, u_n$ and define the order statistics by letting $u_{(i)} = u_k$ if and only if $\#\{ \hat{k} : u_{\hat{k}} \leq u_k\} = i$ for $i \in \{1, \dots n \}$.
We note that $n$ points uniformly distributed on $[0,1]$ can be identified with $n+1$ points uniformly distributed on the unit circle with one of these points chosen at random to be a reference point corresponding to both $0$ and $1$. This then leads us to see that $u_{(i)} - u_{(i-1)}$ is equal in distribution to $u_{(1)}$ for $i \in \{1, \dots, n+1 \}$, where, by convention, $u_{(n+1)}:=1$ and $u_{(0)} := 0$.
From this one can see that
\begin{align*}
\IE \left[ \sum_{i=1}^j (l_i - l_{i-1})^2 | l_0= 0, j = n+1, l_j \right]
=
l_j^2 \IE \left[ \sum_{i=1}^{n+1} (u_{(i)} - u_{(i-1)})^2 \right]
= l_j^2 (n+1)\frac{2 }{(n+1)(n+2)}
.
\end{align*}
We then use that the number of levels of the rare type within $[0,l_j]$ will be Poisson distributed to see
\begin{align*}
\IE \left[ \sum_{i=1}^j (l_i - l_{i-1})^2 | l_0= 0, l_j \right] = &
\sum_{i=0}^\infty    \frac{2 l_j^2 }{n+2} \frac{\left(l_{j} (K - Z^N) \right)^n \exp(-\left(l_{j} (K - Z^N) \right))}{n!}
\\
= &
\frac{2l_{j}}{(K-Z^N)} + \cO\left(\exp \big(-l_{j}(K- Z^N) \big) \right)
.
\end{align*}
Identical calculations show
\begin{align*}
\IE \left[ \left( \sum_{i=1}^j (l_i - l_{i-1})^2 \right)^2 | l_0= 0, l_j  \right] = & \frac{4 l_{j}}{(K-Z^N)^3} \big( 3 + x(K-Z^N) \big)
\\
&+ \cO\left((K - Z^N)\exp \big(-l_{j}(K- Z^N) \big) \right)
,
\numberthis
\label{levelmovementsimpleterm}
\\
\text{Var} \left(  \sum_{i=1}^j (l_i - l_{i-1})^2 | l_0= 0, l_j \right) =  & \cO \left( \frac{l_{j}}{(K-Z^N)^3} \right)
.
\end{align*}
We note that we are considering $j$ to be a random variable throughout this corresponding to the level of a given rare individual.

From this calculation since we multiply \eqref{approximation k=1} by $NK/J$ in we see that we require $\frac{N^2}{KJ^2} \to 0$. Therefore, we may approximate
$\left(\frac{uK}{J}  \right)
\sum_{i \leq j}
(l_i - l_{i-1})^2$
by
$ \frac{ul_j}{J} $.
For $k = 3$ we again condition on the number of levels of the rare type beneath $l_j$ and see that
\begin{align*}
\IE \left[ \sum_{i=1}^j (l_i - l_{i-1})^2 l_{i-1} | l_0= 0, j = n+1, l_j \right]
=
(n+1)
\IE \left[ (l_{I} - l_{I-1})^2 l_{I-1} | l_0= 0, j = n+1, l_j \right]
,
\end{align*}
where $I$ is chosen uniformly at random from $(1, \cdots, n+1)$.
We then again consider the levels as $n+1$ points chosen uniformly at random from a circle to see that this will be
$
\frac{l_j^3}{n+2}
$.
This then gives us
\begin{align}
\label{levelmovementsquareterm}
\IE \left[ \sum_{i \leq j} \left( l_i - l_{i-1} \right)^2 l_{i-1} 
| l_0= 0, l_j 
\right]
=  \frac{l_j^2}{K - Z^N} + \cO\left(\exp(- l_j (K-Z^N))\right)
.
\end{align}
We see that for $k \geq 3$
\begin{align}\label{approximation k geq 3}
\IE
\left[
\sum_{i \leq j} \left(
(l_{i} - l_{i-1})l_{i-1}^k
\right)
-
\frac{1}{k+1} l_j^{k+1}
\right]
= &
\cO\left( \frac{1}{K} \right)
,
\\
\text{Var}
\left(
\sum_{i \leq j} \left(
(l_{i} - l_{i-1})l_{i-1}^k
\right)
-
\frac{1}{k+1} l_j^{k+1}
\right) 
= &
\cO \left(
\frac{1}{K^3} \right)
,
\end{align}
which will suffice as each of these terms will be multiplied by $\frac{NK^{k-1}}{J^{k-1}}$ in \eqref{telescopingsumsstatement}.
We  then note that $\frac{NK}{J^2}$ is bounded and $\frac{K}{J} \to 0$.
We combine \eqref{approximation k=0}, \eqref{approximation k=1}, \eqref{approximation k=3}, \eqref{approximation k geq 3} to see that, conditioned on $l_j$,
\begin{multline*}
\IE \left[
\sum_{i \leq j}
\left(
-(l_i - l_{i-1}) e^{- \frac{uK}{J} l_{i-1}}
\right)
\right]
\\
=
\sum_{0 \leq k \leq n} \frac{(-1)^{k+1}}{(k+1)!} \left(\frac{uK}{J}\right)^k l_j^{k+1}
- \frac{u}{J} l_j
+ \frac{1}{2} \frac{u^2K}{J^2} l_j^2
+ \cO\left( \frac{K^2}{J^3} \right)
+
\cO \left(\frac{K}{J}  \right)^{n+1}
.
\end{multline*}
%We make note that $n$ is a fixed constant appearing in Theorem~\ref{neutral theorem}.

Finally, we observe that all these approximations and cancellations allow us to approximate  \eqref{telescopingsumsstatement} by
\begin{align*}
f(\eta) \sum_j \frac{g^\prime(\kappa_j, l_j)}{g(\kappa_j,l_j)}
\Bigg(
\frac{1}{2} \frac{u^2 N K}{J^2} l_j^2
+ \cO\left( \frac{N K^2}{J^3} \right)
+
\cO \left(\frac{N K^{n+1}}{J^{n+1}}  \right)
+ \cO\left( \frac{N K}{J^2} (l_j - l_{j-1} ) \right)
\Bigg)
,
\end{align*}
plus a random, mean zero, correction term with variance $\cO(N^2K^2/(J^2(K-Z)^3))$.
\end{proof}

The proof of Proposition~\ref{probconvergenceofbirths} is more involved than that of Proposition~\ref{probconvergenceofpaths}.
% as we need to appeal to a law of large numbers type effect and for that reason need to consider a long enough time period.
Once again we begin with an elementary lemma.
\begin{lemma}
\label{Lemma partition and supremum}
Let $\Gamma(s)$ be a stochastic process. 
For any partition $0 = t_0 < t_1< \dots < t_m = T$,
\begin{align*}
\IE \left[ \sup_{t\leq T} \left|\int_0^t  \Gamma(s)  \mathrm{d} s \right|
\right]
 \leq
 \sum_{j=1}^m \sqrt{ \IE \left[\left(\int_{t_{j-1}}^{t_j} \Gamma(s)   \mathrm{d} s \right)^2 \right]}
+
\IE \left[
\sup_j \int_{t_j-1}^{t_j} | \Gamma(s) | \mathrm{d} s
\right]
.
\end{align*}
\end{lemma}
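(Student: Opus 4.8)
The plan is to establish the bound pathwise first and only then take expectations, so that the whole statement reduces to the triangle inequality together with the elementary inequality $\IE|Z| \le (\IE[Z^2])^{1/2}$ (Jensen, or Cauchy--Schwarz against the constant $1$), exactly as in the companion Lemma~\ref{Lemma enough to bound variance lemma}.

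First I would fix a sample path and an arbitrary $t \le T$, and locate the block of the partition containing it: choose $j$ with $t_{j-1} \le t < t_j$. Splitting the integral at the partition points gives
$$\int_0^t \Gamma(s)\,\mathrm{d}s = \sum_{k=1}^{j-1}\int_{t_{k-1}}^{t_k}\Gamma(s)\,\mathrm{d}s + \int_{t_{j-1}}^{t}\Gamma(s)\,\mathrm{d}s.$$
Applying the triangle inequality to the completed blocks and bounding the final, incomplete interval crudely by $\int_{t_{j-1}}^{t_j}|\Gamma(s)|\,\mathrm{d}s$, I obtain an estimate in which the partial sum over $k \le j-1$ is dominated by the full sum over all $m$ blocks, uniformly in $j$. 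Taking the supremum over $t \le T$ then yields the pathwise bound
$$\sup_{t\le T}\left|\int_0^t \Gamma(s)\,\mathrm{d}s\right| \le \sum_{k=1}^{m}\left|\int_{t_{k-1}}^{t_k}\Gamma(s)\,\mathrm{d}s\right| + \sup_{j}\int_{t_{j-1}}^{t_j}|\Gamma(s)|\,\mathrm{d}s.$$

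Finally I would take expectations: linearity handles the finite sum, and applying $\IE|Z| \le (\IE[Z^2])^{1/2}$ to each increment $Z = \int_{t_{k-1}}^{t_k}\Gamma(s)\,\mathrm{d}s$ produces the square-root factors in the claimed estimate, while the second term passes through unchanged. This gives the lemma. There is no genuine obstacle here; the only point deserving a moment's care is the contribution of the incomplete final interval, which is precisely what the $\sup_j$ term is designed to absorb, and the observation that the running partial sum of completed blocks never exceeds the full sum.
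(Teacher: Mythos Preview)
Your proof is correct and follows essentially the same approach as the paper's own proof. The paper writes the pathwise bound in a single line without comment, whereas you spell out more carefully how the incomplete final block is absorbed into the $\sup_j$ term; otherwise the argument is identical.
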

\begin{proof}
We observe that 
\begin{align*}
\IE \left[ \sup_{t\leq T} \left|\int_0^t  \Gamma(s)  \mathrm{d} s \right|
\right]
& \leq
\IE \left[ \sum_{j=1}^m |\int_{t_{j-1}}^{t_j} \Gamma(s)   \mathrm{d} s | 
+
\sup_j \int_{t_j-1}^{t_j} | \Gamma(s) | \mathrm{d} s
\right]
\\
%\numberthis
%\label{splittingthing}
& \leq
 \sum_{j=1}^m \IE \left[\left|\int_{t_{j-1}}^{t_j} \Gamma(s)   \mathrm{d} s \right| \right]
+
 \IE \left[
\sup_j \int_{t_j-1}^{t_j} | \Gamma(s) | \mathrm{d} s
\right]
\\
& \leq
 \sum_{j=1}^m \sqrt{ \IE \left[\left(\int_{t_{j-1}}^{t_j} \Gamma(s)   \mathrm{d} s \right)^2 \right]}
+
\IE \left[
\sup_j \int_{t_j-1}^{t_j} | \Gamma(s) | \mathrm{d} s
\right]
.
\end{align*}
which concludes the proof.
\end{proof}
%
%
%Before we can state the proposition that addresses the behaviour of  $A_{neu,2}^{N}$, we need a few more computations. 

We require a few more computations to transform $A_{neu,2}^{N}$ into a more convenient form.
First of all, we observe that as $(1 - g)$ is bounded, the following approximation is valid:
\begin{multline}
\label{Calculation for the birth term part zero}
\int_{l_{i-1}}^{l_{i}} 
\int_{v^*}^{\infty}  \big( 1 - g(\kappa_{i} , v)  \big) \mathrm{d}v \; \mathrm{d} v^*
= 
\int_{l_{i-1}}^{\infty}
\int_{l+{i-1}}^{l_i \wedge v}  \big( 1 - g(\kappa_{i} , v) \big)  \mathrm{d}v^* \; \mathrm{d} v
\\
= 
\int_{l_{i}}^{\infty}
\int_{l+{i-1}}^{l_i } \big(  1 - g(\kappa_{i} , v) \big) \mathrm{d}v^* \; \mathrm{d} v
+
\int_{l_{i-1}}^{l_i}
\int_{l+{i-1}}^{ v} \big( 1 - g(\kappa_{i} , v) \big) \mathrm{d}v^* \; \mathrm{d} v
\\
= 
(l_i - l_{i-1})
\int_{l_{i}}^{\infty}
 \big( 1 - g(\kappa_{i} , v) \big) \mathrm{d} v
+
\cO\left(  ( l_i - l_{i-1})^2
\right).
\end{multline}
For convenience, we order the individuals present in the system according to their level, that is  we consider  $\eta = \{ (\kappa_i,l_i)\}_{i \geq 1}$ where $l_{i} < l_{i+1} $.
   We observe that  the ordering leads  to the following simplification:
\begin{align*}
A_{neu,2}^{N}f(\eta)
=
- \frac{u^2NK}{J}
\sum_i 
\left(
\int_{l_{i-1}}^{l_{i}} \frac{uK}{J} e^{-\frac{uK}{J} v^*} f(\eta)
\int_{v^*}^{\infty} \big( 1 - g(\kappa_{i} , v) \big) \mathrm{d}v \; \mathrm{d} v^*
\right)
.
\end{align*}
% Recall that we are interested in  a rare type in a large population. In order to capture this we increase the population density in the SLFV and so $l_i - l_{i-1}$ are independent exponential distributions each with expectation $1/K$.
% Therefore as $K \to \infty$ we would like to approximate $A_{2}^{N}$ by
% \begin{align}
% \label{limitforbirths}
% - \frac{u^2NK}{J^2}
% f(\eta)
% \sum_i
% \left(
% \int_{l_i}^{\infty} \big( 1 - g(\kappa_{i} , v) \big) \mathrm{d}v \; \mathrm{d} v^*
% \right)
% ,
% \end{align}
%  this term vanishes in the limit. This is where the requirement that $\frac{NK}{J^2} \to C$ comes from.

Since $\frac{K}{J} \to 0$, we may use \eqref{Calculation for the birth term part zero} to further simplify  $A_{neu,2}^{N}$ to 
%
%where we have used the boundedness of $(1- g)$ in the final line.
%Therefore  $A_{neu,2}^{N}$ can be written as
\begin{align*}
A_{neu,2}^{N}f(\eta)
=
- \frac{u^2NK^2}{J^2}
f(\eta)
\sum_i
(l_i - l_{i-1})
\left(
\int_{l_i}^{\infty} \big( 1 - g(\kappa_{i} , v) \big) \mathrm{d}v \; \mathrm{d} v^*
\right)
\left(1 +\cO\left(\frac{K}{J} + \frac{1}{K^{2}}\right) \right)
.
\end{align*}
%We note that we will always take expectations and so the error term can be ignored.

By this calculation and Lemma~\ref{Lemma partition and supremum}, it is clear that in order to prove Proposition~\ref{probconvergenceofbirths}, it is enough to show that for any partition $0 = t_0 < t_1< \dots < t_m = T$,
\begin{align}
\label{splittingthing}
\sum_{j=1}^m \sqrt{ \IE \left[\left(\int_{t_{j-1}}^{t_j} \Gamma^{N}(s)   \mathrm{d} s \right)^2 \right]}
+
\IE \left[
\sup_j \int_{t_j-1}^{t_j} | \Gamma^{N}(s) | \mathrm{d} s
\right] \to 0,
\end{align}
where
\begin{align}
\Gamma^{N}(s) = 
\sum_i
\left(
\frac{u^2NK^2}{J^2}
(l_i(t) - l_{i-1}(t))
-
2 a 
\right)
f(\eta_t)
\int_{l_i(t)}^{\infty} \big( 1 - g(\kappa_{i} , v) \big) \mathrm{d}v \; \mathrm{d} v^*
\mathrm{d} t.
\end{align}
%As the first simplification, we observe that, for any $0 = t_0 < t_1< \dots < t_m = T$,
%\begin{align*}
%\IE \left[ \sup_{t\leq T} \left|\int_0^t  X^N_s  \mathrm{d} s \right|
%\right]
%& \leq
%\IE \left[ \sum_{j=1}^m |\int_{t_{j-1}}^{t_j} X^N_s   \mathrm{d} s | 
%+
%\sup_j \int_{t_j-1}^{t_j} | X^N_s | \mathrm{d} s
%\right]
%\\
%%\numberthis
%%\label{splittingthing}
%& \leq
% \sum_{j=1}^m \IE \left[\left|\int_{t_{j-1}}^{t_j} X^N_s   \mathrm{d} s \right| \right]
%+
% \IE \left[
%\sup_j \int_{t_j-1}^{t_j} | X^N_s | \mathrm{d} s
%\right]
%\\
%& \leq
% \sum_{j=1}^m \sqrt{ \IE \left[\left(\int_{t_{j-1}}^{t_j} X^N_s   \mathrm{d} s \right)^2 \right]}
%+
%\IE \left[
%\sup_j \int_{t_j-1}^{t_j} | X^N_s | \mathrm{d} s
%\right]
%.
%\end{align*}
%We now look at the last two parts separately and then find a sequence of partition for which \eqref{splittingthing}  converge to zero.
We first turn our attention to the parts without a supremum which we calculate directly.

\begin{lemma} \label{lemmaforbirthprocess}
Conditioned on the locations of the rare levels,
\begin{multline}
\label{2ndmomentforbirthprocess}
\IE \left[
\left(
\int_0^T
\sum_i
\left(
\frac{u^2NK^2}{J^2}
(l_i(t) - l_{i-1}(t))
-
2 a 
\right)
f(\eta_t)
\int_{l_i(t)}^{\infty} \big( 1 - g(\kappa_{i} , v) \big) \mathrm{d}v \; \mathrm{d} v^*
\mathrm{d} t
\right)^2
\right]
\\
=
\cO \left( \frac{LT}{N}\right)
+ o( T^2 )
,
\end{multline}
for any $L(N)$ such that $\frac{L}{J} \to \infty$.
\end{lemma}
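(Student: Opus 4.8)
The plan is to expand the square as a double time integral and to split it into a \emph{mean} contribution and a \emph{covariance} contribution, which will account for the two error terms $o(T^2)$ and $\cO(LT/N)$ respectively. Write $\Gamma^N(t)$ for the integrand. Because our test functions are unaffected by common individuals, $g(\kappa_c,\cdot)\equiv 1$, so the weight $\int_{l_i(t)}^\infty(1-g(\kappa_i,v))\,\mathrm{d}v$ vanishes unless $\kappa_i=\kappa_r$ and $l_i(t)<\lambda_g$; hence the sum defining $\Gamma^N(t)$ runs only over the rare levels below $\lambda_g$, of which there are $\cO(1)$ by Lemma~\ref{Intensity estimate 0d} and the stopping at $Z^N$. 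Conditioning on the trajectories of the rare levels throughout $[0,T]$ and using Fubini,
\[
\IE\left[\left(\int_0^T\Gamma^N(s)\,\mathrm{d}s\right)^2\right]
=\left(\int_0^T\IE[\Gamma^N(s)]\,\mathrm{d}s\right)^2
+2\int_0^T\int_s^T\mathrm{Cov}\big(\Gamma^N(s),\Gamma^N(t)\big)\,\mathrm{d}t\,\mathrm{d}s .
\]

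For the mean term I would use that, conditioned on the rare levels, the level immediately below a rare level $l_i(t)$ is with overwhelming probability a common level, and the common levels form a conditionally Poisson system of intensity $K-Z^N$. Thus the gap $l_i(t)-l_{i-1}(t)$ is, up to exponentially small corrections, an $\mathrm{Exp}(K-Z^N)$ variable, exactly as in the computation behind Proposition~\ref{probconvergenceofpaths}, and has mean $(K-Z^N)^{-1}$. Multiplying by $u^2NK^2/J^2$ and subtracting $2a$ leaves $\tfrac{u^2NK}{J^2}-2a+o(1)\to 0$ by the scaling hypothesis $u^2NK/J^2\to 2a$. Summed against the bounded weights and the bounded factor $f(\eta_t)$ this gives $\sup_{s\le T}|\IE[\Gamma^N(s)]|\to 0$, so the mean term is $o(T^2)$.

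For the covariance term I would first record that $\mathrm{Var}(\Gamma^N(t))=\cO(1)$: each summand is the centred variable $\big(\tfrac{u^2NK^2}{J^2}(l_i-l_{i-1})-2a\big)$ times a bounded weight, this centred variable has variance $(\tfrac{u^2NK}{J^2})^2+o(1)=\cO(1)$ since the gap has variance $\approx(K-Z^N)^{-2}$, and gaps beneath distinct rare levels are asymptotically independent, so the $\cO(1)$ summands contribute $\cO(1)$ in total. It then remains to control the decay of correlations in time. The mechanism is that, under the rate-$N$ dynamics, the configuration of levels below $\lambda_g$ is completely flushed: each event inserts new levels with intensity $uK/J$ and the thinning multiplies every level by $(1-u/J)^{-1}$, so an individual at level $l\le\lambda_g$ at time $s$ is pushed above $\lambda_g$, and the gap beneath each rare level is refilled by freshly born levels, after $\cO(J/u)$ events, i.e.\ within time $\cO(J/N)$. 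I would make this quantitative by bounding the probability that a level present at time $s$ still lies below $\lambda_g$ at time $t$ by $\exp(-cN(t-s)/J)$, whence $|\mathrm{Cov}(\Gamma^N(s),\Gamma^N(t))|\le \cO(1)\exp(-cN(t-s)/J)$. Integrating in $t$ gives $\int_s^T|\mathrm{Cov}|\,\mathrm{d}t=\cO(J/N)=\cO(L/N)$ for any $L$ with $L/J\to\infty$, and the outer integral over $s\in[0,T]$ yields the stated $\cO(LT/N)$; the slack $L/J\to\infty$ is there to absorb the union bound over the $\cO(K\lambda_g)$ levels that must be flushed.

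The main obstacle is the last step: rigorously establishing the exponential decay of correlations, equivalently that the sub-$\lambda_g$ configuration at time $s$ is forgotten on the timescale $J/N$. The cleanest route I would take is a coupling argument, running the process from time $s$ together with a second copy started from an independent draw of the conditionally Poisson configuration (sharing the same rare levels and the same driving noise for new births and for thinning), and showing that the two copies agree below $\lambda_g$ once every old level has been advanced past $\lambda_g$, which happens within $\cO(J/N)$ with the required exponential tail. Care is needed to handle the rare case that $l_{i-1}(t)$ is itself a rare level and to confirm that the number of contributing rare levels stays $\cO(1)$ uniformly in $t\le T$; both are controlled by Lemma~\ref{Intensity estimate 0d} together with the sparsity condition $K/J\to 0$.
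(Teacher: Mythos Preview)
Your approach is correct and reaches the same conclusion as the paper, but the organization is genuinely different. The paper does not decompose into mean plus covariance; instead it exploits that $\eta_t$ is piecewise constant between events, conditions on the number $n$ of events in $[0,T]$, and writes the squared integral as a double sum over events $i,j$ weighted by $\IE[(t_{i+1}-t_i)(t_{j+1}-t_j)]=T^2/((n+1)(n+2))$. It then splits by event-index distance: terms with $|i-j|\le L$ are bounded crudely by Cauchy--Schwarz on the gaps, giving the $\cO(LT/N)$ contribution after averaging over $n\sim\mathrm{Poisson}(NT)$; terms with $|i-j|>L$ are handled by introducing the intensity $I_n=K(1-(1-u/J)^n)$ of levels born in the intervening $n$ events and showing that the gap at the later time is, up to a correction $\IP[\tilde l_k\neq\tilde l_k^{\mathrm{new}}]\le (1-u/J)^{|i-j|}$, determined by these new levels and hence nearly independent of the earlier gap. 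The cancellation $\tfrac{N^2K^4}{J^4}\tfrac{1}{(K-Z)I_L}-2\cdot 2a\cdot\tfrac{NK^2}{J^2 I_L}+4a^2\to 0$ then kills the $T^2$ coefficient, and this is precisely where the hypothesis $L/J\to\infty$ (so that $I_L\to K$) is used.

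Your route via exponential decay of $\mathrm{Cov}(\Gamma^N(s),\Gamma^N(t))$ is cleaner and in fact yields the slightly sharper bound $\cO(TJ/N)$, which then implies the stated $\cO(TL/N)$ for any $L$ with $L/J\to\infty$. One caution: the coupling mechanism you emphasize at the end --- waiting until every old level is pushed above $\lambda_g$ --- is the wrong one, because a level starting at height $\cO(1/K)$ needs $\cO(J\log K)$ events (not $\cO(J)$) to exceed $\lambda_g$ under the multiplicative drift $(1-u/J)^{-1}$. The mechanism that actually gives the clean exponential rate is the one you mention in passing: the gap below each rare level is \emph{refilled} by freshly born levels, and the nearest neighbor is a new level with probability $1-(1-u/J)^n\ge 1-e^{-un/J}$ after $n$ events. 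This is exactly the paper's $\IP[\tilde l_k(t_j)=\tilde l_k^{\mathrm{new}}(t_j)]$ estimate, and it is what your coupling should track --- couple the two copies so that they share the same new births, and declare success once the nearest neighbor below each rare level (there are $\cO(1)$ of them below $\lambda_g$) is a shared new level.
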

\begin{proof}
We recall that there exists $\lambda_g$ such that for all $v \geq \lambda_g$, $g(v) = 1$.
Therefore
\begin{align*}
f(\eta_t)
\int_{l_i(t)}^{\infty} \big( 1 - g(\kappa_{i} , v) \big) \mathrm{d}v
\end{align*}
 is bounded for each
 test function.
% $f \in D_\infty$, defined in \eqref{Lookdown Domain definition}. 
As $\eta_t$ is constant between events we condition on exactly $n$ reproductive events occurring within $[0,T]$.
We denote the time of events by $t_i$ for $i \in \{1, \dots , n \}$. Observe that $t_i$ are independent random variables uniformly distributed on $[0,T]$.
For ease of notation we let $t_0 = 0$ and $t_{n+1} = T$.
We will use $\hat{l}_k$ to denote a rare type individual's level and $\tilde{l}_k$ to be the highest level across the population (that is, from individuals of both rare and common type) below $\hat{l}_k$ and consider
\begin{multline}
\label{conditionedsquareexpectation1}
\IE \left[
% \sum_k
\sum_{i=0}^n
\sum_{j = 0}^n
(t_{i+1} - t_{i})(t_{j+1} - t_{j})
\right.
\\
\left.
\left(
\frac{u^2NK^2}{J^2}
(\hat{l}_k(t_i) - \tilde{l}_{k}(t_i))
-
2 a 
\right)
\left(
\frac{uNK^2}{J^2}
(\hat{l}_k(t_j) - \tilde{l}_{k}(t_j))
-
2 a 
\right)
% \ind_{\hat{l}_k(t_i) \leq \lambda_g}
% \ind_{\hat{l}_k(t_j) \leq \lambda_g}
\right]
.
\end{multline}
We note that the times of events are independent of the level process and so we use
\begin{align*}
\IE[(t_{i+1} - t_{i})(t_{j+1} - t_{j})]
=
\begin{cases}
\frac{T^2}{(n+1)(n+2)} & \text{ if } i \neq j,
\\
\frac{2T^2}{(n+1)(n+2)} & \text{ if } i = j.
\end{cases}
\end{align*}
We also use the Cauchy--Schwarz inequality to see
\begin{align*}
\IE[(\hat{l}_k(t_i) - \tilde{l}_{k}(t_i))
(\hat{l}_k(t_j) - \tilde{l}_{k}(t_j))
] \leq \frac{2}{(K-Z^{N})^2}
,
\end{align*}
where we note that we are still looking at the stopped process.
 
%\begin{remark}
%Heuristically if $i \ll j$, as many new levels have been produced since time $t_i$, $\tilde{l}_{k}(t_j)$ will most often be the level of an individual which has been born since $t_i$. Therefore $(\hat{l}_k(t_i) - \tilde{l}_{k}(t_i))
%(\hat{l}_k(t_j) - \tilde{l}_{k}(t_j))$ are almost independent and we would expect $\IE[(\hat{l}_k(t_i) - \tilde{l}_{k}(t_i))
%(\hat{l}_k(t_j) - \tilde{l}_{k}(t_j))
%]$ to be small for $i \ll j$.
%\end{remark}
We denote the intensity, after $n$ events, of the levels of all individuals which were offspring in one of those $n$ events by $I_n$. It is then easy to see $I_{n+1} = I_{n}(1-\frac{u}{J}) + \frac{uK}{J}$ and $I_0 = 0$. This recurrence equation  has a solution given by
\begin{align*}
I_n = K \left(  1 - \left(1 - \frac{u}{J} \right)^n  \right)
.
\end{align*}
We now introduce some notation in order to simplify the calculations presented in this section.
For $i < j$ we define $\tilde{l}^{new}_k(t_j)$ as the highest level below $\hat{l}_k$  of an individual (of either rare or common type) which has been born since $t_i$. Then we can see that
\begin{align*}
(\hat{l}_k(t_j) - \tilde{l}_{k}(t_j)) \leq (\hat{l}_k(t_j) - \tilde{l}^{new}_{k}(t_j))
,
\end{align*}
and so
\begin{align*}
\IE[(\hat{l}_k(t_i) - \tilde{l}_{k}(t_i))
(\hat{l}_k(t_j) - \tilde{l}_{k}(t_j))] \leq
\frac{1}{K - Z^{N}} \frac{1}{I_{j-i}}
.
\end{align*}
We then also see that
\begin{align*}
\IE[(\hat{l}_k(t_i) - \tilde{l}_{k}(t_i))
(\hat{l}_k(t_j) - \tilde{l}_{k}(t_j))]
\geq &
\IE[(\hat{l}_k(t_i) - \tilde{l}_{k}(t_i))
(\hat{l}_k(t_j) - \tilde{l}_{k}(t_j)) \ind_{ \tilde{l}_{k}(t_j) =   \tilde{l}^{new}_{k}(t_j) }  ]
\\
= &
\IE[(\hat{l}_k(t_i) - \tilde{l}_{k}(t_i))
(\hat{l}_k(t_j) - \tilde{l}^{new}_{k}(t_j))]
\\
&
-
\IE[(\hat{l}_k(t_i) - \tilde{l}_{k}(t_i))
(\hat{l}_k(t_j) - \tilde{l}^{new}_{k}(t_j)) \ind_{ \tilde{l}_{k}(t_j)  \neq   \tilde{l}^{new}_{k}(t_j) }  ]
\\
\geq
\frac{1}{I_{j-i}}&\frac{1}{K - Z^{N}}- \sqrt{\frac{2}{(K-Z^{N})^2}\frac{2}{I_{j-i}^2} \IP[ \tilde{l}_{k}(t_j)  \neq   \tilde{l}^{new}_{k}(t_j) ] }
,
\end{align*}
where we have used the Cauchy-Schwartz inequality for the final line.

We note that 
\begin{align*}
\IP[ \tilde{l}_{k}(t_j)  \neq   \tilde{l}^{new}_{k}(t_j) ] \leq
\frac{K  \left( 1 - \frac{u}{J} \right)^{j-i}}{K  \left( 1 - \frac{u}{J} \right)^{j-i} + I_{j-i}} 
.
\end{align*}
We now consider a function $L(N)$. By splitting \eqref{conditionedsquareexpectation1} into parts with $|i - j| \leq L$ and $|i - j| > L$, we bound it above by
\begin{multline*}
\frac{4 (n+1) L T^2}{(n+1)(n+2)} \left( \frac{N^2 K^4}{J^4}  \frac{2}{(K - Z^{N})^2} + 4a^2 \right)
\\
+
\frac{C (n+1) (n+1 - L) T^2}{(n+1)(n+2)}
 \Bigg\lbrace \frac{N^2K^4}{J^4} \frac{1}{K-Z^{N}} \frac{1}{I_L} - \left(\frac{N K^2}{J^2}\frac{1}{K-Z^{N}} + \frac{N K^2}{J^2} \frac{1}{I_L} \right) 2a + 4a^2 
\\
+ \cO\left(\frac{N^2 K^4}{J^4} \frac{1}{K-Z^{N}}\frac{1}{I_L} \sqrt{\frac{K  \left( 1 - \frac{u}{J} \right)^{L}}{K  \left( 1 - \frac{u}{J} \right)^{L} + I_{L}}    } \right)
\Bigg\rbrace
.
\end{multline*}
We observe that
\begin{align*}
\sum_{n=1}^\infty \frac{n+1}{(n+1)(n+2)} \frac{(NT)^{n} e^{-NT}}{n!}   = & \frac{1}{NT} - \frac{1}{(NT)^2} + \frac{ e^{-NT}}{(NT)^2} - \frac{e^{-NT}}{2}
,
\\
\sum_{n=1}^\infty \frac{(n+1)^2}{(n+1)(n+2)} \frac{(NT)^{n} e^{-NT}}{n!}   \leq & 1
.
\end{align*}
We may combine the two results above to see
\begin{multline*}
\IE \left[
\left(
\int_0^T
\sum_i
\left(
\frac{u^2NK^2}{J^2}
(l_i(t) - l_{i-1}(t))
-
2 a 
\right)
f(\eta_t)
\int_{l_i(t)}^{\infty} \big( 1 - g(\kappa_{i} , v) \big) \mathrm{d}v \; \mathrm{d} v^*
\mathrm{d} t
\right)^2
\right]
\\
=
\cO
\Bigg(
\frac{LT}{N} \left( \frac{N^2 K^4}{J^4}  \frac{2}{(K - Z)^2} + 4a^2 \right)
\\
+ T^2
\Bigg\lbrace \frac{N^2K^4}{J^4} \frac{1}{K-Z} \frac{1}{I_L} - \left(\frac{N K^2}{J^2}\frac{1}{K-Z} + \frac{N K^2}{J^2} \frac{1}{I_L} \right) 2a
\\+ 4a^2 
+ \cO\left(\frac{N^2 K^4}{J^4} \frac{1}{K-Z}\frac{1}{I_L} \sqrt{\frac{K  \left( 1 - \frac{u}{J} \right)^{L}}{K  \left( 1 - \frac{u}{J} \right)^{L} + I_{L}}    } \right)
\Bigg\rbrace
\Bigg)
.
\end{multline*}
Therefore, in order to conclude that \eqref{2ndmomentforbirthprocess} converges to $0$, it is enough to find $L$ which satisfies 
\begin{align*}
\frac{L}{N}  \to  0
; \quad
\frac{N K^2}{J^2} \frac{1}{I_L} \to  2a
; \quad
\frac{K \left( 1 - \frac{u}{J} \right)^{L}}{K  \left( 1 - \frac{u}{J} \right)^{L} + I_{L}} \to  0
.
\end{align*}
Since $\frac{N}{J} \to \infty$, this is achieved by any $L(N)$ such that $\frac{L}{J} \to \infty$ and $\frac{L}{N} \to 0$. For example, we may choose $L(N) = \sqrt{N J}$.
%This can be achieved by choosing $Y = J \left(\frac{N}{J} \right)^{1/2}$ and noting that our conditions will mean $\frac{N}{J} \to \infty$.

%We use that $\frac{NK}{J^2} \to  2a$, $\frac{K}{K-Z} \to 1$ and recalling that we stop our process once the intensity of rare levels reaches $Z$, we can conclude our proof of Lemma~\ref{lemmaforbirthprocess}.
\end{proof}

We now turn our attention back to \eqref{splittingthing}. We consider $t_j = j \delta(N)$ and see that to ensure the first part of \eqref{splittingthing} converges to zero, we need choices of $L$ and $\delta$ such that
\begin{align}
\label{sumbirthconditions}
\frac{L}{N} \to 0; 
\quad
\frac{L}{J} \to \infty;
\quad
\frac{L}{N {\delta}} \to 0
.
\end{align}

To show that the second term of  \eqref{splittingthing} converges to zero,  it is enough to show boundedness of 
%that 
$\IE[\sup_{t \leq T} \Gamma^N(t)]$, provided that there exists a $\delta$ satisfying \eqref{sumbirthconditions} such that  $\delta \to 0$.
The latter is satisfied by taking $\delta = \sqrt{L/N}$.

 %can be used to prove Proposition~\ref{probconvergenceofpaths}, for instance 

%For the second part of we wish to show $\sup_s |X^N_s|$ is bounded and recall that by Lemma~\ref{Intensity estimate 0d} the intensity of the level process is bounded.
%This  ensure that we only need $\delta \to 0$ to finish the proof of Proposition~\ref{probconvergenceofbirths}.
To show $\IE[\sup_{t \leq T} \Gamma^N(t)]$ is bounded we consider an auxiliary process $\beta^{N}$, defined by
\begin{align}
\label{0d auxiliary submartingale}
\beta^N_t = \frac{NK^2}{J^2}\sum_{l(t) \in \eta_{t}^{r}} \left(l(t) - \hat{l}(t)\right) h_{\lambda}(l(t))
,
\end{align}
where $h$ is a decreasing, positive cut-off function (that is, we assume that there exists a $\lambda$ such that $h(v) = 0$ for $v > \lambda$) and $\hat{l}(t) \in \eta_t$ is the first level below $l(t)$.
To show the required bound on the expectation of the supremum of the integral of $X$, (and therefore conclude the proof), we show that $\beta^{N}$ is dominated by a bounded submartingale. 
%The bound then follows by application of Doob martingale inequality and Jensen's inequality.
We note that under the conditions of Theorem~\ref{NS fluctuating selection theorem} $\frac{NK^2}{J^2} = \cO(K)$.

\begin{lemma}
\label{neutral intensity lemma}
Define $\beta^{N}$ as in \eqref{0d auxiliary submartingale}. Then $\beta^{N}$ is dominated by a bounded submartingale.
\end{lemma}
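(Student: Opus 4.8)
The plan is to construct a bounded submartingale above $\beta^N$ by first establishing finiteness of $\IE[\sup_{t\le T}\beta^N_t]$: once this is known, $S^N_t:=\IE\big[\sup_{s\le T}\beta^N_s\mid\mathcal F_t\big]$ is a martingale, hence a submartingale, it dominates $\beta^N_t$ because $\beta^N_t\le\sup_{s\le T}\beta^N_s$, and it is bounded in $L^1$ with $\IE[S^N_t]=\IE[\sup_{s\le T}\beta^N_s]$. Since $\beta^N$ depends only on the levels and not on $\zeta$, environmental jumps leave it unchanged and $A^N_{env}\beta^N=0$, so only $A^N_{neu}$ and $\widehat S A^N_{sel}$ contribute. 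Writing the Dynkin decomposition $\beta^N_t=\beta^N_0+A_t+M^N_t$ with $A_t=\int_0^t(A^N_{neu}+\widehat S A^N_{sel})\beta^N(\eta_s)\,\mathrm{d}s$ and $M^N$ a martingale (legitimate for the process stopped at $\tau^N$ of Remark~\ref{rarety remark}), the task becomes to bound $\IE[\sup_{t\le T}|A_t|]$ and $\IE[\sup_{t\le T}|M^N_t|]$ separately.

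To analyse the compensator I would compute the effect of a single reproduction event on the gap $g=l-\hat l$ below a fixed rare level $l$. The offspring form a Poisson$(uK/J)$ family that replaces $\hat l$ by a strictly closer level with probability of order $\tfrac{uK}{J}g$, while the relabelling $\mathcal J_{neu}$ of~\eqref{Jcaldeffinition} rescales every gap by $1/(1-u/J)$; to leading order this gives a conditional expected increment $\tfrac uJ\,\phi(g)$ with $\phi(g)=g-\tfrac K2 g^2$, to which one adds the creation of new rare gaps when the parent is rare and the loss of terms beyond the cutoff $\lambda$. Multiplying by the event rate $N$ and the prefactor $NK^2/J^2$ exhibits the drift as a bulk term $\tfrac{N^2K^2u}{J^3}\sum_{l\in\eta^r}h_\lambda(l)\,\phi(l-\hat l)$ plus an $\cO(1)$ remainder. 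Conditioning on the rare levels, the common levels form a Poisson system of intensity $K-Z^N\approx K$, so that $\IE[\phi(g)\mid\eta^r]=\cO(K^{-2})$ with the leading moments cancelling exactly, because $\IE[g]=K^{-1}$ and $\IE[g^2]=2K^{-2}$; together with $NK^2/J^2=\cO(K)$ and Lemma~\ref{Intensity estimate 0d} this keeps both $\IE[\beta^N_t]$ and the $\cO(1)$ part of the drift bounded in terms of the rare mass $X^N$.

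I expect the main obstacle to be that this bulk term is \emph{not} controllable pointwise. The scaling forces $NK/J^2=\cO(1)$ and hence $N^2K^2/J^3=\cO(J)\to\infty$, so the instantaneous drift is of divergent order; correspondingly the compensator $A_t$ and the martingale $M^N_t$ each carry a quadratic variation that grows with $N$ although they cancel in $\beta^N$ itself (the gap below a rare level refreshes on the fast timescale $\cO(J/N)$, so $\beta^N$ is a rapidly mean-reverting functional with only $\cO(1)$ marginal fluctuations). Thus neither the positive nor the negative part of $A^N\beta^N$ has bounded expectation, and the divergent contribution, being conditionally mean-zero, can only be tamed at the level of its time integral. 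This is precisely the phenomenon already handled in Lemma~\ref{lemmaforbirthprocess}, where the time average of the object carrying the spacing $(l_i-l_{i-1})$ — the same functional appearing here, up to prefactor and the cutoff weight — was shown to concentrate.

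Concretely, I would split $A^N\beta^N$ into the divergent conditionally mean-zero piece and an $\cO(1)$ remainder. The remainder is bounded by $C(1+X^N_s)$, hence by a constant in expectation through Lemma~\ref{Intensity estimate 0d}. For the divergent piece I would bound $\IE[\sup_{t\le T}|A_t|]$ using Lemma~\ref{Lemma partition and supremum} on a mesh together with a second-moment estimate identical in structure to~\eqref{2ndmomentforbirthprocess} — conditioning on the number of events, using the covariances $\IE[(t_{i+1}-t_i)(t_{j+1}-t_j)]$, the spacing variances, and the splitting into $|i-j|\le L$ and $|i-j|>L$ with $L/J\to\infty$ and $L/N\to0$. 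A Burkholder--Davis--Gundy estimate on the compensated jumps of $\beta^N$ controls $\IE[\sup_{t\le T}|M^N_t|]$ through the quadratic variation. Combining these bounds yields $\IE[\sup_{t\le T}\beta^N_t]<\infty$, whence the process $S^N$ above is the desired bounded submartingale dominating $\beta^N$, and Doob's inequality applied to it provides the control of $\IE[\sup_{t\le T}\Gamma^N(t)]$ needed in~\eqref{splittingthing}.
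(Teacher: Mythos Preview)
Your diagnosis of the difficulty is accurate, but the proposed cure for the martingale part does not work. Burkholder--Davis--Gundy gives $\IE\big[\sup_{t\le T}|M^N_t|\big]\lesssim\IE\sqrt{[M^N]_T}$, and the predictable quadratic variation of $M^N$ is of order $N/J$: an offspring lands inside a given rare gap with probability $\approx (uK/J)\cdot g\approx u/J$ per event, producing a jump of order $g\approx 1/K$ in the gap and hence of order $1$ in $\beta^N$ after the prefactor $NK^2/J^2=\cO(K)$; with events at rate $N$ and $\cO(1)$ rare levels below the cutoff, the quadratic variation accrues at rate $\cO(N/J)$. Under the scaling $NK/J^2\to 2a$ and $K/J\to 0$ one has $N/J=(NK/J^2)(J/K)\to\infty$, so BDG delivers a bound that diverges with $N$. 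Your own remark that $M^N$ ``carries a quadratic variation that grows with $N$'' is precisely this obstruction; the time-averaging device from Lemma~\ref{lemmaforbirthprocess} tames only the compensator, not the martingale fluctuations, and the Dynkin split forces you to control each piece separately.

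The paper sidesteps the issue by seeking only a \emph{lower} bound on $A^N\beta^N$. The contributions to a single event that can push $\beta^N$ downward---an offspring landing inside a rare gap, or the level immediately below a rare level being selected as parent---are shown to contribute at most $C_h\beta^N+\cO(NK^2Z/J^3)$ in absolute value to the drift, while all remaining contributions are nonnegative (here the monotonicity of $h$ is used to pair the loss from a child entering a gap against the gain from the new rare children themselves). Hence
\[
Y_t:=\beta^N_t+\int_0^t\Big(C_h\beta^N_s+C\,\tfrac{NK^2Z}{J^3}\Big)\,\mathrm{d}s
\]
is a nonnegative submartingale dominating $\beta^N$. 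Doob's $L^2$ maximal inequality for submartingales then bounds $\IE\big[(\sup_{t\le T}\beta^N_t)^2\big]$ by $4\,\IE[Y_T^2]$, and the terminal second moment is finite by Lemma~\ref{lemmaforbirthprocess} applied to the integral together with the marginal law of $\beta^N_T$. The crucial point is that Doob needs only $\IE[Y_T^2]$, never the quadratic variation of any martingale piece; this is exactly what your decomposition gives up.
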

\begin{proof}
%We will first calculate the action of the generator given by $ \lim_{t \to 0} \IE[\beta_t^N - \beta_0^N]/t$.
We recall that the offspring in an event can be ordered $(v_1, v_2, \dots )$ with $v_1 = v^*$. Therefore, as the parent is thought of as moving to $v^*$ we will refer to $(v_i)_{i \geq 2}$ as the `children' after an event.
Recall that within the period of time $[0,t]$ with probability $\cO(t^2)$ we will see two or more events. Therefore
\begin{multline*}
\frac{\IE\left[
\sum_{l(t) \in \eta_{t}^{r}} \left(l(t) - \hat{l}(t)\right) h(l(t))
-
\sum_{l(0) \in \eta_{0}^{r}} \left(l(0) - \hat{l}(0)\right) h(l(0))
\right]}
{t}
\\
=
N
\int_0^\infty \frac{uK}{J} e^{-\frac{uK}{J}v^*}
\Bigg\{
\sum_{l(0) \in \eta_{t}^{r}}
\Bigg\{ \left[\left(\cJ(l(0)) - \cJ(\hat{l}(0))\right) - \left(l(0) - \hat{l}(0)\right)\right] h(l(0))
\\
+
\left(\cJ(\hat{l}(0)) - \hat{l}(t)\right) h(l(0))
\\
+
\left(l(t) - \hat{l}(0) \right)\left(\cJ(l(0)) - l(0) \right) \cO( || {h}^\prime ||_{\infty} )
\Bigg\}
+
\sum_{i = 2}^\infty
\left(v_i - \hat{v}_i\right) h(v_i)\ind_{\{v_i \text{ is rare}\} }
\Bigg\} \mathrm{d} v^*
+
\cO(t)
,
\end{multline*}
where we are using $\hat{v}_i$ to denote the highest level below the $i$th `child', $v_i$.
We now require that $h(l) = 0$ for all $l \geq \lambda_h$.
We note that in the proof of Proposition~\ref{probconvergenceofpaths} we have shown that the parts involving $\cJ(l(0)) - l(0) $ converge in $L^2$ to $a l(0)^2$ and so we can see that this is approximated by
\begin{multline*}
\sum_{l(0) \in \eta_{t}^{r}} 
\left\{
\left( a l^2(0) - a \hat{l}^2(0) \right) h(l(0))
+
\left( l(0) - \hat{l}(0) \right) a l^2(0) \cO(||h^\prime||_\infty )
\right\}
\\
+
N
\int_0^\infty \frac{uK}{J} e^{-\frac{uK}{J}v^*}
\Bigg\{
\sum_{l(0) \in \eta_{t}^{r}}
\left(\cJ(\hat{l}(0)) - \hat{l}(t)\right) h(l(0))
+
\sum_{i = 2}^\infty
\left(v_i - \hat{v}_i\right) h(v_i)\ind_{\{v_i \text{ is rare}\} }
\Bigg\} \mathrm{d} v^*
.
\end{multline*}
We note that since the ordering of the $l(t) \in \eta_t$ which are not the parent is unaffected by an event, the only possibilities that will force $\cJ(\hat{l}(0)) < \hat{l}(t)$ is when a `child' has been born between the new locations of levels or when $\hat{l}(0)$ is chosen as the parent. However we use the last sum appearing in the previous equation and the fact that $h$ is decreasing to see that the generator applied to $\beta^N_t$, for large enough $N$, is bounded below by
\begin{align*}
-C_h \beta^N_t + 
NS \frac{NK^2}{J^2}
\sum_{l_i \in \eta_{t}^{r}}
\int_{l_{i-2}}^{\frac{l_{i-2}}{1 - \frac{u}{J}}} \frac{uK}{J} e^{-\frac{uK}{J} v^*} \left(
v^* - \frac{l_{i-2}}{1- \frac{u}{J}} \right) \mathrm{d} v^*
\\
\geq
-C_h \beta^N_t - 
\sum_{l_i \in\eta_{t}^{r}}
\cO\left(
\frac{NK^2}{J^3} l_{i-2}^2
\right)
,
\end{align*}
where $C_h = \lambda_h ||h^\prime||_\infty$.
We therefore see that
\begin{align*}
\beta^N_t + \int_0^t C_h \beta^N_s + C \frac{NK^2Z}{J^3} \mathrm{d} s
,
\end{align*}
is a sub-martingale. Combined with the conditions of Theorem~\ref{NS fluctuating selection theorem} this concludes the proof of the lemma.
\end{proof} 

To complete the proof of Proposition~\ref{probconvergenceofbirths} we see that, by using Lemma~\ref{lemmaforbirthprocess},
\begin{align*}
\IE\left[ \left( \beta^N_T + \int_0^T C_h \beta^N_s + C \frac{NK^2Z}{J^3} \mathrm{d} s\right)^2 \right]
,
\end{align*}
is bounded. 
Therefore, by Jensen's inequality,
\begin{align*}
\IE[\sup_{t \leq T} \beta^N_t ] \leq 
\sqrt{\IE\left[\left(\sup_{t \leq T} \beta^N_t \right)^2\right]}
\end{align*}
which is also bounded by Doob's martingale inequality.  Proposition~\ref{probconvergenceofbirths} therefore follows by an application of Lemma~\ref{Lemma partition and supremum}.

%\end{proof}
%Therefore, by Theorem~\ref{kurtzaveraging} we complete our proof of Theorem~\ref{NS neutral theorem}.

\subsubsection{Selective model - proof of Theorem~\ref{NS selection theorem}}
\label{Subsection Selective model proof}

The generator of the model presented in Theorem~\ref{NS selection theorem} is of the form
\begin{align*}
A^{N}f(\eta) = A_{neu}^{N}f(\eta) + A_{sel}^{N}f(\eta).
\end{align*}
%To rewrite our problem in the form \eqref{kurtz rearrangement}, we choose $A$ to be 
%\begin{align}
%\label{formofA selective}
%Af(\eta) =  
%\int_{0}^{t}
%f(\eta)
%\sum_{i}
%2a\int_{l_{i}}^{\infty}\big(g(v)  - 1\big)\mathrm{d}v + 
%f(\eta)
%\sum_{i}
%\big(al_{i}^{2}  - bl_{i}\big)\frac{g^{\prime}(l_{i})}{g(l_{i})}
%\mathrm{d}s
%.
%\end{align} 
%Then the correction term $\varepsilon^{f}_{N}(t)$ is given by
%%We will have $A$ be of the form appearing in \eqref{Branching level generator} and then in order to satisfy \eqref{kurtz rearrangement} we consider
%\begin{align*}
%\varepsilon^f_N(t)
%= \int_0^t Af(\eta^N_s) - A^N_{neu} f(\eta^N_s) - A^N_{sel} f(\eta^N_s)\mathrm{d} s
%.
%\end{align*}
The analysis for the neutral part of the generator, $A^N_{neu}$, is the same as in Section~\ref{Subsection Neutral model proof}, with the exception of a slight modification of Lemma~\ref{neutral intensity lemma}, which we discuss in Lemma~\ref{selective intensity lemma}.
We therefore turn our attention to $A^N_{sel}$, which is the part of the generator describing the selection.
%We recall that we analyse the generator is in several parts, the neutral one (which we have analysed in the previous section) and the selective one. 
%We note that Proposition~\ref{probconvergenceofpaths} will apply unchanged with only minor changes made to Proposition~\ref{probconvergenceofbirths} which we detail in Proposition~\ref{SelectiveBirthPart}.
%Our selective generator can be written
It can be written as 
\begin{multline}\label{selgenerator}
A^N_{sel}f(\eta)
=
\frac{sN}{S}
\Bigg(
\int_0^\infty
\Bigg[
\frac{uK}{J} e^{- \frac{uK}{J}v^*} g(\kappa^*, v^*)
e^{- \frac{uK}{J} \int_{v^*}^{\infty} (1 - g(\kappa^*, v)
) \mathrm{d} v}
\\
\times
\prod_{ (\kappa , l) \in \eta , l \neq l^*_{sel} }
g \left( \kappa , \mathcal{J}_{sel}(l,l^*_{sel},v^*) \right)
\Bigg] \mathrm{d} v^*
 - f(\eta) \Bigg)
,
\end{multline}
where we recall that the movement of the levels $\cJ_{sel}$ is specified by 
\begin{align*}
\cJ_{sel}((l,\kappa),(l^*_{sel},\kappa^*),v^*)
=
\begin{cases}
v^* & \text{ if } l = l^*_{sel}
,
\\
\frac{1}{1 - \frac{u}{J}}
\left(
l - (l^*_{sel} - v^*)\frac{\sigma(\kappa)}{\sigma(\kappa^*)}
\right)
& \text{ if } l \neq l^*_{sel}, l > v^*
,
\\
\frac{1}{1 - \frac{u}{J}}
l
& \text{ if } l < v^*
.
\end{cases}
\end{align*}
%Observe that the equations for the neutral generator \eqref{noselecgenerator} and the selective generator \eqref{selgenerator} are very similar. For that reason the identification of the limit for the model with selection requires only a minor modification of the calculation presented for the neutral version of the model given in Proposition~\ref{neutralpropositionA}.
%To apply Theorem~\ref{kurtzaveraging}, we need to show the following. 

%We proceed to the analysis of $A^{N}_{sel}$. 
As in Section~\ref{Subsection Neutral model proof}, we transform $A^{N}_{sel}$ into a more convenient form. 
%As in the calculation for the neutral model w
We use the Taylor approximation \eqref{exponentialinttaylor} to approximate the generator by
\begin{align*}
A_{sel}^{N} = A_{sel,1}^{N} + A_{sel,2}^{N} + \mathcal{O}\left( \frac{K}{J} A_{sel, 2}^{N} \right),
\end{align*}
where
\begin{align}
\label{selective generator, levels, no space}
 &A_{sel,1}^{N}f(\eta) \nonumber \\
 &=  \frac{sN}{S}
\Bigg(
\int_0^\infty
\frac{uK}{J} e^{- \frac{uK}{J}v^*} 
\bigg[
g(\kappa^*, v^*)
\prod_{ (\kappa , l) \in \eta , l \neq l^*_{sel} }
g \left( \kappa , \mathcal{J}_{sel}(l,l^*_{sel},v^*) \right)
 - f(\eta)
\bigg] 
\mathrm{d} v^*
\Bigg)
,
\\
&A_{sel,2}^{N} f(\eta) =  \frac{N}{S}
\Bigg(
- \int_0^\infty
\frac{u^2K^2}{J^2} e^{- \frac{uK}{J}v^*}
\int_{v^*}^{\infty} (1 - g(\kappa^*, v)
) \mathrm{d} v
\;
g(\kappa^*, v^*)
\nonumber
\\
\label{selective generator, births, no space}
&\phantom{\frac{N}{S}
\Bigg(
\frac{u^2K^2}{J^2} e^{- \frac{uK}{J}v^*}
\int_{v^*}^{\infty} (1 - g(\kappa^*, v)
) \mathrm{d} v}
\times
\prod_{ (\kappa , l) \in \eta , l \neq l^*_{sel} }
g \left( \kappa , \mathcal{J}_{sel}(l,l^*_{sel},v^*) \right)
\mathrm{d} v^*
\Bigg)
.
\end{align}
Once again, we treat $A_{sel,1}^{N}$ and $A_{sel,2}^{N}$ separately. We order the levels  $\eta = \{ (\kappa_i,l_i)\}_{i \geq 1}$ present in the system by requiring that for each $i$,  $l_{i} < l_{i+1} $.
Arguing in the same way as for the neutral case, we may conclude that the term involving $A_{sel,2}^{N}$ tends to $0$ as $N \to \infty$, as the requirements $NK/J^{2} \to C$ combined with $1/S \to 0$ imply that $NK/J^{2}S \to 0$.
We turn our attention to $A_{sel,1}^{N}$.
Recalling the difference between $l^*_{neu}$ and $l^*_{sel}$,
we note that for large $N$ we will very rarely see $l^*_{sel} \neq l^*_{neu}$ and so we consider
\begin{multline}
\label{selective generator, births, no space, neutral parent}
A_{sel,3}^{N}f(\eta)
\\
= 
\frac{sN}{S}
\Bigg(
\int_0^\infty
\frac{uK}{J} e^{- \frac{uK}{J}v^*} 
\bigg[
g(\kappa^*, v^*)
\prod_{ (\kappa , l) \in \eta , l \neq l^*_{neu} }
g \left( \kappa , \mathcal{J}_{sel}(l,l^*_{neu},v^*) \right)
 - f(\eta)
\bigg] 
\mathrm{d} v^*
\Bigg)
.
\end{multline}
We will prove Proposition~\ref{selectivepropositionA} by showing that  $A^N_{sel, 3}$ satisfies a suitable estimate  and then showing that $A^N_{sel, 1} - A^N_{sel, 3}$ converges to $0$ by virtue of Lemma~\ref{unfavouredselectivegenerator}.
\begin{lemma}
\label{Lemma selective estimate neutral parent}
Under the conditions of Theorem~\ref{NS selection theorem}, %if $\eta^r_t$ is the level process gained from $\eta_t$ by only considering the levels with the ``rare'' type then
\begin{multline*}
\IE\Bigg[
\Bigg|
\frac{sN}{S}
\Bigg(
\int_0^\infty
\frac{uK}{J} e^{- \frac{uK}{J}v^*} 
\bigg[
g(\kappa^*, v^*)
\prod_{ (\kappa , l) \in \eta_t , l \neq l^*_{neu} }
g \left( \kappa , \mathcal{J}_{sel}(l,l^*_{neu},v^*) \right)
 - f(\eta)
\bigg] 
\mathrm{d} v^*
\Bigg)
\\
-
b
\left(
f(\eta^r_t)
\sum_{j}
\frac{g^{\prime}(\kappa_i , l_i )}{g(\kappa_i ,l_i)}
\frac{suN}{JS}l_{i}
\right)
\Bigg|
\Bigg]
\to 0,
\end{multline*}
%for any locations of rare levels. We note that $ \frac{NK}{J^2} \to 2a$.
\end{lemma}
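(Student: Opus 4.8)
The plan is to apply to $A^N_{sel,3}f(\eta)$, given by \eqref{selective generator, births, no space, neutral parent}, the same first-order Taylor machinery used for the neutral generator in Proposition~\ref{probconvergenceofpaths}, and to isolate the one genuinely new effect, namely the factor $\sigma(\kappa)/\sigma(\kappa^*)$ in $\mathcal{J}_{sel}$, which is what produces the linear drift. Expanding the product as in \eqref{taylorforproduct}, the zeroth-order term cancels the $-f(\eta)$ and the leading contribution is
\[
\frac{sN}{S}f(\eta)\int_0^\infty \frac{uK}{J}e^{-\frac{uK}{J}v^*}\sum_{(\kappa,l)\in\eta}\frac{g'(\kappa,l)}{g(\kappa,l)}\bigl(\mathcal{J}_{sel}(l,l^*_{neu},v^*)-l\bigr)\,\mathrm{d}v^*.
\]
The quadratic Taylor remainders are handled exactly as the analogous neutral quantities in Proposition~\ref{probconvergenceofpaths}; since they now carry the prefactor $sN/S$ in place of $N$ and $1/S\to0$ while $u^2NK/J^2\to 2a$ stays bounded, they vanish in $L^1$.

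Next I would split the displacement, using that for $l>v^*,\ l\neq l^*_{neu}$ every such level lies above $l^*_{neu}$, so that
\[
\mathcal{J}_{sel}(l,l^*_{neu},v^*)-l=\bigl(\mathcal{J}_{neu}(l,l^*_{neu},v^*)-l\bigr)-\frac{l^*_{neu}-v^*}{1-u/J}\Bigl(\frac{\sigma(\kappa)}{\sigma(\kappa^*)}-1\Bigr).
\]
The first summand reproduces the neutral integrand of Proposition~\ref{probconvergenceofpaths}, which with prefactor $N$ is $O(1)$ and converges to $\sum a l_i^2\,g'/g$; multiplied here by $s/S\to0$ it drops out. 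The statement thus reduces to controlling the selective correction
\[
-\frac{sN}{S}f(\eta)\int_0^\infty\frac{uK}{J}e^{-\frac{uK}{J}v^*}\sum_{l>v^*}\frac{g'(\kappa,l)}{g(\kappa,l)}\,\frac{l^*_{neu}-v^*}{1-u/J}\Bigl(\frac{\sigma(\kappa)}{\sigma(\kappa^*)}-1\Bigr)\,\mathrm{d}v^*.
\]
Two simplifications now do the work: since the test functions are insensitive to the common type, $g(\kappa_c,\cdot)\equiv1$ and hence $g'(\kappa_c,\cdot)=0$, so only rare levels $l_j$ survive and $f(\eta)=f(\eta^r)$; and the parent $l^*_{neu}$, being the lowest level above $v^*$ in a population of intensity $\approx K$ with rare fraction $\approx X^N/K$, is common with probability $1-O(Z^N/K)$. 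On the complementary (rare-parent) event the integrand is bounded and carries vanishing probability, so it may be discarded, and on the dominant event $\sigma(\kappa^*)=\sigma(\kappa_c)$ gives the fixed factor $\sigma(\kappa_r)/\sigma(\kappa_c)-1$.

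The crux, and the step I expect to be the main obstacle, is the evaluation of the remaining $v^*$-integral. Conditioning on the rare levels, the common levels are Poisson of intensity $\approx K$, and for a rare level $l_j$ one has, to leading order (using $uK/J\to0$, so $e^{-\frac{uK}{J}v^*}\approx1$),
\[
\int_0^{l_j}\frac{uK}{J}\bigl(l^*_{neu}-v^*\bigr)\,\mathrm{d}v^*=\frac{uK}{J}\cdot\frac12\sum_{\text{gaps}\subset[0,l_j]}(\text{gap})^2,
\]
whose conditional mean is $\tfrac{uK}{J}\cdot\tfrac{l_j}{K}=\tfrac{u l_j}{J}$ by the gap computation already performed in Proposition~\ref{probconvergenceofpaths}, where $\IE[\sum_{i\le j}(l_i-l_{i-1})^2\mid l_j]=\tfrac{2l_j}{K-Z^N}+o(1)$ with conditional variance $O(l_j/(K-Z^N)^3)$. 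Replacing the random integral by its mean yields the asserted drift $\frac{suN}{SJ}\bigl(\sigma(\kappa_r)/\sigma(\kappa_c)-1\bigr)f(\eta^r)\sum_j l_j\,g'(\kappa_r,l_j)/g(\kappa_r,l_j)$.

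Finally I would close the $L^1$ estimate: the error from this mean-for-integral replacement is bounded by Cauchy--Schwarz from the small gap-variance, summed over rare levels with the weight $l_j\,g'/g$ kept finite by the cutoff $\lambda_g$ and by the intensity estimate of Lemma~\ref{Intensity estimate 0d} together with the stopping at $Z^N$; the same ingredients furnish uniformity in $t\le T$, which is what lets this fixed-time bound feed the time-integrated statement of Proposition~\ref{selectivepropositionA}. The two genuinely delicate points are the bookkeeping of the parent case $l^*_{neu}=l_j$ (a rare level acting as parent, contributing at order $1/K$) and making the mean-replacement rigorous uniformly in $t$.
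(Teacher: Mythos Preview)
Your proposal is correct and follows essentially the same route as the paper: first-order Taylor expansion of the product, separation of the displacement into the neutral piece (which is $\mathcal{O}(1)$ and hence vanishes after multiplication by $s/S$) plus the selective correction carrying $\sigma(\kappa)/\sigma(\kappa^*)-1$, reduction to rare levels via $g'(\kappa_c,\cdot)=0$, replacement of $\kappa^*$ by $\kappa_c$ up to an $O(Z^N/K)$ error, and finally the gap-sum identity $\sum_{i\le j}(l_i-l_{i-1})^2$ with its mean $2l_j/(K-Z^N)$ and variance $O((K-Z^N)^{-3})$ already established in Proposition~\ref{probconvergenceofpaths}. The only cosmetic difference is that the paper integrates in $v^*$ first and then recognises the neutral block $\mathcal{S}_1$ and the $(\sigma(\kappa_r)/\sigma(\kappa_c)-1)$ block, whereas you split $\mathcal{J}_{sel}-l=(\mathcal{J}_{neu}-l)-\tfrac{l^*_{neu}-v^*}{1-u/J}(\sigma(\kappa)/\sigma(\kappa^*)-1)$ before integrating; the resulting estimates are identical.
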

\begin{proof}
Just as in Proposition~\ref{probconvergenceofpaths}, we approximate \eqref{selective generator, births, no space, neutral parent} using \eqref{taylorforproduct} by 
\begin{multline*}
\frac{sN}{S} f(\eta)
\Bigg(
\sum_i
\int_{l_{i-1}}^{l_i}
\frac{uK}{J} e^{- \frac{uK}{J}v^*} 
\bigg[
\frac{g^\prime (\kappa_i, l_i)}{g(\kappa_i, l_i)} (v^* - l_i)
\\
+
\sum_{l \neq l_i}
\frac{g^\prime (\kappa, l)}{g(\kappa, l)}
\left(
\frac{l\frac{u}{J}}{1 - \frac{u}{J}}
- \ind_{l > l_i}
\frac{\sigma(\kappa)}{\sigma(\kappa^{*})}
\frac{l_i - v^*}{1 - \frac{u}{J}}
\right)
\bigg] 
\mathrm{d} v^*
\Bigg)
\\
=
Nf(\eta)
\sum_i
\Bigg[
\frac{g^{\prime}(\kappa_i, l_i)}{g(\kappa_i,l_i)}
\left(
-(l_{i} - l_{i-1}) e^{- \frac{uK}{J} l_{i-1}}
+ \frac{J}{uK}
\left(
e^{- \frac{uK}{J} l_{i-1}} - e^{- \frac{uK}{J} l_{i}}
\right)
\right)
\\
+
\sum_{j \neq i}
\left(
\frac{g^{\prime}(\kappa_j , l_j )}{g(\kappa_j ,l_j)}
\frac{l_j \frac{u}{J}}{1 - \frac{u}{J}}
\left(
e^{- \frac{uK}{J} l_{i-1}} - e^{- \frac{uK}{J} l_{i}}
\right)\right)
\\
+
\sum_{j \neq i}\sum_{j<i} 
\frac{\sigma(\kappa_j)}{\sigma(\kappa_i)}
\frac{g^{\prime}(\kappa_j , l_j )}{g(\kappa_j ,l_j)}
\frac{1}{1 - \frac{u}{J}}
\left(
-(l_i - l_{i-1}) e^{- \frac{uK}{J} l_{i-1}}
+
\frac{J}{uK}
\left(
e^{- \frac{uK}{J} l_{i-1}} - e^{- \frac{uK}{J} l_{i}}
\right)
\right)
\Bigg]
\\
=: 
\frac{\mathcal{S}_1}{S}  + 
\frac{sN}{S} f(\eta)
\sum_{i}
\sum_{j<i} 
\left(
\frac{\sigma(\kappa_j)}{\sigma(\kappa_i)}
- 1
\right)
\frac{g^{\prime}(\kappa_j , l_j )}{g(\kappa_j ,l_j)}
\frac{1}{1 - \frac{u}{J}}
\left(
-(l_i - l_{i-1}) e^{- \frac{uK}{J} l_{i-1}}\right.
\\
\left.+
\frac{J}{uK}
\left(
e^{- \frac{uK}{J} l_{i-1}} - e^{- \frac{uK}{J} l_{i}}
\right)
\right)
,
\end{multline*}
where we have used integration by parts and $\mathcal{S}_1$ is equal to the terms appearing in \eqref{no space first approximation of level part of generator}. Now we notice that $\mathcal{S}_{1}$ is  multiplied by $1/S$ and since $S \to \infty$ it can be neglected.
Consider then the second term.
We can neglect the factor $1 - u/J$ at the cost of an error of order $NK/J^{3}$, which tends to zero. 
We change the order of summation
% and note that we have only two genetic type rare, $\kappa_r$, and common $\kappa_c$ 
to approximate it by
\begin{equation*}
\frac{sN}{S}
f(\eta)
\left(
\frac{\sigma(\kappa_r)}{\sigma(\kappa_c)}
- 1
\right)
\sum_j
\frac{g^{\prime}(\kappa_j , l_j )}{g(\kappa_j ,l_j)}
\sum_{i < j}
\Bigg(
-(l_i - l_{i-1}) e^{- \frac{uK}{J} l_{i-1}}
+
\frac{J}{uK}
\left(
e^{- \frac{uK}{J} l_{i-1}} - e^{- \frac{uK}{J} l_{i}}
\right)
\Bigg)
.
\end{equation*}
Now we  proceed precisely as in the proof of Proposition~\ref{probconvergenceofpaths}. The final approximation of  $A_{sel,3}^{N}$ then takes the form
\begin{equation*}
f(\eta)
\left(
\frac{\sigma(\kappa_r)}{\sigma(\kappa_c)}
- 1
\right)
\sum_{j}
\frac{g^{\prime}(\kappa_j , l_j )}{g(\kappa_j ,l_j)}
\left(
\frac{suN}{JS}l_{j} + \cO \left(\frac{1}{S}\right)
\right)
.
\end{equation*}
\end{proof}

We now turn our attention to $A_{sel,1}^{N} - A_{sel,3}^{N}$.
We must treat the cases when the rare type is favoured or unfavoured separately.
 However, we show in both cases that $\IE[ \left| A_{sel,1}^{N} - A_{sel,3}^{N} \right|] \to 0$ and conclude with Jensen's inequality.

\begin{lemma}
\label{unfavouredselectivegenerator}
Let $l^*_{neu}$ be the lowest level above $v^*$. Let $l^*_{sel}$ be the level above $v^*$ which minimises $\frac{l - v^*}{\sigma}$. Then
\begin{align*}
\IE
\left[
\left|
\frac{sN}{S} f(\eta) \int_0^\infty \frac{uK}{J} e^{-\frac{uK}{J} v^*} g(v^*, \kappa^*) \sum_l \frac{g^\prime(l)}{g(l)}
\left(
\cJ_{sel}(l,l^*_{sel}, v^*) - \cJ_{sel}(l, l^*_{neu}, v^*) 
\right)
\mathrm{d} v^*
\right|
\right]
\to 0
.
\end{align*}
\end{lemma}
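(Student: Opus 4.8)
The plan is to exploit that the integrand vanishes identically on the event $\{l^*_{sel}=l^*_{neu}\}$, so that only those rare configurations in which the selective parent differs from the neutral parent contribute, and then to show that these are too sparse to survive the scaling. First I would record the elementary but crucial observation that a discrepancy $l^*_{sel}\neq l^*_{neu}$ can arise only when two individuals of \emph{different} types sit close above $v^*$: among individuals sharing a common value of $\sigma(\cdot,\zeta)$ the minimiser of $(l-v^*)/\sigma$ is simply the lowest level, so two common individuals never swap the parent. Hence the whole expression is supported on configurations involving at least one rare individual near $v^*$. Moreover, since $g(\kappa_c,\cdot)\equiv 1$ forces $g'(\kappa_c,\cdot)\equiv 0$ and $g(\cdot)=1$ above $\lambda_g$, only rare levels $l$ with $v^*<l<\lambda_g$ enter the sum; in particular nothing contributes unless $v^*<\lambda_g$, and on $\{l^*_{sel}\neq l^*_{neu},\,l>v^*\}$ a direct computation gives
\[
\cJ_{sel}(l,l^*_{sel},v^*)-\cJ_{sel}(l,l^*_{neu},v^*)=\frac{\sigma(\kappa_l,\zeta)}{1-u/J}\Big(\tfrac{l^*_{neu}-v^*}{\sigma(\kappa^*_{neu},\zeta)}-\tfrac{l^*_{sel}-v^*}{\sigma(\kappa^*_{sel},\zeta)}\Big),
\]
which is non-negative, independent of $l$, and of order $l^*_{neu}-v^*$.

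Next I would condition on the configuration $\eta$ and treat $v^*$ as an exponential variable of rate $uK/J$, so that the inner integral is an expectation over $v^*$. Writing $R_{\lambda_g}$ for the number of rare levels below $\lambda_g$, I would estimate the $v^*$-measure of the bad set $\{l^*_{sel}\neq l^*_{neu}\}$. Since the common levels form a conditionally Poisson system of intensity $\asymp K$ while the rare levels have intensity $X^N\le Z^N$, consecutive gaps are of order $1/K$, so $d:=l^*_{neu}-v^*$ is $\cO(1/K)$ on the relevant range. The favourability condition $(l^*_{sel}-v^*)/\sigma(\kappa^*_{sel})<(l^*_{neu}-v^*)/\sigma(\kappa^*_{neu})$ confines the bad $v^*$ to a window of width $\cO(1/K)$ attached to each rare level, whence the total bad measure is $\cO(R_{\lambda_g}/K)$. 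Here the two cases $\sigma_r>\sigma_c$ and $\sigma_r<\sigma_c$ must be handled separately: in the rare-favoured case the closest level is common and a rare competitor must fall in the weighted window just above it, whereas in the rare-unfavoured case the closest level must itself be rare; in both cases a rare individual is pinned near $v^*$, giving the same $\cO(R_{\lambda_g}/K)$ bound.

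Combining the pieces, on the bad set the sum over the $\cO(R_{\lambda_g})$ admissible rare levels is $\cO(R_{\lambda_g}/K)$ (each term $\cO(d)=\cO(1/K)$, with $f(\eta)g'(l)/g(l)=g'(l)\prod_{j\neq l}g(l_j)$ bounded), and integrating the density $uK/J$ against the bad window contributes a factor $uR_{\lambda_g}/J$. Multiplying by the prefactor $sN/S$ yields
\[
\IE[|A^N_{sel,1}-A^N_{sel,3}|]\;\lesssim\;\frac{suN}{SJ}\,\frac{\IE[R_{\lambda_g}^2]}{K}.
\]
Under the conditions of Theorem~\ref{NS selection theorem} the convergence $\frac{suN}{SJ}(\sigma(\kappa_r)/\sigma(\kappa_c)-1)\to b$ forces $\frac{suN}{SJ}=\cO(1)$ (the statement being trivial when $\sigma(\kappa_r)=\sigma(\kappa_c)$), while $R_{\lambda_g}$ is conditionally Poisson with mean $\le Z^N\lambda_g$, so $\IE[R_{\lambda_g}^2]\lesssim (Z^N)^2$. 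The bound is therefore $\cO((Z^N)^2/K)$, which tends to zero once $Z^N$ is taken to grow slowly enough (for instance $Z^N=o(\sqrt K)$), as we are free to impose by Remark~\ref{rarety remark}.

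I expect the main obstacle to be the second step: making rigorous the claim that the bad $v^*$-measure is $\cO(R_{\lambda_g}/K)$ rather than merely $\cO(R_{\lambda_g}/J)$ or worse. This requires an honest accounting of the Poisson geometry of the interlaced rare and common levels---identifying, for each rare level, the precise window of $v^*$ producing a parent swap and controlling its length by the typical inter-level gap---together with the second-moment estimate for $R_{\lambda_g}$, for which the stopping at $Z^N$ is essential.
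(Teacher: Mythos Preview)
Your approach is essentially the same as the paper's: isolate the event $\{l^*_{sel}\neq l^*_{neu}\}$, observe it requires a rare level near $v^*$, and bound the contribution by controlling both the size of the bad window and the magnitude of the displacement on it. Your displacement formula is correct (and in fact slightly cleaner than the paper's simplification to $(l^*_{neu}-l^*_{sel})\sigma(\kappa)/\sigma(\kappa^*)$, which glosses over the fact that $\kappa^*_{neu}\neq\kappa^*_{sel}$ on the bad set).

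The main difference is in execution. You give a unified heuristic bound, treating gap sizes as deterministically $\cO(1/K)$ and arriving at $\cO((Z^N)^2/K)$, which forces the additional constraint $Z^N=o(\sqrt{K})$. The paper instead splits explicitly into the rare-favoured and rare-unfavoured cases and computes directly: in the unfavoured case it simply observes that $\IE[(l^s_i-l_i)(l_i-l_{i-1})]=\cO(1/(K-Z^N)^2)$ by independence of Poisson increments; in the favoured case it conditions on the number of common levels between consecutive rare levels and reduces to a moment computation for uniform order statistics, obtaining the same $\cO(NK/(SJ(K-Z^N)^2))$. Your self-diagnosis is apt: the step where you replace random gaps by $\cO(1/K)$ is exactly where the paper invests its effort, and without that care your argument remains a sketch rather than a proof. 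The paper's explicit computations buy a sharper bound without needing to constrain $Z^N$ further, but your outline would go through with the restriction you propose.
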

\begin{proof}
We note that for $l < v^*$, $\cJ_{sel}(l,l^*_{sel}, v^*) = \cJ_{sel}(l, l^*_{neu}, v^*) $
as
\begin{align*}
\cJ_{sel}(l, l^*, v^*)
=
\begin{cases}
\frac{1}{1 - \frac{u}{J}} \left( l - (l^* - v^*) \frac{\sigma(\kappa)}{\sigma(\kappa^*)} \right) 
&
l > v^* , \; l \neq l^*
,
\\
v^* 
&
l = l^*
,
\\
\frac{1}{1 - \frac{u}{J}} l
&
l< v^*
,
\end{cases}
\end{align*}
and so we need only consider
\begin{align}
\label{selectioncorrectionapproximation}
\frac{sN}{S} f(\eta) \int_0^\infty \frac{uK}{J} e^{-\frac{uK}{J} v^*} g(v^*, \kappa^*) \sum_{l > v^*} \frac{g^\prime(l)}{g(l)}
\left(
l^*_{neu} - l^*_{sel}
\right)
\frac{\sigma(\kappa)}{\sigma(\kappa^*)}
\mathrm{d} v^*
.
\end{align}
We proceed as in Section~\ref{Subsection Neutral model proof} and consider $v^*$ in the interval $[l_{i-1} , l_i]$. 
We note that $l^*_{neu} = l_i$ and if $l_i$ is of the favoured type then $l^*_{sel} = l_i$ also. If $l_i$ is unfavoured we denote the lowest favoured type above $l_i$ by $l^s_i$ then $l^*_{sel} \neq l_i$ if and only if
\begin{align*}
l_{i-1} \leq v^* \leq 
\left(
l_i - \frac{\sigma_w}{\sigma_s - \sigma_w} (l^s_i - l_i) \right)\wedge l_{i-1}
,
\end{align*}
where $\sigma_w = \sigma(\kappa_w)$, $\sigma_s = \sigma(\kappa_s)$, $\kappa_w$ is the unfavoured type and $\kappa_s$ is the favoured type.
Therefore we see that \eqref{selectioncorrectionapproximation} can be written as
\begin{align*}
\frac{sN}{S} f(\eta)
\sum_{(l_i, \kappa_i) \in \eta, \kappa_i = \kappa_w}
\int_{l_{i-1}}^{(
l_i - \frac{\sigma_w}{\sigma_s - \sigma_w} (l^s_i - l_i) )\wedge l_{i-1}}
\frac{uK}{J} e^{-\frac{uK}{J} v^*} g(v^*, \kappa^*) \sum_{l> v^*} \frac{g^\prime(l)}{g(l)}
\left(
l^*_n - l^*_s
\right)
\frac{\sigma(\kappa)}{\sigma(\kappa^*)}
\mathrm{d} v^*
.
\end{align*}
We can then integrate and use a Taylor expansion to see that we need only show
\begin{align*}
\IE
\left[
\frac{sN}{S} f(\eta) \sum_j \frac{g^\prime(l_j)}{g(l_j)}
\sum_{(l_i, \kappa_i) \in \eta, l_i < l_j, \kappa_i = \kappa_w} \frac{uK}{J} (l^s_i - l_i)
\left(
l_i - l_{i-1} - \frac{\sigma_w}{\sigma_s - \sigma_w} (l^s_i - l_i) 
\right) \wedge 0
\right]
\to 0
.
\end{align*}
If the rare type is unfavoured we can then conclude by noting that $\IE \left[ \frac{suNK}{SJ} (l^s_i - l_{i})(l_i - l_{i-1}) \right] = \cO\left(\frac{NK}{SJ} \frac{1}{(K-Z)^2} \right)$.
%\end{proof}
%\begin{proposition}
%\label{favouredselectivegenerator}
%If the rare type is favoured and
%we again define $l^*_n$ as the lowest level above $v^*$ and $l^*_s$ as the level above $v^*$ which minimises $\frac{l - v^*}{\sigma}$ then
%\begin{align*}
%\IE
%\left[
%\left|
%\frac{sN}{S} f(\eta) \int_0^\infty \frac{uK}{J} e^{-\frac{uK}{J} v^*} g(v^*, \kappa^*) \sum_l \frac{g^\prime(l)}{g(l)}
%\left(
%\cJ_{sel}(l,l^*_s, v^*) - \cJ_{sel}(l, l^*_n, v^*) 
%\right)
%\mathrm{d} v^*
%\right|
%\right]
%\to 0
%.
%\end{align*}
%\end{proposition}
%\begin{proof}

%We recall all but the final line of the proof of Proposition~\ref{unfavouredselectivegenerator} and since
If the rare type is favoured we may bound it by
\begin{align}
\label{finalselectionapproximation}
\sum_{\hat{l}_{k-1} < l_i < \hat{l}_k} \frac{suNK}{JS} (l_i - l_{i-1})^2 \ind_{\{ l_i - l_{i-1} - \frac{\sigma_w}{\sigma_s - \sigma_w}(\hat{l}_k - l_i) > 0 \}}
,
\end{align}
where we recall that $\hat{l}_k$ is the $k$th highest rare level. 
As before, we condition on there being $n$ individuals of the common type with levels between $\hat{l}_k$ and $\hat{l}_{k-1}$.
%Conditioned on the number of such $n$ levels we then look at
%\begin{align}
%
%\sum_{i = 1}^n
%\frac{suKN}{JS} \IE \left[  (l_i - l_{i-1})^2 \ind_{\{ l_i - l_{i-1} - \frac{\sigma_w}{\sigma_s - \sigma_w}(\hat{l}_k - l_i) > 0 \}} \right]
%.
%\end{align}
We can then  consider the $n$ levels as independent uniformly distributed rather than the ordered levels $l_i$.
Suppose that $z, z_1, \dots, z_{n-1}$ are independent uniformly distributed random variables on $[0,1]$, $Y = 1 - z$ and $X = \min \left( \{z\}, \{ (z - z_i)\}_{z_i < z} \right)$.
We then see that \eqref{finalselectionapproximation} is bounded by
$$n\frac{suKN}{JS} \IE\left[
X^2 \ind_{X >cY}
\right] .$$
We now look at
\begin{align}
\label{Funny_expectation_squared_la}
\IE[X^2 \ind_{X >cY} ]
= &
\int_0^1 \IE[X^2 \ind_{X > c (1 - u)} | z = u] \mathrm{d} u
\\
= &
\int_{\frac{c}{1+c}}^1 \IE[X^2 \ind_{X > c (1 - u)} | z = u] \mathrm{d} u
\nonumber
\\
= &
\int_{\frac{c}{1+c}}^1 \int_{0}^u x^2 \ind_{x > c (1 - u)} \mathrm{d}\IP[X=x | z = u] \mathrm{d} u
\nonumber
\\
= &
\int_{\frac{c}{1+c}}^1 \int_{c(1-u)}^u x^2  \mathrm{d}\IP[X=x | z = u] \mathrm{d} u
\nonumber
.
\end{align}
Since
\begin{align*}
\IP[X >x | z = u]
= &
\IP\big[ [ u-x,u] \cap \{z_i\} = \emptyset \big]
= (1-x)^{n-1}
,
\\
\mathrm{d} \IP[X = x | z = u]
= &
(n-1)(1-x)^{n-2}
,
\end{align*}
we may bound \eqref{Funny_expectation_squared_la} by
\begin{align*}
\IE[X^2 \ind_{X >cY} ]
\leq &
\int_{\frac{c}{1+c}}^1 \int_{c(1-u)}^1 x^2 (n-1)(1-x)^{n-2} \mathrm{d}x \;  \mathrm{d} u
\\
= &
\int_{\frac{c}{1+c}}^1
\int_0^{1 - c + cu}
(n-1)(1-v)^2v^{n-2} \mathrm{d} v \; \mathrm{d} u
\\
%= &
%(n-1)
%\int_{\frac{c}{1+c}}^1
%\left[
%\frac{v^{n-1}}{n-1} - 2 \frac{v^n}{n} + \frac{v^{n+1}}{n+1} \right]_{0}^{1 - c + cu} \mathrm{d} u
%\\
%= &
%(n-1)
%\int_{\frac{c}{1+c}}^1
%\frac{(1 - c + cu)^{n-1}}{n-1} - 2 \frac{(1 - c + cu)^n}{n} + \frac{(1 - c + cu)^{n+1}}{n+1}  \mathrm{d} u
%\\
%=&
%\frac{n-1}{c}
%\int_{\frac{1}{1+c}}^1
%\frac{v^{n-1}}{n-1} - 2 \frac{v^n}{n} + \frac{v^{n+1}}{n+1}  \mathrm{d} v
%\\
\leq &
\frac{n-1}{c}
\left[
\frac{v^{n}}{n(n-1)} - 2 \frac{v^{n+1}}{n(n+1)} + \frac{v^{n+2}}{(n+1)(n+2)} 
\right]_0^1
\\
%= &
%\frac{n-1}{c} \left(
%\frac{(n+1)(n+2) -2(n-1)(n+2) + n(n-1)}{(n-1)n(n+1)(n+2)}
%\right)
%\\
= &
\frac{6}{c n (n+1)(n+2)}
.
\end{align*}
We can then conclude by noting that
\begin{align*}
\sum_{n=1}^\infty \frac{1}{(n+1)(n+2)} \frac{(K-Z^{N})^{n} e^{-(K-Z^{N})}}{n!}   = &\cO\left(\frac{1}{(K-Z^{N})^2} + \exp(-(K-Z^{N}) ) \right)
,
\end{align*}
which again leads to an error of order $\cO\left( \frac{NK}{SJ} \frac{1}{(K-Z)^2} \right)$, completing the proof of the Lemma.
\end{proof}

\begin{proof}[Proof of Proposition~\ref{selectivepropositionA}]
By an application of Lemma~\ref{Lemma enough to bound variance lemma},
 Proposition~\ref{selectivepropositionA} follows from a combination of Lemma~\ref{Lemma selective estimate neutral parent} and Lemma~\ref{unfavouredselectivegenerator}. 
%Combining Proposition~\ref{unfavouredselectivegenerator}, Proposition~\ref{favouredselectivegenerator}
%and noting $A^{N}_{sel,2} = \cO(1/S A^N_{neu,2})$
%we arrive at the conclusion that the limiting generator takes form 
%
%\begin{equation}\label{no space final generator branching process}
%A f(\eta) = f(\eta)
%\left(
%\sum_{l \in \eta} -2a \int_l^\infty (1 - g(v) ) \; \mathrm{d} v
%\right)
%+
%f(\eta) \left(
%\sum_{l \in \eta} (al^2 - bl) \frac{g'(l)}{g(l)}
%\right)
%,
%\end{equation}
%where 
%\begin{align*}
%b = \lim_{N \to \infty} 
%\left(
%\frac{\sigma(\kappa_r)}{\sigma(\kappa_c)}
%- 1
%\right)
%\frac{suN}{S(N)J(N)},\quad a = \lim_{N \to \infty}
%\frac{NK(N)}{J(N)^{2}}.
%\end{align*}
\end{proof}

\begin{lemma}
\label{selective intensity lemma}
Define $\beta^{N}$ as in \eqref{0d auxiliary submartingale}. Then $\beta^{N}$ is a dominated by a bounded submartingale.
\end{lemma}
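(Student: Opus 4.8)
The plan is to run the argument of Lemma~\ref{neutral intensity lemma} again, treating the selective part of the generator as a perturbation of the neutral one. Writing the generator of the model in Theorem~\ref{NS selection theorem} as $A^N = A^N_{neu} + A^N_{sel}$, I would split
\[
A^N \beta^N_t = A^N_{neu}\beta^N_t + A^N_{sel}\beta^N_t,
\]
and recall that the first summand was already controlled in the proof of Lemma~\ref{neutral intensity lemma}, where it was shown that
\[
A^N_{neu}\beta^N_t \geq -C_h \beta^N_t - \sum_{l_i \in \eta_t^r}\cO\!\left(\frac{NK^2}{J^3}l_{i-2}^2\right),
\qquad C_h = \lambda_h \|h^\prime\|_\infty.
\]
It therefore remains to produce a lower bound of the same shape for $A^N_{sel}\beta^N_t$.

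For the selective term I would repeat, event by event, the decomposition used for the neutral case, keeping track of how a single selective event changes $\sum_{l \in \eta^r}(l-\hat l)h(l)$. A selective event acts on the surviving levels exactly as a neutral event, with two modifications: the parent is the minimiser $l^*_{sel}$ of $\{(l_i-v^*)/\sigma(\kappa_i)\}$ rather than the lowest level $l^*_{neu}$ above $v^*$, and each surviving level above $v^*$ is shifted through $\cJ_{sel}$, whose downward displacement carries the extra factor $\sigma(\kappa)/\sigma(\kappa^*)$. Since $\sigma$ takes values in a fixed compact subset of $(0,\infty)$, these ratios are bounded above and below by constants depending only on $\sigma$, so every quantity appearing in the neutral computation is reproduced up to a bounded multiplicative constant. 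Because the selective events carry the prefactor $sN/S$ in place of the neutral rate $N$, and $s/S \to 0$ under the conditions of Theorem~\ref{NS selection theorem}, the resulting bound for the selective part is of the form $-\tfrac{s}{S}C'_h\,\beta^N_t$ together with an error of strictly smaller order than the neutral one, and is therefore a vanishing perturbation of the bound above.

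The only place where the neutral and selective computations genuinely diverge is the choice of parent, $l^*_{sel}\neq l^*_{neu}$. To dispose of this discrepancy I would invoke the estimate already obtained in Lemma~\ref{unfavouredselectivegenerator}: on the rare event $\{l^*_{sel}\neq l^*_{neu}\}$ the associated contribution to the drift of $\beta^N$ is of order $\cO\big(\tfrac{NK}{SJ}(K-Z)^{-2}\big)$ and is absorbed into the error terms. Combining the two bounds and summing the errors over the $\cO(Z)$ rare levels lying below the cut-off $\lambda_h$ (with $Z=Z^N$ the stopping threshold), I obtain
\[
A^N \beta^N_t \geq -(C_h+C'_h)\beta^N_t - C\,\frac{NK^2 Z}{J^3},
\]
so that
\[
\beta^N_t + \int_0^t\!\Big((C_h+C'_h)\beta^N_s + C\,\frac{NK^2 Z}{J^3}\Big)\mathrm{d}s
\]
is a submartingale. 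Under the conditions of Theorem~\ref{NS selection theorem} one has $NK^2/J^3 = (NK/J^2)(K/J)\to 0$ and, for $Z=Z^N$ tending to infinity sufficiently slowly, $NK^2Z/J^3$ stays bounded, so this process is dominated by a bounded submartingale exactly as in the neutral case.

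I expect the main obstacle to be the rare-favoured regime, in which $\sigma(\kappa)/\sigma(\kappa^*)>1$ can displace surviving levels upward and thereby increase $\beta^N$; this is the one mechanism through which selection could a priori create an uncontrolled positive drift. The observation that defeats it is that such upward displacements are weighted simultaneously by the vanishing factor $s/S$ and by the bounded $\sigma$-ratio, so they cannot overcome the downward drift $-C_h\beta^N$ produced by the cut-off $h$; the quantitative form of this control is precisely the $\IE[X^2\ind_{X>cY}]$-type bound derived in the proof of Lemma~\ref{unfavouredselectivegenerator}.
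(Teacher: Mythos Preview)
Your proposal is correct and follows essentially the same approach as the paper: both arguments rerun the neutral computation from Lemma~\ref{neutral intensity lemma}, observe that the selective contribution is a vanishing perturbation because of the prefactor $s/S\to 0$, and invoke Lemma~\ref{unfavouredselectivegenerator} to dispose of the discrepancy $l^*_{sel}\neq l^*_{neu}$. The paper's proof is simply terser---it appeals directly to Propositions~\ref{probconvergenceofpaths} and~\ref{selectivepropositionA} to identify the limit of the $\cJ(l(0))-l(0)$ terms and then says ``proceed as in the proof of Lemma~\ref{neutral intensity lemma}''---whereas you spell out the event-by-event decomposition and the boundedness of the $\sigma$-ratios, but the substance is the same.
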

\begin{remark}
Although the statements of Lemma~\ref{neutral intensity lemma} and Lemma~\ref{selective intensity lemma} are the same, the behaviour of the  process $\beta^{N}$ is subtly different, as in  Lemma~\ref{selective intensity lemma} the movement of the levels is affected by selective events. This difference does not effect the proof however we include them as seperate lemmas for completeness.
\end{remark}
\begin{proof}
As in the proof of Lemma~\ref{neutral intensity lemma} (and using the notation introduced there)
\begin{multline*}
\frac{\IE\left[
\sum_{l(t) \in \eta_{t}^{r}} \left(l(t) - \hat{l}(t)\right) h(l(t))
-
\sum_{l(0) \in \eta_{0}^{r}} \left(l(0) - \hat{l}(0)\right) h(l(0))
\right]}
{t}
\\
=
N
\int_0^\infty 
\left(1 + \frac{s}{S} \right)
 \frac{uK}{J} e^{-\frac{uK}{J}v^*}
\Bigg\{
\sum_{l(0) \in \eta_{t}^{r}}
\Bigg\{ \left[\left(\cJ(l(0)) - \cJ(\hat{l}(0))\right) - \left(l(0) - \hat{l}(0)\right)\right] h(l(0))
\\
+
\left(\cJ(\hat{l}(0)) - \hat{l}(t)\right) h(l(0))
\\
+
\left(l(t) - \hat{l}(0) \right)\left(\cJ(l(0)) - l(0) \right) \cO( || {h}^\prime ||_{\infty} )
\Bigg\}
+
\sum_{i = 2}^\infty
\left(v_i - \hat{v}_i\right) h(v_i)\ind_{\{v_i \text{ is rare}\} }
\Bigg\} \mathrm{d} v^*
+
\cO(t)
.
\end{multline*}
Considering combination of Proposition~\ref{probconvergenceofpaths} and Proposition~\ref{selectivepropositionA} we know that the parts involving $\mathcal{J}(l(0)) - l(0)$ converge to $al(0) - bl^{2}(0)$. 
Since $\frac{s}{S} \to 0$ and observing Lemma~\ref{unfavouredselectivegenerator}, we may proceed as in the proof of   Lemma~\ref{neutral intensity lemma} to conclude.
\end{proof}

\subsubsection{Model with selection in fluctuating environment - proof of Theorem~\ref{NS fluctuating selection theorem}}
\label{Subsection fluctuating model proof}
%We further extend our model to incorporate fluctuations in the environment, which act directly on the selection coefficient. 

%The proof of Theorem~\ref{NS fluctuating selection theorem} will be divided into two steps. First we identify the limiting process. As the rate of selective events is higher than in the model in the absence of fluctuations, this operation is not entirely trivial, as we employ a computational trick of separation of timescales, first introduced in \cite{kurtz:1973}. To prove that the process converges to the identified limit, we use Theorem~\ref{kurtzaveraging}.
% (or, to be more precise, his Example 2.2). %We recall this result for the sake of completeness.

%\paragraph{Identification of the limit}
\begin{proof}
We begin by identifying  the limit. 
Recall that the rescaled generator takes  the form
\begin{align}\label{fluctuating generator2}
A^{N}f(\eta,\zeta) = A_{neu}^{N}f(\eta,\zeta) + \hat{S}A_{sel}^{N}f(\eta,\zeta) + \hat{S}^{2}A_{env}f(\eta,\zeta),
\end{align}
where  $A_{neu}^{N}$ is defined as in \eqref{noselecgenerator}, $A_{sel}^{N}$ is defined as in \eqref{selgenerator}, and
\begin{align*}
A_{env}f(\eta,\xi)  = \mathbb{E}_{\pi}[f(\eta,\xi)]- f(\eta,\xi)
.
%\\
%\hat{S}
\end{align*}
%We observe that the terms $A_{neu}$ and $A_{sel}$ should converge by arguments similar to those of Theorem~\ref{NS selection theorem} with a slight modification of Lemma~\ref{neutral intensity lemma} (which we discuss later).
%However, since $\widehat{S} \to \infty$ as $N$ tends to infinity, it may seem that as $N$ tends to infinity, \eqref{fluctuating generator2} will not converge to a non-trivial limit. 
%However, this form of the generator does not exploit the cancellations coming from the fluctuations in the direction of selection.
%In order to overcome this difficulty, we use a similar technique as in Section~\ref{section Lookdown construction for BBMRE and SBMRE} and apply a trick due to \cite{kurtz:1973}.

%Recall that 
We also use \eqref{Asel for f1} and that $ A_{neu}f_{1}$ is of order $1$ by previous calculations.
%Since $A_{sel}f(\eta,\xi) = f_{1}(\eta,xi)$, and $\frac{1}{\hat{S}} \to \infty$, it is 
%now enough to evaluate $A_{sel}f_{1}(\eta,xi)$. We use ... to see that 
To identify the correct limit it is therefore enough to evaluate $A_{sel}^{N}\left( 
f_N(\eta, \xi) 
 \right)$, which can be approximated as
\begin{multline*}
A_{sel}^{N}\left( 
f_N(\eta, \xi) 
 \right)
 =
A_{sel}^{N}
\left(
f(\eta)
\frac{suN}{JS}
\left(
\frac{\sigma(\kappa_r,\zeta)}{\sigma(\kappa_c,\zeta)}
- 1
\right)
\sum_{j}l_{j} 
\frac{g^{\prime}(\kappa_j , l_j)}{g(\kappa_j ,l_j)}
\right)
\\
=
\left[
\frac{suN}{JS}
\left(
\frac{\sigma(\kappa_r,\zeta)}{\sigma(\kappa_c,\zeta)}
- 1
\right)
\right]^{2}
f(\eta)
\\
\times
\left\{
\left(
\sum_{i \neq j}l_{i}l_{j} 
\frac{g^{\prime}(\kappa_i , l_i,)g^{\prime}(\kappa_j , l_j)}{g(\kappa_i ,l_i)g(\kappa_j ,l_j)}
\right)
+  \left(
\sum_{j}l_{j} 
\frac{g^{\prime}(\kappa_j , l_j )+ l_{j}g^{\prime\prime}(\kappa_j , l_j)}{g(\kappa_j ,l_j)}
\right)
\right\}
+ \cO \left(\frac{1}{S} + \frac{1}{K^{1/2}}\right)
\end{multline*} 
%
%Finally, we observe that 
%
%which also gives a desired bound on 
%\paragraph{Convergence}
%The calculations from the previous paragraph allow us to see that the decomposition that is needed in order to apply Theorem~\ref{kurtzaveraging} can be achieved by taking 
%
%\begin{align*}
%Af(\eta) = \int_{0}^{t}f(\eta)\sum_{i}2a\int_{l_{i}}^{\infty}\big(g(v)  - 1\big)\mathrm{d}v + f(\eta)\sum_{i}\big(al_{i}^{2}  - b^{2}l_{i}\big)\frac{g^{\prime}(l_{i})}{g(l_{i})}
%\\
%+
%b^{2}f(\eta)\sum_{j} \left( \sum_{i \ne j}l_{j}l_{i}\frac{g^{\prime}(l_{i})g^{\prime}(l_{j})}{g(l_{i})g(l_{j})} + l_{j}^{2}\frac{g^{\prime\prime}(l_{i})}{g(l_{i})}\right)\mathrm{d}s
%\\
%\varepsilon_{N}^{f} = \int_{0}^{t}Af(\eta) - A_{neu}^{N}f(\eta) -  A_{sel}^{N}\left( 
%f_N(\eta, \xi) 
% \right)\mathrm{d}s
% + t\mathcal{O}\left(
% \frac{1}{\widehat{S}}
% +
% \frac{\widehat{S}}{S} + \frac{\widehat{S}}{K^{1/2}}
% \right)
%\end{align*}
The bound \eqref{epsilontozero} follows directly from proofs of Propositions~\ref{selectivepropositionA},~\ref{probconvergenceofpaths},~\ref{probconvergenceofbirths}, with Lemma~\ref{neutral intensity lemma} modified in a way analogous to Lemma~\ref{selective intensity lemma}.
%To identify the correct version of the limit, we average over the environment.
\end{proof}

\section{SuperBrownian motion in a random environment}
\label{Section SuperBrownian motion in a random environment}
In this section we present a precise definition of superBrownian motion in a random environment.
We begin by defining Branching Brownian Motion in a random environment
and
recalling the original definition of the corresponding superprocess from \cite{mytnik:1996a}. Then we describe a lookdown construction for both of these models based on the ideas in \cite{kurtz/rodrigues:2011}.

\subsection{Definitions}
Branching Brownian Motion in a random environment (BBMRE) can be described as follows. 
Imagine a collection of particles on $\mathbb{R}^{d}$. 
Each particle moves according to independent standard Brownian motions. 
Each  particle, if alive, gives birth to one new particle at a time, at  rate  $a$. 
The initial  offspring location is the same as that of the parent. 
After the birth, the offspring  moves  and reproduces independently of all other particles.
The particles die at instantaneous rate $a - \zeta_{t}(x)b$, where $\zeta_{t}(x)$, taking values in $\{-1,1\}$ as before,  models the random environment and $x$ is the current location of the particle. 
We assume that $a > b > 0$.
If $\zeta_{t}(x)$ is positive, the particle is less likely to die, if it is negative, it is more likely to do so. 
The evolution of the environment and  particles are independent. 
We give a more formal definition below.

We begin by recalling the description of our environment, which is used for all models in this section. Our environment is modelled through a simple random field.%, essentially the same as in Chapter~\ref{chap4}. 
\begin{definition}
\label{Branching -  Environment definition}
Let $\Pi^{env}$ be a Poisson process 
with intensity $E$, dictating the times of the changes in the environment.
Let $q(x,y)$ be an element of $C_{0}\left(\mathbb{R}^d \times \mathbb{R}^d\right)$ (continuous functions vanishing at infinity) and
let $\{\xi^{(m)}(\cdot)\}_{m\geq 0}$ be a  family of identically distributed 
random fields on $\mathbb{R}^{d}$ such that 
\begin{align*}
\mathbb{P}\left[\xi^{(m)}(x) = -1  \right] = &  \frac{1}{2}=
\mathbb{P}\left[\xi^{(m)}(x) =+1 \right],\\
\mathbb{E}\left[ \xi^{(m)}(x)\xi^{(m)}(y) \right] = & q(x,y).
\end{align*}
Set $\tau_0=0$ and write $\{\tau_m\}_{m\geq 1}$ for the points in $\Pi^{env}$ and define
$$
\zeta(t,\cdot):=\sum_{m=0}^\infty\xi^{(m)}(\cdot)
\mathbf{1}_{[\tau_m,\tau_{m+1})}(t).
$$
%In other words, the environment $\xi(t,\cdot)$ is resampled, independently, at
%the times of the Poisson process $\Pi^{env}$.
\end{definition}

Since the exact labelling of our particles is not important, we identify the particle component of the process with a counting measure, that is for a vector $\overline{x} = (x_{1}, \dots x_{n})$
\begin{align*}
\mu_{\overline{x}} = \sum_{i}\delta_{x_{i}} \quad x_{i} \in \mathbb{R}^{d}.
\end{align*}
We are now ready to state the definition of Branching Brownian motion in a random environment.

\begin{definition}[Branching Brownian motion in a random environment (BBMRE)]
\label{Definition -  BBMRE}
Branching Brownian motion in the  random environment  $\zeta$ is the stochastic process taking values  in purely atomic measures on $\{-1, 1 \} \times \mathbb{R}^{d}$ whose evolution consists of four ingredients.

\begin{enumerate}
\item \textbf{Spatial motion} The location of each particle, $x_{i}$, evolves according to a standard Brownian motion, independently of all other particles.
\item \textbf{Birth events} At exponential rate $a$ (independent for each particle), a particle gives  birth to a new particle (a new particle is added to the system). 
The location of the offspring is the same as the location of their parent. 
The behaviour of the new particle after the birth is independent of all other particles.
\item \textbf{Death events} Each particle dies (is removed from the system) at instantaneous  rate $a - \zeta_{t}(x_{i})b$, where $x_{i}$ is its location.
\item \textbf{Environment changes} The environment evolves as described in Definition~\ref{Branching -  Environment definition}.
\end{enumerate} 

\end{definition}
%a more formal definition.
Alternatively, we may define the BBMRE by the means of the generator. 
%For that purpose, let $A^{env}$ denote the generator of the process from Definition~\ref{Branching -  Environment definition}. 
%Recall that we identify our process with a counting measure on $\mathbb{R}^{d}$.
For a counting measure $\mu  = \sum_{i}\delta_{x_{i}}$, define
$$f(\mu) = \sum_{i}f(x_{i}).$$
%For that purpose, let $A^{env}$ denote the generator of the process from Definition~\ref{Branching -  Environment definition}. 
Let $f(\zeta,\mu) = f_{0}(\zeta)f_{1}(\mu) = f_{0}(\zeta)\pi_{i}h(x_{i})$ be a function such that $h \in C_{c}(\mathbb{R}^{d})$, that is $h$ is a continuous function with a  compact support. %compactly supported.
 We define the generator of the BBMRE as  
\begin{multline*}
\mathcal{L}f(\zeta,\mu) \\
=f_{1}(\mu)A^{env}f_{0}(\zeta) + f_{0}(\zeta)\left(\sum_{i = 1}^{n}B f_{1}(\mu) + \sum_{i} a\big(f_{1}(\mu_{b(x|x_{i})}) - f_{1}(\mu)\big)\right.\\
\left.
 + \sum_{i}\big( a - \zeta(x_{i}) b(f_{1}(\mu_{d(x|x_{i})}) - f_{1}(\mu)\big)
\right),
\end{multline*}
where $\mu_{b(x|x_{i})}$ denotes the addition of a particle at location $x_{i}$, and $\mu_{d(x|x_{i})}$ denotes a removal of the particle at location $x_{i}$.

%Alternatively, this process can be characterised by a generator of the form 
%\begin{align*}
%s
%\end{align*}
%\subsection{SuperBrownian motion in a random environment}
It is well known that  the high density limit for the Branching Brownian Motion gives  rise to a SuperBrownian motion (see e.g. \cite{etheridge:2000}). An analogous result (under certain scaling of the environment) was  established by \cite{mytnik:1996a} for branching random walk in a random environment. The limiting object is 
 SuperBrownian motion in a random environment (SBMRE). 
\begin{definition}[SuperBrownian motion in a random environment (SBMRE)]
\label{Definition SuperBrownian}
Let $q(x,y)$ be a covariance function which belongs to $C_{0}\left(\mathbb{R}^d \times \mathbb{R}^d\right)$. The superBrownian motion in a random environment is the (unique) process for which, for all $\phi \in \mathcal{D}(\Delta)$,

\begin{align}
\label{SBMRE mytnik generator martingale part}
X_{t}(\phi) = X_{t}(\phi) - X_{0}(\phi) - \int_{0}^{t}\frac{1}{2}X_{s}(\Delta\phi)\mathrm{d}s
\end{align}
is a square-integrable martingale with  quadratic variation given by
\begin{align*}
\langle X(\phi)\rangle_{t} = \int_{0}^{t}X_{s}(\phi^{2})\mathrm{d}s 
+ 
\int_{0}^{t}\int_{\mathbb{R}^{d}\times \mathbb{R}^{d}}q(x,y)\phi(x)\phi(y)X_{s}(\mathrm{d}x)X_{s}(\mathrm{d}y)\mathrm{d}s.
\end{align*}
%where $q(x,y)$ is the correlation function of the environment. 
\end{definition}

%\textcolor{red}{To be honest, I prefer notation $\langle f,\mu \rangle = \int f \mathrm{d}\mu$ (dual pairing fetish), but we can choose whichever}

An equivalent characterisation of the SBMRE can be given in terms of the generator, see Theorem 4.8 in \cite{mytnik:1996a}. Namely,  
%or \textcolor{red}{Our funny Poissonian calculation}
for $f\in \bar{C}^{2}(R_{+}), \phi \in\mathcal{D}(\Delta)$ and $q(x,y)$ as in Definition~\ref{Definition SuperBrownian}, 
the generator is given by
\begin{align*}
\mathcal{L}f(\mu(\phi)) = 
f^{\prime}(\mu(\phi))\mu(\Delta\phi) + 
\frac{1}{2}f^{\prime\prime}(\mu(\phi))\left(\mu(\phi^{2}) 
+
\int_{\mathbb{R}^{d}\times \mathbb{R}^{d}}q(x,y)\phi(x)\phi(y)\mu(\mathrm{d}x)\mu(\mathrm{d}y)\mathrm{d}s \right)
.
\end{align*}
We would like to point out that the process of Definition~\ref{Definition SuperBrownian motion drift} which is obtained as a limiting behaviour of the scaled SLFV in a random environment differs from the one from Definition~\ref{Definition SuperBrownian} by a presence of a drift term.  

\begin{remark}
\label{SBMRE uniqueness remark}
Uniqueness of solutions to the martingale problem of Definition~\ref{Definition SuperBrownian} is not immediately clear. 
It was established by \cite{mytnik:1996a} using a novel approximate duality technique, and later re-proved by the means of the log-Laplace transform by \cite{crisan:2004}. 
A uniqueness result for the process of Definition~\ref{Definition SuperBrownian motion drift}  is a simple consequence of Dawson's Girsanov Theorem, see \cite{dawson:1978} and \cite{etheridge:2000}, Chapter~7.
\end{remark}
\begin{remark}
A model similar to that in \cite{mytnik:1996a} was studied in \cite{sturm:2003}. The main difference between the two is in the behaviour of the environment. 
For the limiting model in \cite{sturm:2003}, \eqref{SBMRE mytnik generator martingale part} is again a martingale, but with quadratic variation of the form 
\begin{align*}
\langle X(\phi)\rangle_{t} = 
\int_{0}^{t}\int_{\mathbb{R}^{d}\times \mathbb{R}^{d}}q(x,y)\phi(x)\phi(y)X_{s}(\mathrm{d}x)X_{s}(\mathrm{d}y)\mathrm{d}s.
\end{align*}
In this case, the density of the process can be described as a solution to an SPDE in all dimensions, whereas the SBMRE has a density only in dimension one and the analogous SPDE has no solution in dimensions $d \geq 2$.
\end{remark}
\begin{remark}
\label{Nakashima papers}
An alternative construction of the SBMRE has been suggested by \cite{nakashima:2015}.
The construction in this paper is based on the model introduced by \cite{birkner/geiger/kersting:2005}. 
%The scaling is analogous to that 
\end{remark}

%Let us describe the 

\subsection{Lookdown representation for BBMRE and SBMRE}

%\subsubsection{Nearly-critical case}
%\subsubsection{Critical case}

In this section we describe a new construction of the SBMRE, inspired by the constructions of \cite{kurtz/rodrigues:2011}. 
As a by-product of this construction, we provide a lookdown construction for the BBMRE. 
%This construction serves as a heuristic which justifies the correspondence to the high-density limit of Branching Brownian motion in a (scaled) random environment. 
A precise statement of the results is given in Theorem~\ref{LC BBMRE} and Theorem~\ref{LC SBMRE}.
We will use the lookdown representation of SBMRE in Section~\ref{Scaling limits of the SLFV - dynamics of the rare type} to describe the behaviour of the `rare' type (by which we mean a new mutation establishing in the population) in the Spatial Lambda-Fleming-Viot model with fluctuating selection.

%We begin by defining the class of approximation models, which depend on the critical level  $\lambda$. This corresponds to increase in intensity of the process in the classical construction. We show, via the Markov Mapping Theorem, that this process gives an alternative characterisation of the Branching Process in random environment. 

%\textcolor{red}{Next paragraph is not entirely correct}
%
%
%The process of interest is the result of two passages to the limit - firstly, we pass to the limit with the number of particles considered in our system. Secondly, we pass to the limit with critical level parameter $\lambda$. Finally, using the properties of Poisson measures, we show that the process constructed in that way is equivalent to the superBrownian motion in Random Environment. The argument is, once again, based on the Markov Mapping Theorem.

%\subsubsection{Construction of approximations - Lookdown construction for BBMRE}\label{LBBRMRE}
In order to motivate what follows, let us informally describe the construction of SBMRE from  \cite{mytnik:1996a}. 
The SBMRE is constructed via a series of approximations. 
Consider a sequence of %\begin{remark}
%A study of scaling limits of Branching Brownian Motion in a random environment can be found in \cite{hutzenthaler/pfaffelhuber:2015}. 
%Their results are not based on the lookdown representation, but do use ideas which are somehow similar to ours, in the sense that \cite{hutzenthaler/pfaffelhuber:2015} also uses separation of timescales argument and a Theorem analogous to Theorem~\ref{kurtzaveraging}.
%\end{remark} 
mean $0$ random fields $\zeta_{k}$, taking values in $\{-1,1\}$, with  correlation as in Definition~\ref{Branching -  Environment definition} (\cite{mytnik:1996a} considered a more general class of random fields, but this one is sufficient for our purposes). At stage $n$ of the approximation, we start with a population with size of order $n$. Over each time interval $(k/n,(k+1)/n), k \in \mathbb{N}$, each individual, if alive, moves independently of  all the others, according to a standard Brownian motion. At times $k/n$, each individual either splits into two with probability $1/2 + \zeta_{k}(x_{i})/\sqrt{n}$, or dies with probability $1/2 - \zeta_{k}(x_{i})/\sqrt{n}$. 
The state of the environment is resampled with each reproduction event. 
If for each set B we define 
\begin{align*}
X_{t}^{n}(B) = \frac{\text{number of particles in } B \text{ alive at time } t}{n},
\end{align*}
and assume that $X_{0}^{n}$ converges to some $X_{0}$, then passing to the limit as $n$ tends to infinity the process $X^{n}$ converges to a SBMRE with initial condition $X_{0}$. 
Intuitively, this procedure corresponds to increasing the rate of the branching events in BBMRE   by $n$, while scaling down the impact of the environment by $\sqrt{n}$.
Observe that \cite{mytnik:1996a} considers a model with non-overlapping generations. This is for purely technical reasons.

We now move to a description of a lookdown construction for BBMRE (with overlapping generations). 
Fix $\lambda \in \mathbb{R}$.
Consider a system of particles in the geographical space $\mathbb{R}^{d}$. 
Each particle moves independently, according to a standard Brownian motion. 
Each particle $i$ is assigned a level, $l_{i}$. 
The level takes values in $[0,\lambda]$. 

In order to take environmental fluctuations into account we use the process $\zeta_{t}(x) \in \{-1, 1\}$, introduced in Definition~\ref{Branching -  Environment definition}, to model the environment. 
The branching rate of the particles depends on both their level and the state of the environment. 
We assume that every particle gives  birth at instantaneous rate $2a(\lambda - l_{i}(t))$, with the location of offspring being the same as the location of the parent. 
The initial level of the offspring is distributed uniformly on the interval $[l_{i},\lambda]$. 
The levels of the particles evolve according to an ODE with a random coefficient  
\begin{align*}
\frac{\mathrm{d}l_{i}}{\mathrm{d}t} = al_{i}^{2} - \zeta(x_{i})\sqrt{\lambda}bl_{i}.
\end{align*}
The particle dies when its level reaches  $\lambda$. 
%Observe that the changes in the evolution of the system with respect to $\lambda$ are of the same nature as the changes in the system of approximations described at the beginning of this section.  
We are now ready to define  the process of interest. 
%We proceed to specifying  the evolution of the environment. 

%The environmental variable $\zeta(t,x)$ takes value in $\{-1,1\}$. The evolution of this variable is driven by an independent Poisson process with rate $E$. Whenever an environmental event occurs $\zeta(t,x)$ is sampled uniformly from $\{-1,1\}$, with correlation between the points given by a Lipschitz continuous function $g(x,y)$.

\begin{definition}[Lookdown representation of Branching Brownian motion in a random environment]
\label{Definition - Lookdown for BBMRE}
The lookdown representation of BBMRE is the process taking values in  $(\mathbb{R}^{d}\times\mathbb{R})^{\infty} \times \{-1,1\}$
with  dynamics specified by four components.
\begin{enumerate}
\item \textbf{Spatial motion} The spatial location of each particle, $x_{i} \in \mathbb{R}^{d}$, evolves according to a standard Brownian motion with generator $B  =\frac12 \Delta$.
\item \textbf{Birth events} Each particle gives birth at instantaneous rate $2a(\lambda - l_{i}(t))$, where $l_i$ is the level of the particle. 
The spatial location of the offspring is the same as the location of their parent, $x_i$. 
The level of the offspring is chosen uniformly at random from  $[l_i, \lambda]$.
\item \textbf{Level movement} The level of each of the particles evolves according to the equation
\begin{align*}
\frac{\mathrm{d}l_{i}}{\mathrm{d}t} = al_{i}^{2} - \zeta(x_{i})\sqrt{\lambda}bl_{i}.
\end{align*}
\item \textbf{Environment changes} The  environment evolves as described in Definition~\ref{Branching -  Environment definition}.
\end{enumerate}

\end{definition}
A formal definition, via the generator of the process, is given in \eqref{Mytnik lambda generator}.
%As the exact labelling of our particles is not important, w
It is convenient to identify the process with the counting measure 
$$\mu_{\overline{x},\overline{l}} = \sum_{i}\delta_{x_{i},l_{i}}.$$

%
%\begin{definition}[Lookdown representation of SuperBrownian motion in random environment]
%fs
%\begin{enumerate}
%\item \textbf{Spatial motion}
%\item \textbf{Birth events}
%\item \textbf{Level movement}
%\item \textbf{Environment changes}
%\end{enumerate}
%\end{definition}

We now state the first result of this section.

\begin{theorem}[Lookdown construction for Branching Brownian Motion in random environment]
\label{LC BBMRE}
Let $\mu_{0}$ be the measure associated with the initial state of the population. Let $\gamma : \mathcal{N}\big(\mathbb{R}^{d}\times[0,\lambda)\big) \rightarrow \mathcal{M}(\mathbb{R}^{d})$ be given by
\begin{align*}
\gamma\left(\sum_{i}\delta_{x_{i},l_{i}}\right) 
= 
\frac{1}{\lambda}\sum_{l_{i}}\delta_{x_{i}}
.
\end{align*}
%The lookdown representation \ref{Definition - Lookdown for BBMRE}

The lookdown process of Definition~\ref{Definition - Lookdown for BBMRE} corresponds to BBMRE, in the sense that if $\eta(\overline{x},\overline{l})$ is a solution to the martingale problem for the process defined in Definition~\ref{Definition - Lookdown for BBMRE}, then $\gamma\left(\eta(\overline{x},\overline{l})\right)$ is the solution to the martingale problem for the process of Definition~\ref{Definition -  BBMRE}.
\end{theorem}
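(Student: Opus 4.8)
The plan is to obtain the correspondence as an application of the Markov Mapping Theorem recalled in Appendix~\ref{Markov Mapping Theorem}, exactly in the spirit of \cite{kurtz/rodrigues:2011}. Take the lookdown state space $E_1 = \mathcal{N}(\mathbb{R}^d\times[0,\lambda))\times\{-1,1\}$, the target space $E_2 = \mathcal{M}(\mathbb{R}^d)\times\{-1,1\}$, and let $\gamma$ act as stated on the particle component while leaving the environment untouched. The fibre kernel $\alpha(\nu,\cdot)$ is chosen to be the conditionally Poisson (Cox) law whose intensity is uniform in the level variable on $[0,\lambda]$ and agrees with $\nu$ in space; since $\gamma$ only forgets the levels and rescales by $1/\lambda$, the measure $\alpha(\nu,\cdot)$ is supported on the fibre $\gamma^{-1}(\nu)$, as the theorem demands. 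The assertion that $\gamma(\eta)$ solves the martingale problem of Definition~\ref{Definition -  BBMRE} is then precisely the conclusion of the Markov Mapping Theorem, once its hypotheses are verified.

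First I would record the generator $\mathcal{A}$ of the lookdown process of Definition~\ref{Definition - Lookdown for BBMRE} on product test functions $f(\eta)=\prod_i g(x_i,l_i)$ with $g(x,\cdot)\equiv 1$ in a neighbourhood of $\lambda$; this is the generator \eqref{Mytnik lambda generator}. It splits into four pieces: the spatial part $\tfrac12\sum_i\Delta_{x_i}$; the birth operator, in which particle $i$ throws down offspring on $(l_i,\lambda]$ at intensity $2a\,\mathrm{d}v$ (equivalently total rate $2a(\lambda-l_i)$ with uniform offspring level); the level-transport operator associated with the random ODE $\dot l_i = a l_i^2 - \zeta(x_i)\sqrt{\lambda}\,b\,l_i$, together with absorption of a particle when its level reaches $\lambda$; and the environment operator $A^{env}$ of \eqref{chapter4 environmnet generator}.

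The core of the argument is the intertwining identity required by the Markov Mapping Theorem: for $\bar f(\nu,\zeta)=f_0(\zeta)\prod_j h(y_j)$ one must show that $\int_{E_1}\mathcal{A}(\bar f\circ\gamma)(\eta,\zeta)\,\alpha(\nu,\mathrm{d}\eta)$ equals $\mathcal{L}\bar f(\nu,\zeta)$, with $\mathcal{L}$ the BBMRE generator displayed before the statement. The spatial Laplacian and the environment term pass through the averaging verbatim. The essential point is that, after integrating the levels against the uniform law in $\alpha$, the birth operator, the level transport and the absorption at $\lambda$ collapse onto exactly the two BBMRE mechanisms: a birth at rate $a$ with offspring at the parent's location, and a death at rate $a-\zeta(x_i)b$. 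I would carry this out using the Poisson-integral identities already employed in the paper (of the type $\int_0^\infty u e^{-uv^*}g(v^*)e^{-u\int_{v^*}^\infty(1-g)\,\mathrm{d}v}\,\mathrm{d}v^* = e^{-u\int_0^\infty(1-g)\,\mathrm{d}v}$), noting that the choice of birth rate $2a(\lambda-l)$ with uniform offspring placement is exactly what leaves the conditionally Poisson structure invariant, so that $\alpha$ is a consistent fibre kernel; this invariance I would check piece by piece against each of the four generator components.

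The step I expect to be the main obstacle is the treatment of death. Unlike birth, death is not an exogenous Poisson rate but the absorption of levels crossing the boundary $\lambda$, so its contribution to the projected generator has to be extracted as the boundary flux of the level transport, namely the product of the level density at $\lambda$ and the transport velocity $a l^2 - \zeta(x_i)\sqrt{\lambda}\,b\,l$ evaluated there. The delicate bookkeeping is to show that, once the $1/\lambda$ normalisation built into $\gamma$ is accounted for, the environment-dependent part of this flux reproduces precisely the term $-\zeta(x_i)b$ in the BBMRE death rate, rather than an error of a different order in $\lambda$; this is exactly the role of the $\sqrt{\lambda}$ scaling in the level ODE. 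Making the boundary-flux computation rigorous, and confirming that the Poisson structure is genuinely invariant so that the hypotheses of the Markov Mapping Theorem hold, is where the real work lies, while the remaining matchings are routine.
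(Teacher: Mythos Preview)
Your overall plan is the same as the paper's: apply the Markov Mapping Theorem by averaging the lookdown generator \eqref{Mytnik lambda generator} over the level kernel and checking that the result is the BBMRE generator. Two points where your description diverges from what actually works are worth flagging.

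First, for finite $\lambda$ the fibre kernel $\alpha$ is \emph{not} a Cox/Poisson law. Given $\nu=\tfrac{1}{\lambda}\sum_i\delta_{x_i}$, the number and positions of particles are fixed; $\alpha$ simply assigns to each particle an independent level uniform on $[0,\lambda]$. The conditionally Poisson picture is the $\lambda\to\infty$ situation of Theorem~\ref{LC SBMRE}, not the present one. Correspondingly, the Poisson-integral identity you quote from the SLFV setting plays no role here; the relevant tools are Fubini and integration by parts on $[0,\lambda]$, exactly as in \cite{kurtz/rodrigues:2011}.

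Second, your worry about extracting death as a boundary flux is resolved more simply than you anticipate. Because the test functions satisfy $g(x,\lambda)=1$, integrating $\lambda^{-1}\int_0^\lambda (az^2-\sqrt{\lambda}\zeta b\,z)\,\partial_z g\,\mathrm{d}z$ by parts produces the boundary term $\lambda^{-1}(a\lambda^2-\sqrt{\lambda}\zeta b\,\lambda)=a\lambda-\sqrt{\lambda}\zeta b$ directly; combined with the birth integral via Fubini, this yields $\lambda a\big(\hat f(\mu_{b(\cdot|x_i)})-\hat f(\mu)\big)+(\lambda a-\sqrt{\lambda}\zeta(x_i)b)\big(\hat f(\mu_{d(\cdot|x_i)})-\hat f(\mu)\big)$ with no delicate limiting argument. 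The $\sqrt{\lambda}$ in the level ODE is there so that after this integration by parts the death rate carries the factor $\sqrt{\lambda}b$, matching the scaled BBMRE; nothing further is needed. The remaining hypotheses of the Markov Mapping Theorem are handled by Condition~\ref{LC Condition 3.1} and the choice of $\psi$ in \eqref{BBMRE psi}.
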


Our next aim is to write down  the generator of the process obtained by passing  to the  limit $\lambda \to \infty$, which would correspond to passing to the limit with $n$ passing to infinity  in the sequence of approximations described at the beginning of this section.
%This corresponds to the limiting procedure that
 %our process converges to a measure-valued limit, which is the analogue of the original construction of \cite{mytnik:1996a}.
In order to formulate the result for the limiting process, we need to consider a special class of test functions of the form
\begin{align}
\label{test_functions_BBMRE}
%\mathcal{D}_{\lambda} = \left\lbrace f\in C^{2}(\mathbb{R}^{d}\times \mathbb{R}^{n}) :
 f(\zeta,x,l) = f_{0}(\zeta)f_{1}(x,l) =  f_{0}(\zeta)\prod_{i}g(x_{i},l_{i}), 
\end{align} 
with the additional requirement
\begin{align*}
g(x_{i},l_{i}) = 1 \text{ for } l_{i} > \lambda_g,
 %\right\rbrace
\end{align*}
which ensures that we ignore all individuals with levels above $\lambda_g$.
We are now in a position to state the main result of this section.

\begin{theorem}[Lookdown construction for superBrownian motion in random environment]
\label{LC SBMRE}
Consider the process with  generator given by
\begin{multline}
\label{SBMRE LIMITING GENERATOR}
A_{\infty}\left(f_{1}\right) = f_{1}(x,l)\sum_{i} \frac{Bg(x_{i},l_{i})}{g(x_{i},l_{i})} 
%\nonumber 
\\
 + f_{1}(x,l)\sum_{i}2a\int_{l_{i}}^{\lambda}(g(x_{i},v) - 1)\mathrm{d}v 
+ 
f_{1}(x,l)\sum_{i}
(al_{i}^{2}  - b^{2}l_{i})\frac{\partial_{l}g(x_{i},l_{i}) }{g(x_{i},l_{i})}
%\nonumber 
\\
- f_{1}(x,l)\sum_{i}
b^{2}l_{i}
\left(\sum_{j \ne i}q(x_{i},x_{j})l_{j}\frac{\partial_{l}g(x_{i},l_{i})\partial_{l}g(x_{j},l_{j})}{g(x_{i},l_{j})g(x_{i},l_{i})}   + \frac{l_{i}\partial^{2}_{l}g(x_{i},l_{i})}{g(x_{i},l_{i})}\right). 
%\nonumber \\
%:= A_{1} + A_{2} + A_{3}.
\end{multline}
Let $\mu_{0}$ be the measure associated with the initial state of the population in the limit. Define the map $\gamma : \mathcal{N}\big(\mathbb{R}^{d}\times[0,\infty)\big) \rightarrow \mathcal{M}(\mathbb{R}^{d})$ by
\begin{align*}
\gamma\left(\sum_{i}\delta_{x_{i},l_{i}}\right) 
= 
\begin{cases}
\lim_{\lambda \to \infty} \frac{1}{\lambda}\sum_{l_{i} \leq \lambda}\delta_{x_{i}} & \text{if the limit exists}, \\
\mu_{0} & \text{otherwise } . 
\end{cases}
\end{align*}
The process described by the limiting generator \eqref{SBMRE LIMITING GENERATOR} corresponds to the SBMRE of Definition~\ref{Definition SuperBrownian motion drift}, in the sense that if $\eta(\overline{x},\overline{l})$ is a solution to the martingale problem for the process described by the limiting generator \eqref{SBMRE LIMITING GENERATOR}, then $\gamma(\eta(\overline{x},\overline{l}))$ is a solution to the martingale problem for the process in Definition~\ref{Definition SuperBrownian motion drift}.
\end{theorem}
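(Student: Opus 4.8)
The strategy is to prove this through the Markov Mapping Theorem (Appendix~\ref{Markov Mapping Theorem}), exactly in the spirit of the non-spatial constructions of \cite{kurtz/rodrigues:2011}. The projection $\gamma$ should be realised through a transition kernel $\alpha(\mu,\mathrm{d}\eta)$ that places, over a given finite measure $\mu\in\mathcal{M}(\mathbb{R}^d)$, a conditionally Poisson configuration $\eta$ on $\mathbb{R}^d\times[0,\infty)$ with Cox measure $\mu(\mathrm{d}x)\otimes\mathrm{d}l$. The heart of the argument is then a single \emph{averaging identity}
$$\int A_{\infty} f_1(\eta)\,\alpha(\mu,\mathrm{d}\eta) = \mathcal{L}\bar f_1(\mu), \qquad \bar f_1(\mu) := \int f_1(\eta)\,\alpha(\mu,\mathrm{d}\eta),$$
where $\mathcal{L}$ is the SBMRE generator of Definition~\ref{Definition SuperBrownian motion drift} and $f_1(x,l)=\prod_i g(x_i,l_i)$ ranges over the test functions \eqref{test_functions_BBMRE}. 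Once this identity and the measurability and integrability hypotheses of the theorem are in place, any solution $\eta$ of the $A_{\infty}$-martingale problem will project under $\gamma$ to a solution of the $\mathcal{L}$-martingale problem, which is precisely the assertion.

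Two structural facts feed into the kernel $\alpha$. First, for $\alpha(\mu,\cdot)$-almost every realisation one has $\gamma(\eta)=\mu$: since $\eta$ is conditionally Poisson with intensity $\mu\otimes\mathrm{d}l$, the strong law of large numbers gives $\frac{1}{\lambda}\sum_{l_i\le\lambda}\phi(x_i)\to\mu(\phi)$ almost surely for bounded continuous $\phi$, so the limit defining $\gamma$ exists and equals $\mu$. Secondly, the kernel $\alpha$ is the correct one because the conditionally Poisson structure is preserved by the $A_{\infty}$ dynamics; this is the defining feature of the lookdown and the infinitesimal counterpart of the finite-$\lambda$ fact established via Theorem~\ref{LC BBMRE}, and rigorously it is encoded in the averaging identity above. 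If existence of a solution to the $A_{\infty}$-martingale problem is itself required, I would obtain it as the $\lambda\to\infty$ limit of the finite-$\lambda$ lookdown of Definition~\ref{Definition - Lookdown for BBMRE}, whose generator \eqref{Mytnik lambda generator} converges to $A_{\infty}$.

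The computation behind the averaging identity is the main piece of work. Writing $h(x):=\int_0^{\infty}\big(1-g(x,l)\big)\,\mathrm{d}l$, the generating functional of the Poisson kernel gives the explicit form $\bar f_1(\mu)=\exp(-\mu(h))$, so $\mathcal{L}\bar f_1(\mu)$ is computed by taking $f(y)=e^{-y}$ and $\phi=h$ in Definition~\ref{Definition SuperBrownian motion drift}. On the lookdown side each single sum in \eqref{SBMRE LIMITING GENERATOR} is averaged by Mecke's (Campbell's) formula, which replaces $\sum_i(\cdot)$ by $\bar f_1(\mu)$ times an integral against $\mu\otimes\mathrm{d}l$, while the cross term $\sum_{i\ne j}q(x_i,x_j)(\cdot)$ is handled by the bivariate Mecke formula and produces the double integral $\int\!\int q(x,y)h(x)h(y)\,\mu(\mathrm{d}x)\mu(\mathrm{d}y)$. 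Integration by parts in the level variable—using $\int_0^{\infty} l\,\partial_l g\,\mathrm{d}l=h(x)$, $\int_0^{\infty} l^2\,\partial_l^2 g\,\mathrm{d}l=-2h(x)$ and the analogous identities—reduces the spatial-motion term to the diffusion term of $\mathcal{L}$ and matches the birth and level-drift terms; I expect the auxiliary moment contributions $\int_0^{\infty} l(1-g)\,\mathrm{d}l$ arising from the birth term and from the $al_i^2$ drift to cancel exactly, leaving precisely the local-variance term $a\mu(h^2)$, the growth term, and the $q$-covariance term of $\mathcal{L}$.

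The principal obstacle is twofold and lies entirely in making the averaging identity rigorous rather than formal. On the one hand, the bookkeeping of constants must be done carefully: the correct $\tfrac12$ factors and the signs of the growth and covariance terms emerge only after the integration-by-parts cancellations just described, and the self-interaction ($i=j$) contributions must be separated cleanly from the genuine two-point covariance term. On the other hand, and this is the more delicate point, the test functions $f_1$ may vanish (when some $g(x_i,l_i)=0$) and the levels range over the unbounded half-line, so before invoking the Markov Mapping Theorem I must verify its integrability and continuity hypotheses uniformly in this degenerate regime. Here I would lean on the cutoff $g\equiv 1$ for $l>\lambda_g$, which confines all nontrivial contributions to a bounded band of levels and keeps the relevant expressions finite even where $g$ vanishes, together with Lemma~A.13 of \cite{kurtz/rodrigues:2011} (Appendix~\ref{kurtzrodmartingalelemma}) to identify the projected martingale problem. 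The detailed estimates are carried out in Appendix~\ref{Proofs SBMRE BBMRE}.
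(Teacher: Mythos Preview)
Your proposal is correct and follows essentially the same route as the paper: set up the conditionally Poisson kernel $\alpha(\mu,\cdot)$ with Cox measure $\mu\otimes m_{leb}$, compute $\bar f_1(\mu)=e^{-\langle h,\mu\rangle}$, average each block of $A_\infty$ via the Poisson (Mecke/Campbell) identities together with integration by parts in the level variable to recover the SBMRE generator of Definition~\ref{Definition SuperBrownian motion drift}, and conclude by the Markov Mapping Theorem with $\psi,\tilde\psi$ as in \eqref{BBMRE psi}--\eqref{BBMRE psi tilde}. The paper additionally records how $A_\infty$ itself arises from $A_\lambda$ via the separation-of-timescales perturbation $f_1+\lambda^{-1/2}h_1$ and an appeal to Theorem~\ref{kurtzaveraging}, which you only allude to; since the theorem statement takes $A_\infty$ as given this is not a gap in your argument, but it is worth noting that the environment averaging (replacing $\zeta(x_i)\zeta(x_j)$ by $q(x_i,x_j)$) is part of that step.
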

%Let $\zeta(t)$ be a collection of Bernoulli random fields with correlation function $g(x,y)$. Whenever $\phi$ changes sign, the environment is resampled from that random field.

%We assume that 

%be a measurable Gaussian mean-zero random field with covariance function $g(x,y)$ (\textcolor{red}{We aim at sth like that, we're not there yet}).

%\begin{remark}[Special cases of the environment]
%We would like to point out two special borderline constructions for our environment.
%\end{remark}

%Let $Q$ be a generator of finite state, irreducible  Markov chain with stationary distribution $\pi$. (\textcolor{red}{We need to work on that}).

%We begin by defining a truncated version of our model. We assume that the process is stopped after $n$ particles are present in the system. This allows us to apply the limiting Theorem from Kurtz to show uniqueness. We then let  

%We then investigate the model in which the level at which particle dies, $\lambda$, is taken to $\infty$. Those considerations are the content of the next paragraph. We then allow the number of the particles in the system go to $\infty$, which would lead us to diffusion approximation of the model, which is the BBMRE. This is the content of the second paragraph. Both of the passages to the limit require the idea of separation of timescales, see  \cite{kurtz1973limit}. On the way we verify that our construction corresponds to the usual definition of the BBMRE.

%\paragraph*{The generator}$\mbox{}$ \newline
We now wish to specify the generator for the process in  Definition~\ref{Definition - Lookdown for BBMRE}. 
In general, we consider the set of test functions of the form \eqref{test_functions_BBMRE}.
To make the functional setup more precise, we need to borrow the following condition from \cite{kurtz/rodrigues:2011}, which guarantees that the assumptions of the Markov Mapping Theorem are satisfied.

\begin{condition}[Based on \cite{kurtz/rodrigues:2011}, Condition 3.1]
\label{LC Condition 3.1}
We assume that the following conditions on the operator $B$, the coefficients $a,b$ and the test functions $g$ are satisfied.
\begin{enumerate}
\item The operator $B$ is defined on a subset of the space of bounded continuous functions, and its domain is closed under multiplication and separating.
\item The test functions are of the form \eqref{test_functions_BBMRE}, where 
$$g(x,l) = \prod_{j=1}^{m}(1 - g_{1}^{j}(x)g_{2}^{j}(l))$$
and $g^{j}_{1} \in \mathcal{D}(B)$ and $g^{j}_{2}$ are twice differentiable with support in $[0,\lambda]$. Moreover,
$$0 \leq g_{1}^{j}g^{j}_{2}  < \rho_{g} <1.  $$
\item There exists a continuous, non-negative  function $\psi_{B}$ such that for every test function $g$, and for every $x \in \mathbb{R}^{d}$
\begin{align*}
\sup_{l}|Bg(x,l)| \leq c_{g}\psi_{B}(x)
\end{align*}
for some constant $c_{g}$ which depends only on the function $g$.
\item The following bound holds for the test functions:
\begin{align*}
\int_{0}^{\infty}|g(x,l) - 1|\mathrm{d}l + \sup_{l}\lbrace l + l^{2} \rbrace\partial_{l}g(x,l) \leq c_{g}\psi_{B}(x)
\end{align*}
for some constant $c_{g}^{\prime}$ which depends only on the  function $g$.
\item $a >0$, $\lambda a - \sqrt{\lambda}b > 0.$
\end{enumerate}
\end{condition}

\begin{remark}
We notice that the conditions for the generator of the motion $B$ are satisfied by the  Laplacian, the generator of the Brownian motion. This is the only generator that we consider. We provide the general construction for the sake of completeness and to highlight potential extensions of this work.  
\end{remark}

%It is sufficient for the proof of Theorem~\ref{LC BBMRE} to consider the subset of the set of test functions for which functions $g$ are compactly supported in space, and  equal to $1$ for $l \geq \lambda$.

%It is now easy to see that t
The generator $A_{\lambda}$ of the process of Definition~\ref{Definition - Lookdown for BBMRE}
 %define a generator 
 can be written as
\begin{multline}
\label{Mytnik lambda generator}
A_{\lambda}f(\zeta,x,l) =  f_{1}(x,l)A^{env}_{\lambda}f_{0}(\zeta) + f(\zeta,x,l)\sum_{i} \frac{Bg(x_{i},l_{i})}{g(x_{i},l_{i})}  \\
 + f(\zeta,x,l)\sum_{i}2a\int_{l_{i}}^{\lambda}(g(x_{i},v) - 1)\mathrm{d}v \\
+ f(\zeta,x,l)\sum_{i} (al_{i}^{2} - \sqrt{\lambda}\zeta(x_{i}) bl_{i})\frac{\partial_{l_{i}}g(x_{i},l_{i})}{g(x_{i},l_{i})}.
\end{multline}
%where $A_{env}$ is the generator of the process which describes the changes in the environment. 
%We observe that the evolution of the levels of particles (after they were born) follows a stochastic differential equation of the form

%\paragraph{Averaging the levels - Proof of Theorem~\ref{LC BBMRE}}$\mbox{}$ \newline
%%We would like to point out that if we average this generator over the all possible initial configurations of levels, this corresponds to the usual generator of the Branching Brownian Motion in random environment. In order to observe that we need to introduce some additional notation.  Define 
%In this section we  verify that the generator (and therefore our model) specified in the previous section does indeed describe the BBMRE, as advertised. The main tool which allows us to check this relation is the Markov Mapping Theorem. 

%\subsubsection{Identification of the limit - lookdown construction for SBMRE}
%In order to identify the process at the limit, we describe the limiting behaviour of the generator $A_{\lambda}$. 
A naive limiting procedure does not lead to a well defined object, since the form of the generator does not take  into account the cancellations coming from fluctuations in the environment. 
In order to identify the correct limit, we once again use the separation of timescales trick. 
Proofs of Theorem~\ref{LC BBMRE} and Theorem~\ref{LC SBMRE} are presented in Appendix~\ref{Proofs SBMRE BBMRE}.

% introduced by \cite{kurtz:1973}, just as we did in Chapter~\ref{chap4}.
%This `separation of timescales' argument is the main content of the first part of this section. 

%As in  Sections~\ref{nospace} and \ref{scaling proof}, we are interested in the model which  comes from  averaging over the levels in the limiting lookdown model.
% We show that this  model is the SBMRE.
%Once again, the key tool in the argument is the Markov Mapping Theorem, and to describe the projection we average over the distribution of the levels. 
%Our argument is built around the Markov Mapping Theorem and Theorem~\ref{kurtzaveraging}.
%Our presentation is focused on the part of the proof which appeals to the Markov Mapping Theorem. 
%However, since in the limiting construction the levels are no longer uniformly distributed, but follow Poisson random variables, this argument is based on the properties of random Poisson measures. 
\begin{remark}
Our computations do not lead to any surprising results - we show that  the SBMRE can be obtained as a scaling limit of BBMRE. 
Our arguments, combined with tightness of the sequence of projected processes, guarantees convergence of the sequence of projected models. 
We therefore provide a new construction for the SBMRE. 
However, we still refer  to results described in Remark~\ref{SBMRE uniqueness remark} to guarantee the uniqueness of solutions to the projected martingale problem. The question of uniqueness of solutions  to the martingale problem for the limiting process with levels (which, by the Markov Mapping Theorem would guarantee uniqueness of solutions to the projected model) will be pursued elsewhere. 
\end{remark}

\section{Scaling limits of the SLFV - dynamics of the rare type}
\label{Scaling limits of the SLFV - dynamics of the rare type}
In this section we are interested in a spatial analogue of the results from Section~\ref{Scaling limits of the LFV - dynamics of the rare type}.
We show that under a certain scaling, the dynamics of the subpopulation with a rare mutation, which is a part of a population evolving according to a version of the Spatial-Lambda-Fleming-Viot model with selection in a fluctuating environment, is given by a superBrownian motion in a random environment. 
Since under our scaling the proof of the result does not differ significantly from the one discussed in Section~\ref{Scaling limits of the LFV - dynamics of the rare type}, our discussion will be rather brief and focus on highlighting the differences and required modifications. 

We begin with a description of the model. As before, we consider a population with two genetic types, rare and common, which we denote by $\kappa_{r}$ and $\kappa_{c}$, respectively.
We also consider the random field $\zeta$, specified by Definition~\ref{Branching -  Environment definition}.
Since in our model we consider a population with a countable number of individuals, the state of the population can be represented as 
\begin{align*}
\eta = \sum_{(x,\kappa,l)}\delta_{x,\kappa,l}.
\end{align*}
We assume that $\eta$ is a conditionally Poisson system with Cox measure $\Xi(\mathrm{d} x, \mathrm{d}\kappa) \times m_{leb}(\mathrm{d}l)$.

The evolution of the population is determined by reproduction events of two types - neutral and selective driven by independent Poisson point processes $\Pi^{neu}$ and $\Pi^{sel}$, which specify the time, location, impact and radius of the events. 
%The neutral events are the same as in the first model discussed in Section~\ref{Section Spatial Lambda-Fleming-Viot model}. 
%The selective events are analogous to the selective events described in Section~\ref{Scaling limits of the LFV - dynamics of the rare type}, but now we assume that only individuals within the ball centred at the location of the event, of radius specified during event, are affected. 
They are analogous to those in Section~\ref{Scaling limits of the LFV - dynamics of the rare type}, but now we assume that an event has a location and radius, only individuals within the ball of given radius centred at the location of the event are affected. 
A rigorous definition of the model follows.

\begin{definition}[Lookdown representation of SLFVSRE]
\label{Lookdown representation of SLFVSRE}
Let $\mu$ be a measure on $(0, \infty)$ and for each $r\in (0,\infty)$, 
let $\nu_{r}$ be a probability measure on $[0,1]$, such that the 
mapping $r \mapsto \nu_r$ is measurable and 
\begin{align}
\label{LC integrability}
\int_{(0,\infty)}r^{d}\int_{[0,1]}u\; \nu_{r}(\mathrm{d}u)\mu(\mathrm{d}r) < \infty.
\end{align}

Fix $s \in [0,1]$. Let $\Pi^{neu}, \Pi^{sel}$ be a pair of  independent Poisson processes with intensity measures $(1-s)\mathrm{d}t \otimes \mathrm{d}y \otimes \mu(\mathrm{d}r)\nu_{r}(\mathrm{d}u)$
and 
$s\mathrm{d}t \otimes \mathrm{d}y \otimes \mu(\mathrm{d}r)\nu_{r}(\mathrm{d}u)$  respectively. Let $\Pi^{env}$ be a Poisson process independent of $\Pi^{neu}, \Pi^{sel}$.%, with the dynamics specified by Definition~\ref{Branching -  Environment definition}.

The lookdown representation of SLFVSRE is a process taking values in purely atomic measures on $\mathbb{R}^{d}\times\mathbb{R}\times \{\kappa_{r},\kappa_{c}\} \times \{-1,1\}$ with  dynamics described as follows.
\begin{enumerate}
\item If $(t,y,r,u) \in \Pi^{neu}$  
	\begin{enumerate}
	\item a group of new individuals with levels $(v_{1}, v_{2}, \dots,)$ is added to the 			population within the  ball $B_{r}(y)$. 
	Their levels are distributed according to a Poisson process with intensity $u$.
	\item Let $v^{*} = \min\{ v_{1}, v_{2}, \dots,\}$.
	The type of the new individuals is chosen to be the same as the type of the individual with the lowest level above $v^{*}$ within the  ball $B_{r}(y)$.
	\item As a result of an event the individual originally with level, $l$, and position, $x$, has a new level given by
	\begin{align*}
\mathcal{J}_{neu}(l,l^*,v^*, x,  (y, r) )
=
\begin{cases}
l
&
\text{ if } x \notin B_{r}(y)
,
\\
\frac{1}{1 - \frac{u}{J}} (l - (l^* - v^*))
& \text{ if } l > l^*
, x \in B_{r}(y)
,
\\
\frac{1}{1 - \frac{u}{J}} l
& \text{ if } l < l^*
, x \in B_{r}(x)
,
\\
v^*
& \text{ if } l = l^*
, x \in B_{r}(y)
.
\end{cases}
\end{align*}
	\end{enumerate}

\item If $(t,y,r,u) \in \Pi^{sel}$  
	\begin{enumerate}
	\item a group of new individuals with levels $(v_{1}, v_{2}, \dots,)$ is added to the 			population. Their levels distributed according to a Poisson process with intensity $u$.
	\item Let $v^{*} = \min\{ v_{1}, v_{2}, \dots,\}$.
	The type of the new individuals is chosen to be the same as the type of the individual with level above $v*$ minimizing $(l_{i} - v^{*})/\sigma(\kappa_{i},\zeta)$ within the  ball $B(x,r)$.
	\item As a result of an event the individual originally with level, $l$, and position, $x$, has a new level given by
	\begin{align*}
\mathcal{J}_{sel}(l,l^*,v^*, x,  (y, r) )
=
\begin{cases}
l
&
\text{ if } x \notin B_{r}(y)
,
\\
\frac{1}{1 - \frac{u}{J}} (l - (l^* - v^*)\frac{\sigma(\kappa,\zeta)}{\sigma(\kappa^{*},\zeta)})
& \text{ if } l > l^*
, x \in B_{r/M}(y)
,
\\
\frac{1}{1 - \frac{u}{J}} l
& \text{ if } l < l^*
, x \in B_{r}(y)
,
\\
v^*
& \text{ if } l = l^*
, x \in B_{r}(y)
.
\end{cases}
\end{align*}
	\end{enumerate}
\item The dynamics of $\Pi^{env}$ are specified by Definition~\ref{Branching -  Environment definition}.
\end{enumerate}
\end{definition}

Definition~\ref{Lookdown representation of SLFVSRE} is more
general than we require, however, we include it to underline the 
possibility of extending our results. However, in the interest of keeping our notation as simple as possible, from now on we 
shall specialise to fix the radius and impact of reproduction events. 
\begin{assumption}
From now on, fix $R\in (0,\infty)$ and $\bar{u}\in (0,1)$ and take
\begin{align*}
\mu({dr}) = \delta_{R}(\mathrm{d}r), 
\qquad \nu_{r}(\mathrm{d}u) = \delta_{\bar{u}}(\mathrm{d}u)
.
\end{align*}
\end{assumption}
The integrability condition \eqref{LC integrability} is trivially satisfied for our model with fixed radius and impact. 
%\begin{remark}
%The assumption \eqref{LC integrability} has been firt used in the context of lookdown construction in \cite{veber/wakolbinger:2015}. For the projected model it has been introduced in \cite{barton/etheridge/veber:2010}. 
%%A somehow weaker condition has been given by \cite{etheridge/kurtz:2014}.
%\end{remark}
\subsection{Scaling and statement of main results}

%We are going to extract superBrownian motion from the SLFV through a scaling and a passage
%to the limit. There will be two cases, the first leading to the finite variance superprocess
%and the second to a superprocess with a stable branching law. 

As in Section~\ref{Scaling limits of the LFV - dynamics of the rare type}  we record two theorems which are a by-product of our technique.

\begin{theorem}
\label{Space neutral theorem}
Suppose that
$X_0^N$ is absolutely continuous with respect to Lebesgue measure, that
the support $\mathrm{supp}(X_0^N)\subseteq D$, where $D$ is a 
compact subset of $\IR^d$ (independent of $N$), and that 
$X_0^N$ converges weakly to $X_0$. Furthermore suppose that the intensity of selective events is $0$ and as $N$ tends to infinity,
\begin{align*}
\frac{C_{d}ur^{d+2}N}{JM^2} \to & C_1 ; \quad
J,K,M \to \infty; \quad \frac{K}{JM^{d}} \to 0;  \quad   \frac{N^2}{M^{d}KJ^2} \to 0; \quad  \frac{u^2 V_R NK}{J^2 M^{d}} \to a,
\end{align*} 
In addition, assume that there exists an $n$ such that, as $N$ tends to infinity,
$N(K/J)^{n} \to 0$.
Then the sequence $X_{N}(t)$ converges weakly to superBrownian motion without drift, initial condition $X_{0}$, diffusion parameter $C_1$ and quadratic variation parameter $2a$.
\end{theorem}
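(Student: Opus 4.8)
The plan is to transcribe the non-spatial neutral argument (Theorem~\ref{NS neutral theorem}) almost line by line, adding one genuinely new ingredient that produces the Brownian spatial motion of the limit. Since the intensity of selective events is zero there is no fluctuating environment to average, so the random perturbation $Y_n$ in Theorem~\ref{kurtzaveraging} is trivial and the averaging theorem reduces to the standard criterion for relative compactness together with identification of the limiting martingale problem (cf.\ \cite{ethier/kurtz:1986}, Theorems~3.9.1 and~3.9.4). Throughout I would work with the lookdown representation of Definition~\ref{Lookdown representation of SLFVSRE}, using product test functions $f(\eta)=\prod_i g(x_i,l_i)$ with the cut-off in the level variable, and apply the Markov Mapping Theorem to transfer relative compactness and the martingale characterisation from the lookdown to the projected measure-valued process $X^N$. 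Uniqueness of the limiting driftless superBrownian martingale problem is classical (Remark~\ref{SBMRE uniqueness remark}), so relative compactness plus identification yields convergence.

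First I would prove the spatial analogue of the intensity estimate, Lemma~\ref{Intensity estimate 0d}. In the neutral case the total rare mass $\langle X^N_t,1\rangle$ is a nonnegative martingale, so $\sup_{t\le T}\sup_N\IE[\langle X^N_t,1\rangle]<\infty$ is immediate and the running-supremum bound \eqref{Second part of intensity estimate 0d} follows from Doob's and Markov's inequalities exactly as in \eqref{stoppingtimemarkov}; this gives compact containment in the mass coordinate and, via Corollary~\ref{compactcontainmentcorollary}, transfers between the lookdown and projected processes. The additional requirement is spatial compact containment: starting from $\mathrm{supp}(X^N_0)\subseteq D$ with $D$ compact, the emergent motion of low-level lineages is diffusive with coefficient of order $C_d u r^{d+2}N/(JM^2)\to C_1$, so a second-moment estimate on the spatial spread bounds, uniformly in $N$, the probability that mass leaves a large ball by time $T$.

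The heart of the proof is Proposition~\ref{Proposition Neutral Convergence In Space}, the spatial analogue of Proposition~\ref{Proposition Neutral Convergence}, which identifies the limit of $\int_0^t A^N_{neu}f(\eta_s)\,\mathrm{d}s$. I would apply $A^N_{neu}$ to $f(\eta)=\prod_i g(x_i,l_i)$ and Taylor expand $g$ simultaneously in the level variable and in the spatial variable. The level expansion is handled verbatim as in Propositions~\ref{probconvergenceofpaths} and~\ref{probconvergenceofbirths}: the telescoping-sum computation leading to \eqref{telescopingsumsstatement}, now carrying the ball-volume factor $V_R/M^d$ that localises each event in space, produces the path term $\sum_i a\,l_i^2\,\partial_l g(x_i,l_i)/g(x_i,l_i)$ and the birth term $2a\sum_i\int_{l_i}^\infty(1-g(x_i,v))\,\mathrm{d}v$, with the constants fixed by $u^2 V_R NK/(J^2 M^d)\to a$ and matching the quadratic-variation parameter of Definition~\ref{Definition SuperBrownian motion drift}. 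The new contribution comes from the spatial argument of $g$: because the offspring created by a ball event of radius $r/M$ are placed at positions distributed isotropically about the affected region, a Taylor expansion of $g(\cdot,l_i)$ in $x$ has vanishing first-order (gradient) part by symmetry, while the second-order part accumulates, under $C_d u r^{d+2}N/(JM^2)\to C_1$ with $C_d=\int_{\cB_1(0)}x^2\,\mathrm{d}x$, to the term $\tfrac{C_1}{2}\sum_i \Delta_x g(x_i,l_i)/g(x_i,l_i)$. Collecting the three limits reproduces the lookdown generator \eqref{SBMRE LIMITING GENERATOR} with $b=0$ and motion generator $B=\tfrac{C_1}{2}\Delta$, which by Theorem~\ref{LC SBMRE} projects onto the driftless superBrownian motion with diffusion parameter $C_1$ and quadratic-variation parameter $2a$.

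I expect the main obstacle to be the rigorous control of the spatial term: showing that the second-order spatial Taylor remainder vanishes in $L^1$ uniformly on $[0,T]$ and that the cross terms between the spatial displacement and the level movement are negligible. This is exactly where the strengthened sparsity condition $K/JM^d\to 0$ and the moment condition $N^2/(M^d K J^2)\to 0$ enter, in place of their non-spatial counterparts, and where one needs the intensity of levels inside a ball of radius $r/M$ to tend to infinity so that the limit can carry the two-dimensional support of superBrownian motion, as explained in Remark~\ref{sparsity remark}. The required joint spatial--level displacement estimates, conditioned on the rare levels, are the direct analogues of the variance computations \eqref{levelmovementsimpleterm}--\eqref{levelmovementsquareterm}, and the remaining bounds are the spatial transcriptions of Lemmas~\ref{Lemma enough to bound variance lemma}, \ref{Lemma partition and supremum}, \ref{lemmaforbirthprocess} and~\ref{neutral intensity lemma}, all of which carry over with the factor $V_R/M^d$ inserted and require no new ideas.
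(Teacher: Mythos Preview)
Your proposal is broadly correct and shares the overall architecture with the paper (lookdown representation, Theorem~\ref{kurtzaveraging} with trivial $Y$, Markov Mapping Theorem, uniqueness of the limiting martingale problem), but it diverges from the paper in one substantive respect: how the spatial Laplacian is obtained.

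You propose to Taylor-expand $g$ simultaneously in the level and spatial variables at the lookdown level, so that the motion term $\tfrac{C_1}{2}\sum_i\Delta_x g(x_i,l_i)/g(x_i,l_i)$ emerges directly and one recovers the full lookdown generator \eqref{SBMRE LIMITING GENERATOR} before projecting. The paper does not attempt this; it states explicitly in Section~\ref{Scaling limits of the SLFV - dynamics of the rare type} that in the spatial case it does ``not claim to show any results on the limits of sequences of lookdown representations.'' Instead the paper adds and subtracts $g(x^*,\kappa^*,v^*)$ inside the integrand of \eqref{noselecgeneratorwithspace}, splitting $A^N_{neu}$ into a level-only piece $A^N_{full,neu}$ (all spatial arguments of $g$ frozen at the parent's location $x^*$), which is then handled by Proposition~\ref{Proposition Neutral Convergence In Space} as a direct transcription of the non-spatial argument, and a spatial-correction piece $A^N_{full,neu,2}$ carrying the factor $\hat g_{y,r/M}(\kappa^*,v^*)-g(x^*,\kappa^*,v^*)$. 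This spatial piece is dealt with at the \emph{projected} level: the paper computes $\alpha A^N_{full,neu,2}$, obtains $e^{-\langle h,\Xi\rangle}\langle \tfrac{C(d)ur^d N}{JM^2}\Delta h+\cO(1/M),\Xi\rangle$, and invokes the calculations of \cite{chetwynd-diggle/etheridge:2017} rather than redoing them. Your route is more unified and, if pushed through, would additionally yield convergence of the lookdown processes; the paper's route is more modular, reuses the projected-level spatial estimates already available in \cite{chetwynd-diggle/etheridge:2017}, and avoids the joint spatial--level control that your simultaneous expansion would require. A minor point: in the neutral case $\langle X^N_t,1\rangle$ is not quite a martingale because of the $\cO(N/(JM^3))$ correction $N^N_t(\phi)$ in Lemma~\ref{roughgenerator}; the paper's Lemma~\ref{Intensity estimate many d} absorbs this via Gr\"onwall rather than appealing to exact martingality.
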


\begin{theorem}
\label{Space selection theorem}
Suppose that
$X_0^N$ is absolutely continuous with respect to Lebesgue measure, that
the support $\mathrm{supp}(X_0^N)\subseteq D$, where $D$ is a 
compact subset of $\IR^d$ (independent of $N$), and that 
$X_0^N$ converges weakly to $X_0$. 
Furthermore, suppose that $\widehat{S} = 1$, $\sigma(\kappa, \zeta) = \sigma(\kappa)$ and as $N$ tends to infinity,
\begin{align*}
\frac{N}{JM^2} \to  C_1 \quad
J,K,M \to \infty; \quad \frac{K}{JM^{d}} \to 0;  \quad   \frac{N^2}{M^{d}KJ^2} \to 0; \quad  \frac{u^2 V_R NK}{J^2 M^{d}}  \to a 
;
\\
\frac{suNV_{R}}{JS}\left(\frac{\sigma(\kappa_r)}{\sigma(\kappa_c)} - 1 \right) \to b,
\end{align*} 
In addition, assume that there exists an $n$ such that, as $N$ tends to infinity,
$N(K/J)^{n} \to 0$.
Then the sequence $X_{N}(t)$ converges weakly  to a critical superBrownian motion with initial condition $X_{0}$, diffusion parameter $C_1$, growth rate $b$ and quadratic variation parameter $2a$.
\end{theorem}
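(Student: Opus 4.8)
The plan is to follow the proof of the non-spatial selection result, Theorem~\ref{NS selection theorem}, and add only the ingredient that generates the spatial motion. Because $\widehat{S}=1$ and $\sigma(\kappa,\zeta)=\sigma(\kappa)$ carries no dependence on the environment, the perturbation $Y$ in Theorem~\ref{kurtzaveraging} is trivial, so -- exactly as for Theorem~\ref{NS selection theorem} -- no separation-of-timescales argument is needed, and it suffices to prove relative compactness of the rescaled (projected) sequence together with a martingale characterisation of its limit points. I would work throughout with the lookdown representation of Definition~\ref{Lookdown representation of SLFVSRE}, using product test functions $f(\eta)=\prod_i g(x_i,l_i)$ that vanish above some level $\lambda_g$, and with the scaled generator written as $A^N = A^N_{neu} + A^N_{sel}$.

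First I would analyse $A^N_{neu}$. Expanding its action on a product test function and Taylor-expanding $g$ in the level variable reproduces, verbatim from Section~\ref{Subsection Neutral model proof}, the branching-drift term $\sum_i a l_i^2\,\partial_l g(x_i,l_i)/g(x_i,l_i)$ (as in Proposition~\ref{probconvergenceofpaths}) and the birth term $2a\sum_i\int_{l_i}^\infty(g(x_i,v)-1)\,dv$ (as in Proposition~\ref{probconvergenceofbirths}), yielding the quadratic-variation parameter $2a$. The genuinely new contribution is the second-order expansion of $g$ in the spatial variable: since a neutral event places its offspring over a ball of scaled radius $r/M$ about the parent, the displacement seen by a lineage has mean zero and spatial second moment of order $r^2/M^2$, so that after summing against the event rate $N$ and the impact $u/J$ the first-order spatial terms cancel and the second-order terms converge to the Laplacian term $\tfrac{C_1}{2}\sum_i Bg(x_i,l_i)/g(x_i,l_i)$, with $C_1$ read off from $N/(JM^2)$ and the geometric constant $C_d=\int_{\cB_1(0)}x^2\,dx$. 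This is precisely the content of the spatial neutral estimate, Proposition~\ref{Proposition Neutral Convergence In Space}, whose proof is the spatial counterpart of the computations carried out in Section~\ref{Subsection Neutral model proof}.

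Next I would treat $A^N_{sel}$ through the spatial analogue Proposition~\ref{selectivepropositionAspace}, following Lemma~\ref{Lemma selective estimate neutral parent} and Lemma~\ref{unfavouredselectivegenerator}. Selective events differ from neutral ones only in the choice of parent, and only rarely select a parent other than the lowest level above $v^*$; the leading term is therefore $-b\sum_i l_i\,\partial_l g(x_i,l_i)/g(x_i,l_i)$ with $b=\lim (suNV_R/JS)(\sigma(\kappa_r)/\sigma(\kappa_c)-1)$, while the correction vanishes because the relevant level spacings are $\cO(1/(K-Z^N))$ and $1/S\to 0$, exactly as in the non-spatial argument. To verify the compact containment condition \eqref{tightness} and the $L^p$-bound \eqref{kurtzbound} I would prove the spatial version of the intensity estimate Lemma~\ref{Intensity estimate 0d}, controlling $\IE[\langle X^N_{t\wedge\tau^N},1\rangle]$ on the stopped process through a Grönwall argument applied to the projected generator. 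As stressed in Remark~\ref{sparsity remark}, the sparsity condition $K/JM^d\to 0$ -- which forces the intensity of levels inside each ball to diverge -- is exactly what allows these estimates to go through with no essential change from the non-spatial case.

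Finally I would assemble the pieces. The two generator estimates show that the rescaled lookdown processes are relatively compact and that every limit point solves the martingale problem for the lookdown generator built from $\sum_i Bg/g$, the birth term $2a\sum_i\int_{l_i}^\infty(g-1)$, and the level drift $\sum_i(al_i^2-bl_i)\partial_l g/g$ -- that is, the spatial analogue of the Feller-with-drift lookdown generator \eqref{Kurtz-Rodrigues limiting generator no fluctuations}, with no environmental cross-term since there are no fluctuations. Pushing this forward through the map $\gamma$ of Theorem~\ref{LC SBMRE} and invoking the Markov Mapping Theorem, together with Lemma~A.13 of \cite{kurtz/rodrigues:2011}, yields relative compactness of the projected processes $X^N$ and identifies any limit as a solution of the projected martingale problem, namely the superBrownian motion with drift of Definition~\ref{Definition SuperBrownian motion drift} with $c=0$, diffusion parameter $C_1$, growth rate $b$ and quadratic variation $2a$; uniqueness of that martingale problem (Remark~\ref{SBMRE uniqueness remark}, via Dawson's Girsanov theorem) then upgrades relative compactness to weak convergence. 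The hard part will be the spatial neutral estimate: one must show that the second-order spatial term survives to give a nondegenerate Laplacian while all higher-order spatial terms, and the cross terms between spatial displacement and level movement, vanish, and that this holds with enough $L^2$-control (through the machinery of Lemma~\ref{Lemma enough to bound variance lemma} and Lemma~\ref{Lemma partition and supremum}) even though the product test functions do not smooth the support of the limit -- the subtlety flagged in Remark~\ref{sparsity remark}.
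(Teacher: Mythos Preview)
Your overall architecture is correct and mirrors the paper's: with $\widehat S=1$ and $\sigma$ constant no separation of timescales is needed, and one combines the level-evolution estimates from the non-spatial proof with a spatial Laplacian term, then identifies the projected limit via uniqueness. There is, however, a genuine difference in how the Laplacian is produced. The paper does \emph{not} obtain it by Taylor-expanding $g$ in the spatial variable within the lookdown. Instead it adds and subtracts $g(x^*,\kappa^*,v)$ inside the event integral to split the generator into pieces $A^N_{full,neu}$, $A^N_{full,sel}$ (which freeze the spatial argument at the parent location and are treated exactly as in Section~\ref{Subsection Convergence of the non-spatial model}; note that Propositions~\ref{Proposition Neutral Convergence In Space} and~\ref{selectivepropositionAspace} contain only level terms and no Laplacian, contrary to what you attribute to them) and remainder pieces $A^N_{full,neu,2}$, $A^N_{full,sel,2}$ carrying the difference $\hat g_{y,r/M}-g(x^*,\cdot)$. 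These remainders are then \emph{projected}, and the Laplacian is read off from $\alpha A^N_{full,neu,2}$ by reusing the calculations of \cite{chetwynd-diggle/etheridge:2017} on the projected model; the selective spatial remainder vanishes thanks to the extra factor $1/S$. Correspondingly the paper explicitly declines to claim relative compactness at the lookdown level in the spatial setting (see the discussion following Theorem~\ref{Space selection theorem}) and applies Theorem~\ref{kurtzaveraging} directly to the projected process, rather than establishing lookdown tightness and pushing forward through $\gamma$ at the end as you propose. Your route---a joint level/space expansion entirely in the lookdown followed by the Markov Mapping Theorem---would yield a more self-contained argument but requires controlling the level--space cross terms you flag in your last paragraph; the paper's route buys modularity by importing the diffusion term wholesale from \cite{chetwynd-diggle/etheridge:2017}.
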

%\begin{theorem}
%\label{NS selection theorem}
%Suppose that
%$X_0^N$ is absolutely continuous with respect to Lebesgue measure, that
%the support $\mathrm{supp}(X_0^N)\subseteq D$, where $D$ is a 
%compact subset of $\IR^d$ (independent of $N$), and that 
%$X_0^N$ converges weakly to $X_0$ and that $\sigma$ does not depend on the environment.
%Moreover, suppose that, as $N$ tends to infinity,
%\begin{align*}
%J,K,M,S \to \infty; \quad  \frac{K}{JM^{d}} \to 0; \quad   \frac{N^2}{M^{d}KJ^2} \to 0; \quad  \frac{u^2 NK}{M^{d}J^2} \to 2a; \quad \frac{suN}{M^{d}SJ} \left(\frac{\sigma(\kappa_r)}{\sigma(\kappa_c)} - 1 \right) \to b
%\end{align*} 
%In addition, assume that there exists an $n$ such that, as $N$ tends to infinity,
%$N(K/J)^{n} \to 0$.
%Then the sequence $X_{N}(t)$ converges weakly  to Feller diffusion  with initial condition $X_{0}$ and parameters $2a$, $b$.
%\end{theorem}

The strategy of the proof is analogous to that laid out in Section~\ref{Subsection Convergence of the non-spatial model}.
We therefore focus on describing the differences.
In contrast to the situation described in the Remark~\ref{NS_almost_full_lookdown_proof} we do not claim to show any results on the limits of sequences of lookdown representations. 
We focus on the result for the projected version of the model. 
We divide the generator of the lookdown representation into two separate parts - one part describes the spatial movement of the particles, the other describes the evolution of the levels.
We show that the selective events (and therefore the fluctuations in the direction of selection) do not affect the movement of the particles in the limit.
For the convergence of the neutral model we refer to results of \cite{chetwynd-diggle/etheridge:2017}.
We then use the lookdown representation to deduce the right form of the limiting generator and to justify the separation of timescales trick for the selective part of the generator.
The part of the proof which deals with the evolution of the levels is analogous to that of Section~\ref{Scaling limits of the LFV - dynamics of the rare type}, and we do not repeat it here. 
For the reader’s convenience we include intensity estimate and discuss the splitting of the generator.

%In particular, the choice of $\widetilde{f}$ is completely analogous.
%For that reason we focus our attention on highlighting the differences in the most challenging part of the proof, that is in the for the convergence of the neutral model.
Once again we consider a stopped process, see Remark~\ref{rarety remark}.
The domain of the generator is specified by \eqref{Functional setup SLFV 1}, \eqref{Functional setup SLFV 2}.

\subsection{Intensity estimate} \label{Intensityestimatesection}
As in Section~\ref{subsection intensity estimate 0d} we observe that the generator of the projected process is given by

\begin{multline}
\label{SLFV with funny selection genereator}
\mathcal{L}^{N}f(\langle\phi,X^N_0\rangle)
= 
N M^d
\Bigg[
\int_{\mathbb{R}^d}
\int_{B_r^M(x)} 
\frac{1}{|B_r^M|}
w^N_0(z) f \Bigg( K\frac{u}{J} \int_{B_r^M(x)}  \phi(y)  \mathrm{d} y 
\\
+ K\left(1-\frac{u}{J}\right) \int_{B_r^M(x)} \phi(y) w^N_0(y) \mathrm{d} y  
  \Bigg)   -
f \bigg( K\int_{B_r^M(x)} \phi(y) w^N_0(y) \mathrm{d} y \bigg) \mathrm{d} z \mathrm{d} x\\
+
\int_{\mathbb{R}^d}
\int_{B_r^M(x)}
 \big( 1 - w^N_0(z) \big)
f \Bigg( 
K\left(1-\frac{u}{J}\right) \int_{B_r^M(x)} \phi(y) w^N_0(y) \mathrm{d} y  
 \Bigg) \\
-
f \bigg( K\int_{\mathbb{R}^d} \phi(y) w^N_0(y) \mathrm{d} y \bigg) \mathrm{d} z \;  \mathrm{d} x \Bigg]
\\
+
N \frac{s\widehat{S}}{S} M^d
\Bigg[
\int_{\mathbb{R}^d}
\int_{B_r^M(x)} 
\frac{1}{|B_r^M|}
\frac{\sigma(\kappa_{r},\zeta)w^{N}_{0}(z)}{\sigma(\kappa_{r},\zeta)w^{N}_{0}(z) + \sigma(\kappa_{s},\zeta)\left(1-w^{N}_{0}(z)\right)} f \Bigg( K\frac{u}{J} \int_{B_r^M(x)}  \phi(y)  \mathrm{d} y 
\\
+ K\left(1-\frac{u}{J}\right) \int_{B_r^M(x)} \phi(y) w^N_0(y) \mathrm{d} y  
  \Bigg)   -
f \bigg( K\int_{B_r^M(x)} \phi(y) w^N_0(y) \mathrm{d} y \bigg) \mathrm{d} z \mathrm{d} x\\
+
\int_{\mathbb{R}^d}
\int_{B_r^M(x)}
\frac{\sigma(\kappa_{c},\zeta)(1 - w^{N}_{0}(z))}{\sigma(\kappa_{r},\zeta)w^{N}_{0}(z) + \sigma(\kappa_{s},\zeta)\left(1-w^{N}_{0}(z)\right)}
f \Bigg( 
K\left(1-\frac{u}{J}\right) \int_{B_r^M(x)} \phi(y) w^N_0(y) \mathrm{d} y  
 \Bigg) \\
-
f \bigg( K\int_{\mathbb{R}^d} \phi(y) w^N_0(y) \mathrm{d} y \bigg) \mathrm{d} z \;  \mathrm{d} x \Bigg]
.
\end{multline}
The first two terms represent the part of the generator describing the effect of the neutral events and is the same as in \cite{chetwynd-diggle/etheridge:2017}.
The last two terms represent the effects of selective events.
Substituting $Kw^N_0=X^N_0$ this becomes
\begin{multline}
\label{SLFV with funny selection genereator wtih Kw}
\mathcal{L}^{N}f(\langle\phi,X^N_0\rangle)
= 
\mathcal{L}_{neu}^{N}f(\langle\phi,X^N_0\rangle) + \mathcal{L}_{sel}^{N}f(\langle\phi,X^N_0\rangle)
\\
N M^d
\Bigg[
\int_{\mathbb{R}^d}
\int_{B_r^M(x)} 
\frac{1}{|B_r^M|}
\frac{X^{N}_{0}(z)}{K} f \Bigg( K\frac{u}{J} \int_{B_r^M(x)}  \phi(y)  \mathrm{d} y 
\\
+ \left(1-\frac{u}{J}\right) \int_{B_r^M(x)} \phi(y) X^N_0(y) \mathrm{d} y  
  \Bigg)   -
f \bigg( \int_{B_r^M(x)} \phi(y) X^N_0(y) \mathrm{d} y \bigg) \mathrm{d} z \mathrm{d} x\\
+
\int_{\mathbb{R}^d}
\int_{B_r^M(x)}
 \left( 1 - \frac{X^{N}_{0}(z)}{K} \right)
f \Bigg( 
\left(1-\frac{u}{J}\right) \int_{B_r^M(x)} \phi(y) X^N_0(y) \mathrm{d} y  
 \Bigg) \\
-
f \bigg( \int_{\mathbb{R}^d} \phi(y)X^N_0(y) \mathrm{d} y \bigg) \mathrm{d} z \;  \mathrm{d} x \Bigg]
\\
+
N \frac{s\widehat{S}}{S} M^d
\Bigg[
\int_{\mathbb{R}^d}
\int_{B_r^M(x)} 
\frac{1}{|B_r^M|}
\frac{\sigma(\kappa_{r},\zeta)\frac{X^{N}_{0}(z)}{K} }{\sigma(\kappa_{r},\zeta)\frac{X^{N}_{0}(z)}{K}  + \sigma(\kappa_{s},\zeta)\left(1-\frac{X^{N}_{0}(z)}{K} \right)} f \Bigg( K\frac{u}{J} \int_{B_r^M(x)}  \phi(y)  \mathrm{d} y 
\\
+ \left(1-\frac{u}{J}\right) \int_{B_r^M(x)} \phi(y) X_{0}^{N}(y) \mathrm{d} y  
  \Bigg)   -
f \bigg( \int_{B_r^M(x)} \phi(y) X_{0}^{N}(y) \mathrm{d} y \bigg) \mathrm{d} z \mathrm{d} x\\
+
\int_{\mathbb{R}^d}
\int_{B_r^M(x)}
\frac{\sigma(\kappa_{c},\zeta)\left(1-\frac{X^{N}_{0}(z)}{K} \right)}{\sigma(\kappa_{r},\zeta)\frac{X^{N}_{0}(z)}{K}  + \sigma(\kappa_{s},\zeta)\left(1-\frac{X^{N}_{0}(z)}{K} \right)}
f \Bigg( 
\left(1-\frac{u}{J}\right) \int_{B_r^M(x)} \phi(y) X_{0}^{N}(y) \mathrm{d} y  
 \Bigg) \\
-
f \bigg( \int_{\mathbb{R}^d} \phi(y) X_{0}^{N}(y) \mathrm{d} y \bigg) \mathrm{d} z \;  \mathrm{d} x \Bigg]
.
\end{multline}

We state useful lemmas from \cite{chetwynd-diggle/etheridge:2017}, which describe the form and estimates for the neutral part of the generator. 
\begin{definition} \label{spatialgenerator}
We denote by $\cA^N$ the operator
\begin{align*}
\cA^N(\phi) := \frac{C(d) Nu r^{d+2}}{JM^{2}}
\Delta \phi
,
\end{align*}
with $C(d) := \int_{\cB_1(0)} x^2 \mathrm{d} x$.
\end{definition}

\begin{lemma}[\cite{chetwynd-diggle/etheridge:2017}, Lemma~4.2] 
\label{roughgenerator}
For $f(x,\zeta) = x$ and  $\phi_s(x):\mathbb{R}\times \mathbb{R}^d\to \mathbb{R}\in C_0^{2,3}$,
\begin{align}
\label{rough generator neutral part}
\int_0^t \mathcal{L}_{neu}^{N}(\langle\phi,X^N(s)\rangle)& \mathrm{d} s =  
\int_0^t  \left\langle X^N_s, \dot{\phi}_s \right\rangle + 
	\left\langle X^N_s, \cA^N(\phi_s) \right\rangle \mathrm{d} s + N_t^N(\phi)
,
\end{align}
where 
\begin{align}
\label{estimate on correction term}
|N_t^N(\phi)| \leq
\mathcal{O}\left(\frac{N\sup_{0\leq s\leq t}\|\phi_s\|_{C^3}}{JM^3}
V_{R}
\right)
\int_0^t \left\langle X^N_s, \ind \right\rangle \mathrm{d} s
.
\end{align}
\end{lemma}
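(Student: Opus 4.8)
The plan is to evaluate the neutral generator $\mathcal{L}^N_{neu}$ on the linear functional $X \mapsto \langle \phi, X\rangle$ directly from the explicit form recorded in the first bracket of \eqref{SLFV with funny selection genereator wtih Kw}, and then to extract its leading behaviour by a local averaging (Taylor) argument over the small balls $B^M_r(x)$ of radius $r/M$. Because $f(x)=x$ is linear, only the change inside the event ball matters: conditioning on the parental location $z$, with probability $X^N_s(z)/K$ the parent is rare and the rare mass inside the ball is updated to $(1-u/J)\int_{B^M_r(x)}\phi X^N_s + K(u/J)\int_{B^M_r(x)}\phi$, while with the complementary probability it is merely scaled by $(1-u/J)$. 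Collecting the two contributions, the terms proportional to $\phi$ combine and the generator reduces, up to the prefactor $NM^d u/J$, to $\int_{\IR^d}\big[\bar X(x)\,\bar\phi(x)-\overline{\phi X}(x)\big]\,\mathrm{d}x$, where $\bar\phi(x)=\int_{B^M_r(x)}\phi$, $\overline{\phi X}(x)=\int_{B^M_r(x)}\phi X^N_s$ and $\bar X(x)=|B^M_r|^{-1}\int_{B^M_r(x)}X^N_s$.

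Next I would apply Fubini. The second piece is elementary: $\int_{\IR^d}\overline{\phi X}(x)\,\mathrm{d}x=|B^M_r|\,\langle\phi,X^N_s\rangle$, since each $y$ lies in $B^M_r(x)$ for a set of centres $x$ of measure $|B^M_r|$. For the first piece, interchanging the order of integration produces the kernel $\Psi(|y-z|):=\mathrm{Leb}\{x:\,y,z\in B^M_r(x)\}$, the volume of the intersection of two balls of radius $r/M$, which is symmetric and supported on $|y-z|<2r/M$. Writing $y=z+h$ and Taylor expanding $\phi(z+h)$ about $z$: the zeroth-order term contributes $|B^M_r|\langle\phi,X^N_s\rangle$, which cancels exactly against $\int\overline{\phi X}$; the first-order term vanishes because $\Psi$ is even in $h$; and the second-order term yields $\tfrac12\int X^N_s(z)\,|B^M_r|^{-1}\!\int \Psi(|h|)\,h^{\top}D^2\phi(z)\,h\,\mathrm{d}h\,\mathrm{d}z$. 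By rotational symmetry of $\Psi$ this equals a constant multiple of $\int X^N_s\,\Delta\phi$, and a bookkeeping of the powers of $r/M$ (using $|B^M_r|=\omega_d(r/M)^d$ and $\int\Psi(|h|)|h|^2\,\mathrm{d}h\asymp|B^M_r|^2(r/M)^2$) shows the prefactor collapses to exactly $C(d)Nur^{d+2}/(JM^2)$, i.e. the operator $\cA^N$.

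The remainder is then controlled by the third-order Taylor estimate $|\phi(z+h)-\sum_{|\alpha|\le2}\partial^\alpha\phi(z)h^\alpha/\alpha!|\le \tfrac16\|\phi\|_{C^3}|h|^3$. Since $\int\Psi(|h|)|h|^3\,\mathrm{d}h\lesssim |B^M_r|^2(r/M)^3$, the induced error in $\int[\bar X\bar\phi-\overline{\phi X}]$ is $\lesssim\|\phi\|_{C^3}|B^M_r|(r/M)^3\langle\ind,X^N_s\rangle$; multiplying by $NM^du/J$ gives precisely the bound $\mathcal{O}\!\big(N\|\phi\|_{C^3}V_R/(JM^3)\big)\int_0^t\langle X^N_s,\ind\rangle\,\mathrm{d}s$ claimed in \eqref{estimate on correction term}. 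Finally, the term $\langle X^N_s,\dot\phi_s\rangle$ is not produced by the spatial dynamics at all: it is the contribution of the explicit time-dependence of the family $\phi_s$, i.e. the $\partial_s$ part of the space-time generator acting on $(s,X)\mapsto\langle\phi_s,X\rangle$, and is simply carried along unchanged.

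The main obstacle I expect is the careful bookkeeping of the scaling constants through the double-ball Fubini step—making the exact cancellation of the $O(\phi)$ terms transparent and verifying that the surviving second moment of $\Psi$ produces the coefficient $C(d)$ with the correct power $r^{d+2}/M^2$—rather than any conceptual difficulty. Indeed the whole argument is the non-spatial level computation of Proposition~\ref{probconvergenceofpaths} transcribed into the spatial averaging kernel, and is exactly the content of Lemma~4.2 of \cite{chetwynd-diggle/etheridge:2017}, so an alternative is simply to invoke their computation verbatim once the two generators are matched.
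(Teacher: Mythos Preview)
The paper does not prove this lemma at all: it is quoted verbatim as Lemma~4.2 of \cite{chetwynd-diggle/etheridge:2017} and used as a black box in Section~\ref{Intensityestimatesection}. Your sketch correctly reconstructs the computation from that reference --- linearity of $f$, averaging the parental probability to obtain the bilinear expression $\int[\bar X\,\bar\phi-\overline{\phi X}]$, the Fubini step with the lens kernel $\Psi$, and the second/third-order Taylor identification of $\cA^N$ and the remainder --- so there is nothing to compare beyond noting that your argument \emph{is} the cited proof.
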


We proceed to the proof of Lemma~\ref{Intensity estimate many d}.
We notice that the proof is a simple combination of our proof of Lemma~\ref{Intensity estimate 0d} and the proof of \cite{chetwynd-diggle/etheridge:2017}, Lemma~5.6.
\begin{lemma}
\label{Intensity estimate many d}
Let $X = Kw$ denote the total intensity of individuals of the rare type.  Assume that $\mathbb{E}[X^{N}(0)] < \infty$. Then for any $T > 0$ 
\begin{align}
\label{First part of intensity estimate many d}
\sup_{t \leq T}\sup_{N}\mathbb{E}[\langle X^{N},1 \rangle]  < &  \infty
,
\\
\label{Second part of intensity estimate many d}
\lim_{H \to \infty}\sup_{N}\mathbb{P}\left[\sup_{t \leq T} \langle X^{N},1 \rangle \geq H \right]  = &  0
.
\end{align} 
\end{lemma}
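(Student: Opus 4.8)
The plan is to follow the proof of Lemma~\ref{Intensity estimate 0d} almost verbatim, replacing the scalar drift computation by its spatial counterpart and importing the neutral estimate from \cite{chetwynd-diggle/etheridge:2017}. Starting from the projected generator \eqref{SLFV with funny selection genereator wtih Kw}, I would apply it to the linear test function $f(x)=x$ evaluated at the total mass $\langle X^N_s,1\rangle$ (that is, take $\phi\equiv 1$), obtaining a semimartingale decomposition
\begin{align*}
\langle X^N_T,1\rangle = \langle X^N_0,1\rangle + \int_0^T \mathcal{L}_{neu}^{N}(\langle 1,X^N_s\rangle)\,\mathrm{d}s + \int_0^T \mathcal{L}_{sel}^{N}(\langle 1,X^N_s\rangle)\,\mathrm{d}s + M_T,
\end{align*}
with $M$ a martingale, and I would treat the two drift integrals separately.

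For the neutral term I would invoke Lemma~\ref{roughgenerator} with $\phi\equiv 1$. Since $\Delta 1=0$ and $\dot\phi=0$, the operator $\mathcal{A}^N$ of Definition~\ref{spatialgenerator} vanishes and the neutral drift reduces to the correction term, bounded by \eqref{estimate on correction term}; under the scaling of Theorem~\ref{Space fluctuating selection theorem} its coefficient $\mathcal{O}\!\left(N V_R/(JM^3)\right)$ tends to zero, because $N/(JM^2)\to C_1$ and $M\to\infty$, so the neutral part contributes at most $C\int_0^T \mathbb{E}[\langle X^N_s,1\rangle]\,\mathrm{d}s$. For the selective term I would expand each local parental-choice probability exactly as in the non-spatial fraction identity \eqref{0d intensity estimate fraction expansion}, now applied pointwise inside the spatial integral. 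This splits the selective drift into a leading term, linear in the local rare density and carrying the factor $\big(\frac{\sigma(\kappa_r,\zeta)}{\sigma(\kappa_c,\zeta)}-1\big)$, together with a quadratic correction of relative order $1/K$.

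To conclude \eqref{First part of intensity estimate many d} I would pass to the stopped process $\widehat X^N(t)=X^N(t\wedge\tau^N)$ with $\tau^N=\inf\{t:\langle X^N_t,1\rangle>Z^N\}$ as in Remark~\ref{rarety remark}, take expectations at this bounded stopping time, and use the symmetry condition \eqref{sysmetryeqcondition} to cancel the leading linear drift; the surviving quadratic correction is then bounded on the stopped path by $\mathcal{O}(Z^N/K)\,\langle\widehat X^N_s,1\rangle$. Collecting the neutral and selective contributions yields an estimate of the form $\mathbb{E}[\langle\widehat X^N_T,1\rangle]\le \mathbb{E}[\langle X^N_0,1\rangle]+C^N\int_0^T \mathbb{E}[\langle\widehat X^N_s,1\rangle]\,\mathrm{d}s$, with $C^N$ uniformly bounded provided $Z^N\to\infty$ slowly enough that $Z^N\widehat S/K$ stays bounded. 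Gr\"onwall's inequality then gives \eqref{First part of intensity estimate many d}, and Markov's inequality applied to $\widehat X^N_T$, exactly as in \eqref{stoppingtimemarkov}, gives the uniform tail bound \eqref{Second part of intensity estimate many d}.

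The main obstacle is the control of the selective drift. One must verify that the spatially integrated leading term genuinely averages to zero under \eqref{sysmetryeqcondition}, despite the correlation between the environment field $\zeta_s(\cdot)$ and the rare-type density at time $s$, and that what remains is of order $1/K$ uniformly in space; this is where the fast resampling of the environment (rate $\widehat S^2$) and the rarity of the type (the factor $Z^N/K\to 0$) are both needed to keep the Gr\"onwall coefficient bounded. It is precisely the step inherited from the non-spatial Lemma~\ref{Intensity estimate 0d}, whereas the genuinely spatial ingredients — the ball-averaging and the Laplacian correction — are absorbed entirely into Lemma~\ref{roughgenerator}.
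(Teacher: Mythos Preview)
Your proposal is essentially the same argument as the paper's: semimartingale decomposition via the projected generator, the neutral drift handled by Lemma~\ref{roughgenerator}, the selective drift split via the fraction expansion \eqref{0d intensity estimate fraction expansion} so that the symmetry condition kills the leading term and leaves an $\cO(Z^N/K)$ remainder, followed by Gr\"onwall and Markov.

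The one genuine technical difference is your choice $\phi\equiv 1$. Lemma~\ref{roughgenerator} is stated for $\phi\in C_0^{2,3}$, and the constant function is not compactly supported; the semimartingale representation and the error bound \eqref{estimate on correction term} are not a priori valid for it. The paper handles this by working instead with a sequence of smooth cutoffs $h_R$ (equal to $1$ on the ball of radius $R$, supported in the ball of radius $2R$, with $\Delta h_R\le\epsilon$), applying the generator estimates to $\langle X^N_t,h_R\rangle$, and then letting $R\to\infty$ via monotone convergence. This is the standard fix and your argument goes through once you insert it; the stopping you propose (inherited from Lemma~\ref{Intensity estimate 0d}) is compatible with and complementary to this approximation, since it controls the quadratic remainder while the cutoff controls the spatial extent.
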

\begin{proof}
We observe that with the test functions chosen as in Lemma~\ref{roughgenerator}, the part
of the generator describing the change in the population resulting from a selective event can be written as 
\begin{multline}
\label{rough generator sel}
\mathcal{L}_{sel}^{N}(\langle\phi,X^N(s)\rangle)   = 
\\
N\frac{us\widehat{S}}{SJ} M^d \left[\int_{\mathbb{R}^{d}}\frac{1}{|B_{r}^{M}|}
\int_{B_{r}^{M}(x)}
\frac{\sigma(\kappa_{r},\zeta)}{\sigma(\kappa_{r},\zeta)\frac{X}{K} + \sigma(\kappa_{c},\zeta)\left(1 - \frac{X}{K} \right)}
\frac{X(z)}{K}
\int_{B_{r}^{M}(x)}\left(K - X(y)\right)\phi(y)\mathrm{d}y\mathrm{d}z\mathrm{d}x
\right.
\\
\left.
 - \int_{\mathbb{R}^{d}}\frac{1}{|B_{r}^{M}|}
\int_{B_{r}^{M}(x)}
\frac{\sigma(\kappa_{r},\zeta)}{\sigma(\kappa_{r},\zeta)\frac{X}{K} + \sigma(\kappa_{c},\zeta)\left(1 - \frac{X}{K} \right)}
 \left(1 -  \frac{X(z)}{K}\right)
\int_{B_{r}^{M}(x)}X(y)\phi(y)\mathrm{d}y\mathrm{d}z\mathrm{d}x
\right] 
\\
=
N\frac{us\widehat{S}}{SJ} M^d \left[\int_{\mathbb{R}^{d}}\frac{1}{|B_{r}^{M}|}
\int_{B_{r}^{M}(x)}
\frac{\sigma(\kappa_{r},\zeta)}{\sigma(\kappa_{r},\zeta)\frac{X}{K} + \sigma(\kappa_{c},\zeta)\left(1 - \frac{X}{K} \right)}
X(z)
\int_{B_{r}^{M}(x)}\phi(y)\mathrm{d}y\mathrm{d}z\mathrm{d}x
\right.
\\
-
\left.
 \int_{\mathbb{R}^{d}}\frac{1}{|B_{r}^{M}|}
\int_{B_{r}^{M}(x)}
\int_{B_{r}^{M}(x)}X(y)\phi(y)\mathrm{d}y\mathrm{d}z\mathrm{d}x
\right].
\end{multline}
We observe that by Taylor's Theorem $\phi(y)$ can be locally approximated by
\begin{align*}
\phi(y) =  \phi(z) + \nabla\phi(z)(y-z) + \|\phi\|_{C^{2}(\mathbb{R}^{d})}\mathcal{O}\left(|y-z|^{2}\right).
\end{align*} 
Therefore by using a calculation analogous to \eqref{0d intensity estimate fraction expansion} and the fact that $|y-z| < R/M$ within a ball $B_{r}(x)$ we may approximate \eqref{rough generator sel} by
\begin{multline}
\label{rough generator sel app}
\mathcal{L}_{sel}^{N}(\langle\phi,X^N(s)\rangle)   = 
\\
N \frac{us\widehat{S}}{SJM^{2d}} 
%\left[
\left(
\frac{\sigma(\kappa_{r},\zeta)}{\sigma(\kappa_{c},\zeta)} - 1
\right)
\langle
X,\phi
\rangle
%\right]
+ 
\frac{1}{K}
\langle
X,\phi
\rangle^{2}
\mathcal{O}\left(  
C_{\sigma}N \frac{us\widehat{S}}{SJM^{2d}}
\right)
+ \mathcal{O}\left(\frac{N\widehat{S}}{SJM}\right),
\end{multline}
where $C_{\sigma}$ is a constant depending on $\sigma$.

Let $\chi^{N}(t) = \mathbb{E}[\langle X_{t}^{N}, 1 \rangle]$.
Let $h_{R}$ denote a sequence of smooth  functions such that $h_{R}$ is supported on the ball of radius $2R$ centred at zero and is  equal to $1$ on the ball of radius $R$ centred at zero.

Assume in addition that the sequence $h_{R}$ satisfies 
\begin{align*}
\Delta h_{R} \leq \epsilon; \quad h_{R}\leq h_{R+1}.
\end{align*}

We combine \eqref{rough generator neutral part} and \eqref{rough generator sel app}
and take expectation in \eqref{SLFV with funny selection genereator wtih Kw} to obtain
\begin{multline}
\mathbb{E}\left[\langle X^{N}_{t},\phi \rangle\right]
=
\mathbb{E}\left[\langle X^{N}_{0},\phi \rangle\right]
+
\mathbb{E}\left[
\int_0^t  \left\langle X^N_s, \dot{\phi}_s \right\rangle + 
	\left\langle X^N_s, \cA^N(\phi_s) \right\rangle \mathrm{d} s\right]
\\
	+  N \frac{us\widehat{S}}{SJM^{2d}} 
\mathbb{E}	
	\left[
\left(
\frac{\sigma(\kappa_{r},\zeta)}{\sigma(\kappa_{c},\zeta)} - 1
\right)
\int_{0}^{t}
\langle
X,\phi
\rangle
+
\frac{C_{\sigma}}{K}
\langle
X,\phi
\rangle^{2}
\mathrm{d}s
\right]+ \mathbb{E} \left[N_t^N(\phi)\right]
\\
\leq 
\chi^{N}(0)
+
C\|\Delta h_{R}\|
+
\mathcal{O}\left(\frac{N\sup_{0\leq s\leq t}\|\phi_s\|_{C^3}}{JM^3}
V_{R}
+
N \frac{us\widehat{S}C_{\sigma}}{SJM^{2d}}
\right)
\chi^{N}(0)
,
\end{multline}
where we have used the properties of $h_{R}$.
Letting $R$ tend to infinity and using the Monotone Convergence Theorem, we arrive at
\begin{align*}
\chi^{N}(t) \leq \chi^{N}(0)
+
C\|\Delta h_{R}\|
+
\mathcal{O}\left(\frac{N\sup_{0\leq s\leq t}\|\phi_s\|_{C^3}}{JM^3}
V_{R}
+
N \frac{us\widehat{S}C_{\sigma}}{SJM^{2d}}
\right)
\chi^{N}(0)
.
\end{align*}
We now apply Gr\"onwall's inequality to conclude.
The second part of the statement follows exactly as the second part of the proof of Lemma~\ref{Intensity estimate 0d}.
\end{proof}

\subsection{Sketch of the proof of Theorem~\ref{Space fluctuating selection theorem}}
The generator of the SLFVRE process is given  by
%REFER BACK
\begin{multline}
\label{noselecgeneratorwithspace}
A^N f(\eta) =
N M^d
\int_{\IR^d}
\Bigg(
\int_0^\infty
\Bigg[
\frac{uK|B_r|}{JM^d} e^{- \frac{uK|B_r|}{JM^d}v^*} \hat{g}_{y, r/M}(\kappa^*, v^*)
e^{- \frac{uK|B_r|}{JM^d} \int_{v^*}^{\infty} (1 - \hat{g}_{y, r/M}(\kappa^*, v)
) \mathrm{d} v}
\\
\prod_{ (x, \kappa , l) \in \eta , l \neq l^* }
g \Big(x , \kappa , \mathcal{J}_{neu}\big(l,l^*,v^*, x, (y, r/M) \big) \Big)
\Bigg] \mathrm{d} v^*
 - f(\eta) \Bigg)
\mathrm{d} y \\
+
N \frac{s\widehat{S}}{S}M^d
\int_{\IR^d}
\Bigg(
\int_0^\infty
\Bigg[
\frac{uK|B_r|}{JM^d} e^{- \frac{uK|B_r|}{JM^d}v^*} \hat{g}_{y, r/M}(\kappa^*, v^*)
e^{- \frac{uK|B_r|}{JM^d} \int_{v^*}^{\infty} (1 - \hat{g}_{y, r/M}(\kappa^*, v)
) \mathrm{d} v}
\\
\prod_{ (x, \kappa , l) \in \eta , l \neq l^* }
g \Big(x , \kappa , \mathcal{J}_{sel}\big(l,l^*,v^*, x, (y, r/M) \big) \Big)
\Bigg] \mathrm{d} v^*
 - f(\eta) \Bigg)
\mathrm{d} y 
+ A^{env}
,
\end{multline}
where $A^{env}$ is specified as in \eqref{chapter4 environmnet generator} and $\hat{g}_{y,r/M}(\kappa, l) := \frac{1}{|B_{r/M}|} \int_{B_{r/M}(y)} g(z, \kappa, l) \mathrm{d}z$.

%We note that
%\begin{align*}
%\hat{g}_{y,r/M}(\kappa, l)
%=
%g(y, \kappa , l)
%+
%\frac{C}{M^2} \Delta g(y, \kappa , l)
%+
%\cO
%\left(
%\frac{1}{M^3}
%\right)
%,
%\end{align*}
%and we will then look at the two parts of this separately. We note that the $\Delta g$ part is similar to the integral part in the previous work.

%We will require
%\begin{align*}
%\frac{N}{JM^2} \to & C_1
%,
%\\
%\frac{KN}{J^2M^d} \to & C_2
%.
%\end{align*}

%Since the parts of the generators describing selective and neutral events are very similar, we focus our attention on the neutral part. 
We split the generator into three parts, by adding and subtracting $g(y,\kappa^{*},v)$ inside the integral.
The first two parts describes the movement of the levels. 
\begin{multline}
\label{A1 definition}
A^N_{full, neu} f(\eta)
\\
=
N M^d
\int_{\IR^d}
\Bigg(
\int_0^\infty
\Bigg[
\frac{uK|B_r|}{JM^d} e^{- \frac{uK|B_r|}{JM^d}v^*} g(x^{*},\kappa^*, v^*)
e^{- \frac{uK|B_r|}{JM^d} \int_{v^*}^{\infty} (1 - \hat{g}_{y, r/M}(\kappa^*, v)
) \mathrm{d} v}
\\
\prod_{ (x, \kappa , l) \in \eta_{B_{r/M}(y)} , l \neq l^* }
g \Big(x^{*},\kappa , \mathcal{J}_{neu}\big(l,l^*,v^*, x, (y, r/M) \big) \Big)
\prod_{ (x, \kappa , l) \notin \eta_{B_{r/M}(y)} , l \neq l^* }
g \Big(x , \kappa , l\Big)
\Bigg] \mathrm{d} v^*
\\ 
 - f(\eta) \Bigg)
\mathrm{d} y
,
\end{multline}
where $ \eta_{B_{r/M}(y)} := \{(x,\kappa, l) \in \eta : x \in B_{r/M}(y)    \} $ and 
\begin{multline}
\label{A3 definition}
A^N_{full, sel} f(\eta)
\\
=
N \frac{s\widehat{S}}{S}M^d
\int_{\IR^d}
\Bigg(
\int_0^\infty
\Bigg[
\frac{uK|B_r|}{JM^d} e^{- \frac{uK|B_r|}{JM^d}v^*}  g(x^{*},\kappa^*, v^*)
e^{- \frac{uK|B_r|}{JM^d} \int_{v^*}^{\infty} (1 - \hat{g}_{y, r/M}(\kappa^*, v)
) \mathrm{d} v}
\\
\prod_{ (x, \kappa , l) \in \eta_{B_{r/M}(y)} , l \neq l^* }
g\Big(x^{*},\kappa , \mathcal{J}_{sel}\big(l,l^*,v^*, x, (y, r/M) \big) \Big)
\prod_{ (x, \kappa , l) \notin \eta_{B_{r/M}(y)} , l \neq l^* }
g \Big(x , \kappa , l\Big)
\Bigg] \mathrm{d} v^*
\\ 
 -f(\eta) \Bigg)
\mathrm{d} y
,
\end{multline}
%$$
%\hat{f}_{y,r/M}(\eta) = \prod_{ (x, \kappa , l) \in \eta_{B_{r/M}(y)}}
%\hat{g}_{y, r/M} \Big(\kappa , l \Big)
%\prod_{ (x, \kappa , l) \notin \eta_{B_{r/M}(y)}}
%g \Big(x , \kappa , l\Big)
%.
%$$
Observe that $\widehat{g})_{y,R/M}$ is symmetrical with respect to $y$.
Therefore
\eqref{A1 definition}, \eqref{A3 definition} 
can be treated using arguments which are analogous to those which we applied to $A_{neu,1}, A_{neu,2}, A_{sel,1}$ and $A_{sel,2}$ in Section~\ref{Scaling limits of the LFV - dynamics of the rare type}.
The second two parts are given by
\begin{multline}
\label{A2 definition}
A^N_{full,neu,2} f(\eta)
=
N M^d
\int_{\IR^d}
\Bigg(
\int_0^\infty
\Bigg[
\frac{uK|B_r|}{JM^d} e^{- \frac{uK|B_r|}{JM^d}v^*} 
\bigg(
\hat{g}_{y,r/M}(\kappa^*, v^*)
-
g(y, \kappa^*, v^*)
\bigg)
\\
\times
e^{- \frac{uK|B_r|}{JM^d} \int_{v^*}^{\infty} (1 - \hat{g}_{y, r/M}(\kappa^*, v)
) \mathrm{d} v}
\\
\times
\prod_{ (x, \kappa , l) \in \eta , l \neq l^* }
g \Big(x , \kappa , \mathcal{J}_{neu}\big(l,l^*,v^*, x, (y, r/M) \big) \Big)
\Bigg] \mathrm{d} v^*
\Bigg)
\mathrm{d} y
\end{multline}
and
\begin{multline}
\label{A4 definition}
A^N_{full,sel, 2} f(\eta)
=
N \frac{s\widehat{S}}{S}M^d
\int_{\IR^d}
\Bigg(
\int_0^\infty
\Bigg[
\frac{uK|B_r|}{JM^d} e^{- \frac{uK|B_r|}{JM^d}v^*} 
\bigg(
\hat{g}_{y,r/M}(\kappa^*, v^*)
-
g(y, \kappa^*, v^*)
\bigg)
\\
\times
e^{- \frac{uK|B_r|}{JM^d} \int_{v^*}^{\infty} (1 - \hat{g}_{y, r/M}(\kappa^*, v)
) \mathrm{d} v}
\\
\times
\prod_{ (x, \kappa , l) \in \eta , l \neq l^* }
g \Big(x , \kappa , \mathcal{J}_{sel}\big(l,l^*,v^*, x, (y, r/M) \big) \Big)
\Bigg] \mathrm{d} v^*
\Bigg)
\mathrm{d} y.
\end{multline}
We observe that our scaling implies that if the contribution from \eqref{A2 definition} is non-negligible, the contribution from \eqref{A4 definition} vanishes in the limit.
For that reason the results of \cite{chetwynd-diggle/etheridge:2017} are sufficient to deduce the behaviour of the spatial movement of the generator of all cases of interest. 
From
\begin{multline*}
A^N_{full,sel, 2} f(\eta) = NM^d \int_{\IR^d} \int_0^\infty \frac{uK|B_r|}{JM^d} e^{-\frac{uK|B_r|}{JM^d} v^*}
\left(
\hat{g}_{y,r/M}(\kappa^*, v^*) - g(x^*,\kappa^*,v^*)
\right)
\\
\times
e^{-\frac{uK|B_r|}{JM^d}\int_{v^*}^\infty (1 - \hat{g}_{y,r/m}(\kappa^*,v)) \mathrm{d}v}
\prod_{(x,\kappa,l) \in \eta, l \neq l^*}
g(x,\kappa,\cJ)
\mathrm{d} v^* \mathrm{d}y
,
\end{multline*}
we can see as in the calculations in Section~\ref{Scaling limits of the LFV - dynamics of the rare type}
that this leads to
\begin{multline*}
\alpha A^N_{full,sel, 2} h (\Xi)
\\
=
NM^d \int_{\IR^d \times \cK} 
\frac{M^d}{|B_r|}
\int_{B_{r/M}(x^*)}
e^{-\int_{\IR/B_{r/M}(y)} h(z,\kappa) \Xi(\mathrm{d}z, \mathrm{d}\kappa)}
e^{-\left(1 - \frac{u}{J}\right) \int_{B_{r/M}(y)} h(z, \kappa) \Xi(\mathrm{d}z, \mathrm{d} \kappa) }
\\
\times
\left[
e^{- \frac{uK|B_r|}{JM^d} \hat{h}_{y,r/M}(\kappa^*)}
-
e^{- \frac{uK|B_r|}{JM^d} h(x^*,\kappa^*)}
\right]
\mathrm{d}y \;
\Xi(\mathrm{d}x^*, \mathrm{d}\kappa^*)
.
\end{multline*}
Performing a simple Taylor expansion and noting this is identical to calculations appearing in \cite{chetwynd-diggle/etheridge:2017}, Section~4.3 we see that this will give us
\begin{align*}
\alpha A^N_{full,sel, 2} f (\Xi)
= \exp( - \langle h, \Xi \rangle)
\left\langle 
\frac{C(d) u r^d N}{JM^2} \Delta h + \cO(1/M)
,
\Xi
\right\rangle
,
\end{align*}
where we recall that $C(d):= \int_{B_1} x^2 \mathrm{d} x$ and $\langle h, \Xi \rangle := \int_{\IR^d \times \cK} h(x, \kappa) \Xi(\mathrm{d}x,\mathrm{d} \kappa)$.

%We note that the form of $A^N_{full, neu}$ is an average of the non-spatial generator introduced in Section~\ref{Subsection Statement of main results}.
%We now note that the generator written in this form is analogous to our non-spatial generator. In particular we can see that i

For the sake of completeness we state the propositions describing the evolution of the levels.
We observe that both \eqref{A1 definition} and \eqref{A3 definition} are  integrals over compact sets (recall that $g(x,l)$ is equal to $1$ outside of a compact set) of their non-spatial counterparts  
studied in Section~\ref{Subsection Convergence of the non-spatial model}. The task of  analysing this generator is then a simple expansion of arguments in that section. 
In particular,  the sparsity condition ${K}/{M^d} \to \infty$ leads to following analogous of  Proposition~\ref{Proposition Neutral Convergence} and Proposition~\ref{selectivepropositionA}.
\begin{proposition}
\label{Proposition Neutral Convergence In Space}
Under the conditions of Theorem~\ref{Space neutral theorem}, %if $\eta^r_t$ is the level process gained from $\eta_t$ by only considering the levels with the rare type then
\begin{multline*}
\IE \Bigg[
\sup_{t \leq T}
\Bigg|
\int_0^t
A_{full, neu}^{N}f(\eta_s)
\\
-
\left(
f(\eta_s^r) 
\sum_{l_i(s) \in \eta_s^N} al_{i}^{2} \frac{\partial_{l}g(x_i,l_{i}(s))}{g(x_i,l_{i}(s))}
+
2a
f(\eta_s^r) \sum_{l_i(s) \in \eta_s^r}
\int_{l_i(s)}^{\infty} \big( 1 - g(x_{i} , v) \big) \mathrm{d}v
\right)
\mathrm{d} s
\Bigg|
\Bigg]
\to 0
.
\end{multline*}
\end{proposition}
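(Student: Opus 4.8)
The plan is to reduce the statement to the non-spatial Proposition~\ref{Proposition Neutral Convergence}, exploiting the fact that $A_{full,neu}^N$ in \eqref{A1 definition} is an integral over event-centres $y\in\IR^d$ of a copy of the non-spatial neutral generator \eqref{noselecgenerator} in which the spatial mark of every participating individual has already been frozen to the parent's location $x^*$ and only the levels of individuals inside $B_{r/M}(y)$ are displaced. Thus the spatial degrees of freedom are inert in \eqref{A1 definition} (they are carried instead by \eqref{A2 definition}), and the analysis of Section~\ref{Subsection Neutral model proof} applies inside each ball, with the bare intensity $\tfrac{uK}{J}$ replaced by the localised intensity $\tfrac{uK|B_r|}{JM^d}$ and the level density $K$ replaced by the per-ball level density of order $K|B_r|/M^d$.

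First I would split $A_{full,neu}^N = A_{full,neu,1}^N + A_{full,neu,2}^N + \cO(\cdots)$, exactly as in \eqref{eq2.1limitgen}--\eqref{eq2.2limitgen}, by inserting the Taylor expansion \eqref{exponentialinttaylor} of the factor $e^{-\frac{uK|B_r|}{JM^d}\int_{v^*}^\infty(1-\hat g_{y,r/M}(\kappa^*,v))\mathrm{d}v}$; the first piece governs the motion of the levels and the second the births. Because the test functions satisfy $g(x,l)=\prod_j(1-g_1^j(x)g_2^j(l))$ with $g_1^j$ compactly supported and $g_2^j$ supported in $[0,\lambda_g]$ (Condition~\ref{LC Condition 3.1}), the integral over $y$ and all the level sums are effectively finite, and the error terms are negligible under the scaling of Theorem~\ref{Space neutral theorem}. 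It then suffices to establish the spatial analogues of Proposition~\ref{probconvergenceofpaths} and Proposition~\ref{probconvergenceofbirths}: that $\int_0^t A_{full,neu,1}^N f(\eta_s)\,\mathrm{d}s$ converges, in $\IE[\sup_{t\le T}|\cdot|]$, to the level-movement term $f(\eta_s^r)\sum_i al_i^2\,\partial_l g(x_i,l_i)/g(x_i,l_i)$, and $\int_0^t A_{full,neu,2}^N f(\eta_s)\,\mathrm{d}s$ to the birth term $2a\,f(\eta_s^r)\sum_i\int_{l_i}^\infty(1-g(x_i,v))\,\mathrm{d}v$.

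For the level-movement part I would condition on the (time-frozen) spatial configuration and, for each centre $y$, run the computation leading to \eqref{telescopingsumsstatement}: the only modification is that the Poisson order-statistics estimates $\IE[\sum_i(l_i-l_{i-1})^2\mid l_j]$ and its higher analogues are now computed with the per-ball level intensity $K|B_r|/M^d$. The condition $K/M^d\to\infty$ guarantees this intensity diverges, so those estimates are valid exactly as in the non-spatial case; integrating the resulting per-event drift over $y\in B_{r/M}(x_i)$ (of volume $|B_r|/M^d$) with the prefactor $NM^d$ produces the constant $a$ through the scaling $\tfrac{u^2V_RNK}{J^2M^d}\to a$, while the condition $\tfrac{N^2}{M^dKJ^2}\to0$ plays the role that $\tfrac{N^2}{KJ^2}\to0$ played in Proposition~\ref{probconvergenceofpaths} and controls the variance of the mean-zero correction term.

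The main obstacle is the birth term, the analogue of Proposition~\ref{probconvergenceofbirths}. As there, I would use Lemma~\ref{Lemma partition and supremum} to reduce the $\IE[\sup_{t\le T}|\int_0^t\Gamma^N(s)\,\mathrm{d}s|]$ bound to a short-window $L^2$ estimate (the spatial version of Lemma~\ref{lemmaforbirthprocess}) together with a uniform bound on $\IE[\sup_{t\le T}\beta^N_t]$ for the auxiliary process \eqref{0d auxiliary submartingale}, now summed over rare levels carrying spatial marks with a cut-off $h$ compactly supported in both space and level. The delicate point is re-deriving the submartingale domination of Lemma~\ref{neutral intensity lemma} for the generator $A_{full,neu}^N$: the spatial integral over $y$ contributes the factor $|B_r|/M^d$, but since the competing level intensity inside each ball is of the correspondingly larger order $K|B_r|/M^d$, every spacing $(l_i-l_{i-1})$ shrinks by the same factor, so all the estimates in the proof of Lemma~\ref{neutral intensity lemma} rescale consistently and the conclusion — domination by a bounded submartingale — survives. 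Doob's and Jensen's inequalities then finish the supremum bound exactly as before, and combining the two parts yields Proposition~\ref{Proposition Neutral Convergence In Space}.
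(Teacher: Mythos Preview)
Your proposal is correct and follows essentially the same approach as the paper. The paper's own argument for this proposition is extremely brief: it observes that \eqref{A1 definition} is an integral over a compact set of event-centres $y$ of a copy of the non-spatial neutral generator (with the spatial marks of all participants in the ball frozen to $x^{*}$), notes that the per-ball level intensity is $K|B_r|/M^d$, and then simply invokes the sparsity condition $K/M^d\to\infty$ to say that the arguments of Section~\ref{Subsection Neutral model proof} carry over verbatim. Your sketch is a more explicit unpacking of precisely this reduction---the split into $A_{full,neu,1}^N$ and $A_{full,neu,2}^N$, the order-statistics estimates with rescaled intensity, and the submartingale domination of the spatial analogue of $\beta^N$---but the route is the same.
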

%\begin{proof}
%The proof follows by observing that
%\begin{multline*}
%\left(
%\hat{f}_{y,r/M}(\eta_t^r) 
%\sum_{l_i(t) \in \eta_{B_{r/M}(y),s}^N} al_{i}^{2} \frac{\partial_{l}\hat{g
%}_{y,r/M}(x_i,l_{i}(t))}{\hat{g}_{y,r/M}(x_i,l_{i}(t))}
%\right.
%\\
%\left.
%+
%2a
%\hat{f}_{y,r/M}(\eta_t^r)  \sum_{l_i(t) \in \eta_{B_{r/M}(y),s}^{N,r}}
%\int_{l_i(t)}^{\infty} \big( 1 - \hat{g}_{y,r/M}(x_{i} , v) \big) \mathrm{d}v
%\right)
%\\
%=
%\left(
%f(\eta_t^r) 
%\sum_{l_i(t) \in \eta_{B_{r/M}(y),s}^N} al_{i}^{2} \frac{\partial_{l}g(x_i,l_{i}(t))}{{g}(x_i,l_{i}(t))}
%\right.
%\\
%\left.
%+
%2a
%{f}(\eta_t^r)  \sum_{l_i(t) \in \eta_{B_{r/M}(y),s}^{N,r}}
%\int_{l_i(t)}^{\infty} \big( 1 - {g}(x_{i} , v) \big) \mathrm{d}v
%\right)
%+
%\cO(1/M)
%.
%\end{multline*}
%\end{proof}
\begin{proposition} \label{selectivepropositionAspace}
Under the conditions of Theorem~\ref{Space selection theorem}
for any $T \in \IR$
\begin{align*}
\IE\Bigg[ \sup_{t \leq T} \Bigg| \int_0^t A^N_{full, sel}f(\eta^N_s) - 
f(\eta^N_s) \left(
\sum_{l_i(t) \in \eta_t^N} - bl_{i} \frac{g'(x_i,l_{i})}{g(x_i,l_{i})}
\right)
\mathrm{d} s
\Bigg| \Bigg]
\to 0
.
\end{align*}
\end{proposition}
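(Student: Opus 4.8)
The plan is to follow the proof of the non-spatial Proposition~\ref{selectivepropositionA} essentially verbatim, exploiting that for each fixed event centre $y$ the operator $A^N_{full,sel}$ of \eqref{A3 definition} has exactly the structure of the non-spatial selective generator of Section~\ref{Subsection Selective model proof}, except that the product runs only over the individuals in $B_{r/M}(y)$ and the whole expression is integrated against $\mathrm{d}y$ over $\IR^d$. Since the test functions $g(x,\kappa,l)$ equal $1$ outside a compact spatial set and an event at $y$ only alters the $g$-factors of individuals in $B_{r/M}(y)$, the $y$-integral is effectively over a compact region. Moreover, for a fixed individual at $x_i$ the set of centres that can select it, $\{y : x_i\in B_{r/M}(y)\}$, has volume $|B_{r/M}|=V_R/M^d$, so the spatial integration together with the prefactor $M^d$ in \eqref{A3 definition} produces the ball-volume factor $V_R$ appearing in the scaling $\tfrac{suNV_R}{JS}(\sigma(\kappa_r)/\sigma(\kappa_c)-1)\to b$ of Theorem~\ref{Space selection theorem}.

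First I would apply the exponential Taylor expansion \eqref{exponentialinttaylor} to split $A^N_{full,sel}$ into a ``paths'' term and a ``births'' term, as in the decomposition $A^N_{sel}=A^N_{sel,1}+A^N_{sel,2}+\cO(\tfrac{K}{J}A^N_{sel,2})$. The births term is the selective analogue of the neutral births operator of Proposition~\ref{probconvergenceofbirths}, carrying the extra prefactor $s/S$ (recall $\widehat{S}=1$ in Theorem~\ref{Space selection theorem}); since the neutral births scaling $\tfrac{u^2V_RNK}{J^2M^d}\to a$ stays bounded while $1/S\to0$, this term vanishes, exactly as in the non-spatial argument. For the paths term I would introduce the neutral-parent variant in the spirit of \eqref{selective generator, births, no space, neutral parent}, replacing $l^*_{sel}$ by the lowest level $l^*_{neu}$ above $v^*$ in $B_{r/M}(y)$, show the difference is negligible via the spatial analogue of Lemma~\ref{unfavouredselectivegenerator}, and identify the leading contribution via the spatial analogue of Lemma~\ref{Lemma selective estimate neutral parent}, repeating the order-statistics computation inside each ball. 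This produces the target drift $-b\,f(\eta^N_s)\sum_i l_i\,\partial_l g(x_i,l_i)/g(x_i,l_i)$, and the passage from this pointwise control to the bound on $\IE[\sup_{t\le T}|\int_0^t\cdots\,\mathrm{d}s|]$ then follows from Lemma~\ref{Lemma enough to bound variance lemma} together with Jensen's inequality, precisely as in the proof of Proposition~\ref{selectivepropositionA}.

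The main obstacle is the bookkeeping of the spatial localisation inside the order-statistics estimates. In Section~\ref{Subsection Selective model proof} every spacing and concentration bound relied on the common-type level intensity $K-Z^N$ tending to infinity; here this intensity is replaced by the within-ball intensity of order $K|B_{r/M}|\propto K/M^d$, and one must check that each resulting error term still vanishes after integration over the compact spatial domain. This is exactly where the sparsity condition $K/M^d\to\infty$ (equivalently $M^2/J\to0$, which follows from the hypotheses as noted in Remark~\ref{sparsity remark}) enters: it guarantees that the common-type levels inside each $B_{r/M}(y)$ are asymptotically dense, so the uniform-spacing variance bounds and the probability estimate for $\{l^*_{sel}\ne l^*_{neu}\}$ tend to zero at the required rates. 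Granting this single quantitative input, the remaining steps are routine adaptations of the non-spatial arguments.
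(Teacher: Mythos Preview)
Your proposal is correct and follows exactly the approach sketched in the paper: the paper's own argument for Proposition~\ref{selectivepropositionAspace} is simply the observation that \eqref{A3 definition} is an integral over a compact set of the non-spatial selective generator, so the proof of Proposition~\ref{selectivepropositionA} transfers directly once the within-ball level intensity $K/M^d\to\infty$ replaces the non-spatial intensity $K$. Your outline is in fact more detailed than the paper's sketch, correctly tracking the origin of the volume factor $V_R$ and identifying the sparsity condition as the sole new ingredient.
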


The proofs of Theorem~\ref{Space fluctuating selection theorem}, Theorem~\ref{Space neutral theorem} and Theorem~\ref{Space selection theorem} now follow by applying Theorem~\ref{kurtzaveraging} to the process characterised by the generator of the projected version of $A_{full, neu}^{N}$, $A_{full, sel}^{N}$ and $A_{env}$.

\begin{appendix}
\section{Poisson random measures}
\label{Section Poisson random measures}

In this section we present some facts about Poisson random measures. 
%These objects, and some of their properties, play a crucial role for the  results in Chapter~\ref{chap1}.
Most of the facts presented in this section have been stated in the papers on lookdown constructions, see, for example \cite{kurtz/rodrigues:2011}, \cite{etheridge/kurtz:2014}. We present them again as they are useful for many calculations involving lookdown constructions.
%For an excellent introductory presentation of the topic, which contains most of the proofs of the results stated in this section, we refer to \cite{cinlar:2011}, Chapter VI.

%We begin by recalling the definition of the Poisson random measure. 
%For simplicity, we state it for the measure supported on $\mathbb{R}^{d}$.
%
%\begin{definition}[Poisson random measure]
%Consider $\mathbb{R}^{d}$ and a collection of Borel sets $\mathcal{B}(\mathbb{R}^{d})$.
%Let $\nu$ be a $\sigma$-finite measure on $\mathcal{B}(\mathbb{R}^{d})$.
%We say that a counting measure $\xi$ is a Poisson random measure with a mean measure $\nu$ if
%\begin{enumerate}
%\item for evert Borel set $A$ such that $\nu(A) < \infty$, $\xi(A)$ is a Poisson random variable with intensity $\nu(A)$.
%\item for any collection of disjoint Boirel sets $\{ A_{i} \}$, $\{ \xi(A_{i}) \}$ is a collection of independent random variables.
%\end{enumerate}
%\end{definition}

%The following two Lemmas provide us with a collection of identities for Poisson random measures (see Definition~\ref{Poisson random measure definition}).% which play an important role in Chapter~\ref{chap1}.

\begin{lemma}
Let $\xi$ be a Poisson random measure with  mean measure $\nu$. Let $f \in L^{1}(\mathbb{R}^{d}, \nu)$.
Then
\begin{align*}
\mathbb{E} 
\left[
\exp \left( 
\int f(z)\xi(\mathrm{d}z)
 \right)
\right]
=
\exp\left(
\int (e^{f(x)} - 1)\nu(\mathrm{d}x)
\right)
\end{align*}
Similarly, the expected value and variance of   the integral with respect to a Poisson random measure is given by
\begin{align*}
\mathbb{E} 
\left[
\int f(z)\xi(\mathrm{d}z)
\right]
=
\int f(x) \nu(\mathrm{d}x)
\quad
\text{Var} 
\left[
\int f(z)\xi(\mathrm{d}z)
\right]
=
\int f^{2}(x)\nu(\mathrm{d}x)
\end{align*}
%in particular, letting $\xi = \sum_{i}\delta_{|_{i}}$, and taking a $g \geq 0$ such that $g \in L^{1}(\mathbb{R}^{d},\nu)$
\end{lemma}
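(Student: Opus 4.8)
The final statement is the standard Campbell-type formula for Poisson random measures: the Laplace functional, and the first two moments of a linear functional $\int f\, d\xi$. The plan is to derive all three formulas from the fundamental exponential (Laplace) formula, which itself follows from the independence and Poisson-distributed counts that characterise a Poisson random measure.

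First I would establish the exponential formula. The cleanest route is to reduce to simple functions and then pass to the limit. For a simple function $f = \sum_{k=1}^{n} c_k \ind_{A_k}$ with the $A_k$ disjoint and of finite $\nu$-measure, one has $\int f \, d\xi = \sum_k c_k \xi(A_k)$, where the random variables $\xi(A_k)$ are independent and Poisson-distributed with parameter $\nu(A_k)$. Using independence and the known moment generating function of a Poisson variable, $\IE[e^{c \xi(A)}] = \exp\big((e^{c}-1)\nu(A)\big)$, I would factor the expectation over $k$ to obtain
\begin{align*}
\IE\left[\exp\left(\int f \, d\xi\right)\right]
=
\prod_{k} \exp\big((e^{c_k}-1)\nu(A_k)\big)
=
\exp\left(\int (e^{f}-1)\, d\nu\right).
\end{align*}
I would then extend from simple functions to general $f \in L^1(\nu)$ by a monotone/dominated convergence argument, taking care that both sides remain finite; the $L^1$ hypothesis guarantees $\int (e^{f}-1)\, d\nu$ is well defined.

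The moment formulas then follow by differentiating the Laplace functional, or equivalently by applying the exponential formula to $\theta f$ and expanding in $\theta$. Writing $\Phi(\theta) := \IE[\exp(\theta \int f\, d\xi)] = \exp\big(\int(e^{\theta f}-1)\, d\nu\big)$, the mean is $\Phi'(0) = \int f \, d\nu$ and the second moment is $\Phi''(0)$. Since $\log \Phi(\theta) = \int(e^{\theta f}-1)\,d\nu$ has derivatives $\int f e^{\theta f}\, d\nu$ and $\int f^2 e^{\theta f}\, d\nu$ at the origin equal to $\int f\, d\nu$ and $\int f^2\, d\nu$ respectively, the cumulant structure gives immediately $\IE[\int f\, d\xi] = \int f\, d\nu$ and $\text{Var}(\int f\, d\xi) = \int f^2\, d\nu$. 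Alternatively one verifies these directly on simple functions using that for a Poisson variable both mean and variance equal the parameter, then extends by approximation.

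The main obstacle is purely technical rather than conceptual: justifying the interchange of limit and expectation when passing from simple functions to general $f$, and ensuring the variance formula holds without an a priori $L^2$ assumption (indeed one should note $\text{Var}(\int f\, d\xi) = \int f^2\, d\nu$ presupposes $f \in L^2(\nu)$, so I would state that hypothesis explicitly where the variance is claimed). The Laplace identity and the mean formula require only $f \in L^1(\nu)$, while the variance requires $f \in L^2(\nu)$; I would flag this so the statement is applied correctly in the paper's later computations. Everything else is routine Poisson bookkeeping.
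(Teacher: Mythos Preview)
Your proof is correct and entirely standard; this is exactly how one establishes Campbell's formula and the first two moments for a Poisson random measure. The paper itself does not supply a proof of this lemma: it is stated in the appendix as a well-known fact, with a reference to the lookdown literature (e.g.\ Kurtz--Rodrigues 2011, Etheridge--Kurtz 2014) rather than an argument. So there is nothing to compare against, and your write-up would serve perfectly well as the missing justification. Your remark that the variance identity implicitly requires $f\in L^2(\nu)$ is a fair point and worth keeping.
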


\begin{definition}[Conditionally Poisson system]
Consider a counting measure $\xi$ on $\mathbb{R}^{d}$. 
Let $\Xi$ be a locally finite random measure on $\mathbb{R}^{d}$. 
We say that $\xi$ is conditionally Poisson with Cox measure $\Xi$ if, conditioned on $\Xi$, $\xi$ is a Poisson random measure with mean measure $\Xi$.
\end{definition}
Conditionally Poisson systems are sometimes referred to as Cox processes. 
We notice that to check that a Poisson random measure $\xi$ is actually a Cox process, it is enough to check that 
\begin{align*}
\mathbb{E}
\left[
\exp
\left(
-\int_{\mathbb{R}^{d}}f \mathrm{d}\xi
\right)
\right]
=
\mathbb{E}
\left[
\exp
\left(
-\int_{\mathbb{R}^{d}}(1 - e^{f})\mathrm{d}\Xi
\right)
\right]
\end{align*}
for all positive Borel-measurable functions $f$.
Our main application of the presented theory is to show convergence of the particles systems to their high intensity limits. 
We need some more definitions, as the convergence of sequences of conditionally Poisson systems requires a rather exotic topology. 

Consider a family of continuous functions $h_{k}:\mathbb{R}^{d} \rightarrow [0,1]$ such that 
\begin{align*}
\bigcup_{k} S_{h_{k}} = \mathbb{R}^{d},
\end{align*}
where $S_{f}$ denotes the support of $f$.
Let $\mathcal{M}_{h_{k}}(\mathbb{R}^{d})$ be the collection of Borel measures on $\mathbb{R}^{d}$ such that
\begin{align*}
\int_{\mathbb{R}^{d}} h_{k} f \mathrm{d}\nu < \infty.
\end{align*} 
%whenever $\nu \in \mathcal{M}_{h_{k}}(\mathbb{R}^{d})$.
Let $\mathrm{d}\nu^{k} = h_{k}\mathrm{d}\nu$. 
The space $\mathcal{M}_{h_{k}}(\mathbb{R}^{d})$  endowed with the topology of weak convergence of $\mathrm{d}\nu^{k}$ is metrizable.
We observe that checking convergence in $\mathcal{M}_{h_{k}}(\mathbb{R}^{d})$ is equivalent to checking convergence of $\int_{\mathbb{R}^{d}}f\mathrm{d}\nu_{n}$ for all bounded and continuous functions which satisfy 
\begin{align*}
\int_{\mathbb{R}^{d}}f \mathrm{d}\nu^{k} < \infty \text{ for } f \leq ch_{k}
\end{align*}
for some constant $c >0$.
The space $\mathcal{M}_{h_{k}}(\mathbb{R}^{d} \times [0,\infty))$ can be defined in a similar way.  

\begin{theorem}[\cite{kurtz/rodrigues:2011}, Theorem A.9]
\label{KR convergence of random measures theorem}
Let $\xi_{n}$ be a sequence of conditionally Poisson random measures on $\mathbb{R}^{d}\times [0,\infty)$ with Cox measures $\lbrace \Xi_{n} \times \Lambda \rbrace$. Then $\xi_{n} \Rightarrow \xi$ in $\mathcal{M}_{h_{k}}(\mathbb{R}^{d} \times [0,\infty))$ if and only if $\Xi_{n} \Rightarrow \Xi$ in $\mathcal{M}_{h_{k}}( \IR^d)$ . If the limit exists, $\xi$ is a conditionally Poisson random measure with Cox measure $\Xi \times \Lambda$.
\end{theorem}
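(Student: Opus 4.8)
The plan is to reduce the equivalence entirely to Laplace functionals, exploiting the product structure $\Xi_n \times \Lambda$ of the Cox measures. Write $\Lambda$ for the deterministic measure on $[0,\infty)$, so that $\Lambda = m_{leb}$ in our applications. Combining the Laplace functional lemma with the Cox characterisation recalled just above, for every nonnegative admissible test function $f(x,l)$ on $\mathbb{R}^d \times [0,\infty)$ one has, by Fubini,
\[
\mathbb{E}\Big[e^{-\int f\,\mathrm{d}\xi_n}\Big]
=
\mathbb{E}\Big[\exp\Big(-\int_{\mathbb{R}^d\times[0,\infty)}\big(1-e^{-f(x,l)}\big)\,\Xi_n(\mathrm{d}x)\Lambda(\mathrm{d}l)\Big)\Big]
=
\mathbb{E}\Big[\exp\Big(-\int_{\mathbb{R}^d} g_f(x)\,\Xi_n(\mathrm{d}x)\Big)\Big],
\]
where $g_f(x):=\int_{[0,\infty)}\big(1-e^{-f(x,l)}\big)\,\Lambda(\mathrm{d}l)$. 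Thus the Laplace functional of $\xi_n$ at $f$ coincides with the Laplace functional of $\Xi_n$ at $g_f$. Since weak convergence in $\mathcal{M}_{h_{k}}$ is characterised by convergence of Laplace functionals along a convergence-determining class of test functions, the whole theorem becomes a matter of transferring determining classes across the map $f\mapsto g_f$.

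For the direction assuming $\Xi_n\Rightarrow\Xi$, together with the identification of the limit, I would first check that $g_f$ is itself admissible on $\mathbb{R}^d$ whenever $f$ is admissible on the product: $g_f$ is nonnegative and continuous, and the elementary bound $1-e^{-f}\le f$ together with the integrability of $f$ in $l$ on its support yields $g_f\le c\,h_k$ for a suitable constant. Weak convergence of $\Xi_n$ then gives $\mathbb{E}[e^{-\int g_f\,\mathrm{d}\Xi_n}]\to \mathbb{E}[e^{-\int g_f\,\mathrm{d}\Xi}]$, so by the displayed identity $\mathbb{E}[e^{-\int f\,\mathrm{d}\xi_n}]$ converges to $\mathbb{E}[\exp(-\int(1-e^{-f})\,\mathrm{d}(\Xi\times\Lambda))]$. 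The right-hand side is exactly the Laplace functional of a conditionally Poisson measure with Cox measure $\Xi\times\Lambda$; uniqueness of Laplace functionals then shows $\xi_n\Rightarrow\xi$ with $\xi$ conditionally Poisson of Cox measure $\Xi\times\Lambda$, which simultaneously settles the ``if'' implication and the final assertion of the theorem.

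For the converse, suppose $\xi_n\Rightarrow\xi$, so that $\mathbb{E}[e^{-\int g_f\,\mathrm{d}\Xi_n}]=\mathbb{E}[e^{-\int f\,\mathrm{d}\xi_n}]$ converges for every admissible $f$. The key algebraic point is that the family $\{g_f\}$ contains a convergence-determining class for measures on $\mathbb{R}^d$. Taking $f(x,l)=\phi(x)\chi(l)$ with $\phi\in C_c^+(\mathbb{R}^d)$ and $\chi$ a continuous approximation of $\ind_{[0,s]}$, dominated convergence gives $g_f(x)\to s\big(1-e^{-\phi(x)}\big)$; and for any target $\theta\in C_c^+(\mathbb{R}^d)$ with $\|\theta\|_\infty<s$, the choice $\phi=-\log(1-\theta/s)$ produces $s(1-e^{-\phi})=\theta$. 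Hence every $\theta\in C_c^+(\mathbb{R}^d)$ is realised, in the limit, as some $g_f$, so $\mathbb{E}[e^{-\int\theta\,\mathrm{d}\Xi_n}]$ converges along a convergence-determining class; once the limit is known to be the Laplace functional of a genuine random measure $\Xi$ we conclude $\Xi_n\Rightarrow\Xi$, and the identification of $\xi$ then follows from the previous paragraph.

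The step I expect to be the main obstacle is precisely showing that the limiting functional $\theta\mapsto\lim_n\mathbb{E}[e^{-\int\theta\,\mathrm{d}\Xi_n}]$ is the Laplace functional of an actual random measure --- equivalently, establishing tightness of $\{\Xi_n\}$ and continuity of the limit at $\theta=0$. I would obtain this by transferring tightness from $\{\xi_n\}$ (which is tight, being convergent) back to $\{\Xi_n\}$ through the intensity relation $\mathbb{E}[\xi_n(B\times[0,s])]=\Lambda([0,s])\,\mathbb{E}[\Xi_n(B)]$, so that control of the intensity measures of $\xi_n$ on slabs $B\times[0,s]$ yields the corresponding control for $\Xi_n$; feeding this into the continuity theorem for Laplace functionals of random measures then upgrades pointwise convergence of Laplace functionals to genuine weak convergence $\Xi_n\Rightarrow\Xi$. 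The remaining verifications --- admissibility of $g_f$, the approximation of $\ind_{[0,s]}$ by continuous $\chi$, and the passage to the limit in $g_f$ --- are routine, and I would treat them only briefly.
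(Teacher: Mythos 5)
The paper never proves this statement; it is imported verbatim from \cite{kurtz/rodrigues:2011} (their Theorem A.9), whose argument is likewise built on Laplace functionals of conditionally Poisson systems. Your skeleton is therefore the right one: the identity $\mathbb{E}[e^{-\int f\,\mathrm{d}\xi_n}]=\mathbb{E}[e^{-\int g_f\,\mathrm{d}\Xi_n}]$ with $g_f(x)=\int_0^\infty(1-e^{-f(x,l)})\,\Lambda(\mathrm{d}l)$, admissibility of $g_f$, and the observation that $\{g_f\}$ is rich enough to determine the law of $\Xi_n$. The forward direction ($\Xi_n\Rightarrow\Xi$ implies $\xi_n\Rightarrow\xi$, with identification of the limit as conditionally Poisson with Cox measure $\Xi\times\Lambda$) is sound as you present it, modulo the standard continuity theorem for Laplace functionals in the $\mathcal{M}_{h_k}$ topology.

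The genuine gap is your tightness transfer in the converse. You propose to control $\{\Xi_n\}$ ``through the intensity relation $\mathbb{E}[\xi_n(B\times[0,s])]=\Lambda([0,s])\,\mathbb{E}[\Xi_n(B)]$'', but weak convergence of $\xi_n$ gives no uniform control of these expectations: nothing in the hypotheses makes $\mathbb{E}[\Xi_n(B)]$ finite, let alone uniformly bounded, since a convergent sequence of random measures need not have uniformly integrable (or even finite) first moments. The step fails as stated. The repair must be made at the level of probabilities, using the conditional Poisson structure: given $\Xi_n$, the count $\xi_n(B\times[0,s])$ is Poisson with mean $s\,\Xi_n(B)$, so by Chebyshev applied conditionally,
\[
\IP\big[\Xi_n(B)>R\big]
\;\le\;
\IP\Big[\xi_n(B\times[0,s])>\tfrac{sR}{2}\Big]
\;+\;
\sup_{m\ge sR}\IP\Big[\mathrm{Poisson}(m)\le\tfrac{sR}{2}\Big]
\;\le\;
\IP\Big[\xi_n(B\times[0,s])>\tfrac{sR}{2}\Big]+\frac{4}{sR},
\]
and stochastic boundedness of $\xi_n(B\times[0,s])$ (which does follow from convergence of $\xi_n$, by dominating $\ind_{B\times[0,s]}$ with an admissible continuous function) then yields stochastic boundedness of $\Xi_n(B)$ and hence tightness of $\{\Xi_n\}$ — the same law-of-large-numbers mechanism that underlies the map $\gamma$ in Theorem~\ref{LC SBMRE}. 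The same flaw infects your approximation step: passing from $g_{f_\epsilon}$ (with continuous $\chi_\epsilon\approx\ind_{[0,s]}$) to $\theta$ uniformly in $n$ again requires $\sup_n\mathbb{E}[\Xi_n(\mathrm{supp}\,\phi)]<\infty$ if run through expectations. Either use $|e^{-a}-e^{-b}|\le 1\wedge|a-b|$ together with the stochastic bound above, or avoid the limit altogether: for continuous compactly supported $\chi$, one has $g_f=\Psi_\chi\circ\phi$ with $\Psi_\chi(a)=\int(1-e^{-a\chi(l)})\,\Lambda(\mathrm{d}l)$ a strictly increasing continuous bijection from $[0,\infty)$ onto $[0,m)$, $m:=\Lambda(\{\chi>0\})$, so any $\theta\in C_c^+(\mathbb{R}^d)$ with $\|\theta\|_\infty<m$ is realised \emph{exactly} as $g_f$ by taking $\phi=\Psi_\chi^{-1}\circ\theta$. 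With these two corrections your argument closes.
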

\end{appendix}
\section{Markov Mapping Theorem}
\label{Markov Mapping Theorem}

We recall some basic definitions and introduce the necessary notation. 
For a detailed account of this introductory material we refer to \cite{ethier/kurtz:1986}, Chapter~1, and  \cite{lunardi:2012}.

Let $(E,d)$, $(E_{0},d_{0})$ be a pair of complete, separable metric spaces (with metrics $d$ and  $d_{0}$, respectively).
Let $B(E)$ be the space of bounded measurable functions on $E$. We notice that equipped with the usual supremum norm $\| \cdot \|_{\infty}$ $B(E)$ forms a Banach space. 
Let $C(E) \subset B(E)$ denote the subspace of continuous functions on $E$.
A subspace $A$ of $B(E) \times B(E)$ is a multivalued linear operator.
It domain is given by $\mathcal{D} = \{f : (f,g) \in A \}$ and its range by $\mathcal{R} = \{g: (f,g) \in A\}$.
%Armed with those definitions we are ready to state the definitions.
\begin{definition}[Dissipative operator]
We say that the operator $A$ is dissipative if for each $(f,g) \in A$ and $\lambda > 0$
\begin{align*}
\|\lambda f - g \| \geq \lambda \|f\|.
\end{align*}
\end{definition}
\begin{definition}[Graph separable pre-generator]
\label{definition pre-generator}
We say that an operator $A \subset B(E) \times B(E)$ is a pre-generator if it is dissipative and there exists a sequence of functions $\mu_{n}$ mapping $E$ to the set of probability measures over $\mathcal{P}(E)$, and a sequence of $\lambda_{n} \in E$, such that for each $(f,g) \in A$
\begin{align*}
g(x) = \lim_{n \to \infty} \lambda_{n}\int_{E}\left(f(y) - f(x) \right)\mu_{n}(x,\mathrm{d}y).
\end{align*}
If in addition there exists a countable subset $\{f_{k}\} \subset \mathcal{D}(A) \bigcup C(E)$ such that the graph of $A$ is contained in the closure of the linear span of $(f_{n},Af_{n})$, we say that it is graph-separable.
\end{definition}
We notice that the generators of Markov process are graph-separable pre-generators.
Let $D_{E}[0, \infty)$ denote the space if c\`adl\`ag functions and $M_{E}[0,\infty)$ denote the space of Borel measurable functions from $[0,\infty)$ taking values in $E
$.

\begin{theorem}[\cite{kurtz/rodrigues:2011}, Theorem A.15]
%Let $(E,d)$ and $(E_{0},d_{0})$ be a pair of complete, seperable metric spaces.
Let $A \subset \overline{C}(E) \times C(E)$  and let $\psi$ be a continuous function taking values in $\mathbb{R}$ such that $\psi \geq 1$.
Suppose that for each $f \in \mathcal{D}(A)$ there exists a constant $c_{f} > 0$ such that 
\begin{align*}
|Af(x)| \leq c_{f}\psi(x).
\end{align*}
Let $A_{0}$ be defined as
\begin{align*}
A_{0}f(x) = \frac{Af(x)}{\psi(x)}.
\end{align*}
Suppose that $A_{0}$ is graph-separable pre-generator  and suppose that $\mathcal{D}(A) = \mathcal{D}(A_{0})$ is closed under multiplication and separating. 
Let $\gamma : E \rightarrow E_{0}$ be  Borel measurable, and let $\alpha$ be a transition function from $E_{0}$ into $E$ satisfying $\alpha(y, \gamma^{-1}(y)) = 1$.
Assume that for each $y \in E_{0}$
\begin{align*}
\widetilde{\psi} = \int_{E}\psi(y,z)\alpha(y,\mathrm{d}z) < \infty.
\end{align*}
and define
\begin{align*}
 C = 
\left\{
\left(
\int_{E}
f(z)\alpha(\cdot, \mathrm{d}z),
\int_{E}
Bf(z) \alpha(\cdot, \mathrm{d}z) 
\right) 
: f \in \mathcal{D}(B)
\right\}.
\end{align*}
Let $\mu_{0} \in \mathcal{P}(E_{0})$, and define $\nu_{0}(y) = \int_{\mathbb{R}}\alpha(y, \cdot) \mu_{0}(\mathrm{d}y)$. 
\begin{enumerate}
\item Let $\tilde{Y}$ be a solution of a martingale problem for $(C,\mu_{0})$. Assume that it satisfies  the moment condition
\begin{align}
\label{MMT moment condition}
\int_{0}^{t}\mathbb{E}\left[ \widetilde{\psi}(\tilde{Y}(s))  \right] \mathrm{d}s < \infty \quad \forall t \geq 0.
\end{align}
Then there exists a solution $X$ of the martingale problem for  $(A,\nu_{0})$ such that $\tilde{Y}$ has the same distribution on $\mathcal{M}_{E_{0}}[0,\infty)$ as $\widetilde{Y} = \gamma \circ Y$.
%If $Y$ and $\tilde{Y}$ are c\`adl\`ag, they have 
%\item for $t \in T$
%\begin{align}
%\mathcal{P} \left\{ V(t) \in  \Gamma| \tilde{\mathcal{F}}^{U}_{t} \right\} = \alpha(U(t), \Gamma), \Gamma \in  B(E),
%\end{align}
\item If, in addition, uniqueness holds for the martingale problem for $(A, \nu_{0})$, then uniqueness holds for the martingale problem for  $(C,\mu_{0})$.
\end{enumerate}
\end{theorem}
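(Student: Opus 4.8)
The plan is to deduce the theorem from Kurtz's general theory of martingale problems for conditional distributions of Markov processes \citep{kurtz:1998}, of which this statement is essentially the specialization to a graph-separable pre-generator controlled by the weight $\psi$. The argument splits into the lifting assertion (Part~1) and the uniqueness transfer (Part~2), and essentially all of the difficulty is concentrated in Part~1: once existence of the lift is available, Part~2 is formal. Throughout, the definition of $C$ should be read as averaging $A$ itself against $\alpha$, so that for $f\in\cD(A)$ the pair $\big(\bar f,\overline{Af}\big)$, with $\bar f(y)=\int_E f\,\mathrm{d}\alpha(y)$ and $\overline{Af}(y)=\int_E Af\,\mathrm{d}\alpha(y)$, lies in $C$.

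First I would remove the unboundedness of $A$ by the random time change built from $\psi$. Setting $A_0 f = Af/\psi$, the hypothesis $|Af(x)|\le c_f\psi(x)$ gives $|A_0 f(x)|\le c_f$, so $A_0$ maps $\cD(A)$ into bounded functions and is, by assumption, a graph-separable pre-generator whose domain is closed under multiplication and separating. For such operators the general existence theory (see \citealp{ethier/kurtz:1986}, Ch.~4) applies directly. A random time change generated by $\psi$ then converts solutions of the $A$-martingale problem into solutions of the bounded $A_0$-problem and back, and the moment condition \eqref{MMT moment condition}, namely $\int_0^t\IE[\widetilde\psi(\tilde Y(s))]\,\mathrm{d}s<\infty$, is exactly what guarantees that the inverse time change is almost surely finite for every $t$, i.e.\ that the reconstructed process is defined on all of $[0,\infty)$ and does not explode.

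Next I would carry out the lifting itself by solving the associated \emph{filtered} martingale problem. The target is a pair $(X,\tilde Y)$ with $\tilde Y=\gamma\circ X$ and conditional law $\IP[\,X(t)\in\cdot\mid\cF_t^{\tilde Y}\,]=\alpha(\tilde Y(t),\cdot)$, for which $X$ solves the martingale problem for $(A,\nu_0)$. The algebraic backbone is the averaging identity just noted: conditioning an $A$-martingale on the filtration generated by $\tilde Y$ and using the conditional distribution $\alpha$ produces precisely $\bar f(\tilde Y(t))-\int_0^t\overline{Af}(\tilde Y(s))\,\mathrm{d}s$, so the projected process automatically solves the $C$-problem; conversely, the averaged problem retains just enough information to reconstruct the conditional laws. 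The construction of $X$ from a given solution $\tilde Y$ proceeds by a measurable selection of regular conditional distributions consistent with $\alpha$, yielding a joint law on $D_E[0,\infty)\times D_{E_0}[0,\infty)$. Here graph-separability reduces the verification to the countably many test functions $\{f_k\}$ and renders all the relevant maps measurable, while the compatibility $\alpha(y,\gamma^{-1}(y))=1$ ensures that the reconstructed $X$ genuinely projects onto the prescribed $\tilde Y$. This gives a solution $X$ of the $(A,\nu_0)$-problem with $\gamma\circ X$ equal in distribution to $\tilde Y$ on $\mathcal{M}_{E_0}[0,\infty)$, proving Part~1.

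For Part~2, assume uniqueness for $(A,\nu_0)$ and let $\tilde Y_1,\tilde Y_2$ be two solutions of the $(C,\mu_0)$-problem satisfying the moment condition. By Part~1 each lifts to a solution $X_i$ of $(A,\nu_0)$ with $\gamma\circ X_i\stackrel{d}{=}\tilde Y_i$. Uniqueness for $A$ forces $X_1\stackrel{d}{=}X_2$, whence $\tilde Y_1\stackrel{d}{=}\gamma\circ X_1\stackrel{d}{=}\gamma\circ X_2\stackrel{d}{=}\tilde Y_2$, giving uniqueness for $(C,\mu_0)$. The main obstacle is the rigorous construction of the lifted process in Part~1: producing a law on path space whose $E$-marginal solves the $A$-martingale problem while its conditional distributions given the $E_0$-component are exactly $\alpha$. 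This is where the pre-generator hypothesis, the control of the rates by $\psi$ together with the moment condition (to tame the unbounded rates through the time change), and graph-separability (for measurability and reduction to countably many test functions) all enter; the averaging identity is only the algebraic skeleton and does not by itself deliver a process, so the measure-theoretic reconstruction of $X$ is the genuinely technical step.
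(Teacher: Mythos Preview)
The paper does not give its own proof of this statement: it is quoted verbatim as Theorem~A.15 of \cite{kurtz/rodrigues:2011} and left unproved, with only a one-sentence historical remark that the original proof was inspired by generalisations of Burke's Output Theorem in \cite{kliemann/koch/marchetti:1990} and by the equivalence argument for the Moran model and its lookdown in \cite{donnelly/kurtz:1996}. There is therefore nothing in the paper to compare your proposal against.

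That said, your outline is consistent with the actual proof in \cite{kurtz/rodrigues:2011} and its predecessor \cite{kurtz:1998}: the reduction to the bounded operator $A_0$ via the $\psi$-time change, the filtered martingale problem as the mechanism for constructing the lift, the role of graph-separability in reducing to countably many test functions, and the formal deduction of Part~2 from Part~1 are all the right ingredients. Since the paper treats this as a black box, a full proof is not expected here; a citation suffices.
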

The original proof of the theorem was inspired by the proofs of generalisations of Burke's Output Theorem appearing in \cite{kliemann/koch/marchetti:1990} and the proof of equivalence of martingale problems for the  Moran model and its lookdown representation  in \cite{donnelly/kurtz:1996}.
%We briefly discuss the proof of the latter which uses  an application of  the Markov Mapping Theorem at the begining of Chapter~\ref{chap1}.

Since then the Markov Mapping Theorem has been a useful tool in mathematical population genetics (\cite{etheridge/kurtz:2014}, \cite{kurtz/rodrigues:2011}), mathematical biology (\cite{gupta:2012}), mathematical finance (\cite{stockbridge:2002}) and analysis of infinite dimensional stochastic differential equations (\cite{kurtz:2010}).

The main power of the Markov Mapping Theorem comes in simplifications of proofs of equivalence of seemingly different martingale problems. 
The main source of the power is in exploitation of properties of exchangeable process and conditionally Poisson systems.

\section{Kurtz-Rodrigues' Martingale Lemma}
\label{kurtzrodmartingalelemma}
The following Lemma plays an important role in our applications of the Markov Mapping Theorem. 
Intuitively, it clarifies why the averaged process is a solution to a projected martingale problem.
\begin{lemma}[\cite{kurtz/rodrigues:2011}, Lemma~A.13]
Let $\{ \mathcal{F}_{t}\}$ and $\{\mathcal{G}_{t}\}$ be filtrations with $\mathcal{G}_{t} \subset \mathcal{F}_{t}$.
Suppose that for each $t\geq 0$ 
\begin{align*}
\mathbb{E}\big[ |X_{t}| + \int_{0}^{t}|Y_{s}|\mathrm{d}s \big] < \infty.
\end{align*}
and that 
\begin{align*}
M_{t} = X_{t} - \int_{0}^{t}Y_{s}\mathrm{d}s
\end{align*}
is an $\mathcal{F}_{t}$-martingale.
Then
\begin{align*}
\widehat{M}_{t} = \mathbb{E}
\big[
X_{t}|\mathcal{G}_{t}
\big]
-
\int_{0}^{t} 
 \mathbb{E}
\big[
Y_{s}|\mathcal{G}_{s}
\big]
\mathrm{d}s
\end{align*}
is a $\{\mathcal{G}_{t}\}$martingale.
\end{lemma}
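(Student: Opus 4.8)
The plan is to treat this as a standard ``projection onto a subfiltration'' computation, in which the only inputs are the tower property for the nested filtrations $\mathcal{G}_s \subset \mathcal{G}_t \subset \mathcal{F}_t$, a conditional Fubini theorem to exchange $\mathbb{E}[\,\cdot\mid\mathcal{G}_s]$ with the time integral, and the hypothesised $\mathcal{F}_t$-martingale property of $M$.

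First I would dispose of the measurability and integrability requirements. Adaptedness of $\widehat{M}_t$ is immediate for the first term, since $\mathbb{E}[X_t\mid\mathcal{G}_t]$ is $\mathcal{G}_t$-measurable by definition; for the integral term one fixes a measurable version of $s\mapsto\mathbb{E}[Y_s\mid\mathcal{G}_s]$, so that $\int_0^t\mathbb{E}[Y_s\mid\mathcal{G}_s]\,\mathrm{d}s$ is $\mathcal{G}_t$-measurable. Integrability then follows from conditional Jensen and Tonelli: $\mathbb{E}[|\mathbb{E}[X_t\mid\mathcal{G}_t]|]\le\mathbb{E}[|X_t|]<\infty$ and $\mathbb{E}[\int_0^t|\mathbb{E}[Y_s\mid\mathcal{G}_s]|\,\mathrm{d}s]\le\int_0^t\mathbb{E}[|Y_s|]\,\mathrm{d}s=\mathbb{E}[\int_0^t|Y_s|\,\mathrm{d}s]<\infty$, both finite by hypothesis.

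For the martingale property I would fix $s<t$ and compute $\mathbb{E}[\widehat{M}_t\mid\mathcal{G}_s]$. The tower property collapses $\mathbb{E}[\mathbb{E}[X_t\mid\mathcal{G}_t]\mid\mathcal{G}_s]$ to $\mathbb{E}[X_t\mid\mathcal{G}_s]$; splitting $\int_0^t=\int_0^s+\int_s^t$, the first piece is $\mathcal{G}_s$-measurable and passes through the conditional expectation, while conditional Fubini together with the tower property turns the second piece into $\int_s^t\mathbb{E}[Y_u\mid\mathcal{G}_s]\,\mathrm{d}u$. It then remains to show $\mathbb{E}[X_t\mid\mathcal{G}_s]-\int_s^t\mathbb{E}[Y_u\mid\mathcal{G}_s]\,\mathrm{d}u=\mathbb{E}[X_s\mid\mathcal{G}_s]$. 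This is where the hypothesis on $M$ enters: from $\mathbb{E}[M_t\mid\mathcal{F}_s]=M_s$ one reads off $\mathbb{E}[X_t-X_s\mid\mathcal{F}_s]=\mathbb{E}[\int_s^t Y_u\,\mathrm{d}u\mid\mathcal{F}_s]$; applying $\mathbb{E}[\,\cdot\mid\mathcal{G}_s]$ and using the tower property (since $\mathcal{G}_s\subset\mathcal{F}_s$) and conditional Fubini once more yields $\mathbb{E}[X_t\mid\mathcal{G}_s]-\mathbb{E}[X_s\mid\mathcal{G}_s]=\int_s^t\mathbb{E}[Y_u\mid\mathcal{G}_s]\,\mathrm{d}u$, which is exactly the required identity, so $\mathbb{E}[\widehat{M}_t\mid\mathcal{G}_s]=\widehat{M}_s$.

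The algebra above is unproblematic; the main obstacle will be the measure-theoretic bookkeeping behind conditional Fubini, namely producing a jointly measurable version of $(s,\omega)\mapsto\mathbb{E}[Y_s\mid\mathcal{G}_s](\omega)$ so that the integral $\int_s^t\mathbb{E}[Y_u\mid\mathcal{G}_s]\,\mathrm{d}u$ is well defined and the interchange $\mathbb{E}[\int_s^t Y_u\,\mathrm{d}u\mid\mathcal{G}_s]=\int_s^t\mathbb{E}[Y_u\mid\mathcal{G}_s]\,\mathrm{d}u$ is legitimate. This is guaranteed by the integrability hypothesis $\mathbb{E}[\int_0^t|Y_s|\,\mathrm{d}s]<\infty$ together with the usual progressive-measurability conventions on $\{\mathcal{F}_t\}$ and $\{\mathcal{G}_t\}$, and I would invoke the standard conditional Fubini theorem rather than reprove it.
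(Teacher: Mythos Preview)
The paper does not actually prove this lemma; it merely recalls the statement from \cite{kurtz/rodrigues:2011} in Appendix~\ref{kurtzrodmartingalelemma} without supplying an argument. Your proof is correct and is the standard projection-onto-a-subfiltration computation one would expect: tower property to collapse the outer conditional expectation, conditional Fubini to handle the time integral, and the $\mathcal{F}_t$-martingale property of $M$ to close the identity. There is nothing to compare against here beyond noting that your write-up would serve perfectly well as the missing proof.
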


\section{Proofs of Theorem~\ref{LC BBMRE} and Theorem~\ref{LC SBMRE}}
\label{Proofs SBMRE BBMRE}
\begin{proof}[Proof of Theorem~\ref{LC BBMRE}]
Before we can proceed,  additional objects need to be defined. 
Let $\alpha_{\lambda}(n,l)$ be the joint distribution of $n$ i.i.d.~uniformly distributed random variables on $[0,\lambda]$. 
Recall that $\mu$ denotes a point measure representing positions of individuals.
For a test function $f_{1}(x_{i},l_{i})$, we define the projection onto type space $\hat{f}$ as 
\begin{align*}
\hat{f}(\mu) = \prod_{i}\hat{g}(x_{i})=e^{-\sum_{i}\mathcal{I}(g(x_{i}))},
\end{align*}
where the average for a single level is defined as 
\begin{align*}
e^{-\mathcal{I}(g(x_{i}))} = \hat{g}(x_{i}) = \frac{1}{\lambda}\int_{0}^{\lambda}g(x_{i},z)\mathrm{d}z.
\end{align*}
To calculate the generator of the projected model (the generator averaged over the distribution of the levels), we need to evaluate 
\begin{align*}
\int A_{\lambda}f(\zeta,x,l)\alpha_{\lambda}(\mathrm{d}l).
\end{align*}
Let us integrate the four terms appearing in $A_{\lambda}$ separately. 
We begin with the two terms which are least involved - the movement of particles and the environment. 
Since both of those terms do not depend on the levels, integrals with respect to them do not alter our projections, namely
\begin{align}\label{BBMRE movement}
\int f(\zeta,x,l)\sum_{i} \frac{Bg(x_{i},l_{i})}{g(x_{i},l_{i})}\alpha_{\lambda}(\mathrm{d}l) = \sum_{i=1}^{n}B\hat{f}(\zeta,\mu)
= n B\hat{f}(\zeta,\mu)
.
\end{align}
%where $B$ denotes the generator $B$ applied to the spatial variable $x_{i}$.
Analogously,
\begin{align}\label{BBMRE environment}
\int\lambda f_{1}(x,l)A^{env}_{\lambda}f_{0}(\zeta,x)\alpha_{\lambda}(n,\mathrm{d}l) = \lambda\hat{f}_{1}(\mu)A^{env}_{\lambda}f_{0}(\zeta,\mu).
\end{align}
In order to evaluate terms describing births and movement of the levels, which both do depend on the exact value of the level, it is convenient to note that (here we follow the calculation on p.~492 in \cite{kurtz/rodrigues:2011})
\begin{align}
\label{BBMRE helpful identity 1}
\lambda^{-1}2a
\int_{0}^{\lambda}g(x,z)
\int_{z}^{\lambda}(g(x,v) - 1)
\mathrm{d}v\mathrm{d}z 
= 
a\lambda e^{-\mathcal{I}_{g}} - 2a\lambda^{-1}\int_{0}^{\lambda}g(x,z)(\lambda - z)\mathrm{d}z,
\end{align}
where we have used Fubini's Theorem, and
\begin{align}
\label{BBMRE helpful identity 2}
\lambda^{-1}
\int_{0}^{\lambda}(az^{2} - \zeta bz)g^{\prime}(x,z)  
\mathrm{d}z 
&= 
-\lambda^{-1}
\int_{0}^{\lambda}
(2az  - \zeta b)(g(z) - 1)
\mathrm{d}z
\nonumber
\\
&=
\lambda^{-1}2a
\int_{0}^{\lambda}zg(x,z)
\mathrm{d}z + a\lambda + b(e^{-\mathcal{I}_{g}} - 1),
\end{align}
where we have integrated by parts. 
It will also be useful to describe the changes in our system due to births and deaths. 
Whenever a birth event occurs, the new individual is located at the same place as the parent.
 If a death occurs, the individual is just removed from the system. 
 Therefore, if we denote the new collection of particles after a birth at location $y$ by $(b(\overline{x}|y))$ and the new collection of particles after a death at location $x_{j}$ by $d(\overline{x}|x_{j})$, we see that 
\begin{align*}
\mu_{b(\overline{x}|y)} =  \delta_{y} + \sum_{i=1}^{n}\delta_{x_{i}}, \quad \mu_{d(\overline{x}|x_{j})} = -\delta_{x_{j}} + \sum_{i=1}^{n}\delta_{x_{i}}. 
\end{align*}

 Armed with these observations and identities \eqref{BBMRE helpful identity 1}, \eqref{BBMRE helpful identity 2} we proceed to evaluate the remaining terms. A simple calculation shows that

\begin{multline}
\label{BBMRE al^2 - bl computation}
\int  f(\zeta,x,l)\left\lbrace\sum_{i}2a\int_{l_{i}}^{\lambda}(g(x_{i},v) - 1)\mathrm{d}v 
+ \sum_{i} (al_{i}^{2} - \sqrt{\lambda}\zeta(x_{i}) bl_{i})\frac{\partial_{l_{i}}g(x_{i},l_{i})}{g(x_{i},l_{i})}\right\rbrace \alpha_{\lambda}(\mathrm{d}l)
%\nonumber 
\\
= \sum_{j}
e^{\sum_{i\ne j}\mathcal{I}_g(x_{i})}   
\left\lbrace
2a\lambda e^{-\mathcal{I}_{g(x_{j})}}
+2a\lambda^{-1}\int_{0}^{\lambda}g(x_{j},z)\mathrm{d}z 
\right.
\\
\left.
-2a\lambda^{-1}\int_{0}^{\lambda}g(x_{j},z)\mathrm{d}z
+ a\lambda
+ \sqrt{\lambda} b\zeta(x_{j})(e^{\mathcal{I}_{g(x_j)}} - 1)
\right\rbrace
%\nonumber 
\\
=
a\lambda e^{\sum_{i}\mathcal{I}_g(x_{i})}
\sum_{j}
(e^{-\mathcal{I}_{g(x_{j})}} - 1) 
+ 
\sum_{j}
(\lambda a - \sqrt{\lambda}b\zeta(x_{j}))
e^{\sum_{i\ne j}\mathcal{I}_g(x_{i})} (1- e^{\mathcal{I}_{g(x_j)}})      
%\nonumber
\\
= \sum_{i}\lambda a(\hat{f}(\mu_{b(x|x_{i})}) - \hat{f}(\mu))
 + \sum_{i}(\lambda a - \sqrt{\lambda}\zeta(x_{i}) b)(\hat{f}(\mu_{d(x|x_{i})}) - \hat{f}(\mu)).
\end{multline}
Combining \eqref{BBMRE movement}, \eqref{BBMRE environment} and \eqref{BBMRE al^2 - bl computation} we have established that the projected generator can be written as 	
\begin{align*}
\mathcal{L}_{\lambda}\hat{f}(\zeta,\mu) =\lambda \hat{f}_{1}(\mu)A^{env}_{\lambda}f_{0}(\xi) + \sum_{i} B_{x_{i}}\hat{f}(\mu) + \sum_{i}\lambda a\big(\hat{f}(\mu_{b(x|x_{i})}) - \hat{f}(\mu)\big)\\
 + \sum_{i}\big(\lambda a - \zeta(x_{i})\sqrt{\lambda} b\big)\big(\hat{f}(\mu_{d(x|x_{i})}) - \hat{f}(\mu)\big), 
\end{align*}
which is the generator of the BBMRE with  birth rate $\lambda a$ and  death rate $(\lambda a - \sqrt{\lambda}\zeta b)$, as claimed.

Now we only need to check that all assumptions of the  Markov Mapping Theorem are satisfied.  Fortunately our Condition~\ref{LC Condition 3.1} has been imposed  to guarantee just that.
The map  $\gamma : \mathcal{N}(\mathbb{R}^{d}\times[0,\lambda)) \rightarrow \mathcal{M}(\mathbb{R}^{d})$ (mapping counting measures to measures on $\mathbb{R}^{d}$) is given by
\begin{align*}
\gamma\left(\sum_{i}\delta_{x_{i},l_{i}}\right) 
= 
%\begin{cases}
\frac{1}{\lambda}\sum_{l_{i}}\delta_{x_{i}} %& \text{if the limit exists}, \\
%\mu_{0} & \text{otherwise } . 
%\end{cases}
\end{align*}  
The moment condition \eqref{MMT moment condition} is satisfied if we consider $\psi$ of the form
\begin{align}
\label{BBMRE psi}
\psi(x,l) = 1 + \sum_{i} \psi_{B}(x_{i})\left(1 + a + b\right)e^{-l_{i}}
,
\end{align}
so that  the averaged $\tilde{\psi}$ is of the form
\begin{align}
\label{BBMRE psi tilde}
\tilde{\psi}(x) = 1 + \sum_{i} \psi_{B}(x_{i})\left(1 + a + b\right)(1 -e^{-\lambda}).
\end{align}
We note that the  $1$ appearing in the definitions of $\psi$ and $\tilde{\psi}$ has been added only to ensure that both of these functions are greater than or equal to $1$.
%a straightforward consequence of Theorem 3.2 from \cite{kurtz/rodrigues:2011}. This result, along with other relevant existence statements is recalled in Appendix~\ref{Existence theorems from KR} 
%theorems from KR   \textcolor{red}{The big question here is - how much we should write?}
\end{proof}

\begin{proof}[Proof of Theorem~\ref{LC SBMRE}]
We define a test function of the form 
\begin{align*}
h_{1}(\zeta,x,l) =  -f_{1}(x,l)b\sum_{i}\zeta(x_{i}) l_{i}\frac{\partial_{l}g(x_{i},l_{i})}{g(x_{i},l_{i})}
\end{align*}
and apply the generator \eqref{Mytnik lambda generator} to a test function of the form $G = f_{1} + \frac{1}{\sqrt{\lambda}}h_{1}$.
%$B$ does not act on the environment!!!!!
This leads to 
\begin{multline}\label{First separation}
A_{\lambda}\left(f_{1}(x,l) +\frac{1}{\sqrt{\lambda}}h_{1}\right) = f_{1}(x,l)\sum_{i} \frac{Bg(x_{i},l_{i})}{g(x_{i},l_{i})} 
%\nonumber 
\\
 + f_{1}(x,l)\sum_{i}
 2a\int_{l_{i}}^{\lambda}(g(x_{i},v) - 1)\mathrm{d}v   
 + f(x,l)\sum_{i} 
 (al_{i}^{2} - \sqrt{\lambda}\zeta(x_{i}) bl_{i})
 \frac{\partial_{l}g(x_{i},l_{i})}{g(x_{i},l_{i})} 
 %\nonumber 
 \\
+ \sqrt{\lambda}f_{1}(x,l)\sum_{i}\zeta(x_{i}) l_{i}\frac{\partial_{l}g(x_{i},l_{i})}{g(x_{i},l_{i})} 
%\nonumber 
\\
- \frac{1}{\sqrt{\lambda}} 
\left\lbrace
f_{1}(x,l)b\zeta(x_{i})\sum_{i} \left[\frac{Bg(x_{i},l_{i})}{g(x_{i},l_{i})} + \frac{B\partial_{l}g(x_{i},l_{i})}{g(x_{i},l_{i})}\right]
 \right.
%\nonumber 
\\ 
+ \left[ \sum_{i}\zeta(x_{i}) l_{i}\frac{\partial_{l}g(x_{i},l_{i})}{g(x_{i},l_{i})}\right] f_{1}(x,l)\sum_{i}
 2a\int_{l_{i}}^{\lambda}(g(x_{i},v) - 1)
%\nonumber 
\\ 
\left.
+ f_{1}(x,l)\sum_{i}
[al_{i}^{2} - \sqrt{\lambda}b\zeta(x_{i})l_{i}]
\left(
\sum_{j \ne i}l_{j}\zeta(x_{j})\frac{\partial_{l}g(x_{i},l_{i})\partial_{l}g(x_{j},l_{j})}{g(x_{i},l_{j})g(x_{i},l_{i})}\right.\right.
\\
\left.\left.
   + \frac{\zeta(x_{i})\partial_{l}g(x_{i},l_{i}) + \zeta(x_{i})l_{i}\partial^{2}_{l^{2}}g(x_{i},l_{i})}{g(x_{i},l_{i})}
\right)
\right\rbrace
,
% \nonumber \\
 % +  \mathcal{O}\left(\frac{1}{\sqrt{r}}\right),
\end{multline}
where we have used the fact that $f_{1}$ does not depend on the environment and that $\mathbb{E}_\pi[h_{1}]=0$,
(where $\mathbb{E}_\pi$ is the expected value over the stationary distribution for the environment)
since
$\mathbb{E}_{\pi}[\zeta] = 0$.
Passing to the limit in \eqref{First separation} as  $\lambda$  tends to infinity we obtain
\begin{multline}
A_{\infty}\left(f_{1}\right) = f_{1}(x,l)\sum_{i} \frac{Bg(x_{i},l_{i})}{g(x_{i},l_{i})} 
\nonumber \\
 + f_{1}(x,l)\sum_{i}2a\int_{l_{i}}^{\infty}(g(x_{i},v) - 1)\mathrm{d}v 
+ 
f_{1}(x,l)\sum_{i}
(al_{i}^{2}  - b^{2}l_{i})\frac{\partial_{l_{i}}g(x_{i},l_{i}) }{g(x_{i},l_{i})}
\nonumber \\
- f_{1}(x,l,n)\sum_{i}
b^{2}\zeta(x_{i})l_{i}
\left(\sum_{j \ne i}\zeta(x_{j})l_{j}\frac{\partial_{l_{i}}g(x_{i},l_{i})\partial_{l_{j}}g(x_{j},l_{j})}{g(x_{i},l_{j})g(x_{i},l_{i})}   + \frac{\zeta(x_{i})l_{i}\partial^{2}_{l_{i}^{2}}g(x_{i},l_{i})}{g(x_{i},l_{i})}\right). 
%\nonumber \\
%:= A_{1} + A_{2} + A_{3}.
\end{multline}
%where each of the terms stand for the formula in one line.
%
%The Markov mapping theorem is satisfied for free.
%
%Passing with $\lambda$ to $\infty$ is trivial, up to one surprisingly dodgy step - what the hell is happening with the equation for levels?
%
%Poissonian calculation on the limit.

%\paragraph{Description of the limit}
%In this section we would like to give a slightly more intuitive description of the process. Let $\mathcal{N}(X)$ denote the set of counting measures over $X$.
%
%The state space of the process can be identified with $\mathcal{N}\left(\mathbb{R}^{d}\times\mathbb{R}_{+}\right)\times$
%\textcolor{red}{think about where precisely the environment lives} The particles move according to a Markov process with generator $B$ (which, outside of this section, will always be a standard Brownian motion for us). \textcolor{red}{there is a need for a rethink here}The particles give birth \dots Particles die when their level hit infinity.

%\paragraph{Averaging the levels}$\mbox{}$ \newline
Conditions of Theorem~\ref{kurtzaveraging} are satisfied if we consider $A = A_{\infty}\left(f_{1}\right)$ which would lead too $\varepsilon^{f}_{n} = \mathcal{O}\left(\frac{1}{\lambda}\right)$.
We now show that if we average the levels of the limiting generator, we obtain the generator of the SBMRE of Definition~\ref{Definition SuperBrownian}.  
The general principle is the same as for the proof of Theorem~\ref{LC BBMRE} - we average out the levels and refer to the Markov Mapping Theorem to show that the distribution of the projected process is the distribution of the SBMRE. 
%This is the proof of Theorem~\ref{LC SBMRE}.

We consider a Poisson random measure with distribution  $\alpha(\mu,\mathrm{d}x\times\mathrm{d}l)$ on $\mathbb{R}^{d}\times \mathbb{R}_{+}$ with mean measure $\mu \times m_{leb}$, where $m_{leb}$ is Lebesgue measure. Just as in the Branching Brownian motion case, we consider a special set of test functions of the form
\begin{align*}
h(x) = \int_{0}^{\infty}(1 - g(x,l))\mathrm{d}l
.
\end{align*}

In this setup, for a test function $f$, the projected (averaged) test function, $\hat{f}$, takes the form
\begin{align*}
\hat{f}(\mu) = \alpha f(\mu) = \int f(x,v)\alpha(\mu,\mathrm{d}x\times\mathrm{d}v) = e^{\int_{\mathbb{R}^{d}}\int_{0}^{\infty}(1 - g(x,v))\mathrm{d}v\mu(\mathrm{d}x)} = e^{-\langle h,\mu\rangle},
\end{align*}
which is a  simple consequence of properties of  Poisson random measures. 
Once again, we integrate the groups of terms that behave similarly separately. Also, to make the calculations easier to read, we write the averaging `level by level' - performing the computation for a single level wherever possible.

Since the part of the generator which describes the movement of the particles does not depend on the value of the level $l$ the averaging is simply
\begin{align}
\label{SBMRE movement}
\alpha\left(f_{1}(x,l)\sum_{i} \frac{Bg(x_{i},l_{i})}{g(x_{i},l_{i})}\right) 
%=
% \int_{\mathbb{R}^{d}} Bg(x,v)\mu(\mathrm{d}y)e^{-\langle h,\mu\rangle},
=
  \int_{\mathbb{R}^{d}}-Bh(y)\mu(\mathrm{d}y) e^{-\langle h,\mu\rangle},
\end{align}
We now turn our attention to the terms which behave in a very similar fashion to those in \eqref{BBMRE al^2 - bl computation}. The computation is analogous.
 \begin{align*}
 \int_{\mathbb{R}^{d}} & \int_{0}^{\infty} avg(y,z) \int_{v}^{\infty}(1 - g(y,z))\mathrm{d}z\mathrm{d}v\mu(\mathrm{d}y)e^{-\langle h,\mu\rangle}
 \\
& +\int_{\mathbb{R}^{d}}\int_{0}^{\infty} (av^{2} - b^{2}v)\partial_{v}g(y,v)\mathrm{d}v\mu(\mathrm{d}y)e^{-\langle h,\mu\rangle}
\\
=  &
 \int_{\mathbb{R}^{d}}\int_{0}^{\infty} avg(y,z) \int_{v}^{\infty}(1 - g(y,z))\mathrm{d}z\mathrm{d}v\mu(\mathrm{d}y)e^{-\langle h,\mu\rangle}
\\
& -
  \int_{\mathbb{R}^{d}}\int_{0}^{\infty} (2av - b^{2})g(y,v)\mathrm{d}v\mu(\mathrm{d}y)e^{-\langle h,\mu\rangle} 
\\
  = &
 \int_{\mathbb{R}^{d}}\int_{0}^{\infty} avg(y,z) \int_{v}^{\infty}(1 - g(y,z))\mathrm{d}z\mathrm{d}v\mu(\mathrm{d}y)e^{-\langle h,\mu\rangle}
\\
& -
 \int_{\mathbb{R}^{d}}\int_{0}^{\infty} 2a \int_{v}^{\infty}g(z,v)\mathrm{d}z\mathrm{d}v\mu(\mathrm{d}y)e^{-\langle h,\mu\rangle}
 +
   \int_{\mathbb{R}^{d}}\int_{0}^{\infty}  b^{2}g(y,v)\mathrm{d}v\mu(\mathrm{d}y)e^{-\langle h,\mu\rangle}
\\
= &
 \int_{\mathbb{R}^{d}}a\left(\int_{0}^{\infty}[g(y,v) - 1] \mathrm{d}v\right)^{2}\mu(\mathrm{d}y)e^{-\langle h,\mu\rangle}
  +
   \int_{\mathbb{R}^{d}}\int_{0}^{\infty}  b^{2}g(y,v)\mathrm{d}v\mu(\mathrm{d}y)e^{-\langle h,\mu\rangle}
\\
 = &
 \int_{\mathbb{R}^{d}}\lbrace ah^{2}(y) + b^{2}h(y)\rbrace \mu(\mathrm{d}y) e^{-\langle h,\mu\rangle}, 
\numberthis
 \label{SBMRE 3th term}
 \end{align*}
where we have integrated by parts and used the analogues of identities \eqref{BBMRE helpful identity 1}, \eqref{BBMRE helpful identity 2}.

Finally, the projections of the terms which are a direct consequence of separation of timescales lead to 
\begin{align*}
 \int_{\mathbb{R}^{d}\times\mathbb{R^{d}}} & b^{2}\int_{0}^{\infty}\int_{0}^{\infty}\zeta(y_{1})\zeta(y_{2})v\partial_{v}g(y,v)z\partial_{z}g(y,z)\mathrm{d}v\mathrm{d}z\mu(\mathrm{d}y_{1}) 
\mu(\mathrm{d}y_{2}) e^{-\langle h,\mu\rangle} 
\\
= &
- \int_{\mathbb{R}^{d}\times\mathbb{R}^{d}}b^{2}\zeta(y_{1})\zeta(y_{2})\int_{0}^{\infty}(g(y,v) - 1)\mathrm{d}v\int_{0}^{\infty}(g(y,z)-1)\mathrm{d}v\mathrm{d}z\mu(\mathrm{d}y_{1})\mu(\mathrm{d}y_{2})   e^{-\langle h,\mu\rangle}
%\nonumber 
\\
= & \int_{\mathbb{R}^{d}\times\mathbb{R}^{d}}b^{2}\zeta(y_{1})\zeta(y_{2})h(y_{1})h(y_{2})\mu(\mathrm{d}y_{1})\mu(\mathrm{d}y_{2}) e^{-\langle h,\mu\rangle},
\numberthis
\label{SBMRE 4th term}
\end{align*} 
where we have integrated by parts, and
\begin{multline}
\label{SBMRE 5th term}
 \int_{\mathbb{R}^{d}}b^{2}\int_{0}^{\infty}v^{2}\partial^{2}_{v}g(y,v)\mathrm{d}v\mu(\mathrm{d}y) e^{-\langle h,\mu\rangle}
 = 
  -\int_{\mathbb{R}^{d}}2b^{2}\int_{0}^{\infty}v\partial_{v}g(y,v)\mathrm{d}v\mu(\mathrm{d}y) e^{-\langle h,\mu\rangle}
%\nonumber 
\\ 
 =
  -\int_{\mathbb{R}^{d}}2b^{2}\int_{0}^{\infty}(1-g(y,v))\mathrm{d}v\mu(\mathrm{d}y) e^{-\langle h,\mu\rangle}
  = -  \int_{\mathbb{R}^{d}}2b^{2}h(y)\mu(\mathrm{d}y) e^{-\langle h,\mu\rangle},
\end{multline} 
where we have integrated by parts twice.

Combining the calculations \eqref{SBMRE movement}, \eqref{SBMRE 3th term}, \eqref{SBMRE 4th term}, \eqref{SBMRE 5th term} and  appealing to Theorem~\ref{kurtzaveraging} to average over the environment we arrive at
\begin{multline*}
\mathcal{L}\hat{f}(\mu) =  \mathcal{L}e^{\langle f,\mu \rangle} \int_{\mathbb{R}^{d}}\left\lbrace
 -Bh(y) + ah^{2}(y) - b^{2}h(y)
 \right\rbrace
\mu(\mathrm{d}y) e^{-\langle h,\mu\rangle}
\\
+
\int_{\mathbb{R}^{d}\times\mathbb{R}^{d}}b^{2}q(y_{1},y_{2})h(y_{1})h(y_{2})\mu(\mathrm{d}y_{1})\mu(\mathrm{d}y_{2}) e^{-\langle h,\mu\rangle},
\end{multline*}
which is the generator of the SBMRE. 

As before, to  ensure that the solution of the martingale problem for the lookdown process gives us  information about the solution of the martingale problem for the projected process, we need to specify the Markov map $\gamma$ and check that the conditions of the Markov Mapping Theorem are satisfied. 
Once again we appeal to Condition~\ref{LC Condition 3.1}.

The Markov map $\gamma$ is given by
\begin{align*}
\gamma\left(\sum_{i}\delta_{x_{i},l_{i}}\right) 
= 
\begin{cases}
\lim_{\lambda \to \infty} \frac{1}{\lambda}\sum_{l_{i} \leq \lambda}\delta_{x_{i}} & \text{if measures converge}, \\
\mu_{0} & \text{otherwise }. 
\end{cases}
\end{align*}
Our class of test functions is separating over the counting measures, and closed under multiplication.
The moment condition~\ref{MMT moment condition}  is satisfied if we consider $\psi$ of the form \eqref{BBMRE psi}  
%\begin{align*}
%\psi(x,l) = 1 + \sum_{i} \psi_{B}(x_{i})\left(1 + a + b\right)e^{-l_{i}}
%\end{align*}
and the averaged $\tilde{\psi}$ of the form \eqref{BBMRE psi tilde}.
%\begin{align*}
%\tilde{\psi}(x,l) = 1 + \sum_{i} \psi_{B}(x_{i})\left(1 + a + b\right)(1 -e^{-\lambda}).
%\end{align*}
%We note that once again $1$ appearing in the definitions of $\psi$ and $\tilde{\psi}$ has been added just to ensure that both of this functions are greater than or equal to $1$.  
%\textcolor{red}{do we need to write anything else?}.
%We refer once again to Theorem 3.4 of \cite{kurtz/rodrigues:2011} to conclude that  all assumptions of the Markov Mapping Theorem are satisfied.
\end{proof}

\section{Lookdown construction of the Spatial Lambda-Fleming-Viot model}
 \label{Section Spatial Lambda-Fleming-Viot model}
In this section we describe a construction of SLFV model, which is a
 special case of the construction  developed in \cite{etheridge/kurtz:2014}, Section~4.1.3. 
This construction forms the basis for the construction of the SLFV with selection in a fluctuating environment, whose scaling limits we investigate in Section~\ref{Scaling limits of the SLFV - dynamics of the rare type}.
We restrict our attention to the neutral model and do not intend to present any proofs or details.
%This is the content of Section~\ref{First construction of SLFV}.
%The second one, which was the first lookdown construction for the SLFV, is based on the ideas of \cite{veber/wakolbinger:2015}, and was developed using a different approach in \cite{etheridge/kurtz:2014}, Section~4.1.1.
%Even though we do not use this approach in the later part of the thesis,  it is still worth mentioning.
%The construction is much closer in the flavour to the original ideas of \cite{donnelly/kurtz:1996}.
%Secondly, to our knowledge, this construction was the first construction in the spirit of lookdown approach in which the spatial structure influenced the evolution of the population (although some of the earlier constructions for branching processes did allow for branching rate to vary in space, see \cite{kurtz/rodrigues:2011} and \cite{kurtz/xiong:1999}).
%Moreover, inclusion of this example demonstrates the non-uniquness of lookdown representations - many different constructions may lead to the same averaged model.
%This is the content of Section~\ref{Second construction of SLFV}.
%In both cases we restrict our attention to high density limits and models without  selection.
%In both cases we do not intend to present any proofs or details.
%We present it here for two reasons. 

%We begin with some common elements of both constructions. 
Let us recall the key elements of the SLFV process. 
We consider a population living in a geographical space,
which, for simplicity, we choose to be $\mathbb{R}^{d}$.
Each individual is assigned a type from a typespace $\mathcal{K}$. 
Let $\mu  =  m_{leb} \times \nu^{1}(w,\mathrm{d}u) \times \nu^{2}(\mathrm{d}w)$ be a measure on $\mathbb{R}^{d} \times [0,1] \times [0,\infty)$, where $ m_{leb}$ is $d$-dimensional Lebesgue measure,  $\nu^{1}$ is a measure which determines \emph{impacts} of the events and $\nu^{2}$ is a $\sigma$-finite measure of event radii which satisfy conditions which are specified in \eqref{VW existence condition}.
Evolution of the population is driven by a Poisson point process $\Pi$ on $ [0,\infty) \times \mathbb{R}^{d} \times [0,1] \times [0,\infty)$ with  mean measure  $m_{leb}\times \mu$. 
Whenever $(t,x,u,r) \in \Pi$, a reproduction event occurs at time $t$ in the closed ball $B_{r}(x)$ (a ball of radius $r$ centred at $x$) with  \emph{impact} $u$. 
The impact of the event determines the proportion of the individuals within the ball $B_{r}(x)$ that are replaced during the event by the offspring of a parent chosen from the ball $B_{r}(x)$ just before the event. 
The locations of new individuals are distributed uniformly over  $B_{r}(x)$.
For the construction to be valid, we assume that
\begin{align}
\label{VW existence condition}
\int_{[0,1]\times (0,\infty)}uw^{d}\nu^{1}(w,\mathrm{d}u)\nu^{2}(\mathrm{d}w) < \infty.
\end{align}

For simplicity, we only describe the construction for a fixed impact $u$ and assume that the radius of the reproduction events is always fixed and equal to $r$.

In  the spirit of lookdown constructions, in addition to a location in geographical space and a type in the typespace, $\mathcal{K}$, each individual is equipped with a level $l \in \mathbb{R}_{+} \cup \{0\}$.
The value of the level impacts the choice of the parent during a reproduction event.

As was the case for the models of Section~\ref{Section SuperBrownian motion in a random environment}, it is convenient to consider our model as a counting measure on $\mathbb{R}^{d}\times\mathcal{K}\times [0,\infty)$, where the first component encodes the geographical space, the second encodes the type of the individual and the third encodes the level of the individual.
The state of the population is given by 
\begin{align*}
\eta = \sum_{i} \delta_{x_{i},\kappa_{i},l_{i}},
\end{align*}
and the single individual $i$ is described by a triple $(x_{i},\kappa_{i},l_{i})$, where $x_{i}$ is the location of the individual, $\kappa_{i}$ is their type and $l_{i}$ is their level. 
In our particular case the levels will always be a conditionally Poisson system with  Cox measure $\Xi \times m_{leb}$, where $m_{leb}$ is the Lebesgue measure on $\mathbb{R}$. 
The measure $\Xi$ is then nothing else but the distribution of locations and types of individuals.

We specify the model in terms of generator.  Let us describe the domains on which our generator are defined,  which turns out to be useful not only for formalizing the constructions in this section but also will serve as a functional setup for the  considerations in Section~\ref{Scaling limits of the SLFV - dynamics of the rare type}. 
%For technical reason, we consider test functions which value is constant if the level is greater than $\lambda$.
%The domain of the generator for $\lambda < \infty$ is denoted by $\mathcal{D}_{\lambda}$.
%The domain for $\lambda = \infty$ is denoted by $\mathcal{D}_{\infty}$. 
%Let $g: \mathbb{R}^{d}\times \mathcal{K} \times \mathbb{R}$ be a continuous twice differentiable function where $1 - g$ has compact support in the first variable taking values in $[0,1]$.  
%Then the domains of the generators are defined as 
%\begin{align}
%\label{Functional setup SLFV 1}
%\mathcal{D}_{\lambda} = 
%\left\lbrace  
%f(\eta) = \prod_{x,\kappa,l \in \eta} g(x,\kappa,l) :  g(x,\kappa,l) = 1 \text{ for } (x,l) \notin \sup_{1 - g}\times[0,l] 
%\right\rbrace,
%\end{align}
%and %for $\mathcal{D}_{\infty}$ it is defined as 
%\begin{align}
%\label{Functional setup SLFV 2}
%\mathcal{D}_{\infty} =  \bigcup_{\lambda} \mathcal{D}_{\lambda}.
%\end{align}
Define
\begin{multline}
\label{Functional setup SLFV 1}
\mathcal{D}_{\lambda} = 
\left\lbrace  
f(\eta) = \prod_{x,\kappa,l \in \eta} g(x,\kappa,l) :
\right.
\\
 0 \leq g(x,\kappa,l) \leq 1, g (\cdot,\kappa,l) \in C^{2}(\mathbb{R}^{d}), \|\partial_{l} g(x,\kappa,l) \| < \infty
 \\   
\exists \text{ compact } K_{g} \in \mathbb{R}^{d}, 0 < l_{g}\leq \lambda 
\\
\left. \phantom{\prod_{x,\kappa,l \in \eta}}
g(x,\kappa,l) = 1 \text{ for } (x,l) \notin K_{g}\times[0,l_{g}] 
\right\rbrace,
\end{multline}
and %for $\mathcal{D}_{\infty}$ it is defined as 
\begin{align}
\label{Functional setup SLFV 2}
\mathcal{D}_{\infty} =  \bigcup_{\lambda} \mathcal{D}_{\lambda}.
\end{align}
Our test functions are specified by \eqref{Functional setup SLFV 2}.

\begin{remark}
Notice that our restrictions on domains of the generators are very similar to those in Condition~\ref{LC Condition 3.1}.
This is due to the fact that once again we will apply the Markov Mapping Theorem. 
\end{remark}

%\subsection{First construction of SLFV}
%\label{First construction of SLFV}
%We now describe the basic principles of the construction from \cite{etheridge/kurtz:2014}, Section~4.1.2.
%We begin with a finite intensity model.
%Fix $\lambda > 0$.
The evolution of the population is based on events which are composed of two elements - discrete births and so-called thinning of the population. 
Whenever $(t,x) \in \Pi$, if the number of individuals within $B_{r}(x)$ is greater than zero, the birth event produces offspring, with levels distributed on $[0,\infty)$ according to independent Poisson point processes with intensity $\alpha_{z} =  u V_{r}$ (recall that $V_{r}$ denotes the volume of ball of radius $r$).
%, conditioned on being non-zero.
The levels of new particles are  denote by $(v_{1}, v_{2},  \dots)$. %, are distributed uniformly on $[0,\lambda]$.
Their locations are distributed uniformly over $B_{r}(x)$.
Let $v^{*}$ be the minimum of $(v_{1}, v_{2},  \dots)$.

Let $(x^{*},\kappa^{*},l^{*})$ denote the element in $\eta$ such that $x^{*} \in B_{r}(y)$  with the smallest level greater than $v^{*}$.
%, that is
%\begin{align*}
%(x^{*},\kappa^{*},l^{*}) = \min_{l}\left\lbrace (x,\kappa,l) \in \eta : x \in B_{y}(r), l > v^{*} \right\rbrace 
%\end{align*} 
The individual $(x^{*},\kappa^{*},l^{*})$ is chosen as the parent of the event and removed from the population.
%The individual with the lowest level in the population within $B_{r}(x)$ is chosen as a parent and removed from the population.
All new individuals are assigned a type which is same as the type of the parent.
The levels of old individuals in the population are changed. 
If the level of the individual was smaller than $v^{*}$, it remains unaffected by the birth part of the event.
If the level of the individual was larger than  $v^{*}$, it is moved to $l - l^{*} + v^{*}$. 
The thinning occurs after the movement of the levels due to birth event has been accounted for.
The new individuals are not affected by thinning.
The thinning  takes the new level of each individual present within the ball $B_{r}(x)$ just before the event (apart from the parent), and multiplies it by $1/(1-u)$.

We note that instead of removing the parent from the population we can identify the parent with the lowest offspring (with level $v^*$).
The choice between those two options is a matter of convenience and does not affect the model. 
In our considerations in Section~\ref{Scaling limits of the LFV - dynamics of the rare type} and Section~\ref{Scaling limits of the SLFV - dynamics of the rare type} we find it more convenient to identify the parent with the lowest offspring.
 %is simply for convenience and does affect the model.

Let $v_{y,r}$ denote the density of the uniform distribution on $B_{r}(y)$.
Let $\mathcal{J}_{EK}$ denote the expected value of the test function evaluated immediately after an event centred at $y$.
It is  given by
\begin{multline*}
\mathcal{J}_{EK}(g,\eta) = \prod_{(x,\kappa,l) \in \eta, x \notin B_{r}(y)}g(x,\kappa,l)
\\
\times 
\int_{0}^{\infty}\left[    
\alpha_{z}
e^{- \alpha_{z} v^{*}}
\int g(x^{\prime},\kappa^{*},v^{*}) v_{y,r}(\mathrm{d}x^{\prime})\right.
\\
\times
\exp\left( -\alpha_{z}
\int_{v^{*}}^{\infty} \left(1 - \int g(x^{\prime},\kappa^{*},v^{*}) v_{y,r}(\mathrm{d}x^{\prime}) \right)\mathrm{d}v^{*}
\right)
\\
\times \prod_{(x,\kappa,l) \in \eta, x \in B_{r}(y), l > l^{*}} g(x,\kappa,\frac{1}{1-u}(l -l^{*} + v^{*}))
\\
\left.
\times \prod_{(x,\kappa,l) \in \eta, x \in B_{r}(y), l < l^{*}}
g\left(x,\kappa,\frac{1}{1-u}l\right)
\right]\mathrm{d}v^{*}
.
\end{multline*}
The generator of the lookdown representation of the SLFV can be now written as 
\begin{align}
\label{Generator EK levels bla}
A_{EK}f(\eta) = \int_{\mathbb{R}^{d}}\ind_{\eta(B_r(y) \times [0,\infty)) > 0} \left\lbrace \mathcal{J}_{EK}(g,\eta) - f(\eta)\right\rbrace\mathrm{d}y.
\end{align}
Recall that $\eta$ is a conditionally Poisson process with Cox measure
$(\Xi(s) \times m_{leb})$. 
To average the generator over the distribution of the levels, we 
define
\begin{align}
\label{definition of h}
h(x,\kappa) = \int_{0}^{\infty}(1 - g(x,\kappa,l))\mathrm{d}l
\end{align} 
and 
\begin{align}
\label{definition of h^*}
h^{*}_{y,r}(\kappa) =\int\left( 1 - \int g(x^{\prime},\kappa,l) v_{y,r}(\mathrm{d}x^{\prime})\right)\mathrm{d}l.
\end{align}

Observe that, by integration by parts, 
\begin{multline*}
\int_{0}^{\infty}
\left\lbrace
\alpha_{z}
e^{-\alpha_{z} v^{*}}
\int g(x^{\prime},\kappa^{*},v^{*}) v_{y,r}(\mathrm{d}x^{\prime})
\right.
\\
\left.
\times
\exp\left( -\alpha_{z}
\int_{v^{*}}^{\infty} \left(1 - \int g(x^{\prime},\kappa^{*},v^{*}) v_{y,r}(\mathrm{d}x^{\prime}) \right)\mathrm{d}v
\right)\right\rbrace\mathrm{d}v^{*}
\\ 
= e^{-\alpha_{z}h^{*}_{y,r}(\kappa^{*})}
\end{multline*}
Therefore if we average out the levels in generator~\eqref{Generator EK levels bla} we obtain
\begin{multline*}
\alpha A_{EK}f(\Xi) = \exp \left( \int_{\mathbb{R}^{d}}h(x,\kappa)\Xi(\mathrm{d}x,\mathrm{d}\kappa) \right)
\\
\times
\int_{\mathrm{R}^{d}} \left\lbrace\mathbb{H}_{2}(h_{y,r}^{*},\Xi)
\exp \left(
u\int_{B_{r}(x)\times \mathcal{K}} h(x,\kappa)\Xi(\mathrm{d}x,\mathrm{d}\kappa)
\right) -1\right\rbrace,
\end{multline*}
where 
\begin{align*}
\mathbb{H}_{2}(h_{y,r}^{*},\Xi) = \frac{1}{\Xi(B_{r}(x)\times \mathcal{K})} \int_{B_{r}(x)\times \mathcal{K}} \exp \left( 
-uV_{r}h^{*}_{y,r}(\kappa)
\right)
\Xi(\mathrm{d}x,\mathrm{d}\kappa).
\end{align*}
The averaged model is simply the usual Spatial Lambda-Fleming-Viot model, see \cite{etheridge/kurtz:2014}.
We also observe that if $\Xi(0,\mathrm{d}x \times \mathcal{K})$ is Lebesgue measure then $\Xi(t,\mathrm{d}x \times \mathcal{K})$ is Lebesgue measure, for arbitrary $t$.
If this is the case, $\mathbb{H}_{2}$ can be written as 
%Let us investigate the behaviour of the birth process as $\lambda$ tends to infinity.
\begin{align*}
 \mathbb{H}_{2}(h_{y,r}^{*}) 
=  \frac{1}{\Xi(B_{r}(x)\times \mathcal{K})} \int_{B_{r}(x)\times \mathcal{K}} \exp \left( 
-uh^{*}_{y,r}(\kappa)\Xi(B_{r}(x)\times \mathcal{K}))
\right)
\Xi(\mathrm{d}x,\mathrm{d}\kappa).
\end{align*}

\addcontentsline{toc}{section}{References}
\bibliographystyle{plainnat}
\DeclareRobustCommand{\VAN}[3]{#3}

\paragraph{Acknowledgements}
{
The authors would like to thank Alison Etheridge and Tom Kurtz for useful discussions and insights.
}

\end{document}